\newtheorem{assumption}[theorem]{Assumption}
\newcommand{\BbbR}{{\mathbb R}}
\newcommand{\BbbN}{{\mathbb N}}
\newcommand{\eex}{\hbox{}\hfill\rule{0.8ex}{0.8ex}}
\newcommand{\eremk}{\eex}
\newcommand{\supp}{\operatorname*{supp}}
\newcommand{\cyl}{{\mathcal C}}
\newcommand{\A}{\mathbf A}
\newcommand\cutoff{\chi_{S_h}}
\newcommand\edges{{\mathcal E}}
\newcommand\singular{{M_s}}
\newcommand{\appB}{B}
\newcommand{\appC}{C}
 \journalname{}
\newif\iftechreport
\begin{document}
\iftechreport
\title{On optimal $L^2$- and surface flux convergence in FEM (extended version)}
\else 
\title{On optimal $L^2$- and surface flux convergence in FEM}
\fi 


\author{T. Horger \and J.M. Melenk\and
        B. Wohlmuth
}


\institute{J.M. Melenk \at
              Institut f\"ur Analysis und Scientific Computing, 
              Technische Universit\"at Wien\\
              \email{melenk@tuwien.ac.at}           
           \and T. Horger, 
           B. Wohlmuth \at
              Technische Universit\"at M\"unchen \\
              \email{horger, wohlmuth@ma.tum.de}           
}

\date{Received: date / Accepted: date}

\maketitle

\begin{abstract}
We show that optimal $L^2$-convergence in the finite element method
on quasi-uniform meshes can be achieved if, for some $s_0 > 1/2$, the 
boundary value problem has  
the mapping property $H^{-1+s} \rightarrow H^{1+s}$ for $s \in [0,s_0]$.  
The lack of full elliptic regularity in the dual problem 
has to be compensated by additional regularity of the exact solution. Furthermore, we 
analyze for a Dirichlet problem the approximation of
the normal derivative on the boundary without convexity assumption on the domain. We show that 
(up to logarithmic factors) the optimal rate is obtained. 

\keywords{$L^2$ {\sl a priori} bounds \and duality argument \and reentrant corners}
\end{abstract}

\section{Introduction}
\label{sec:intro}
The finite element method (FEM) is a widely used numerical technique for approximating
solutions of boundary value problems. It is based on approximating the solution 
by piecewise polynomials of degree $k$. 
In the classical case of second order elliptic equations with an
$H^1$-coercive bilinear form, the method is of optimal convergence order in the
$H^1$-norm. 
An important tool for the convergence analysis in other norms such as
the $L^2$-norm are duality arguments (``Nitsche trick''). 
The textbook procedure for optimal order convergence in $L^2$ is to exploit 
full elliptic regularity for the dual problem. Conversely, this procedure
suggests a loss of the optimal convergence rate in $L^2$ if 
$H^2$-regularity fails to hold. This occurs, 
for example, in polygonal domains with reentrant corners. 
 
Nevertheless, 
it is possible to recover the optimal convergence rate in $L^2$, if the 
exact solution has additional regularity to compensate for the lack of 
full regularity of the dual problem. 
More precisely: In this 
note, we consider a setting where an elliptic shift theorem holds for 
both the dual and the bidual problem in the 
range $[-1,-1+s_0]$ for some $s_0 \in (1/2,1]$ 
(see Assumption~\ref{assumption:shift-theorem})
and show that if the solution is in
the Sobolev space $H^{k+1+(1-s_0)}$, then the extra regularity $1-s_0$ 
can be exploited to recover the optimal convergence rate in $L^2$
(up to a logarithmic factor in the lowest order case $k=1$).

In the second part of this note, we consider the convergence in $L^2$
of the normal derivative on the boundary. We show that the optimal
rate $O(h^{k})$ (up to a logarithmic factor in the lowest order case)
can be achieved, if the solution is sufficiently smooth. 
The proof 
is based on a local error analysis of the FEM as discussed, e.g.,
in \cite{wahlbin91,wahlbin95}. Here, we extract error bounds for the 
flux on the boundary from an optimal FEM estimate on a strip of width $O(h)$ 
near the boundary. Although we present the convergence of the flux 
for an $H^1$-conforming discretization, the techniques are applicable 
to mixed methods, \cite{melenk-rezaijafari-wohlmuth14}, FEM-BEM coupling,
\cite{melenk-praetorius-wohlmuth14}, and mortar 
and DG methods, \cite{melenk-wohlmuth12,waluga-wohlmuth14}. In fact, the results of the present
work lead to a sharpening of  \cite{melenk-wohlmuth12}, 
where convexity of the domain was assumed to avoid the analysis of a suitable
additional dual problem.
The techniques employed here are in part similar to those 
developed in \cite{melenk-wohlmuth12}. Nevertheless, they are also 
significantly 
different since we have opted to forego the direct use of anisotropic norms
and instead rely on weighted Sobolev norms and the embedding result 
of Lemma~\ref{lemma:weighted-embedding}. 

The analysis of the optimal convergence of fluxes has attracted some attention
recently. Besides our own contributions 
\cite{melenk-praetorius-wohlmuth14,melenk-rezaijafari-wohlmuth14,melenk-wohlmuth12},
we mention the works \cite{apel-pfefferer-roesch12,apel-pfefferer-roesch14,larson-massing14}
where similar results have been obtained by different methods. 

\subsection{Notation} 
For bounded Lipschitz domains $\Omega \subset \BbbR^d$ with boundary $\Gamma:= \partial\Omega$,
we employ standard notation for 
Sobolev spaces $H^s(\Omega)$, \cite{adams75a,tartar07}. 
We will formulate certain regularity results in terms of Besov space: 
for $s > 0$, $s \not\in\BbbN$, and $q \in [1,\infty]$ the Besov space
$B^s_{2,q}(\Omega)$ is defined by interpolation (the ``real'' method, also known
as $K$-method as described, e.g., in  \cite{tartar07,triebel95}) as 
$$
B^s_{2,q}(\Omega) = (H^{\lfloor s \rfloor}(\Omega),H^{\lceil s\rceil}(\Omega))_{\theta,q}, 
\qquad \theta = s - \lfloor s \rfloor.  
$$ 
Integer order Besov spaces $B^n_{2,q}(\Omega)$ with $n \in \BbbN$ are also defined by interpolation: 
$$
B^n_{2,q}(\Omega) = (H^{n-1}(\Omega),H^{n+1}(\Omega))_{1/2,q}, 
\qquad n \in \BbbN. 
$$ 
To give some indication of the relevance of the second parameter $q$ in the definition
of the Besov spaces, we recall the following (continuous) embeddings:  
$$
H^{s+\varepsilon}(\Omega) \subset B^s_{2,1}(\Omega) \subset H^s(\Omega) \subset B^{s}_{2,\infty}(\Omega)
\subset H^{s-\varepsilon}(\Omega) \qquad \forall \varepsilon > 0. 
$$
Of importance will be the distance function $\delta_\Gamma$ and the regularized 
distance function $\widetilde \delta_\Gamma$ given by 
\begin{equation}
\label{eq:delta_Gamma}
\delta_\Gamma(x) := \operatorname*{dist}(x,\Gamma), 
\qquad 
\widetilde \delta_\Gamma(x) := h + \operatorname*{dist}(x,\Gamma). 
\end{equation}
Later on, the parameter $h > 0$ will be the mesh size of the quasi-uniform triangulation. Also of 
importance will be neighborhoods $S_D$ of the boundary $\partial\Omega$ given by 
\begin{equation}
\label{eq:strip}
S_{D}:= \{x \in \Omega\,|\, \delta_\Gamma(x) < D\}, 
\end{equation}
with particular emphasis on the case $D = O(h)$. 
\subsection{Model problem} 
\label{sec:model-problem}
We let $\Omega \subset \BbbR^d$, $d \in \{2,3\}$, be a bounded Lipschitz domain 
with a polygonal/polyhedral boundary and let (\ref{eq:model-problem}) be our model problem:
\begin{equation}
\label{eq:model-problem}
-\nabla \cdot (\A(x) \nabla u) = f \quad \mbox{ in $\Omega$}, 
\qquad u = 0 \quad \mbox{ on $\partial\Omega$}. 
\end{equation}
We assume that $\A$ and $f$ are sufficiently smooth. Moreover $\A$ is pointwise symmetric positive definite,
and $\A(x) \ge \alpha_0 \operatorname*{I}$ for some $\alpha_0 > 0$ and all $x \in \Omega$. 
As usual, (\ref{eq:model-problem}) is understood in a weak sense, i.e., for a 
right-hand side $f \in \left(H_0^{1}(\Omega)\right)^\prime$ the boundary value problem 
(\ref{eq:model-problem}) reads: 
Find $u \in H^1_0(\Omega)$ such that 
\begin{equation}
\label{eq:model-problem-weak}
a(u,v) := \int_\Omega \A \nabla u \cdot \nabla v = \langle f, v\rangle \qquad 
\forall v \in H^1_0(\Omega). 
\end{equation}
We denote by $T:(H^1_0(\Omega)) ^\prime \rightarrow H^1_0(\Omega)$ the solution operator.
We emphasize that the choice of boundary conditions (here: homogeneous Dirichlet boundary conditions) 
is not essential for our purposes. Essential, however, is the following assumption: 
\begin{assumption}
\label{assumption:shift-theorem}
There exists $s_0 \in (1/2,1]$ such that the solution operator 
$f \mapsto T f$ for (\ref{eq:model-problem-weak}) satisfies
$$
\|T f\|_{H^{1+s_0}(\Omega)} 
\leq C \|f\|_{(H_0^{1-s_0}(\Omega))^\prime} \leq C \| f \|_{L^2(\Omega)}. 
$$
\end{assumption}
Here and in the following $0 < c$, $C < \infty $ denote generic constants 
that do not depend on the mesh-size but possibly depend on $s_0$. We also use $
\lesssim $ to abbreviate $\leq C$.
\begin{remark}
{\rm 
The present problem is symmetric. As a consequence  certain dual problems that will be 
needed below coincide with the primal problem. This will simplify the presentation 
but is not essential. Inspection of the procedure below shows that we need 
Assumption~\ref{assumption:shift-theorem} 
for the dual problem and the bidual problem with weighted right-hand side.
}\eremk
\end{remark}
Let ${\mathcal T}$ be an affine simplicial quasi-uniform triangulation
of $\Omega$ with mesh size $h$ and $V_h:= S^{k,1}_0({\mathcal T}) \subset H^1_0(\Omega)$ the continuous space 
of piecewise polynomials of degree $k$. This space has the following well-known 
properties: 
\begin{enumerate}[(i)]
\item 
Existence of a quasi-local approximation operator: 
The Scott-Zhang operator $I^k_h: H^1(\Omega) \rightarrow S^{k,1}({\mathcal T})$ 
of \cite{scott-zhang90} satisfies: 
\begin{itemize}
\item If $u \in H^1_0(\Omega)$ then $I^k_h u \in V_h = S^{k,1}_0({\mathcal T})$. 
\item $I^k_h$ is quasi-local and stable: 
$\|\nabla I^k_h u\|_{L^2(K)} \lesssim \|\nabla u\|_{L^2(\omega_K)}$, where $\omega_K$ is the 
patch of elements sharing a node with $K$. 
\item $I^k_h$ has approximation properties: 
\begin{equation}
\label{eq:scott-zhang-approximation}
\|\nabla^j (u - I^k_h u)\|_{L^2(K)} \lesssim h^{l+1-j} \|\nabla^{l+1} u\|_{L^2(\omega_K)}, 
\quad j \in \{0,1\}, \quad 0 \leq l \leq k. 
\end{equation}
\end{itemize}
\item For every $v \in B^{3/2}_{2,\infty}(\Omega) \cap H^1_0(\Omega)$ there holds 
$$
\inf_{z \in V_h} \|v - z\|_{H^1(\Omega)} \leq h^{1/2} \|v\|_{B^{3/2}_{2,\infty}(\Omega)}. 
$$
(This follows from property (i) and an interpolation argument using the $K$-method).
\item  
The space $V_h$ satisfies standard elementwise inverse estimates: 
for integer $0 \leq j \leq m \leq k$ 
\begin{equation}
\label{eq:inverse-estimate}
|v|_{H^m(K)} \leq C h^{-(m-j)} |v|_{H^j(K)}  \qquad \forall v \in V_h.
\end{equation}
\end{enumerate}
The Galerkin method for (\ref{eq:model-problem-weak}) is then: Find $u_h \in V_h$ such that 
\begin{equation}
\label{eq:model-problem-FEM}
a(u_h,v) = \langle f,v\rangle \qquad \forall v \in V_h.
\end{equation}
\begin{remark}
{\rm 

The restriction to simplicial triangulations is not essential. Our primary motivation 
for this restriction is that in this 
case the space $V_h$ is known to have the above approximation properties, 
the inverse estimates, and moreover it has 
the ``superapproximation property'' that underlies 
the local error analysis as presented in \cite[Sec.~{5.4}]{wahlbin95}. 
}\eremk
\end{remark}
\section{Regularity} 
\label{sec:regularity}
\subsection{Preliminaries}
\label{sec:regularity-preliminaries}
A key mechanism in our arguments that will allow us to exploit additional regularity 
of a function is the following embedding theorem. 
\begin{lemma}
\label{lemma:weighted-embedding}
The following estimates hold, if $\Omega \subset \BbbR^d$ is a bounded Lipschitz domain and $z$ sufficiently regular.
\begin{eqnarray}
\label{eq:lemma:weighted-embedding-1}
\|\widetilde \delta_\Gamma^{-1/2+\varepsilon} z\|_{L^2(\Omega)} &\leq& 
\|\delta_\Gamma^{-1/2+\varepsilon} z\|_{L^2(\Omega)} \leq 
C_\varepsilon \|z\|_{H^{1/2-\varepsilon}(\Omega)}, \quad \varepsilon \in (0,1/2],\\ 
\label{eq:lemma:weighted-embedding-2}
\|\widetilde \delta_\Gamma^{-1/2} z\|_{L^2(\Omega)} &\leq& 
C |\ln h|^{1/2} \|z\|_{B^{1/2}_{2,1}(\Omega)}, \\
\label{eq:lemma:weighted-embedding-4}
\|\widetilde \delta_\Gamma^{-1/2-\varepsilon} z\|_{L^2(\Omega)} &\leq& 
C_\varepsilon  h^{-\varepsilon}\|z\|_{B^{1/2}_{2,1}(\Omega)}, \qquad \varepsilon > 0,  \\
\label{eq:lemma:weighted-embedding-5}
\|z\|_{L^2(S_h)} &\leq& 
C h^{1/2}\|z\|_{B^{1/2}_{2,1}(\Omega)}, \qquad h > 0,  \\
\label{eq:lemma:weighted-embedding-6}
\|z\|_{L^2(\Gamma)} &\leq& 
C \|z\|_{B^{1/2}_{2,1}(\Omega)}. 
\end{eqnarray}
\end{lemma}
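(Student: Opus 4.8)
The plan is to handle \eqref{eq:lemma:weighted-embedding-1} by classical means and to derive the other four estimates from one common device. By density of $C^\infty(\overline\Omega)$ in $H^{1/2-\varepsilon}(\Omega)$ and in $B^{1/2}_{2,1}(\Omega)$, together with continuity of the left-hand sides along such approximating sequences, it suffices to argue for smooth $z$. In \eqref{eq:lemma:weighted-embedding-1} the first inequality is immediate, since $\widetilde\delta_\Gamma\ge\delta_\Gamma>0$ and the exponent $-1/2+\varepsilon$ is nonpositive, so that $\widetilde\delta_\Gamma^{-1/2+\varepsilon}\le\delta_\Gamma^{-1/2+\varepsilon}$ pointwise; the second is the fractional Hardy inequality $\|\delta_\Gamma^{-s}z\|_{L^2(\Omega)}\le C_s\|z\|_{H^s(\Omega)}$ for $s=1/2-\varepsilon\in[0,1/2)$ on bounded Lipschitz domains, which I would either cite or reduce --- via a finite partition of unity subordinate to boundary charts and a bi-Lipschitz flattening of $\Gamma$ --- to the one-dimensional fractional Hardy inequality $\int_0^\infty t^{-2s}|g(t)|^2\,dt\lesssim\|g\|_{H^s(\BbbR_+)}^2$, $s<1/2$.

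For \eqref{eq:lemma:weighted-embedding-2}--\eqref{eq:lemma:weighted-embedding-6} I would fix a dyadic multiscale decomposition $z=\sum_{k\ge 0}z_k$ with $z_k:=\Pi_k z-\Pi_{k-1}z$, where $\Pi_k$ is the $L^2(\Omega)$-projection onto the finite element space of an auxiliary nested quasi-uniform mesh of size $2^{-k}$ (equivalently, a Littlewood--Paley piece after a bounded extension of $z$ to $\BbbR^d$). Each $z_k$ is a finite element function at scale $2^{-k}$ and hence satisfies the inverse estimate $\|\nabla z_k\|_{L^2(\Omega)}\lesssim 2^k\|z_k\|_{L^2(\Omega)}$; furthermore, since $z\mapsto(\|z-\Pi_k z\|_{L^2(\Omega)})_k$ maps $H^1(\Omega)$ boundedly into $\{\,(a_k):\sup_k 2^k|a_k|<\infty\,\}$ (Jackson estimate) and $L^2(\Omega)$ boundedly into $\ell^\infty$ ($L^2$-stability), real interpolation with $B^{1/2}_{2,1}(\Omega)=(L^2(\Omega),H^1(\Omega))_{1/2,1}$ yields
\[
  \sum_{k\ge 0}2^{k/2}\|z_k\|_{L^2(\Omega)}\;\lesssim\;\sum_{k\ge 0}2^{k/2}\|z-\Pi_k z\|_{L^2(\Omega)}\;\lesssim\;\|z\|_{B^{1/2}_{2,1}(\Omega)}.
\]
I would also note two elementary per-frequency bounds: the multiplicative trace inequality combined with the inverse estimate gives $\|z_k\|_{L^2(\Gamma)}^2\lesssim\|z_k\|_{L^2(\Omega)}\|z_k\|_{H^1(\Omega)}+\|z_k\|_{L^2(\Omega)}^2\lesssim 2^k\|z_k\|_{L^2(\Omega)}^2$; and the boundary-strip estimate obtained by the fundamental theorem of calculus in the direction normal to $\Gamma$, localised through boundary charts,
\[
  \|v\|_{L^2(S_D)}^2\;\lesssim\;D\,\|v\|_{L^2(\Gamma)}^2+D^2\,\|\nabla v\|_{L^2(\Omega)}^2\qquad(D\lesssim\operatorname{diam}\Omega),
\]
which for $v=z_k$ combines with the two previous bounds to $\|z_k\|_{L^2(S_D)}^2\lesssim\min(1,D\,2^k)\,\|z_k\|_{L^2(\Omega)}^2$.

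Given these ingredients, \eqref{eq:lemma:weighted-embedding-5} and \eqref{eq:lemma:weighted-embedding-6} follow at once by summing the triangle inequality over frequencies: $\|z\|_{L^2(S_h)}\le\sum_k\|z_k\|_{L^2(S_h)}\lesssim h^{1/2}\sum_k 2^{k/2}\|z_k\|_{L^2(\Omega)}\lesssim h^{1/2}\|z\|_{B^{1/2}_{2,1}(\Omega)}$ and likewise $\|z\|_{L^2(\Gamma)}\le\sum_k\|z_k\|_{L^2(\Gamma)}\lesssim\|z\|_{B^{1/2}_{2,1}(\Omega)}$. For \eqref{eq:lemma:weighted-embedding-2} and \eqref{eq:lemma:weighted-embedding-4} I would use the layer-cake identity $\widetilde\delta_\Gamma(x)^{-1-2\sigma}=(1+2\sigma)\int_h^\infty t^{-2-2\sigma}\,\mathbf{1}[x\in S_{t-h}]\,dt$, valid for $\sigma\ge 0$, which turns the weighted norm into
\[
  \|\widetilde\delta_\Gamma^{-1/2-\sigma}z_k\|_{L^2(\Omega)}^2=(1+2\sigma)\int_0^\infty(D+h)^{-2-2\sigma}\,\|z_k\|_{L^2(S_D)}^2\,dD.
\]
Inserting $\|z_k\|_{L^2(S_D)}^2\lesssim\min(1,D\,2^k)\|z_k\|_{L^2(\Omega)}^2$, splitting the $D$-integral at $D=2^{-k}$, and bounding $D/(D+h)^{2+2\sigma}\le(D+h)^{-1-2\sigma}$, the right-hand side is $\lesssim 2^k|\ln h|\,\|z_k\|_{L^2(\Omega)}^2$ for $\sigma=0$ and $\lesssim C_\varepsilon\,2^k h^{-2\varepsilon}\|z_k\|_{L^2(\Omega)}^2$ for $\sigma=\varepsilon>0$; taking square roots and summing over $k$ with the Besov bound above gives \eqref{eq:lemma:weighted-embedding-2} and \eqref{eq:lemma:weighted-embedding-4}. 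The point of working frequency by frequency is that the triangle inequality in $\ell^1(2^{\bullet/2})$ is lossless, whereas a single-scale duality argument or a two-endpoint interpolation between $L^2(\Omega)$ and $H^1(\Omega)$ would only produce $h^{1/4}$ in \eqref{eq:lemma:weighted-embedding-5} and an extra $|\ln h|^{1/2}$ in \eqref{eq:lemma:weighted-embedding-2}.

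I expect the only genuinely non-routine step to be the interpolation identity above, i.e., the identification --- up to norm equivalence --- of the dyadic finite element approximation spaces with $B^{1/2}_{2,1}(\Omega)$; this is classical approximation theory, but it has to be quoted together with the precise Jackson and inverse estimates on the auxiliary meshes. The classical fractional Hardy inequality behind \eqref{eq:lemma:weighted-embedding-1} is the only other external input; everything else uses nothing beyond the inverse estimates and approximation properties of finite element spaces recorded earlier, the multiplicative trace inequality, and the one-dimensional estimate normal to $\Gamma$ on boundary strips.
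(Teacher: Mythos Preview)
Your argument is correct. The route, however, differs from the paper's.

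For \eqref{eq:lemma:weighted-embedding-1} both approaches coincide (the fractional Hardy inequality, which the paper cites from Grisvard). For \eqref{eq:lemma:weighted-embedding-2}--\eqref{eq:lemma:weighted-embedding-6} the paper proceeds more abstractly: after locally flattening $\Gamma$ one uses the one-dimensional multiplicative embedding $\|v\|_{L^\infty(0,1)}^2\lesssim\|v\|_{L^2(0,1)}\|v\|_{H^1(0,1)}$ in the direction normal to $\Gamma$ to obtain, for each of the four functionals $T$ on the left, a \emph{quadratic} estimate of the form $\|Tz\|^2\lesssim C(h)\,\|z\|_{L^2(\Omega)}\|z\|_{H^1(\Omega)}$ (with $C(h)=|\ln h|$, $h^{-2\varepsilon}$, $h$, and $1$, respectively). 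A single appeal to Tartar's quadratic interpolation lemma (Lemma~25.3 in \cite{tartar07}) then upgrades this to $\|Tz\|\lesssim C(h)^{1/2}\|z\|_{B^{1/2}_{2,1}(\Omega)}$; \eqref{eq:lemma:weighted-embedding-5} is quoted from \cite{li-melenk-wohlmuth-zou10}.

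Your dyadic decomposition plus $\ell^1$-summation is essentially the $K$-functional proof of Tartar's lemma carried out by hand: the per-piece bound $\|z_k\|_{L^2(\Gamma)}\lesssim 2^{k/2}\|z_k\|_{L^2(\Omega)}$ that you derive from the multiplicative trace inequality and an inverse estimate is exactly the frequency-localized form of $\|Tz\|^2\lesssim\|z\|_{L^2}\|z\|_{H^1}$, and the identity $\sum_k 2^{k/2}\|z_k\|_{L^2}\lesssim\|z\|_{B^{1/2}_{2,1}}$ is the discretized $(L^2,H^1)_{1/2,1}$-norm. Your layer-cake representation of $\widetilde\delta_\Gamma^{-1-2\sigma}$ is a clean device that reduces \eqref{eq:lemma:weighted-embedding-2} and \eqref{eq:lemma:weighted-embedding-4} to the strip estimate behind \eqref{eq:lemma:weighted-embedding-5}; the paper instead integrates $\int_0^D(t+h)^{-1-2\sigma}\,dt$ directly in the normal variable. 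What you gain is a fully self-contained argument that avoids citing \cite{tartar07} and \cite{li-melenk-wohlmuth-zou10}; the price is the approximation-space characterization of $B^{1/2}_{2,1}$ (or, in the Littlewood--Paley variant, a bounded Sobolev extension operator), which---as you note---is the only nontrivial external input. The paper's route is shorter and makes the role of the multiplicative endpoint estimate transparent; yours is more constructive and would generalize more readily to other weight profiles.
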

\begin{proof}
The estimate involving $\delta_\Gamma$ in (\ref{eq:lemma:weighted-embedding-1}) 
can be found, e.g., in \cite[Thm.~{1.4.4.3}]{grisvard85a} and (\ref{eq:lemma:weighted-embedding-5}) is shown in \cite[Lemma~{2.1}]{li-melenk-wohlmuth-zou10}. The estimates 
(\ref{eq:lemma:weighted-embedding-2}), (\ref{eq:lemma:weighted-embedding-4}), (\ref{eq:lemma:weighted-embedding-6}) 
follow from 
1D Sobolev embedding theorems for $L^\infty$ and locally flattening the boundary $\Gamma$ in the same
way as it is done in the proof of \cite[Lemma~{2.1}]{li-melenk-wohlmuth-zou10}. For example, 
for (\ref{eq:lemma:weighted-embedding-6}) 
we note that a local flattening of the boundary $\Gamma$
and the 1D embedding $\|v\|^2_{L^\infty(0,1)} \lesssim \|v\|_{L^2(0,1)} \|v\|_{H^1(0,1)}$ imply
$\|z\|_{L^2(\Gamma)}^2 \lesssim \|z\|_{L^2(\Omega)} \|z\|_{H^1(\Omega)}$. This implies
the estimate 
$\|z\|_{L^2(\Gamma)} \lesssim \|z\|_{B^{1/2}_{2,1}(\Omega)}$ by 
\cite[Lemma~{25.3}]{tartar07}. 
We recall that for half-spaces, the upper bound  (\ref{eq:lemma:weighted-embedding-6}) can be directly found in \cite[Thm.~{2.9.3}]{triebel95}, see also the comment in \cite[Sec. 32, Eq. (32.8)]{tartar07}. 
\qed 
\end{proof}
One of several applications of Lemma~\ref{lemma:weighted-embedding} is that it allows us 
to transform negative norms into weighted $L^2$-estimates: 
\begin{lemma}
\label{lemma:negative-norm-vs-weighted-L2}
For $\varepsilon \in (0,1/2]$ and sufficiently regular $z$ there holds 
\begin{eqnarray}
\label{eq:lemma:negative-norm-vs-weighted-L2-1}
\|\delta_{\Gamma}^{\beta} z\|_{(H^{1/2-\varepsilon}(\Omega))^\prime} 
&\leq & C_\varepsilon \|\delta_\Gamma^{\beta +1/2-\varepsilon} z\|_{L^2(\Omega)}, \quad
-1 < \beta  \leq 0, \\
\label{eq:lemma:negative-norm-vs-weighted-L2-3}
\|\widetilde\delta_\Gamma^{-1} z\|_{(B^{1/2}_{2,1}(\Omega))^\prime} 
&\leq & C |\ln h|^{1/2} \|\widetilde \delta_\Gamma^{-1/2} z\|_{L^2(\Omega)}. 
\end{eqnarray}
\end{lemma}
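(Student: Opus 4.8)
The plan is to obtain both bounds by a straightforward duality argument: in each case we write the relevant dual norm as a supremum over test functions, peel ``half'' of the distance weight off onto the test function, apply the Cauchy--Schwarz inequality, and control the resulting weighted $L^2$-norm of the test function by Lemma~\ref{lemma:weighted-embedding}.

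For (\ref{eq:lemma:negative-norm-vs-weighted-L2-1}): since $1/2-\varepsilon < 1/2$, the dual norm is $\|w\|_{(H^{1/2-\varepsilon}(\Omega))^\prime} = \sup\{\langle w,\varphi\rangle : \varphi \in H^{1/2-\varepsilon}(\Omega),\ \|\varphi\|_{H^{1/2-\varepsilon}(\Omega)} \le 1\}$. Given such a $\varphi$, write $\delta_\Gamma^{\beta} = \delta_\Gamma^{\beta+1/2-\varepsilon}\,\delta_\Gamma^{-1/2+\varepsilon}$, so that by Cauchy--Schwarz
\begin{equation*}
\bigl| \langle \delta_\Gamma^{\beta} z,\varphi\rangle \bigr|
= \Bigl| \int_\Omega \bigl(\delta_\Gamma^{\beta+1/2-\varepsilon} z\bigr)\bigl(\delta_\Gamma^{-1/2+\varepsilon}\varphi\bigr) \Bigr|
\le \|\delta_\Gamma^{\beta+1/2-\varepsilon} z\|_{L^2(\Omega)}\, \|\delta_\Gamma^{-1/2+\varepsilon}\varphi\|_{L^2(\Omega)}.
\end{equation*}
The last factor is bounded by $C_\varepsilon \|\varphi\|_{H^{1/2-\varepsilon}(\Omega)} \le C_\varepsilon$ by (\ref{eq:lemma:weighted-embedding-1}), and taking the supremum over $\varphi$ yields the claim. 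The hypotheses $-1 < \beta \le 0$ and ``$z$ sufficiently regular'' only serve to guarantee that $\delta_\Gamma^{\beta+1/2-\varepsilon} z \in L^2(\Omega)$, so that the right-hand side is finite.

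For (\ref{eq:lemma:negative-norm-vs-weighted-L2-3}) the structure is identical, now with the regularized weight: for $\varphi$ with $\|\varphi\|_{B^{1/2}_{2,1}(\Omega)} \le 1$ we split $\widetilde\delta_\Gamma^{-1} = \widetilde\delta_\Gamma^{-1/2}\,\widetilde\delta_\Gamma^{-1/2}$ and use Cauchy--Schwarz to get $|\langle \widetilde\delta_\Gamma^{-1} z,\varphi\rangle| \le \|\widetilde\delta_\Gamma^{-1/2} z\|_{L^2(\Omega)}\,\|\widetilde\delta_\Gamma^{-1/2}\varphi\|_{L^2(\Omega)}$; the test-function factor is bounded by $C|\ln h|^{1/2}\|\varphi\|_{B^{1/2}_{2,1}(\Omega)} \le C|\ln h|^{1/2}$ by (\ref{eq:lemma:weighted-embedding-2}), which is exactly where the logarithmic factor comes from. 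Taking the supremum over $\varphi$ finishes the proof. There is no real obstacle in either case; the one point needing (minor) care is to interpret the integrals $\int_\Omega \delta_\Gamma^\beta z\,\varphi$ and $\int_\Omega \widetilde\delta_\Gamma^{-1} z\,\varphi$ as extensions of the respective duality pairings, which one does by first restricting to $\varphi$ in a dense subspace of smooth functions and then passing to the limit. In short, the lemma is just Lemma~\ref{lemma:weighted-embedding} read through the duality pairing.
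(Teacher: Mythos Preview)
Your proof is correct and essentially identical to the paper's: both argue by duality, split the weight as $\delta_\Gamma^{\beta} = \delta_\Gamma^{\beta+1/2-\varepsilon}\cdot\delta_\Gamma^{-1/2+\varepsilon}$ (resp.\ $\widetilde\delta_\Gamma^{-1} = \widetilde\delta_\Gamma^{-1/2}\cdot\widetilde\delta_\Gamma^{-1/2}$), apply Cauchy--Schwarz, and invoke (\ref{eq:lemma:weighted-embedding-1}) (resp.\ (\ref{eq:lemma:weighted-embedding-2})) of Lemma~\ref{lemma:weighted-embedding} to control the weighted $L^2$-norm of the test function.
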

\begin{proof}
Firstly, we show 
(\ref{eq:lemma:negative-norm-vs-weighted-L2-1}): 
\begin{eqnarray*}
\|\delta_{\Gamma}^{\beta} z\|_{(H^{1/2-\varepsilon}(\Omega))^\prime} 
&=& 
\sup_{v \in H^{1/2-\varepsilon}(\Omega)} \frac{\langle \delta_\Gamma^{\beta} z,v\rangle }{\|v\|_{H^{1/2-\varepsilon}(\Omega)}}  \\
&=& \sup_{v \in H^{1/2-\varepsilon}(\Omega)} 
\frac{\langle \delta_\Gamma^{\beta+1/2-\varepsilon} z,\delta_\Gamma^{-1/2+\varepsilon} v\rangle }{\|v\|_{H^{1/2-\varepsilon}(\Omega)}}
\leq C_\varepsilon \|\delta_\Gamma^{\beta +1/2-\varepsilon} z\|_{L^2(\Omega)}, 
\end{eqnarray*}
where, in the last step, we employed \eqref{eq:lemma:weighted-embedding-1}
of Lemma~\ref{lemma:weighted-embedding}. Secondly, (\ref{eq:lemma:negative-norm-vs-weighted-L2-3}) 
follows by the same type of arguments, where the application of \eqref{eq:lemma:weighted-embedding-1} is 
replaced with that of \eqref{eq:lemma:weighted-embedding-2}.
\qed
\end{proof}
\subsection{Regularity}
We recall the following variant of interior regularity of elliptic problems: 
\begin{lemma}
\label{lemma:weighted-shift-theorem}
Let $\Omega$ be a bounded Lipschitz domain and $z \in H^{1+\beta}(\Omega)$, $\beta \in (0,1]$, 
solve 
$$
-\nabla \cdot (\A \nabla z) = f \quad \mbox{ in $\Omega$.}
$$
Then, for a constant $C>0$ depending only on $\|\A\|_{C^{0,1}(\overline{\Omega})}$, $\alpha_0$, $\beta$, 
and $\Omega$
$$
\|\delta_\Gamma^{1-\beta} \nabla^2 z\|_{L^2(\Omega)} 
\leq C \left( \|\delta_\Gamma^{1-\beta} f\|_{L^2(\Omega)} + \|z\|_{H^{1+\beta}(\Omega)}
\right). 
$$
\end{lemma}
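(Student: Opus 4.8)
\emph{Proof plan.} The idea is to reduce everything to classical \emph{interior} $H^2$-regularity via a Whitney-type decomposition of $\Omega$, exploiting that the weight $\delta_\Gamma^{1-\beta}$ is \emph{bounded} (by $(\operatorname{diam}\Omega)^{1-\beta}$, since $1-\beta\ge 0$) and is comparable to $\operatorname{dist}(\cdot,\Gamma)$ on each Whitney ball. First I would fix a covering $\{B_i\}_i$ of $\Omega$, $B_i=B(x_i,r_i)$, with $r_i\sim\delta_\Gamma(x_i)$, $B(x_i,2r_i)\subset\Omega$, $\delta_\Gamma\sim r_i$ on $B(x_i,2r_i)$, and such that the enlarged balls $\{B(x_i,2r_i)\}_i$ have finite overlap with a constant depending only on $d$ (a standard Whitney covering). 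Since $\delta_\Gamma(x)\sim r_i$ on $B_i$, it then suffices to bound $\sum_i r_i^{2(1-\beta)}\|\nabla^2 z\|_{L^2(B_i)}^2$ by the square of the right-hand side.

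The key step on a single ball is to apply the interior estimate not to $z$ but to $z_i:=z-c_i\cdot x$, where $c_i\in\BbbR^d$ is the average of $\nabla z$ over $B(x_i,2r_i)$. This function solves $-\nabla\cdot(\A\nabla z_i)=f+(\nabla\cdot\A)\,c_i$ on $B(x_i,2r_i)$, and the perturbation satisfies $\|(\nabla\cdot\A)\,c_i\|_{L^2(B(x_i,2r_i))}\lesssim\|\nabla\A\|_{L^\infty}|c_i|\,r_i^{d/2}\lesssim\|\nabla\A\|_{L^\infty}\|\nabla z\|_{L^2(B(x_i,2r_i))}$ by Cauchy--Schwarz (the powers of $r_i$ from the volume and from the $L^2$-to-average bound cancel). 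The standard scaled interior $H^2$-estimate for $W^{1,\infty}$-coefficients — whose constant depends only on $d$, $\alpha_0$ and $\|\A\|_{L^\infty}+(\operatorname{diam}\Omega)\|\nabla\A\|_{L^\infty}$, because rescaling $B(x_i,2r_i)$ to the unit ball multiplies $\|\nabla\A\|_{L^\infty}$ only by $r_i\le\operatorname{diam}\Omega$ — then yields $\|\nabla^2 z\|_{L^2(B_i)}=\|\nabla^2 z_i\|_{L^2(B_i)}\lesssim\|f\|_{L^2(B(x_i,2r_i))}+\|\nabla\A\|_{L^\infty}\|\nabla z\|_{L^2(B(x_i,2r_i))}+r_i^{-1}\|\nabla z-c_i\|_{L^2(B(x_i,2r_i))}$. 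Finally the (fractional, if $\beta<1$) Poincaré inequality on a ball gives $\|\nabla z-c_i\|_{L^2(B(x_i,2r_i))}\lesssim r_i^{\beta}|\nabla z|_{H^{\beta}(B(x_i,2r_i))}$, with constant depending only on $d$ and $\beta$, so that after multiplying by $r_i^{1-\beta}$ the dangerous factor is cancelled: $r_i^{1-\beta}\cdot r_i^{-1}\cdot r_i^{\beta}=1$.

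It remains to sum over $i$. For the $f$-contribution, $r_i^{2(1-\beta)}\sim\delta_\Gamma^{2(1-\beta)}$ on $B(x_i,2r_i)$ together with finite overlap gives $\sum_i r_i^{2(1-\beta)}\|f\|_{L^2(B(x_i,2r_i))}^2\lesssim\|\delta_\Gamma^{1-\beta}f\|_{L^2(\Omega)}^2$. For the $\|\nabla z\|$-contribution one uses instead $r_i^{2(1-\beta)}\lesssim(\operatorname{diam}\Omega)^{2(1-\beta)}$, so $\sum_i r_i^{2(1-\beta)}\|\nabla z\|_{L^2(B(x_i,2r_i))}^2\lesssim\|\nabla z\|_{L^2(\Omega)}^2$. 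For the oscillation contribution, each local Gagliardo seminorm is bounded by the double integral over $\Omega\times B(x_i,2r_i)$, and summing in the $B(x_i,2r_i)$-variable via the overlap gives $\sum_i|\nabla z|_{H^{\beta}(B(x_i,2r_i))}^2\lesssim|\nabla z|_{H^{\beta}(\Omega)}^2$. Collecting, $\|\delta_\Gamma^{1-\beta}\nabla^2 z\|_{L^2(\Omega)}^2\lesssim\|\delta_\Gamma^{1-\beta}f\|_{L^2(\Omega)}^2+\|\nabla z\|_{L^2(\Omega)}^2+|\nabla z|_{H^{\beta}(\Omega)}^2\lesssim\|\delta_\Gamma^{1-\beta}f\|_{L^2(\Omega)}^2+\|z\|_{H^{1+\beta}(\Omega)}^2$, which is the claim, and the constant has the stated dependence.

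The one genuinely delicate point is the mean-subtraction in the second step. Applying the interior estimate directly to $z$ produces the term $\sum_i r_i^{-2\beta}\|\nabla z\|_{L^2(B(x_i,2r_i))}^2\sim\|\delta_\Gamma^{-\beta}\nabla z\|_{L^2(\Omega)}^2$, which is \emph{not} controlled by $\|z\|_{H^{1+\beta}(\Omega)}$ once $\beta\ge 1/2$ (already $z=x_1$, with $\nabla^2z=0$ but an infinite naive bound, shows this). Replacing $z$ by $z-c_i\cdot x$ perturbs the right-hand side only by a term \emph{without} the singular factor $r_i^{-1}$ — here $1-\beta\ge 0$ is used crucially — and this is exactly what makes the argument work uniformly for $\beta\in(0,1]$.
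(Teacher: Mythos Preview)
Your proof is correct and follows essentially the same approach as the paper: local interior $H^2$-regularity combined with a covering of $\Omega$ by balls of radius comparable to $\delta_\Gamma$ (the paper phrases this as a Besicovitch covering, you as a Whitney covering; for this application they amount to the same thing). The paper's proof of this lemma is only a brief reference to standard sources, but the linear-subtraction device you single out as the ``one genuinely delicate point'' is exactly the mechanism made explicit in the paper's Appendix~\ref{sec:lemma:5.4} (see the derivation of~(\ref{eq:foobar-200})) for the closely related Lemma~\ref{lemma:5.4}.
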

\begin{proof} 
The upper bound follows from local interior regularity  for elliptic problems 
(see \cite[Lemma~{5.7.2}]{morrey66} or \cite[Thm.~{8.8}]{gilbarg-trudinger77a})
and a  Besicovitch covering argument, see, e.g.,~\cite[Section 1.5.2]{evans98} and~\cite[Chapter 5]{melenk02}.
We refer also to \cite[Lemma~{A.3}]{khoromskij-melenk03} where a closely related result 
is worked out in detail. 
\qed
\end{proof}
\subsubsection{Refined regularity for polygons and polyhedra}
\label{sec:regularity-polygon}
It is worth pointing out that neither the structure of the boundary $\Gamma$ nor
the kind of boundary conditions play a role in Lemma~\ref{lemma:weighted-shift-theorem}. 
One possible interpretation of Lemma~\ref{lemma:weighted-shift-theorem} is that $z$ could 
lose the $H^2$-regularity anywhere near $\Gamma$. For certain boundary conditions
such as homogeneous Dirichlet conditions and piecewise smooth geometries $\Gamma$ the solution
fails to be in $H^2$ only near the points of nonsmoothness of the geometry. With methods similar
to those of Lemma~\ref{lemma:weighted-shift-theorem} one can show the following, stronger result: 
\begin{lemma}
\label{lemma:weighted-shift-theorem-polygon-simple}
Let $\Omega$ be a (curvilinear) polygon in 2D or a (curvilinear) polyhedron in $3D$. 
Denote by $\edges$ the set of all vertices of $\Omega$ in 2D and the set of all edges of $\Omega$ in 3D. 
Let $\delta_\edges$ be the distance from $\edges$. Let $z \in H^{1+\beta}(\Omega)$, $\beta \in (0,1]$, 
solve (\ref{eq:model-problem}).  Then, for  a constant $C$ depending only
on $\alpha_0$, $\|\A\|_{C^{0,1}(\overline{\Omega})}$, $\beta$, and $\Omega$, 
$$
\|\delta_\edges^{1-\beta} \nabla^2 z\|_{L^2(\Omega)} 
\leq C_\beta \left(
\|\delta_\edges^{1-\beta} f\|_{L^2(\Omega)} + \|z\|_{H^{1+\beta}(\Omega)}
\right). 
$$
\end{lemma}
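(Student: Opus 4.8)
The plan is to mimic the proof of Lemma~\ref{lemma:weighted-shift-theorem} --- a Whitney/Besicovitch covering combined with scaled local elliptic regularity --- but to replace pure interior regularity by a mixture of interior regularity (on balls away from $\Gamma$) and boundary regularity near the \emph{smooth} parts of $\Gamma$. The geometric fact that makes this work is that the faces of $\Omega$ are smooth and meet only along $\edges$, so that any ball whose radius is small compared with its distance to $\edges$ meets at most one smooth face of $\Gamma$; on such a face the standard $H^2$ shift theorem for the homogeneous Dirichlet problem is available and $z$ is locally in $H^2$. I would first dispose of the trivial case $\beta=1$ (the claim then reduces to $\|\nabla^2 z\|_{L^2(\Omega)}\le\|z\|_{H^2(\Omega)}$) and assume $\beta\in(0,1)$ from now on.

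First I would fix constants $r_0>0$ and $K\ge 8$, depending only on $\Omega$ (and in particular on its faces and dihedral angles), so that for every $x\in\Omega$ the ball $B(x,\tfrac2K\min\{\operatorname*{dist}(x,\edges),r_0\})$ is disjoint from $\edges$ when $\operatorname*{dist}(x,\edges)<r_0$ and in any case meets at most one, necessarily smooth, face of $\Gamma$. Putting $\rho(x):=\tfrac1K\min\{\operatorname*{dist}(x,\edges),r_0\}$, the Besicovitch covering theorem yields a countable subfamily $\{B_i=B(x_i,\rho_i)\}$, $\rho_i:=\rho(x_i)$, of $\{B(x,\rho(x))\}_{x\in\Omega}$ which still covers $\Omega$ and has overlap bounded by $N=N(d)$; I would work with fixed dilates $\widetilde B_i:=B(x_i,2\rho_i)$, on which $\delta_\edges\sim\rho_i$ if $\operatorname*{dist}(x_i,\edges)<r_0$ and $\delta_\edges\sim1$ (constants depending only on $\Omega$) otherwise.

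On each $\widetilde B_i$ I would rescale to the unit ball; the rescaled coefficients remain Lipschitz with a bounded constant and the rescaled face keeps bounded curvature (rescaling by a factor $\le r_0$ does not make either worse), so the unit-scale $H^2$ a priori estimate applies with a constant uniform in $i$. Combined with the Poincar\'e--Wirtinger inequality on an interior $\widetilde B_i$ (applied to $z$ minus its mean, which solves the same equation, so that no lower-order commutator is produced) or with the Poincar\'e inequality on a boundary $\widetilde B_i$ (where $z=0$ on $\Gamma$), this gives in both cases
\begin{equation*}
\|\nabla^2 z\|_{L^2(B_i\cap\Omega)}^2 \;\lesssim\; \|f\|_{L^2(\widetilde B_i\cap\Omega)}^2 + \rho_i^{-2}\|\nabla z\|_{L^2(\widetilde B_i\cap\Omega)}^2 .
\end{equation*}
Multiplying by $\delta_\edges^{2(1-\beta)}$ --- which is $\sim\rho_i^{2(1-\beta)}$ on the near-$\edges$ balls and $\sim1$ on the finitely many remaining ones --- and summing over $i$ using the bounded overlap yields
\begin{equation*}
\|\delta_\edges^{1-\beta}\nabla^2 z\|_{L^2(\Omega)}^2 \;\lesssim\; \|\delta_\edges^{1-\beta} f\|_{L^2(\Omega)}^2 + \|\delta_\edges^{-\beta}\nabla z\|_{L^2(\Omega)}^2 + \|\nabla z\|_{L^2(\Omega)}^2 .
\end{equation*}

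To finish, I would control $\|\delta_\edges^{-\beta}\nabla z\|_{L^2(\Omega)}$ by a Hardy inequality for the codimension-two set $\edges$: extending $\nabla z\in H^\beta(\Omega)$ to $H^\beta(\BbbR^d)$, localising and straightening $\edges$, and applying the two-dimensional Hardy inequality (valid for exponents $<1$) in the directions transverse to $\edges$ gives $\|\delta_\edges^{-\beta}\nabla z\|_{L^2(\Omega)}\le C_\beta\|\nabla z\|_{H^\beta(\Omega)}\le C_\beta\|z\|_{H^{1+\beta}(\Omega)}$; together with $\|\nabla z\|_{L^2(\Omega)}\le\|z\|_{H^{1+\beta}(\Omega)}$ this closes the estimate. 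The two steps I expect to cost the most care are (i) the fixing of $r_0,K$ and the verification that the local interior and boundary $H^2$ estimates hold with an $i$-independent constant --- the scaling bookkeeping already underlying Lemma~\ref{lemma:weighted-shift-theorem} --- and (ii) the Hardy inequality for $\delta_\edges$, which is the one-dimension-higher analogue of \eqref{eq:lemma:weighted-embedding-1} and the only step genuinely using $\beta<1$.
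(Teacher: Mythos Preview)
Your argument is correct and shares the paper's overall strategy---a Besicovitch-type covering by balls of radius $\sim\delta_\edges$ together with local $H^2$ regularity, interior on balls contained in $\Omega$ and up-to-the-boundary on balls meeting a single smooth face. Two technical choices differ. The paper uses the structured coverings of Theorems~\ref{thm:covering-2D}--\ref{thm:covering-3D}, placing boundary-ball centers \emph{on} the faces so that $\widehat B_i\cap\Omega$ is always an exact half-ball; this makes the uniformity of the local regularity constant automatic, whereas your generic covering with centers in $\Omega$ requires the (correct) compactness remark that the rescaled intersections stay uniformly Lipschitz. More interestingly, the paper absorbs the fractional regularity of $z$ directly into the local estimate (cf.\ the argument for \eqref{eq:foobar-200} in Lemma~\ref{lemma:weighted-inverse-estimate}): subtracting a polynomial yields $\|\nabla^2 z\|_{L^2(B_r)}\lesssim\|f\|_{L^2(\widehat B_r)}+\|\nabla z\|_{L^2(\widehat B_r)}+r^{\beta-1}|z|_{H^{1+\beta}(\widehat B_r)}$, and after weighting by $r^{1-\beta}$ and summing, the Aronstein--Slobodeckij seminorms assemble directly to $|z|_{H^{1+\beta}(\Omega)}$ by finite overlap. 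You instead keep only the integer-order local bound $\|\nabla^2 z\|\lesssim\|f\|+\rho_i^{-1}\|\nabla z\|$ and close globally with the codimension-two Hardy inequality $\|\delta_\edges^{-\beta}\nabla z\|_{L^2}\lesssim\|\nabla z\|_{H^\beta}$, valid precisely for $\beta<1$. Both routes are sound; yours is more elementary at the local level at the price of the extra Hardy step, which---as you rightly flag---is where the restriction $\beta<1$ genuinely enters.
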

\begin{proof}
Follows from local considerations as in Lemma~\ref{lemma:weighted-shift-theorem}. The novel aspect 
is the behavior near the boundary away from the vertices (in 2D) and the edges (in 3D). This is achieved
with an adapted covering theorem of the type described in Theorems~\ref{thm:covering-2D}, \ref{thm:covering-3D}. 
The key feature of these coverings is that they allow us to reduce the considerations to 
balls $B = B_r(x)$ and stretched balls $\widehat B = B_{(1+\varepsilon)r}(x)$ (with fixed $\varepsilon >0$) 
with $r \sim \operatorname*{dist}(x,\edges)$ and the following properties: 
either $x \in \Omega$ with $\widehat B_r(x) \subset \Omega$ 
or $x \in \Gamma$ and $\widehat B \cap \Omega$ is a half-ball. Local elliptic regularity assertions
can then be employed for each ball $B$. 
\qed 
\end{proof}
Lemma~\ref{lemma:weighted-shift-theorem-polygon-simple} assumes that a loss of $H^2$-regularity 
occurs at any point of non-smoothness of $\Gamma$. However, the set of ``singular'' vertices or edges 
can be further reduced. For example, in 2D for 
$\A = \operatorname*{Id}$, it is well-known that only the vertices of $\Omega$ with interior angle
greater than $\pi$ lead to a loss of full $H^2$-regularity. It will therefore be useful to introduce 
the closed set $\singular$ of boundary points associated with a loss of $H^2$-regularity. Before introducing 
this set, we point out that this set is a subset of the vertices and edges: 
\begin{definition}[$H^2$-regular part and singular part of the boundary]
\label{def:H2-regular-edges}
{\it
Let $\Omega$ be a polygon (in 2D) or a polyhedron (in 3D) 
with vertices ${\mathcal A}$ and edges ${\mathcal E}$.  
\begin{enumerate}
\item 
A vertex $A \in {\mathcal A}$ of $\Omega$ is said to 
be $H^2$-regular, if there is a ball $B_\varepsilon(A)$ of radius $\varepsilon > 0$ such that the solution $u$ of (\ref{eq:model-problem})
satisfies $u|_{B_\varepsilon(A)\cap\Omega} \in H^2(\Omega)$ whenever $f \in L^2(\Omega)$ together with the {\sl a priori} estimate
$\|u\|_{H^2(B_\varepsilon(A)\cap\Omega)} \leq C \|f\|_{L^2(\Omega)}$. 
\item 
In 3D, an edge $e \in {\mathcal E}$ of $\Omega$ with endpoints $A_1$, $A_2$ is said to be $H^2$-regular
if the following condition is satisfied: There is $c > 0$ such that for the neighborhood
$S= \cup_{x \in e} B_{c \operatorname*{dist}(x,\{A_1,A_2\})}(x)$ of the edge $e$ we have 
the regularity assertion $u|_{S\cap\Omega} \in H^2$ for the solution $u$ of (\ref{eq:model-problem}) 
whenever $f \in L^2(\Omega)$ together with the {\sl a priori} estimate 
$\|u\|_{H^2(S\cap\Omega)} \leq C \|f\|_{L^2(\Omega)}$. 
\end{enumerate} 
Denote by ${\mathcal A}_r \subset {\mathcal A}$ the set of $H^2$-regular vertices and by 
${\mathcal E}_r \subset {\mathcal E}$ the set of 
$H^2$-regular edges.  
Correspondingly, let ${\mathcal A}_s:= {\mathcal A} \setminus {\mathcal A}_r$
and ${\mathcal E}_s:= {\mathcal E}\setminus {\mathcal E}_r$ be the set of vertices and edges, respectively, 
associated with a loss of $H^2$-regularity. Define the {\em singular set} $\singular$ as 
\begin{equation}
\label{eq:singular-set}
\singular := {{\mathcal A}_s \bigcup {\mathcal E}_s}\subset \Gamma. 
\end{equation}
}
\end{definition}
With the notion of the singular set in hand, we can formulate the following regularity result: 
\begin{lemma}
\label{lemma:weighted-shift-theorem-polygon}
Let $\Omega$ be a polygon or a polyhedron. Let $\singular$ be the singular set as defined 
in Definition~\ref{def:H2-regular-edges}. 
Then the following is true for any solution $z \in H^1_0(\Omega)$ of (\ref{eq:model-problem}): 
If $z \in H^{1+\beta}(\Omega)$ for some $\beta \in (0,1]$, then 
with $\delta_\singular:= \operatorname*{dist}(\cdot,\singular)$, there holds 
for some $C > 0$ depending only on $\alpha_0$, $\|\A\|_{C^{0,1}(\overline{\Omega})}$, $\beta$, and $\Omega$,
$$
\|\delta_\singular^{1-\beta} \nabla^2 z\|_{L^2(\Omega)} 
\leq C_\beta \left(
\|\delta_\singular^{1-\beta} f\|_{L^2(\Omega)} + \|z\|_{H^{1+\beta}(\Omega)}
\right). 
$$
\end{lemma}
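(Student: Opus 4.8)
The plan is to reduce Lemma~\ref{lemma:weighted-shift-theorem-polygon} to the already-established Lemma~\ref{lemma:weighted-shift-theorem-polygon-simple} by a localization argument. Recall that Lemma~\ref{lemma:weighted-shift-theorem-polygon-simple} gives control of $\delta_\edges^{1-\beta}\nabla^2 z$ (with $\delta_\edges$ the distance to \emph{all} vertices/edges), whereas now we want the same estimate but only weighting against the distance to the \emph{singular} subset $\singular\subset\edges$. Since $\delta_\singular \ge \delta_\edges$ pointwise, the new weight is larger, so the estimate is genuinely stronger and the extra work lies precisely in the neighborhoods of the $H^2$-regular vertices and edges, where we need no weight degeneration at all.

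First I would introduce a partition of unity $1 = \sum_i \varphi_i$ on $\overline\Omega$ subordinate to a finite cover: one patch $\omega_0$ that stays a fixed distance away from $\singular$ but may touch the $H^2$-regular parts of $\Gamma$ and whose closure is disjoint from $\singular$, plus patches $\omega_A$ around each singular vertex $A \in {\mathcal A}_s$ and tubular patches $\omega_e$ around each singular edge $e\in{\mathcal E}_s$. On each singular patch $\omega_A$ or $\omega_e$, the weights $\delta_\singular$ and $\delta_\edges$ are comparable (the only nearby element of $\edges$ is the singular one itself), so Lemma~\ref{lemma:weighted-shift-theorem-polygon-simple}, applied after the usual commutator/product-rule bookkeeping for $\varphi_i z$ (which solves a perturbed equation $-\nabla\cdot(\A\nabla(\varphi_i z)) = \varphi_i f + [\text{lower order terms in }z]$), yields $\|\delta_\singular^{1-\beta}\nabla^2(\varphi_i z)\|_{L^2} \lesssim \|\delta_\singular^{1-\beta} f\|_{L^2} + \|z\|_{H^{1+\beta}}$, the lower-order terms being absorbed into $\|z\|_{H^{1+\beta}}$ since $\delta_\singular$ is bounded on $\Omega$.

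On the remaining patch $\omega_0$, $\delta_\singular$ is bounded below by a fixed positive constant, so the weight is harmless and what is needed is simply the plain $H^2$-regularity estimate $\|\varphi_0 z\|_{H^2(\Omega)} \lesssim \|f\|_{L^2(\Omega)} + \|z\|_{H^{1+\beta}(\Omega)}$ on that patch. This is where Definition~\ref{def:H2-regular-edges} is used directly: $\omega_0$ can be chosen to be covered by finitely many of the neighborhoods $B_\varepsilon(A)\cap\Omega$ ($A\in{\mathcal A}_r$), the edge-neighborhoods $S\cap\Omega$ ($e\in{\mathcal E}_r$), and interior balls, on each of which the relevant $H^2$ {\sl a priori} bound holds by hypothesis or by interior regularity (Lemma~\ref{lemma:weighted-shift-theorem} with $\beta=1$ locally, or \cite[Thm.~{8.8}]{gilbarg-trudinger77a}). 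Summing the contributions of all patches via the triangle inequality and $\nabla^2 z = \sum_i \nabla^2(\varphi_i z) - [\text{commutators}]$ gives the claimed bound, with the constant depending only on $\alpha_0$, $\|\A\|_{C^{0,1}(\overline\Omega)}$, $\beta$, $\Omega$ (the partition of unity and the radii $\varepsilon$, $c$ from Definition~\ref{def:H2-regular-edges} depending only on $\Omega$).

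I expect the main obstacle to be the bookkeeping near the $H^2$-regular edges in 3D: one must check that the edge-tube neighborhood $S = \cup_{x\in e} B_{c\,\mathrm{dist}(x,\{A_1,A_2\})}(x)$ from Definition~\ref{def:H2-regular-edges}, which itself shrinks toward the endpoints of $e$, can be glued consistently with the conical patches $\omega_A$ around the (possibly singular) endpoints $A_1,A_2$ of $e$ so that the whole cover is finite and the weights remain comparable on each overlap — in particular that no point of $\Omega$ is forced to be both "$H^2$-regular-away-from-$\singular$" and "near $\singular$" in a way that makes the two local estimates incompatible. Once the geometry of the cover is set up so that on each patch exactly one of the two regimes (bounded weight vs.\ comparable-to-$\delta_\edges$ weight) applies, the rest is routine.
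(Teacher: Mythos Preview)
Your approach---a finite partition of unity plus reduction to Lemma~\ref{lemma:weighted-shift-theorem-polygon-simple} on the singular patches---differs from the paper's, which carries out a single Besicovitch-type covering of $\Omega$ by balls of radius $r\sim\delta_\singular$ with centres restricted to interior points, face interiors, points on $H^2$-regular edges, and $H^2$-regular vertices (via Theorem~\ref{thm:covering-3D}), and applies a scaled local $H^2$ estimate on each ball. The paper never invokes Lemma~\ref{lemma:weighted-shift-theorem-polygon-simple} as a black box.

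There is a genuine gap in your 3D argument, located exactly where you flag the ``bookkeeping''. On a patch $\omega_A$ around a singular vertex $A$ from which an $H^2$-regular edge $e'$ emanates, the weights $\delta_\singular$ and $\delta_\edges$ are \emph{not} comparable: for $x\in e'\cap\omega_A$ one has $\delta_\edges(x)=0$ while $\delta_\singular(x)=|x-A|>0$. Hence Lemma~\ref{lemma:weighted-shift-theorem-polygon-simple} applied to $\varphi_A z$ controls only $\|\delta_\edges^{1-\beta}\nabla^2(\varphi_A z)\|_{L^2}$, which is strictly weaker than the required $\|\delta_\singular^{1-\beta}\nabla^2(\varphi_A z)\|_{L^2}$ in the cone around $e'$. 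Falling back on the $H^2$-regularity of $e'$ from Definition~\ref{def:H2-regular-edges} does not rescue this directly either: that estimate needs the right-hand side in $L^2(\Omega)$, whereas on $\omega_A$ you only know $\delta_\singular^{1-\beta} f\in L^2$, which does not force $\varphi_A f\in L^2$ near $A$.

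The fix is to cover the cone around $e'$ inside $\omega_A$ by balls $B_{cr}(x)$ with $x\in e'$ and $r\sim|x-A|\sim\delta_\singular(x)$, each meeting $\Omega$ in a dihedral angle, and to use a \emph{scaled} local $H^2$ estimate on each (extractable from Definition~\ref{def:H2-regular-edges} by a cutoff at scale $r$). But that is precisely the paper's covering argument, carried out inside $\omega_A$; a finite partition of unity alone does not suffice in 3D. In 2D your plan does go through as written, since singular vertices are isolated from the rest of $\edges$ and the comparability $\delta_\singular\sim\delta_\edges$ genuinely holds on a small enough $\omega_A$.
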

\begin{proof}
The proof is based on local considerations as in Lemma~\ref{lemma:weighted-shift-theorem-polygon-simple}. 
We recall that not all vertices and edges (in 3D) are included in the singular set $\singular$. This is accounted for 
by a further refinement of the covering employed. We restrict ourselves to
 the 3D situation. Using finite coverings provided 
by Theorem~\ref{thm:covering-3D}, one may restrict the attention to balls $B_r = B_r(x)$ 
and stretched balls $\widehat B = B_{(1+\varepsilon)r}(x)$ (with fixed $\varepsilon >0$) 
with $r \sim \operatorname*{dist}(x,\singular)$ where one of the following additional properties
is satisfied: a) $x \in \Omega$ with $\widehat B \subset \Omega$; 
b)  $x \in {\mathcal A}_r$ and $\widehat B \cap \Omega$ is a solid angle; 
c) $x \in \cup {\mathcal E}_r$ and $\widehat B \cap \Omega$ is a dihedral angle; 
d)  $x$ lies in the interior of a face and $\widehat B \cap \Omega$ is a half-ball. 
We emphasize that we do not need to consider balls $B_r(x)$ with $x \in {\mathcal A}_s$ or
$x \in {\mathcal E}_s$ since the covering provided by Theorem~\ref{thm:covering-3D} is such that
for every such $x$ there is a neighborhood ${\mathcal U}_x$ of $x$ that is covered by (countably many)
balls whose radii tend to $0$ as their centers approach $x$.

\qed 
\end{proof}
\subsubsection{Shift theorems for locally supported right-hand sides}
We have the following continuity results for the solution operator $T$ for 
our model problem (\ref{eq:model-problem}): 
\begin{lemma}
\label{lemma:B32-regularity}
Let Assumption~\ref{assumption:shift-theorem} be valid. Then
 $T: (H^1_0(\Omega))^\prime \rightarrow H^1_0(\Omega)$ satisfies
\begin{eqnarray}
\label{eq:lemma:B32-regularity-1}
\|T f\|_{B^{3/2}_{2,\infty}(\Omega)}   
&\leq & C \|f\|_{(B^{1/2}_{2,1}(\Omega))^\prime}, \\
\label{eq:lemma:B32-regularity-1a}
\|T f\|_{H^{3/2+\varepsilon}(\Omega)}  
&\leq & C_\varepsilon  \|\delta_\Gamma^{1/2-\varepsilon} f\|_{L^2(\Omega)}, 
\qquad 0 < \varepsilon \leq s_0-1/2. 
\end{eqnarray}
In particular, if $f \in L^2(\Omega)$ with $\supp f \subset \overline{S_{h}}$,
then
\begin{eqnarray}
\label{eq:lemma:B32-regularity-2}
\|T f\|_{B^{3/2}_{2,\infty}(\Omega)}   
& \leq & C h^{1/2} \|f\|_{L^2(\Omega)},  \\
\label{eq:lemma:B32-regularity-3}
\|T f\|_{H^{3/2+\varepsilon}(\Omega)}  
&\leq & C_\varepsilon h^{1/2-\varepsilon} \|f\|_{L^2(\Omega)}, 
\qquad 0 < \varepsilon \leq s_0-1/2. 
\end{eqnarray}
\end{lemma}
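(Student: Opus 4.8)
The plan is to reduce all four estimates to the two endpoint mapping properties of $T$, namely $T\colon (H^1_0(\Omega))'\to H^1_0(\Omega)$ (Lax--Milgram) and $T\colon (H^{1-s_0}_0(\Omega))'\to H^{1+s_0}(\Omega)$ (Assumption~\ref{assumption:shift-theorem}), and to interpolate. Operator interpolation on the couple $\bigl((H^1_0(\Omega))',(H^{1-s_0}_0(\Omega))'\bigr)$ by the real method with second index $2$ first yields the intermediate shift estimate $\|Tf\|_{H^{1+s}(\Omega)}\lesssim\|f\|_{(H^{1-s}_0(\Omega))'}$ for all $s\in[0,s_0]$. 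For \eqref{eq:lemma:B32-regularity-1a} I would apply this with $s=1/2+\varepsilon$, which is admissible precisely because $\varepsilon\le s_0-1/2$; since $1/2-\varepsilon<1/2$ one has $H^{1/2-\varepsilon}_0(\Omega)=H^{1/2-\varepsilon}(\Omega)$, so $\|Tf\|_{H^{3/2+\varepsilon}(\Omega)}\lesssim\|f\|_{(H^{1/2-\varepsilon}(\Omega))'}$, and \eqref{eq:lemma:negative-norm-vs-weighted-L2-1} of Lemma~\ref{lemma:negative-norm-vs-weighted-L2} with $\beta=0$ turns the right-hand side into $C_\varepsilon\|\delta_\Gamma^{1/2-\varepsilon}f\|_{L^2(\Omega)}$, which is \eqref{eq:lemma:B32-regularity-1a}.

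For \eqref{eq:lemma:B32-regularity-1} I would instead interpolate the same two endpoint bounds by the real $K$-method at parameter $\theta=1/(2s_0)\in[1/2,1)$ with second index $\infty$. On the target side, $(H^1_0(\Omega),H^{1+s_0}(\Omega))_{\theta,\infty}$ has smoothness $1+\theta s_0=3/2$ and embeds continuously into $B^{3/2}_{2,\infty}(\Omega)$. On the data side, the duality theorem for real interpolation gives $\bigl((H^1_0(\Omega))',(H^{1-s_0}_0(\Omega))'\bigr)_{\theta,\infty}=\bigl((H^1_0(\Omega),H^{1-s_0}_0(\Omega))_{\theta,1}\bigr)'$, and since $(H^1_0(\Omega),H^{1-s_0}_0(\Omega))_{\theta,1}\subset(H^1(\Omega),H^{1-s_0}(\Omega))_{\theta,1}=B^{1/2}_{2,1}(\Omega)$ (smoothness $1-\theta s_0=1/2$) with continuous embedding, the dual norm is controlled, $\|f\|_{((H^1_0,H^{1-s_0}_0)_{\theta,1})'}\le\|f\|_{(B^{1/2}_{2,1}(\Omega))'}$. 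Chaining these three facts with the interpolated mapping property of $T$ proves \eqref{eq:lemma:B32-regularity-1}.

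Finally, \eqref{eq:lemma:B32-regularity-2} and \eqref{eq:lemma:B32-regularity-3} follow as corollaries when $\supp f\subset\overline{S_h}$. For \eqref{eq:lemma:B32-regularity-2}, testing against $v\in B^{1/2}_{2,1}(\Omega)$ gives $\langle f,v\rangle=\int_{S_h}fv\le\|f\|_{L^2(\Omega)}\|v\|_{L^2(S_h)}\le Ch^{1/2}\|f\|_{L^2(\Omega)}\|v\|_{B^{1/2}_{2,1}(\Omega)}$ by \eqref{eq:lemma:weighted-embedding-5}, hence $\|f\|_{(B^{1/2}_{2,1}(\Omega))'}\le Ch^{1/2}\|f\|_{L^2(\Omega)}$, and \eqref{eq:lemma:B32-regularity-1} finishes the job. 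For \eqref{eq:lemma:B32-regularity-3}, $\delta_\Gamma\le h$ on $\supp f$ together with $1/2-\varepsilon\ge0$ gives $\|\delta_\Gamma^{1/2-\varepsilon}f\|_{L^2(\Omega)}\le h^{1/2-\varepsilon}\|f\|_{L^2(\Omega)}$, and \eqref{eq:lemma:B32-regularity-1a} concludes.

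I expect the main obstacle not to be any single step but rather the careful bookkeeping of the function spaces on the Lipschitz domain: one must keep track of the distinction between $H^s$, $H^s_0$, and their duals across the critical exponent $s=\pm1/2$, justify that $T$ genuinely interpolates on the relevant couples, and make sure that every continuous embedding between the interpolated Sobolev scale and the Besov spaces $B^{\cdot}_{2,1}$, $B^{\cdot}_{2,\infty}$ points in the direction demanded by the duality argument.
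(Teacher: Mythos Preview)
Your proposal is correct and follows essentially the same route as the paper's proof: interpolate the two endpoint mapping properties of $T$ by the real method (the paper writes $\theta=1-1/(2s_0)$ because it orders the couple the other way round, but this is the same choice), use the duality theorem for interpolation together with the embedding $(H^1_0,H^{1-s_0}_0)_{\theta,1}\subset B^{1/2}_{2,1}(\Omega)$ for \eqref{eq:lemma:B32-regularity-1}, invoke Lemma~\ref{lemma:negative-norm-vs-weighted-L2} with $\beta=0$ for \eqref{eq:lemma:B32-regularity-1a}, and then read off \eqref{eq:lemma:B32-regularity-2}--\eqref{eq:lemma:B32-regularity-3} from the support property via \eqref{eq:lemma:weighted-embedding-5} and the pointwise bound $\delta_\Gamma\le h$ on $S_h$. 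Your explicit mention of the interpolated shift $\|Tf\|_{H^{1+s}}\lesssim\|f\|_{(H^{1-s}_0)'}$ for $s\in[0,s_0]$ is in fact a useful clarification of what the paper compresses into ``follows from Assumption~\ref{assumption:shift-theorem}''.
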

\begin{proof}
We follow the arguments of \cite[Lemma~{5.2}]{melenk-wohlmuth12}.
The starting point for the proof of (\ref{eq:lemma:B32-regularity-1}) is that interpolation and
Assumption~\ref{assumption:shift-theorem} yield
with $\theta \in (0,1)$
$$
T: ((H^{1-s_0}_0(\Omega))^\prime,(H^1_0(\Omega))^\prime)_{\theta,\infty} 
\rightarrow 
(H^{1+s_0}(\Omega),H^1(\Omega))_{\theta,\infty}  = 
B^{1+s_0(1-\theta)}_{2,\infty}(\Omega).
$$
Next, we recognize as in \cite[Lemma~{5.2}]{melenk-wohlmuth12}
(cf.~\cite[Thm.~{1.11.2}]{triebel95} or \cite[Lemma~{41.3}]{tartar07})
\begin{align*}
((H^{1-s_0}_0(\Omega))^\prime,(H^1_0(\Omega))^\prime)_{\theta,\infty}  &= 
((H^{1-s_0}_0(\Omega),H^1_0(\Omega))_{\theta,1})^\prime \\
&\supset 
((H^{1-s_0}(\Omega),H^1(\Omega))_{\theta,1})^\prime = 
(B^{1-s_0(1-\theta)}_{2,1}(\Omega))^\prime. 
\end{align*}
Setting $\theta = 1-1/(2s_0) \in (0,1/2]$, we get
$(B^{1-s_0(1-\theta)}_{2,1}(\Omega))^\prime = (B^{1/2}_{2,1}(\Omega))^\prime$
and $B^{1+s_0(1-\theta)}_{2,\infty}(\Omega) =B^{3/2}_{2,\infty}(\Omega)$.
The assertion (\ref{eq:lemma:B32-regularity-1a}) follows from 
 the Assumption~\ref{assumption:shift-theorem} 
and (\ref{eq:lemma:negative-norm-vs-weighted-L2-1}) with $\beta=0$. 
For the bound (\ref{eq:lemma:B32-regularity-2}), we argue 
as in the proof of Lemma~\ref{lemma:negative-norm-vs-weighted-L2} 
and use   \eqref{eq:lemma:weighted-embedding-5}, see also
\cite[Lemma~{5.2}]{melenk-wohlmuth12}. 
Finally, the proof of (\ref{eq:lemma:B32-regularity-3}) follows from (\ref{eq:lemma:B32-regularity-1a})
and the assumed support properties of $f$. 
\qed
\end{proof}
We will also require mapping properties of the solution operator $T$ in weighted spaces: 
\begin{lemma}
\label{lemma:regularity-weighted-rhs}
Let Assumption~\ref{assumption:shift-theorem} be valid.
Then for $v \in L^2(\Omega)$
\begin{eqnarray}
\label{eq:lemma:regularity-weighted-rhs-1}
\|T (\widetilde \delta_\Gamma^{-1} v)\|_{B^{3/2}_{2,\infty}(\Omega) } 
&\leq & C |\ln h|^{1/2} \|\widetilde\delta_\Gamma^{-1/2} v\|_{L^2(\Omega)}, \\ 
\label{eq:lemma:regularity-weighted-rhs-1a}
\|T (\widetilde \delta_\Gamma^{-1} v)\|_{H^{3/2+\varepsilon}(\Omega) } 
&\leq &C_\varepsilon h^{-\varepsilon} \|\widetilde\delta_\Gamma^{-1/2} v\|_{L^2(\Omega)},
\quad \varepsilon \in (0,s_0-1/2], \\
\label{eq:lemma:regularity-weighted-rhs-2}
\|T(\delta_\Gamma^{-1+2\varepsilon} v)\|_{H^{3/2+\varepsilon}(\Omega) } 
& \leq & C_\varepsilon \|\delta_\Gamma^{-1/2+\varepsilon}
v\|_{L^2(\Omega)},
\quad \varepsilon \in (0,s_0-1/2].  
\end{eqnarray}
\end{lemma}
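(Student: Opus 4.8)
The plan is to derive all three estimates from Assumption~\ref{assumption:shift-theorem} together with the interpolation/duality machinery already set up in Lemmas~\ref{lemma:weighted-embedding}, \ref{lemma:negative-norm-vs-weighted-L2}, and \ref{lemma:B32-regularity}, treating the weighted right-hand side as a bounded functional on the appropriate Sobolev or Besov space. The point is that a weight $\widetilde\delta_\Gamma^{-1}$ or $\delta_\Gamma^{-1+2\varepsilon}$ applied to an $L^2$-function is too rough to lie in $L^2$ but is perfectly well-behaved as an element of a negative-order space; once we land in such a space, the continuity results for $T$ from Lemma~\ref{lemma:B32-regularity} finish the job.

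First I would prove \eqref{eq:lemma:regularity-weighted-rhs-1}. By \eqref{eq:lemma:B32-regularity-1} of Lemma~\ref{lemma:B32-regularity} we have $\|T(\widetilde\delta_\Gamma^{-1}v)\|_{B^{3/2}_{2,\infty}(\Omega)} \lesssim \|\widetilde\delta_\Gamma^{-1}v\|_{(B^{1/2}_{2,1}(\Omega))^\prime}$, and the right-hand side is bounded by $C|\ln h|^{1/2}\|\widetilde\delta_\Gamma^{-1/2}v\|_{L^2(\Omega)}$ by exactly \eqref{eq:lemma:negative-norm-vs-weighted-L2-3} of Lemma~\ref{lemma:negative-norm-vs-weighted-L2}. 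Next, for \eqref{eq:lemma:regularity-weighted-rhs-1a}, I would interpolate Assumption~\ref{assumption:shift-theorem} to obtain $T:(H^{1/2-\varepsilon}(\Omega))^\prime \to H^{3/2+\varepsilon}(\Omega)$ for $\varepsilon\in(0,s_0-1/2]$ (the admissible range is exactly where the shift theorem still gives a gain of $1$ above $H^{1/2}$), so that $\|T(\widetilde\delta_\Gamma^{-1}v)\|_{H^{3/2+\varepsilon}(\Omega)} \lesssim \|\widetilde\delta_\Gamma^{-1}v\|_{(H^{1/2-\varepsilon}(\Omega))^\prime}$; then I estimate this negative norm by a weighted $L^2$-norm using the duality argument in the proof of Lemma~\ref{lemma:negative-norm-vs-weighted-L2} combined with \eqref{eq:lemma:weighted-embedding-4}, which converts the remaining weight into a factor $h^{-\varepsilon}$. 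Finally, \eqref{eq:lemma:regularity-weighted-rhs-2} is proved the same way but with the unregularized weight: $\|T(\delta_\Gamma^{-1+2\varepsilon}v)\|_{H^{3/2+\varepsilon}(\Omega)} \lesssim \|\delta_\Gamma^{-1+2\varepsilon}v\|_{(H^{1/2-\varepsilon}(\Omega))^\prime}$, and one splits off a weight $\delta_\Gamma^{-1/2+\varepsilon}$ so that $\langle \delta_\Gamma^{-1+2\varepsilon}v, w\rangle = \langle \delta_\Gamma^{-1/2+\varepsilon}v, \delta_\Gamma^{-1/2+\varepsilon}w\rangle$ and applies \eqref{eq:lemma:weighted-embedding-1} (which requires $\varepsilon\in(0,1/2]$, automatic here since $s_0\le 1$) to $w$, leaving $\|\delta_\Gamma^{-1/2+\varepsilon}v\|_{L^2(\Omega)}$.

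The main technical point to check carefully is the bookkeeping of the interpolation exponents: one must verify that interpolating the two endpoint mappings from Assumption~\ref{assumption:shift-theorem}, namely $T:(H^0(\Omega))^\prime\to H^1(\Omega)$ (trivially, by coercivity) and $T:(H^{1-s_0}_0(\Omega))^\prime\to H^{1+s_0}(\Omega)$, with parameter $\theta$ chosen so that $1-s_0(1-\theta) = 1/2-\varepsilon$, indeed yields target regularity $1+s_0(1-\theta) = 3/2+\varepsilon$, and that this forces $\varepsilon\le s_0-1/2$; for $\varepsilon$ in this range the identification of the real interpolation space with $H^{1/2-\varepsilon}$ (up to the second Besov index, which is harmless since we only claim an $H$-space estimate and can absorb an $\varepsilon$) is standard as in the proof of Lemma~\ref{lemma:B32-regularity}. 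I also need to make sure that in the duality step the test function $w$ in $H^{1/2-\varepsilon}(\Omega)$ (or $B^{1/2}_{2,1}(\Omega)$) can legitimately be multiplied by the weight and that the resulting pairing is the honest duality pairing — this is where the precise statement of Lemma~\ref{lemma:weighted-embedding} is used, exactly as in Lemma~\ref{lemma:negative-norm-vs-weighted-L2}. None of these steps is deep; the only real obstacle is ensuring the three weight exponents ($-1$ with $|\ln h|^{1/2}$, $-1$ with $h^{-\varepsilon}$, and $-1+2\varepsilon$ with no loss) are matched consistently with the three embedding estimates \eqref{eq:lemma:weighted-embedding-2}, \eqref{eq:lemma:weighted-embedding-4}, and \eqref{eq:lemma:weighted-embedding-1} respectively, so that the claimed right-hand sides come out precisely as stated.
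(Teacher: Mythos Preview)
Your proposal is correct and matches the paper's own proof, which simply states that the results follow by combining Lemmas~\ref{lemma:negative-norm-vs-weighted-L2} and~\ref{lemma:B32-regularity}; you have spelled out precisely that combination. One small slip: for \eqref{eq:lemma:regularity-weighted-rhs-1a} the relevant embedding is \eqref{eq:lemma:weighted-embedding-1} (together with $\widetilde\delta_\Gamma\ge h$ to extract the factor $h^{-\varepsilon}$) rather than \eqref{eq:lemma:weighted-embedding-4}, since the duality is against $H^{1/2-\varepsilon}$ and not $B^{1/2}_{2,1}$---equivalently, just apply \eqref{eq:lemma:B32-regularity-1a} directly with $f=\widetilde\delta_\Gamma^{-1}v$ and use $\delta_\Gamma^{1/2-\varepsilon}\widetilde\delta_\Gamma^{-1}\le h^{-\varepsilon}\widetilde\delta_\Gamma^{-1/2}$.
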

\begin{proof}
The results follow by combining 
Lemmas~\ref{lemma:negative-norm-vs-weighted-L2} and \ref{lemma:B32-regularity}. 
\qed
\end{proof}
For the analysis of the FEM error on the neighborhood $S_h$, we need a 
refined version of interior regularity for elliptic problems. 
The following result is very similar to \cite[Lemma~{5.4}]{melenk-wohlmuth12}
and closely related to Lemma~\ref{lemma:weighted-shift-theorem}:
\begin{lemma}
\label{lemma:5.4}
Let $z$ solve the equation 
$$
-\nabla \cdot (\A \nabla z)  = v \quad \mbox{ in $\Omega$}.
$$
Then there exist $C_A$ (depending only on $\alpha_0$, $\|\A\|_{C^{0,1}(\overline{\Omega})}$ and $\Omega$)  
and $c_1 > 0$ (depending only on $\Omega$) such that 
for $z \in B^{3/2}_{2,\infty}(\Omega)$, we have
\begin{equation}
\label{eq:lemma:5.4-1000}
\|\delta_\Gamma^{1/2} \nabla^2 z\|_{L^2(\Omega\setminus S_{h})} 
\leq C_A \left[ 
\sqrt{|\ln h|} \|z\|_{B^{3/2}_{2,\infty}(\Omega)} +  \|\sqrt{\delta_\Gamma} v\|_{L^2(\Omega\setminus S_{c_1 h})}
\right].
\end{equation}
If the right-hand side $v$ satisfies additionally 
$\supp v \subset \overline{S_h}$ and furthermore $z = Tv$, then 
there are constants $C_A$ (depending only on $\alpha_0$, $\|\A\|_{C^{0,1}(\overline{\Omega})}$, and $\Omega$) 
and $c>1$, $\tilde c>c^\prime > 1$ (depending only on $\Omega$) such that 
for all sufficiently small $h>0$:
\begin{enumerate}[(i)]
\item
\label{item:lemma:5.4-1}
If $z \in B^{3/2}_{2,\infty}(\Omega)$ then
$\|\delta_\Gamma^{1/2} \nabla^2 z\|_{L^2(\Omega\setminus S_{\tilde ch})} 
\leq C_A \sqrt{|\ln h|} \|z\|_{B^{3/2}_{2,\infty}(\Omega)}. 
$
\item
\label{item:lemma:5.4-2}
For every $\alpha > 0$ there holds
$$
\!\!\!\!\! 
\|\delta_\Gamma^{\alpha} \nabla^3 z\|_{L^2(\Omega\setminus S_{\tilde ch})} 
\!\!\leq\!\! C_A \!\!\left[ 
\|\delta_\Gamma^{\alpha-1}\nabla^2 z\|_{L^2(\Omega\setminus S_{c^\prime h})} 
+ \|\A \|_{C^{1,1}(\Omega)} 
\|\delta_\Gamma^{\alpha}\nabla z\|_{L^2(\Omega\setminus S_{c^\prime h})} 
\right]\!.\qquad 
$$
\item
\label{item:lemma:5.4-3}
If $z \in H^{3/2+\varepsilon}(\Omega)$ for some $\varepsilon \in (0,1/2)$, then
for some $C_{A,\varepsilon} >0$ (depending on $\alpha_0$, $\|\A\|_{C^{0,1}(\overline{\Omega})}$, 
$\Omega$,  and $\varepsilon$) there holds 
$$
\|\nabla^2 z\|_{L^2(\Omega\setminus S_{\tilde c h})}
\leq C_{A,\varepsilon} h^{-1/2+\varepsilon} \|z\|_{H^{3/2+\varepsilon}(\Omega)}. 
$$
\item
\label{item:lemma:5.4-4}
If Assumption~\ref{assumption:shift-theorem} is valid, then for some $C > 0$ depending only on 
the solution operator $T$ and $C_{A,\varepsilon}$ of
(\ref{item:lemma:5.4-3}) we have 
$\|\nabla^2 z\|_{L^2(\Omega\setminus S_{\tilde ch})} 
\leq C \|v\|_{L^2(\Omega)}.
$
\end{enumerate}
\end{lemma}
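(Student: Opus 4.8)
The plan is to localize with interior elliptic regularity on a dyadic covering of $\Omega\setminus S_h$ (resp.\ $\Omega\setminus S_{\tilde ch}$) by balls whose radii are $\sim\delta_\Gamma$, following \cite[Lemma~{5.4}]{melenk-wohlmuth12}; the logarithm stems from the $\sim|\ln h|$ dyadic levels, and the gain under extra regularity from a geometric rather than logarithmic summation over those levels. For \eqref{eq:lemma:5.4-1000}, write $\Omega\setminus S_h=\bigcup_{j=0}^{J}A_j$ with $A_j:=\{x:2^jh\le\delta_\Gamma(x)<2^{j+1}h\}$ and $J\sim|\ln h|$; cover each $A_j$ by finitely overlapping balls $B_i=B_{r_i}(x_i)$, $r_i\sim 2^jh\sim\delta_\Gamma|_{B_i}$, whose stretched versions $\widehat B_i=B_{(1+\varepsilon)r_i}(x_i)$ stay in $\Omega\setminus S_{c_1h}$ for a fixed $c_1\in(0,1)$. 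Interior regularity for $-\nabla\cdot(\A\nabla z)=v$ (cf.\ the references in the proof of Lemma~\ref{lemma:weighted-shift-theorem}), applied to $z-\ell_i$ with $\ell_i$ the best affine fit on $\widehat B_i$, gives $\|\nabla^2z\|_{L^2(B_i)}\lesssim r_i^{-2}\inf_\ell\|z-\ell\|_{L^2(\widehat B_i)}+\|v\|_{L^2(\widehat B_i)}+\|\A\|_{C^{0,1}}\|\nabla z\|_{L^2(\widehat B_i)}$ (the last term since $-\nabla\cdot(\A\nabla\ell_i)\neq0$). Multiplying by $\delta_\Gamma^{1/2}\sim r_i^{1/2}$, squaring and summing, the $v$-contributions give $\|\sqrt{\delta_\Gamma}\,v\|^2_{L^2(\Omega\setminus S_{c_1h})}$, the $\|\nabla z\|$-contributions $\lesssim\|\nabla z\|^2_{L^2(\Omega)}\lesssim\|z\|^2_{B^{3/2}_{2,\infty}(\Omega)}$ (bounded overlap of the stretched annuli), and for the affine term one uses, level by level, the Jackson-type bound $\sum_{i\,:\,\widehat B_i\subset\tilde A_j}\inf_\ell\|z-\ell\|^2_{L^2(\widehat B_i)}\lesssim(2^jh)^3\|z\|^2_{B^{3/2}_{2,\infty}(\Omega)}$ (the $L^2$-analogue of property~(ii) of $V_h$), so that $\sum_i r_i^{-3}\inf_\ell\|z-\ell\|^2_{L^2(\widehat B_i)}\lesssim\sum_{j=0}^{J}\|z\|^2_{B^{3/2}_{2,\infty}(\Omega)}\sim|\ln h|\,\|z\|^2_{B^{3/2}_{2,\infty}(\Omega)}$.

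Items (i) and (iii) rerun this covering on $\Omega\setminus S_{\tilde ch}$ with $\tilde c$ so large (say $\tilde c>1/c_1$) that every $\widehat B_i\subset\Omega\setminus\overline{S_h}$; since there $\supp v\subset\overline{S_h}$, the equation is homogeneous on all $\widehat B_i$ and the $v$-term drops, which for $B^{3/2}_{2,\infty}$-data is precisely (i). For (iii) one additionally has $z\in H^{3/2+\varepsilon}(\Omega)$, so $\inf_\ell\|z-\ell\|_{L^2(\widehat B_i)}\lesssim r_i^{3/2+\varepsilon}|z|_{H^{3/2+\varepsilon}(\widehat B_i)}$; summing at fixed scale exploits the additivity of the Gagliardo seminorm over the boundedly overlapping $\widehat B_i$, and the prefactor $(2^jh)^{-1+2\varepsilon}$ (negative exponent) makes the sum over $j$ a convergent geometric series, yielding $h^{-1/2+\varepsilon}\|z\|_{H^{3/2+\varepsilon}(\Omega)}$ with no logarithm. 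Item (ii) is interior regularity with one extra derivative: since $v\equiv0$ off $\overline{S_h}$, each $\partial_kz$ solves $-\nabla\cdot(\A\nabla\partial_kz)=\nabla\cdot((\partial_k\A)\nabla z)$, whose right-hand side has $L^2(\widehat B_i)$-norm $\lesssim\|\A\|_{C^{1,1}}(\|\nabla^2z\|_{L^2(\widehat B_i)}+\|\nabla z\|_{L^2(\widehat B_i)})$; the interior $H^2$-estimate for $\partial_kz$ then bounds $\|\nabla^3z\|_{L^2(B_i)}$ by $r_i^{-1}\|\nabla^2z\|_{L^2(\widehat B_i)}+\|\A\|_{C^{1,1}}\|\nabla z\|_{L^2(\widehat B_i)}$, and multiplying by $\delta_\Gamma^\alpha\sim r_i^\alpha$ and summing (with stretched balls in $\Omega\setminus S_{c'h}$, $c'<\tilde c$) gives (ii). Finally (iv) is (iii) combined with \eqref{eq:lemma:B32-regularity-3} of Lemma~\ref{lemma:B32-regularity}, which for $\supp v\subset\overline{S_h}$ yields $\|z\|_{H^{3/2+\varepsilon}(\Omega)}\lesssim h^{1/2-\varepsilon}\|v\|_{L^2(\Omega)}$ for any fixed $\varepsilon\in(0,s_0-1/2]$ (recall $s_0>1/2$), so the powers $h^{-1/2+\varepsilon}$ and $h^{1/2-\varepsilon}$ cancel.

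The delicate point is the sharp constant in \eqref{eq:lemma:5.4-1000}/(i): one must produce exactly $\sqrt{|\ln h|}$, which is intrinsic to $B^{3/2}_{2,\infty}$ being borderline — for such $z$ the function $\delta_\Gamma^{1/2}\nabla^2z$ need not lie in $L^2(\Omega)$, whereas it does for $z\in H^{3/2+\varepsilon}$, and excising $S_h$ turns a divergent integral into a logarithmically growing sum over the dyadic levels. A quick alternative route to the right power of $|\ln h|$ (at the cost of a cruder $v$-term, repairable by localizing the $v$-contribution as above) is via Lemma~\ref{lemma:weighted-shift-theorem}: on $\Omega\setminus S_h$ one has $\delta_\Gamma^{1/2}\le h^{\beta-1/2}\delta_\Gamma^{1-\beta}$ for $\beta<1/2$, and the choice $\beta=1/2-1/|\ln h|$ keeps $h^{\beta-1/2}$ bounded while $\|z\|_{H^{1+\beta}(\Omega)}\lesssim|\ln h|^{1/2}\|z\|_{B^{3/2}_{2,\infty}(\Omega)}$, the embedding constant of $B^{3/2}_{2,\infty}(\Omega)\hookrightarrow H^{3/2-\sigma}(\Omega)$ being $O(\sigma^{-1/2})$. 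A last, purely geometric, point running through all parts is to fix the constants $c_1,c',\tilde c$ so that the stretched balls in each covering stay inside $\Omega$, and inside $\Omega\setminus\overline{S_h}$ whenever the $v$-term is to vanish — exactly the relabelling recorded in the statement.
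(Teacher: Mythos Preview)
Your argument is correct. For parts (ii)--(iv) it coincides with the paper's: differentiate the equation for (ii), use the geometric series from the $H^{3/2+\varepsilon}$-scaling for (iii), and combine (iii) with Lemma~\ref{lemma:B32-regularity} for (iv).

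For \eqref{eq:lemma:5.4-1000} and (i), however, your route is genuinely different. The paper (Appendix~\ref{sec:lemma:5.4}) does \emph{not} extract the logarithm from a dyadic sum. Instead it locally flattens $\Gamma$, translates $z$ by $2\delta$ in the normal direction to obtain $\widetilde z$ defined on a cylinder reaching $\Gamma$, and then proves a ``local embedding'' $\|\widetilde z\|_{H^{3/2}(\cyl_0'')}\lesssim\sqrt{|\ln\delta|}\,\|z\|_{B^{3/2}_{2,\infty}(\Omega)}+\delta\|\widetilde v\|_{L^2}$ by splitting the $K$-functional integral defining the $H^{3/2}$-norm at a cut-off $\varepsilon$: the tail $\int_\varepsilon^1$ is bounded by $|\ln\varepsilon|\,\|z\|^2_{B^{3/2}_{2,\infty}}$ directly, the head $\int_0^\varepsilon$ by $\varepsilon\|\widetilde z\|^2_{H^2}\lesssim\varepsilon\delta^{-2}\|z\|^2_{H^1}$ via interior $H^2$-regularity of the translated function, and the choice $\varepsilon=\delta^2$ balances the two. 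After this, a single Besicovitch covering with the scaled estimate $\|\nabla^2 z\|_{L^2(B_r)}\lesssim\|f\|_{L^2}+\|\nabla z\|_{L^2}+r^{-1/2}|z|_{H^{3/2}(B_{r(1+\rho)})}$ (their (\ref{eq:foobar-200})) converts the $H^{3/2}$-control into the weighted $\nabla^2$-bound with no further logarithm.

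Your dyadic argument is more elementary---no flattening, no translation, no $K$-functional manipulation---and makes the source of the logarithm transparent: it is precisely the number of dyadic annuli between $S_h$ and $\partial\Omega$. The price is that you must justify the scale-wise Jackson bound $\sum_{i\text{ at level }j}\inf_\ell\|z-\ell\|^2_{L^2(\widehat B_i)}\lesssim r_j^{3}\|z\|^2_{B^{3/2}_{2,\infty}(\Omega)}$, which you do correctly (decompose $z=z_0+z_1$ via the $K$-functional at $t=r_j$ and use $H^1$/$H^2$ Poincar\'e on each ball). The paper's route, by contrast, packages the borderline behaviour into a single global step and then uses the \emph{additive} Aronstein--Slobodeckij $H^{3/2}$-seminorm in the covering, which is slightly cleaner for tracking constants but requires the translation machinery of \cite[Lemma~5.4]{melenk-wohlmuth12}. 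Your alternative via Lemma~\ref{lemma:weighted-shift-theorem} with $\beta=1/2-1/|\ln h|$ is also valid and is in spirit close to the paper's $K$-functional split.
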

\begin{proof}
{\em of (\ref{eq:lemma:5.4-1000}), 
(\ref{item:lemma:5.4-1}), (\ref{item:lemma:5.4-2}):}
\cite[Lemma~{5.4}]{melenk-wohlmuth12} is formulated for $-\Delta$.
However, the essential property of the differential operator $\Delta$
that is required is just interior regularity. Hence, the result also
stands for the present, more general elliptic operator 
(with the appropriate modifications due to the fact that the coefficient
$\A$ is allowed to be non-constant).
In the interest
of generality, we have also tracked in (\ref{eq:lemma:5.4-1000}) the dependence
on the right-hand side $v$, which was not done in \cite[Lemma~{5.4}]{melenk-wohlmuth12}. 
\iftechreport
A full proof can be found in Appendix~\ref{sec:lemma:5.4}.
\else 
A full proof can be found in \cite[Appendix~\appC]{melenk-wohlmuth14a}.
\fi

{\em Proof of (\ref{item:lemma:5.4-3}):} This follows again by local
considerations similar to those employed in the proof of
\cite[Lemma~{5.4}]{melenk-wohlmuth12} and the obvious bound
$\delta_\Gamma \ge h$ on $\Omega\setminus S_{\tilde ch}$.
\iftechreport
A full proof can be found in Appendix~\ref{sec:lemma:5.4}.
\else 
A full proof can be found in \cite[Appendix~\appC]{melenk-wohlmuth14a}.
\fi

{\em Proof of (\ref{item:lemma:5.4-4}):} In view of (\ref{item:lemma:5.4-3}),
we have to estimate $\|z\|_{H^{3/2+\varepsilon}(\Omega)}$. By the
support properties of $v$, the bound (\ref{eq:lemma:B32-regularity-3})
yields $\|z\|_{H^{3/2+\varepsilon}(\Omega)} \leq C h^{1/2-\varepsilon} \|v\|_{L^2(\Omega)}$.
Inserting this in (\ref{item:lemma:5.4-3}) gives the result.
\qed
\end{proof}

\section{FEM $L^2$-error analysis} 
\label{sec:FEM-L2}
Let $u_h$ be the FEM approximation and denote by $e = u-u_h$ the FEM error. 
The standard workhorse is the Galerkin orthogonality 
\begin{equation}
a(e,v) = a(u - u_h,v) = 0 \qquad \forall v \in V_h.
\end{equation}
We start with a weighted $L^2$-error: 
\begin{lemma}
\label{lemma:weighted-L2-estimate}
Let Assumption~\ref{assumption:shift-theorem} be valid. Assume that a function 
$z \in H^1_0(\Omega)$ satisfies the Galerkin orthogonality 
$$
a(z,v) = 0 \qquad \forall v \in V_h. 
$$
Then 
\begin{eqnarray}
\label{eq:lemma:weighted-L2-estimate-1}
\|\delta_\Gamma^{-1/2+\varepsilon} z\|_{L^2(\Omega)} &\leq & C_\varepsilon h^{1/2+\varepsilon} \|z\|_{H^1(\Omega)}, 
\qquad  
\varepsilon \in (0,s_0-1/2], \\
\label{eq:lemma:weighted-L2-estimate-2}
\|\widetilde \delta_\Gamma^{-1/2} z\|_{L^2(\Omega)} &\leq & C h^{1/2} |\ln h|^{1/2} \|z\|_{H^1(\Omega)}. 
\end{eqnarray}
\end{lemma}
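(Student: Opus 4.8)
The plan is a duality (Aubin--Nitsche) argument with a weighted right-hand side. For the first estimate, set $g := \delta_\Gamma^{-1+2\varepsilon} z$ and let $w := T g \in H^1_0(\Omega)$, so that, using the symmetry of $a(\cdot,\cdot)$ (which makes the dual solution operator equal to $T$ itself, cf.\ the remark after Assumption~\ref{assumption:shift-theorem}),
\[
\|\delta_\Gamma^{-1/2+\varepsilon} z\|_{L^2(\Omega)}^2 = \langle g, z\rangle = a(w,z) = a(z,w).
\]
Galerkin orthogonality allows us to subtract any $w_h \in V_h$, and continuity of $a(\cdot,\cdot)$ on $H^1_0(\Omega)$ then gives
\[
\|\delta_\Gamma^{-1/2+\varepsilon} z\|_{L^2(\Omega)}^2 \lesssim \|z\|_{H^1(\Omega)}\, \inf_{w_h \in V_h}\|w - w_h\|_{H^1(\Omega)}.
\]

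First I would note that the left-hand side is finite: by \eqref{eq:lemma:weighted-embedding-1} one has $\|\delta_\Gamma^{-1/2+\varepsilon}z\|_{L^2(\Omega)} \le C_\varepsilon\|z\|_{H^{1/2-\varepsilon}(\Omega)} \le C_\varepsilon\|z\|_{H^1(\Omega)} < \infty$ (here $\varepsilon \le s_0-1/2 \le 1/2$). Next, the weighted elliptic regularity \eqref{eq:lemma:regularity-weighted-rhs-2} yields $\|w\|_{H^{3/2+\varepsilon}(\Omega)} \lesssim \|\delta_\Gamma^{-1/2+\varepsilon} z\|_{L^2(\Omega)}$, and the Scott--Zhang approximation bound \eqref{eq:scott-zhang-approximation} together with an interpolation argument (at regularity $3/2+\varepsilon$) gives $\inf_{w_h \in V_h}\|w - w_h\|_{H^1(\Omega)} \lesssim h^{1/2+\varepsilon}\|w\|_{H^{3/2+\varepsilon}(\Omega)}$. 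Inserting both bounds and dividing by $\|\delta_\Gamma^{-1/2+\varepsilon}z\|_{L^2(\Omega)}$ (legitimate by the finiteness just noted) yields \eqref{eq:lemma:weighted-L2-estimate-1}.

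For \eqref{eq:lemma:weighted-L2-estimate-2} the argument is identical with the weight $\delta_\Gamma^{-1+2\varepsilon}$ replaced by $\widetilde\delta_\Gamma^{-1}$: now $\|\widetilde\delta_\Gamma^{-1/2}z\|_{L^2(\Omega)} \le h^{-1/2}\|z\|_{L^2(\Omega)} < \infty$ makes the final division legitimate; the choice $w := T(\widetilde\delta_\Gamma^{-1}z)$ satisfies $\|w\|_{B^{3/2}_{2,\infty}(\Omega)} \lesssim |\ln h|^{1/2}\|\widetilde\delta_\Gamma^{-1/2}z\|_{L^2(\Omega)}$ by \eqref{eq:lemma:regularity-weighted-rhs-1}; and the approximation property (ii) of $V_h$ gives $\inf_{w_h \in V_h}\|w-w_h\|_{H^1(\Omega)} \lesssim h^{1/2}\|w\|_{B^{3/2}_{2,\infty}(\Omega)}$. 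Combining these with the Galerkin-orthogonality/continuity bound $\|\widetilde\delta_\Gamma^{-1/2} z\|_{L^2(\Omega)}^2 \lesssim \|z\|_{H^1(\Omega)}\inf_{w_h}\|w-w_h\|_{H^1(\Omega)}$ and dividing finishes the proof.

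The only point requiring care --- the ``main obstacle'', though a mild one --- is the a priori finiteness of the weighted $L^2$-norms on the left-hand sides, which is what licenses the final division by them; this is precisely where the Hardy-type inequality \eqref{eq:lemma:weighted-embedding-1} and the crude lower bound $\widetilde\delta_\Gamma \ge h$ enter. One should also keep track that the weighted regularity estimates \eqref{eq:lemma:regularity-weighted-rhs-1}, \eqref{eq:lemma:regularity-weighted-rhs-2} are only available for $\varepsilon \in (0,s_0-1/2]$, which is exactly the hypothesis on $\varepsilon$ in \eqref{eq:lemma:weighted-L2-estimate-1}.
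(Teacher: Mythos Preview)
Your proof is correct and follows essentially the same duality argument as the paper: define the dual solution $w=T(\delta_\Gamma^{-1+2\varepsilon}z)$ (respectively $T(\widetilde\delta_\Gamma^{-1}z)$), use Galerkin orthogonality and continuity to bound the weighted $L^2$-norm squared by $\|z\|_{H^1}\inf_{w_h}\|w-w_h\|_{H^1}$, invoke the weighted regularity estimates \eqref{eq:lemma:regularity-weighted-rhs-2} and \eqref{eq:lemma:regularity-weighted-rhs-1}, and conclude with the approximation properties of $V_h$. Your explicit justification of the finiteness of the weighted norms (needed for the final division) is a small but welcome addition that the paper leaves implicit.
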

\begin{proof}
The proof follows standard lines. Define $\psi = T (\delta_{\Gamma}^{-1+2\varepsilon} z)$, which solves 
$$
\langle v,\delta_\Gamma^{-1+2\varepsilon} z\rangle = a(v,\psi) \qquad \forall v \in H^1_0(\Omega). 
$$ 
Then we have by Galerkin orthogonality for arbitrary $I\psi \in V_h$
\begin{eqnarray*}
\|\delta_\Gamma^{-1/2+\varepsilon} z\|^2_{L^2(\Omega)} &=& a(z,\psi) = a(z,\psi - I \psi)
\leq C \|z\|_{H^1(\Omega)} \|\psi - I \psi\|_{H^1(\Omega)}. 
\end{eqnarray*}
{}From \eqref{eq:lemma:regularity-weighted-rhs-2} in
 Lemma~\ref{lemma:regularity-weighted-rhs}, we have $
\|\psi\|_{H^{3/2+\varepsilon}(\Omega)} \leq C_\varepsilon \|\delta_\Gamma^{-1/2 + \varepsilon} z\|_{L^2(\Omega)}$ 
so that with the approximation properties of $V_h$ we get 
$$
\inf_{I\psi \in V_h} \|\psi - I \psi\|_{H^1(\Omega)} \leq C_\varepsilon h^{1/2+\varepsilon} \|\delta_\Gamma^{-1/2+\varepsilon} z\|_{L^2(\Omega)}. 
$$
This shows (\ref{eq:lemma:weighted-L2-estimate-1}). For (\ref{eq:lemma:weighted-L2-estimate-2}), 
we proceed similarly using the regularity assertion (\ref{eq:lemma:regularity-weighted-rhs-1})
and the approximation property of $V_h$. 
\qed
\end{proof}
\begin{corollary}
\label{cor:weighted-L2-estimate}
Let Assumption~\ref{assumption:shift-theorem} be valid and
the  solution $u$ be in $H^{s}(\Omega)$, $s \ge 1$. Then the FEM error $e = u - u_h$ satisfies 
for $\varepsilon \in (0,s_0-1/2]$
$$
\|\delta_\Gamma^{-1/2+\varepsilon} e\|_{L^2(\Omega)} \leq C_\varepsilon h^{\mu-1/2+\varepsilon}
\|u\|_{H^{\mu}(\Omega)}, 
\qquad \mu:= \min\{s,k+1\}.
$$
\end{corollary}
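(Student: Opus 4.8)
The plan is to combine the weighted $L^2$-estimate for functions satisfying Galerkin orthogonality (Lemma~\ref{lemma:weighted-L2-estimate}) with a standard quasi-optimality bound for the FEM error in the $H^1$-norm. The FEM error $e = u - u_h$ does \emph{not} itself satisfy the Galerkin orthogonality hypothesis of Lemma~\ref{lemma:weighted-L2-estimate} (that lemma requires $a(z,v)=0$ for all $v\in V_h$, which $e$ \emph{does} satisfy), so in fact $e$ is directly admissible. Thus the first step is simply to apply \eqref{eq:lemma:weighted-L2-estimate-1} with $z = e$, giving
$$
\|\delta_\Gamma^{-1/2+\varepsilon} e\|_{L^2(\Omega)} \leq C_\varepsilon h^{1/2+\varepsilon} \|e\|_{H^1(\Omega)}, \qquad \varepsilon \in (0,s_0-1/2].
$$

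**Next I would** bound $\|e\|_{H^1(\Omega)}$ by the best approximation error. By Céa's lemma (coercivity and continuity of $a$), $\|e\|_{H^1(\Omega)} \lesssim \inf_{v \in V_h}\|u - v\|_{H^1(\Omega)}$. Choosing $v = I^k_h u$, the Scott–Zhang approximation property \eqref{eq:scott-zhang-approximation} summed over all elements $K$ (using $j=1$, $l = \mu-1$ with $\mu = \min\{s,k+1\}$, and the fact that the patches $\omega_K$ have finite overlap) yields $\inf_{v\in V_h}\|u-v\|_{H^1(\Omega)} \lesssim h^{\mu-1}\|u\|_{H^\mu(\Omega)}$. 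If $\mu$ is not an integer one first interpolates between the integer-order estimates, which is routine. Combining the two displays gives exactly
$$
\|\delta_\Gamma^{-1/2+\varepsilon} e\|_{L^2(\Omega)} \leq C_\varepsilon h^{1/2+\varepsilon}\, h^{\mu-1}\|u\|_{H^\mu(\Omega)} = C_\varepsilon h^{\mu - 1/2+\varepsilon}\|u\|_{H^\mu(\Omega)},
$$
which is the claimed bound.

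**The only subtle point** is the transition from the elementwise Scott–Zhang estimate to a global $H^1$ best-approximation bound when $\mu \notin \BbbN$: strictly one should invoke the interpolation-space characterization (the $K$-method, as already used for property~(ii) of $V_h$ in the excerpt) rather than apply \eqref{eq:scott-zhang-approximation} at a fractional $l$. This is entirely standard and adds no real difficulty. A second, even more minor, point is that the corollary is stated with $\delta_\Gamma$ whereas the arguments naturally involve either $\delta_\Gamma$ or $\widetilde\delta_\Gamma$; since $\widetilde\delta_\Gamma \ge \delta_\Gamma$ one has $\widetilde\delta_\Gamma^{-1/2+\varepsilon} \le \delta_\Gamma^{-1/2+\varepsilon}$ for the range of $\varepsilon$ here (as $-1/2+\varepsilon<0$), so no loss occurs. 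I do not anticipate any genuine obstacle: the corollary is a direct consequence of Lemma~\ref{lemma:weighted-L2-estimate} applied to the error together with classical quasi-optimality.
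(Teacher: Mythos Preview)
Your proposal is correct and is precisely the approach the paper intends: the corollary is an immediate consequence of applying Lemma~\ref{lemma:weighted-L2-estimate} (specifically \eqref{eq:lemma:weighted-L2-estimate-1}) with $z = e = u - u_h$, together with C\'ea's lemma and the standard approximation properties of $V_h$. The paper does not even spell out a proof, treating it as obvious; your discussion of the fractional-regularity interpolation and the $\delta_\Gamma$ vs.\ $\widetilde\delta_\Gamma$ issue is correct but in fact superfluous here, since \eqref{eq:lemma:weighted-L2-estimate-1} already involves $\delta_\Gamma$ directly.
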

The following Theorem~\ref{thm:optimal-L2} shows that the optimal rate of 
the $L^2$-convergence of the FEM can be achieved also for non-convex geometries
if the solution has some additional regularity: 
\begin{theorem}
\label{thm:optimal-L2}
Let Assumption~\ref{assumption:shift-theorem} be valid. 
Let the exact solution $u$ satisfy the extra regularity 
$u \in H^{k+2-s_0}(\Omega)$.  
Then the FEM error $u - u_h$ satisfies 
\begin{equation}
\label{eq:thm;optimal-L2-1}
\|u - u_h\|_{L^2(\Omega)} \lesssim h^{k+1} 
\|u\|_{H^{k+2-s_0}(\Omega)}. 
\end{equation}
More generally, if $u \in H^s(\Omega)$, $s \in[1,k+2-s_0]$, then 
\begin{equation}
\label{eq:thm;optimal-L2-2}
\|u - u_h\|_{L^2(\Omega)} \lesssim 
h^{s-1+s_0} \|u\|_{H^s(\Omega)},  \quad 1 \leq s \leq k+2-s_0.  
\end{equation}
\end{theorem}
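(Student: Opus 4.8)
The plan is to run an Aubin--Nitsche duality argument, but to compensate the loss of full elliptic regularity of the dual problem by the extra regularity $u\in H^{k+2-s_0}(\Omega)$, by the weighted elliptic regularity of the dual and bidual problems (Lemma~\ref{lemma:weighted-shift-theorem-polygon} and Lemma~\ref{lemma:regularity-weighted-rhs}), and by the weighted {\sl a priori} error bounds of Corollary~\ref{cor:weighted-L2-estimate} and Lemma~\ref{lemma:weighted-L2-estimate}. First I would reduce \eqref{eq:thm;optimal-L2-2} to \eqref{eq:thm;optimal-L2-1}: the map $u\mapsto u-u_h$ is linear, it is bounded $H^1_0(\Omega)\to L^2(\Omega)$ with norm $\lesssim h^{s_0}$ (C\'ea's lemma followed by one plain duality step, using that $T$ maps $L^2(\Omega)$ to $H^{1+s_0}(\Omega)$ by Assumption~\ref{assumption:shift-theorem}), and, granted \eqref{eq:thm;optimal-L2-1}, it is bounded $H^{k+2-s_0}(\Omega)\cap H^1_0(\Omega)\to L^2(\Omega)$ with norm $\lesssim h^{k+1}$. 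Real interpolation of this operator between $H^1_0(\Omega)$ and $H^{k+2-s_0}(\Omega)\cap H^1_0(\Omega)$ then gives, for $\theta=(s-1)/(k+1-s_0)$, the norm bound $h^{s_0(1-\theta)}h^{(k+1)\theta}=h^{s_0+\theta(k+1-s_0)}=h^{s-1+s_0}$ on the interpolation space containing $H^s(\Omega)\cap H^1_0(\Omega)$, which is \eqref{eq:thm;optimal-L2-2}. It thus remains to prove \eqref{eq:thm;optimal-L2-1}.

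For \eqref{eq:thm;optimal-L2-1}, write $e=u-u_h$, let $\phi\in L^2(\Omega)$ be arbitrary, set $w=T\phi$, and use Galerkin orthogonality to get $(e,\phi)=a(e,w)=a(e,w-I_h w)$ for any quasi-interpolant $I_h w\in V_h$ (e.g.\ the Scott--Zhang operator). The naive estimate $|a(e,w-I_h w)|\lesssim\|e\|_{H^1(\Omega)}\|w-I_h w\|_{H^1(\Omega)}\lesssim h^{k}\|u\|_{H^{k+1}(\Omega)}\cdot h^{s_0}\|w\|_{H^{1+s_0}(\Omega)}\lesssim h^{k+s_0}\|u\|_{H^{k+1}(\Omega)}\|\phi\|_{L^2(\Omega)}$ misses the optimal rate by the factor $h^{1-s_0}$, and the whole point is to recover it. To this end I would split $a(e,w-I_h w)$ into a contribution from an $O(h)$-neighbourhood of the singular set $\singular$ (Definition~\ref{def:H2-regular-edges}) and its complement. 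Away from $\singular$ the dual solution enjoys the weighted $H^2$-regularity $\|\delta_\singular^{1-s_0}\nabla^2 w\|_{L^2(\Omega)}\lesssim\|\phi\|_{L^2(\Omega)}$ from Lemma~\ref{lemma:weighted-shift-theorem-polygon} (with $\beta=s_0$ and $\|w\|_{H^{1+s_0}(\Omega)}\lesssim\|\phi\|_{L^2(\Omega)}$), so that the quasi-local approximation property \eqref{eq:scott-zhang-approximation} gives $\|\delta_\singular^{1-s_0}\nabla(w-I_h w)\|_{L^2(\Omega)}\lesssim h\,\|\phi\|_{L^2(\Omega)}$; this is paired against the gradient of $e$ weighted by $\delta_\singular^{-(1-s_0)}$, for which one uses $\nabla^{k+1}u\in H^{1-s_0}(\Omega)$, the weighted embedding of Lemma~\ref{lemma:weighted-embedding}, and the weighted error bounds of Corollary~\ref{cor:weighted-L2-estimate} and Lemma~\ref{lemma:weighted-L2-estimate} — the latter quantity being controlled by a second, {\em bidual}, duality argument with right-hand sides weighted by powers of $\widetilde\delta_\Gamma$, appealing once more to Assumption~\ref{assumption:shift-theorem} and to the mapping properties in Lemma~\ref{lemma:regularity-weighted-rhs}. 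The contribution near $\singular$ is treated directly, using that this region shrinks like $h$, that $e$ is superconvergent there in the weighted $L^2$-sense by Corollary~\ref{cor:weighted-L2-estimate}, and that $\nabla^{k+1}u\in H^{1-s_0}(\Omega)$ carries quantitatively small mass near a set of codimension $\ge 2$; balancing the $O(h)$-width of the neighbourhood then yields the clean rate $h^{k+1}\|u\|_{H^{k+2-s_0}(\Omega)}\|\phi\|_{L^2(\Omega)}$, and taking the supremum over $\phi$ finishes the proof.

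The main obstacle is exactly this near-boundary bookkeeping: a crude Cauchy--Schwarz split of $a(e,w-I_h w)$ near $\singular$ only reproduces the suboptimal order $h^{k+s_0}$, so one is forced to run the additional weighted duality argument for the error itself and to track precisely how the gain $h^{1-s_0}$ coming from the extra regularity of $u$ cancels the loss $h^{-(1-s_0)}$ incurred when passing from the weighted to the unweighted elliptic estimates for the dual problem; organizing this cancellation, rather than any single estimate, is the delicate part. In the lowest-order case $k=1$ one of the interpolation-type steps is borderline and produces the harmless extra factor $|\ln h|$ mentioned in the introduction. \eex
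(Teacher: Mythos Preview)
Your plan has the right ingredients but misassembles them at the crucial step. You write the duality as $(e,\phi)=a(e,w-I_h w)$ with $w=T\phi$ and then attempt a weighted Cauchy--Schwarz, which leaves you with the negatively-weighted factor $\|\delta_{\singular}^{-(1-s_0)}\nabla e\|_{L^2(\Omega)}$. None of the tools you cite controls this: Corollary~\ref{cor:weighted-L2-estimate} and Lemma~\ref{lemma:weighted-L2-estimate} give weighted $L^2$-bounds on $e$, not on $\nabla e$, and a direct attack on the weighted gradient (splitting near/far from the boundary, C\'ea, inverse estimates) loses precisely the factor $h^{1-s_0}$ you are trying to recover. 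The fix---and this is what the paper does---is to replace the interpolant $I_h w$ by the Galerkin projection $w_h\in V_h$ of the dual solution. This buys a \emph{second} Galerkin orthogonality, $a(v,w-w_h)=0$ for all $v\in V_h$, which allows the replacement
\[
a(e,w-w_h)=a(u-I^k_h u,\,w-w_h).
\]
Now the singular weight $\widetilde\delta_\Gamma^{-1/2+\varepsilon}$ (with $\varepsilon=s_0-1/2$) sits on the pure approximation term $\nabla(u-I^k_h u)$, where the extra regularity $\nabla^{k+1}u\in H^{1/2-\varepsilon}(\Omega)$ and Lemma~\ref{lemma:weighted-embedding} give $h^k\|u\|_{H^{k+2-s_0}(\Omega)}$. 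The regularizing weight $\widetilde\delta_\Gamma^{1/2-\varepsilon}$ sits on $\nabla(w-w_h)$, and \emph{this} is where the local FEM analysis (the Wahlbin-type estimate \eqref{eq:wahlbin-1}) together with the bidual argument of Lemma~\ref{lemma:weighted-L2-estimate} enter, yielding $\|\widetilde\delta_\Gamma^{1/2-\varepsilon}\nabla(w-w_h)\|_{L^2(\Omega)}\lesssim h\|e\|_{L^2(\Omega)}$. The ``bidual'' step you invoke is indeed present, but it controls the weighted $L^2$-norm of the \emph{dual} error $w-w_h$ (feeding into the positively-weighted $H^1$-estimate via local analysis), not the weighted $H^1$-norm of the primal error $e$.

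Two smaller remarks. First, the proof of Theorem~\ref{thm:optimal-L2} uses the distance $\widetilde\delta_\Gamma$ to the full boundary, not $\delta_{\singular}$; the refinement to the singular set $\singular$ is the separate content of Theorem~\ref{thm:abstract-optimal-L2}, and mixing the two obscures the argument. Second, there is no logarithmic loss in Theorem~\ref{thm:optimal-L2}: since $\varepsilon=s_0-1/2>0$ one uses \eqref{eq:lemma:weighted-L2-estimate-1} rather than \eqref{eq:lemma:weighted-L2-estimate-2}; the $|\ln h|$ factors appear only in the strip and flux estimates of Section~\ref{sec:error-on-strip}. Your reduction of \eqref{eq:thm;optimal-L2-2} to \eqref{eq:thm;optimal-L2-1} by interpolating the error operator is a legitimate alternative to the paper's route, which instead interpolates at the level of the Scott--Zhang approximation estimate \eqref{eq:thm:optimal-L2-110}.
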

\begin{proof}
{\em of (\ref{eq:thm;optimal-L2-1}):}
We proceed along a standard duality argument. To begin with, we note that 
the case $s_0 = 1$ is classical so that we may assume $s_0 < 1$ for the remainder
of the proof. 
Set $\varepsilon :=s_0-1/2 \in (0,1/2)$ by our assumption $1/2 < s_0 < 1$. 
Let $w = T e$ and let $w_h \in V_h$ be its Galerkin approximation. 
Quasi-optimality and the use of \eqref{eq:lemma:negative-norm-vs-weighted-L2-1}
give us the following energy error estimate: 
\begin{eqnarray}
\nonumber 
\|w - w_h\|_{H^1(\Omega)} &\lesssim & \inf_{v \in V_h} \|w - v\|_{H^1(\Omega)} 
\lesssim h^{1/2+\varepsilon} \|w\|_{H^{3/2+\varepsilon}(\Omega)}  \\
\label{eq:thm:optimal-L2-100}
&\lesssim & h^{1/2+\varepsilon} \|e\|_{(H^{1/2-\varepsilon}(\Omega))^\prime}  
\lesssim  h^{1/2+\varepsilon} \|e\|_{L^2(\Omega)}. 
\end{eqnarray}
The Galerkin orthogonalities satisfied by $e$ and $w - w_h$ and a
weighted Cauchy-Schwarz inequality yield for
the Scott-Zhang interpolant $I_h^k u$
\begin{eqnarray}
\label{eq:key-duality}
\|e\|^2_{L^2(\Omega)} &=& a(e, w) = a(e,w - w_h) = a(u - I_h^ku,w - w_h) 
\\
\label{eq:key-CS}
&\leq& C \|\widetilde \delta_\Gamma^{-1/2+\varepsilon} \nabla (u - I u)\|_{L^2(\Omega)}
\|\widetilde\delta_\Gamma^{1/2-\varepsilon} \nabla (w - w_h)\|_{L^2(\Omega)}. 
\end{eqnarray}
We get by a covering argument and \eqref{eq:lemma:negative-norm-vs-weighted-L2-1} of Lemma~\ref{lemma:weighted-embedding}
\begin{eqnarray}
\nonumber 
\|\widetilde\delta_\Gamma^{-1/2+\varepsilon} \nabla (u - I^k_h u)\|_{L^2(\Omega)} 
&\lesssim&  h^{k} \|\widetilde\delta_\Gamma^{-1/2+\varepsilon} \nabla^{k+1} u\|_{L^2(\Omega)}  \\
\label{eq:thm:optimal-L2-110}
&\lesssim&  h^{k} 
\|\nabla^{k+1} u\|_{H^{1/2-\varepsilon}(\Omega)}. 
\end{eqnarray}
It should also be noted at this point that in (\ref{eq:thm:optimal-L2-110}), 
the weight $\widetilde\delta_\Gamma^{-1/2+\varepsilon}$ can be considered as constant in each element $K$.
For the contribution $\|\widetilde\delta_\Gamma^{1/2-\varepsilon} \nabla (w - w_h)\|_{L^2(\Omega)}$ 
in (\ref{eq:key-CS}),
we have to analyze the Galerkin error $w - w_h$ in more detail, which will be done with 
the techniques from the local error analysis of the FEM.
We split
$\Omega$ into $S_{ch} \cup (\Omega \setminus S_{ch})$ where $c > 0$ will be selected
sufficiently large below. 
For fixed $c>0$, the $L^2$-norm on $S_{ch}$ can easily be bounded
with (\ref{eq:thm:optimal-L2-100}) by 
\begin{equation}
\label{eq:thm:optimal-L2-1000}
\|\widetilde \delta_\Gamma^{1/2-\varepsilon} \nabla (w - w_h)\|_{L^2(S_{ch})}
\lesssim h^{1/2-\varepsilon} \|\nabla (w - w_h)\|_{L^2(\Omega)} 
\lesssim h \|e\|_{L^2(\Omega)}. 
\end{equation}
The term $\|\widetilde\delta_\Gamma^{1/2-\varepsilon} \nabla (w - w_h)\|_{L^2(\Omega\setminus S_{ch})}$
requires more care. Obviously,
$\widetilde\delta_\Gamma^{1/2-\varepsilon}\lesssim 
\delta_\Gamma^{1/2-\varepsilon}$ on $\Omega \setminus S_{ch}$. 
We have to employ the tools from the local error analysis 
 in FEM. The Galerkin orthogonality satisfied by $w - w_h$ 
allows us to use  techniques as described in \cite[Sec.~{5.3}]{wahlbin95}, which  yields 
the following estimate for arbitrary balls $B_r \subset B_{r^\prime}$ with the same 
center (implicitly, $r^\prime > r + O(h)$ is assumed in (\ref{eq:wahlbin-1})) 
\begin{equation}
\label{eq:wahlbin-1}
\|\nabla( w - w_h)\|_{L^2(B_r)} \lesssim \|\nabla (w - I^k_h w)\|_{L^2(B_{r^\prime})} + 
\frac{1}{r^\prime - r}\|w - w_h\|_{L^2(B_{r^\prime})}.
\end{equation}  
By a covering argument (which requires $r^\prime - r \sim c \delta_\Gamma(x)$, where $x$ is the center
of the ball $B_r$, and $c$ is sufficiently small) 
these local estimates can be combined into a global estimate of the following form, 
where for sufficiently small $c_1 >0$ ($c_1$ depends 
only on $\Omega$ and the shape regularity of the triangulation but is independent of $h$):
\begin{align}
\label{eq:wahlbin-10}
& \|\delta_\Gamma^{1/2-\varepsilon} \nabla (w - w_h)\|_{L^2(\Omega\setminus S_{c h})}
\lesssim \\
\nonumber 
& \qquad \|\delta_\Gamma^{1/2-\varepsilon} \nabla (w - I^k_h w)\|_{L^2(\Omega\setminus S_{c c_1 h})} 
+ \|\delta_\Gamma^{-1/2-\varepsilon} (w - w_h)\|_{L^2(\Omega\setminus S_{c c_1 h})}.
\end{align}
This estimate implicitly assumed $c_1 c h >  2h$, i.e., at least two layers of elements separate 
$\Gamma$ from $\Omega\setminus S_{c_1 c h}$. We now fix $c > 2/c_1$. 
The first term in (\ref{eq:wahlbin-10}) can easily be bounded by 
standard approximation properties of $I^k_h$, Lemma~\ref{lemma:weighted-shift-theorem}, 
and Assumption \ref{assumption:shift-theorem}: 
\begin{align*}
& \| \delta_\Gamma^{1/2 - \varepsilon} \nabla (w - I^k_h w)\|_{L^2(\Omega\setminus S_{c c_1 h})} 
\lesssim  h \|\delta_\Gamma^{1/2 - \varepsilon} \nabla^2 w\|_{L^2(\Omega)} \\
& \qquad {\lesssim} h \left[ \|\delta_\Gamma^{1/2 - \varepsilon} e\|_{L^2(\Omega)}  + 
\|w\|_{H^{3/2+\varepsilon}(\Omega)} \right] 
\lesssim h \|e\|_{L^2(\Omega)}.
\end{align*}
In the last step, we have to deal with the term
$\| \delta_\Gamma^{-1/2-\varepsilon} (w - w_h)\|_{L^2(\Omega \setminus S_{c c_1 h})}$ 
of (\ref{eq:wahlbin-10}). 
Lemma~\ref{lemma:weighted-L2-estimate} and (\ref{eq:thm:optimal-L2-100}) 
imply
\begin{eqnarray}
\nonumber \hspace*{-1cm}
\| \delta_\Gamma^{-1/2-\varepsilon} (w - w_h)\|_{L^2(\Omega \setminus S_{c c_1 h})}
&\lesssim&  h^{-2\varepsilon} \|\delta_\Gamma^{-1/2+\varepsilon}( w - w_h)\|_{L^2(\Omega)} 
\\
\label{eq:thm:optimal-L2-700}
&\lesssim & h^{-2\varepsilon}h^{1/2+\varepsilon} \|w - w_h\|_{H^1(\Omega)}   
\lesssim  h \|e\|_{L^2(\Omega)}. 
\end{eqnarray}
Here we have used the quasi-optimality of the Galerkin approximation with respect to the $H^1$-norm.

{\em Proof of (\ref{eq:thm;optimal-L2-2}):} The above arguments show that the regularity of 
$u$ enters in the bound (\ref{eq:thm:optimal-L2-110}). For $u \in H^1(\Omega)$, the stability 
properties of the Scott-Zhang operator $I^k_h$ show 
\begin{equation}
\label{eq:thm:optimal-L2-1001}
\|\widetilde \delta_\Gamma^{-1/2+\varepsilon} \nabla (u - I^k_h u)\|_{L^2(\Omega)} 
\lesssim h^{-1/2+\varepsilon} \|u\|_{H^1(\Omega)}. 
\end{equation}
Hence, a standard interpolation argument that combines (\ref{eq:thm:optimal-L2-110}) and 
(\ref{eq:thm:optimal-L2-1001}) yields 
$
\|\widetilde \delta_\Gamma^{-1/2+\varepsilon} \nabla (u - I^k_h u)\|_{L^2(\Omega)} 
\lesssim h^{-1/2+\varepsilon +s-1}\|u\|_{H^s(\Omega)}$ for 
$s \in [1,k+2-s_0]$. Combining this estimate with the above 
control of $\|\widetilde\delta_\Gamma^{1/2-\varepsilon} \nabla (w - w_h)\|_{L^2(\Omega)}$ yields
the desired bound in the range $s \in [1,k+2-s_0]$. 
\qed
\end{proof}
\section{FEM $L^2$-error analysis on piecewise smooth geometries}
\label{sec:L2-error-polygon}
The convergence analysis of Theorem~\ref{thm:optimal-L2} did not make 
{\em explicit} use of the fact that a piecewise smooth geometry is 
considered; the essential ingredient was 
Assumption~\ref{assumption:shift-theorem} (which, of course, is related
to the geometry of the problem). This is reflected in our use of 
 $\widetilde\delta_\Gamma$, which measures the distance
from the boundary $\Gamma$. One interpretation of this procedure is that 
one assumes of the dual solution $w$ (and, in fact, also of the solution 
of the ``bidual'' problem employed to estimate 
$\|\widetilde\delta_\Gamma^{-1/2+\varepsilon} (w - w_h)\|_{L^2(\Omega)}$
in Theorem~\ref{thm:optimal-L2}) that it may lose $H^2$-regularity
{\em anywhere} near $\Gamma$. However, for piecewise smooth geometries in conjunction
with certain homogeneous boundary conditions (here: homogeneous Dirichlet
conditions), this 
loss of $H^2$-regularity is restricted to a much smaller set, namely, 
a subset of vertices in 2D and a subset of the skeleton (i.e., the union of vertices
and edges) in 3D. This set is given by $\singular$ in Definition~\ref{def:H2-regular-edges}. 
For this set $\singular$, we introduce the distance function 
\begin{equation}
\label{eq:delta_M} 
\delta_\singular:= \operatorname*{dist}(\cdot,\singular), 
\qquad \widetilde \delta_\singular:= h + \delta_\singular. 
\end{equation}
\begin{theorem} 
\label{thm:abstract-optimal-L2} 
Let $\Omega$ be a polygon (in 2D) or a polyhedron (in 3D).
Let $\singular$ be the set of vertices (in 2D) or edges and vertices (in 3D) associated
with a loss of $H^2$-regularity for (\ref{eq:model-problem}) as given 
in Definition~\ref{def:H2-regular-edges}. 
Let Assumption~\ref{assumption:shift-theorem} be valid.  
Let $Iu \in V_h$ be arbitrary. Then we have 
$$
\|u - u_h\|_{L^2(\Omega)} \leq C h 
\|\widetilde\delta_\singular^{s_0-1} \nabla (u - Iu)\|_{L^2(\Omega)} .
$$
\end{theorem}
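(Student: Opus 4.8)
The plan is to rerun the duality argument from the proof of Theorem~\ref{thm:optimal-L2}, replacing $\widetilde\delta_\Gamma$ everywhere by $\widetilde\delta_\singular$ and the regularity Lemma~\ref{lemma:weighted-shift-theorem} by its refined version Lemma~\ref{lemma:weighted-shift-theorem-polygon}. Write $\varepsilon := s_0 - 1/2 \in (0,1/2]$, so that $s_0 - 1 = -1/2 + \varepsilon$ and $1 - s_0 = 1/2 - \varepsilon$; put $e := u - u_h$, $w := Te$, and let $w_h \in V_h$ be the Galerkin approximation of $w$. Since $e \in L^2(\Omega)\subset (H^{1/2-\varepsilon}(\Omega))'$, Lemma~\ref{lemma:B32-regularity} gives $w \in B^{3/2}_{2,\infty}(\Omega)\cap H^{3/2+\varepsilon}(\Omega)$ with $\|w\|_{H^{3/2+\varepsilon}(\Omega)}\lesssim\|e\|_{L^2(\Omega)}$, and hence the energy estimate $\|w-w_h\|_{H^1(\Omega)}\lesssim h^{1/2+\varepsilon}\|e\|_{L^2(\Omega)}$, exactly as in~\eqref{eq:thm:optimal-L2-100}. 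Using the Galerkin orthogonality of $e$ and of $w-w_h$ together with a weighted Cauchy--Schwarz inequality, for the arbitrary $Iu \in V_h$ from the statement one gets
\begin{align*}
\|e\|_{L^2(\Omega)}^2 &= a(e,w-w_h) = a(u-Iu,w-w_h) \\
&\le C\,\|\widetilde\delta_\singular^{s_0-1}\nabla(u-Iu)\|_{L^2(\Omega)}\,\|\widetilde\delta_\singular^{1-s_0}\nabla(w-w_h)\|_{L^2(\Omega)},
\end{align*}
so the theorem reduces to the bound $\|\widetilde\delta_\singular^{1-s_0}\nabla(w-w_h)\|_{L^2(\Omega)}\lesssim h\,\|e\|_{L^2(\Omega)}$.

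To prove this, split $\Omega$ into $S' := \{x\in\Omega:\delta_\singular(x)<ch\}$ and $\Omega\setminus S'$, with $c>0$ fixed large below. On $S'$, where $\widetilde\delta_\singular\lesssim h$, the weighted term is bounded by $h^{1-s_0}\|\nabla(w-w_h)\|_{L^2(\Omega)}\lesssim h^{1-s_0}h^{1/2+\varepsilon}\|e\|_{L^2(\Omega)} = h\,\|e\|_{L^2(\Omega)}$ since $1-s_0 = 1/2-\varepsilon$. On $\Omega\setminus S'$, where $\widetilde\delta_\singular\lesssim\delta_\singular$, I would invoke the local FEM error analysis of~\cite[Sec.~5.3]{wahlbin95}, but localized around $\singular$ instead of around $\Gamma$: using the refined coverings of Theorems~\ref{thm:covering-2D} and~\ref{thm:covering-3D} the analysis reduces to balls $B_r(x)$ and stretched balls $\widehat B$ with $r\sim\delta_\singular(x)$ for which $\widehat B\cap\Omega$ is an interior ball, a half-ball, a regular solid angle, or a regular dihedral angle; in each such configuration $w$ is locally $H^2$-regular and the local estimate~\eqref{eq:wahlbin-1} is available. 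Summing these local bounds with $r'-r\sim c\,\delta_\singular(x)$ as in~\eqref{eq:wahlbin-10} and fixing $c$ so large that $cc_1>2$, i.e.\ at least two element layers separate $\singular$ from $\Omega\setminus S''$ where $S'':=\{x:\delta_\singular(x)<cc_1 h\}$, one obtains
\begin{align*}
\|\delta_\singular^{1-s_0}\nabla(w-w_h)\|_{L^2(\Omega\setminus S')}
&\lesssim \|\delta_\singular^{1-s_0}\nabla(w-I^k_h w)\|_{L^2(\Omega\setminus S'')} \\
&\quad + \|\delta_\singular^{-s_0}(w-w_h)\|_{L^2(\Omega\setminus S'')}.
\end{align*}

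The first term on the right is controlled by~\eqref{eq:scott-zhang-approximation} (the weight being comparable over patches in $\Omega\setminus S''$) and then Lemma~\ref{lemma:weighted-shift-theorem-polygon} with $\beta=s_0$ together with Assumption~\ref{assumption:shift-theorem}:
\[
h\,\|\delta_\singular^{1-s_0}\nabla^2 w\|_{L^2(\Omega)} \lesssim h\bigl(\|\delta_\singular^{1-s_0}e\|_{L^2(\Omega)} + \|w\|_{H^{1+s_0}(\Omega)}\bigr) \lesssim h\,\|e\|_{L^2(\Omega)},
\]
using that $\delta_\singular^{1-s_0}$ is bounded on $\Omega$ and $\|Te\|_{H^{1+s_0}(\Omega)}\lesssim\|e\|_{L^2(\Omega)}$. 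For the second term, on $\Omega\setminus S''$ we have $\delta_\singular\gtrsim h$ and, since $\singular\subset\Gamma$, also $\delta_\singular\ge\delta_\Gamma$, hence $\delta_\singular^{-s_0} = \delta_\singular^{-2\varepsilon}\delta_\singular^{-1/2+\varepsilon}\lesssim h^{-2\varepsilon}\delta_\Gamma^{-1/2+\varepsilon}$; applying~\eqref{eq:lemma:weighted-L2-estimate-1} of Lemma~\ref{lemma:weighted-L2-estimate} to $z = w-w_h$ and then the energy estimate gives
\begin{align*}
\|\delta_\singular^{-s_0}(w-w_h)\|_{L^2(\Omega\setminus S'')} &\lesssim h^{-2\varepsilon}\|\delta_\Gamma^{-1/2+\varepsilon}(w-w_h)\|_{L^2(\Omega)} \\
&\lesssim h^{-2\varepsilon}h^{1/2+\varepsilon}\|w-w_h\|_{H^1(\Omega)} \lesssim h\,\|e\|_{L^2(\Omega)},
\end{align*}
because $-2\varepsilon+(1/2+\varepsilon)+(1/2+\varepsilon)=1$. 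Combining the two halves of the splitting yields the required bound, and hence the theorem.

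The step I expect to be the main obstacle is the localization of the FEM error analysis around $\singular$: one must check that the local energy estimate~\eqref{eq:wahlbin-1} remains valid not only on interior balls but also up to the $H^2$-regular part of $\Gamma$ --- on half-balls, regular solid angles, and regular dihedral angles --- which relies on the superapproximation property of $V_h$ and on local a~priori estimates in precisely those configurations, and one must verify that the bookkeeping of the refined coverings of Theorems~\ref{thm:covering-2D},~\ref{thm:covering-3D} (whose covering balls with centers approaching $\singular$ have radii tending to zero, as in the proof of Lemma~\ref{lemma:weighted-shift-theorem-polygon}) is compatible with the weighted summation. The remaining steps are a direct transcription of the argument for Theorem~\ref{thm:optimal-L2}.
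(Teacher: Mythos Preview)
Your proposal is correct and follows essentially the same approach as the paper's proof: the same weighted duality argument with weight $\widetilde\delta_\singular^{\pm(1/2-\varepsilon)}$, the same splitting into a tubular neighborhood of $\singular$ and its complement, the same use of Lemma~\ref{lemma:weighted-shift-theorem-polygon} for the approximation term, and the same reduction of the weighted $L^2$ term to $\widetilde\delta_\Gamma$ via $\singular\subset\Gamma$ and~\eqref{eq:thm:optimal-L2-700}. The obstacle you flag is precisely the point the paper elaborates on, recording the boundary version~\eqref{eq:wahlbin-1-up-to-bdy} of the local estimate (valid on half-balls, solid angles, and dihedral angles) and appealing to the coverings of Theorems~\ref{thm:covering-2D}--\ref{thm:covering-3D}.
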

\begin{proof}
We may assume $s_0 < 1$ since the case $s_0 = 1$ corresponds to the standard 
duality argument with full elliptic regularity and
set $\varepsilon:= s_0 - 1/2 \in (0,1/2)$. 
The key observation is that, starting from the duality argument 
(\ref{eq:key-duality}), one can replace the weight function 
$\widetilde\delta_\Gamma^{-1/2+\varepsilon}$ in (\ref{eq:key-CS}) 
with {\em any} positive weight function. Taking as the weight
function $ \widetilde \delta_\singular^{-1/2+\varepsilon}$, we get 
\begin{equation} 
\label{eq:thm:abstract-optimal-L2-10}
\|e\|^2_{L^2(\Omega)} \lesssim \| \widetilde \delta_\singular^{-1/2+\varepsilon} \nabla (u - Iu)\|_{L^2(\Omega)} 
\| \widetilde \delta_\singular^{1/2-\varepsilon} \nabla (w-w_h)\|_{L^2(\Omega)}.
\end{equation} 
The estimate of $w-w_h$ in the weighted norm is done similarly as 
in the proof of Theorem~\ref{thm:optimal-L2}, taking into account the 
improved knowledge of the regularity of $w$. With 
$S_{\singular,ch}:=\{x \in \Omega\,|\, \delta_\singular(x) < c h\}$ we have 
the trivial bound
\begin{align}
\label{eq:thm:abstract-optimal-L2-12}
&
\|\widetilde\delta_{\singular}^{1/2-\varepsilon} \nabla (w - w_h)\|_{L^2(\Omega)}  \\
\nonumber 
&\quad  \lesssim 
\|\widetilde\delta_{\singular}^{1/2-\varepsilon} \nabla (w - w_h)\|_{L^2(S_{\singular,ch})} + 
\|\widetilde\delta_{\singular}^{1/2-\varepsilon} \nabla (w - w_h)\|_{L^2(\Omega\setminus S_{\singular,ch})},  
\end{align}
where the parameter $c$ will be selected sufficiently large below. 
The first term in (\ref{eq:thm:abstract-optimal-L2-12}) is estimated in exactly the 
same way as in (\ref{eq:thm:optimal-L2-1000}) and produces 
$$
\|\widetilde\delta_{\singular}^{1/2-\varepsilon} \nabla (w - w_h)\|_{L^2(S_{\singular,ch})}
\leq C h \|e\|_{L^2(\Omega)}. $$ 
The second term in (\ref{eq:thm:abstract-optimal-L2-10}) again requires the techniques
from the local error analysis of the FEM, this time with the appropriate modifications
to account for the boundary conditions. Inspection of the arguments in \cite[Sec.~{5.3}]{wahlbin95}
shows that the key estimate (\ref{eq:wahlbin-1}) extends up to the boundary in the following 
sense: 
\begin{equation}
\label{eq:wahlbin-1-up-to-bdy}
\|\nabla (w - w_h)\|_{L^2(B_r \cap \Omega)} \lesssim 
\|\nabla (w - I^k_h w)\|_{L^2(B_{r^\prime} \cap \Omega)} + 
\frac{1}{r^\prime - r} \|w - w_h\|_{L^2(B_{r^\prime} \cap \Omega)}; 
\end{equation}
besides the implicit assumption $r^\prime > r + O(h)$, the balls $B_r$ and $B_{r^\prime}$ 
are assumed to have the same center $x$ and satisfy one of the following conditions: 
\begin{enumerate} 
\item $B_{r^\prime} = B_{r^\prime}(x) \subset \Omega$; 
\item $x \in \partial \Omega$ and $B_{r^\prime}(x) \cap \Omega$ is a half-disk; 
\item $x$ is a vertex of $\Omega$;  
\item (only for $d =3$) $x$ lies on an edge $e$ and $B_{r^\prime}(x) \cap \Omega$ is 
a dihedral angle (i.e., the intersection of $\partial (B_{r^\prime}(x) \cap \Omega)$ with $\partial\Omega$
is contained in the two faces that share the edge $e$.
\end{enumerate}
The reason for the restriction of the location of the centers of the balls is that 
the procedure presented in \cite[Sec.~{5.3}]{wahlbin95} relies on Poincar\'e inequalities so that the 
number of possible shapes for the intersections $B_{r^\prime} \cap \Omega$ should be finite. 
A covering argument (see Theorem~\ref{thm:covering-2D} for the 2D case and 
Theorem~\ref{thm:covering-3D} for the 3D situation) then leads to the following bound  
with an appropriate $c_1 > 0$ (here, $c>0$ is implicitly assumed sufficiently large): 
\begin{align}
\label{eq:thm:abstract-optimal-L2-20}
& \|\widetilde \delta_\singular^{1/2-\varepsilon} \nabla (w - w_h)\|_{L^2(\Omega\setminus S_{\singular,ch})}
\lesssim \\
\nonumber 
& \quad 
\|\widetilde \delta_\singular^{1/2-\varepsilon} \nabla (w - I^k_h w)\|_{L^2(\Omega\setminus S_{\singular,c c_1 h})}
+ \|\widetilde \delta_\singular^{-1/2-\varepsilon} (w - w_h)\|_{L^2(\Omega\setminus S_{\singular,c c_1 h})}. 
\end{align}
The first term in (\ref{eq:thm:abstract-optimal-L2-20}) can be estimated with the improved regularity assertion of 
Lemma~\ref{lemma:weighted-shift-theorem-polygon} to produce (with appropriate $c_2 > 0$ 
and the implicit assumption on $c$ that $c c_1 c_2 > 2$)
\begin{align*}
& \|\widetilde \delta_\singular^{1/2-\varepsilon} \nabla (w - I^k_h w)\|_{L^2(\Omega\setminus S_{\singular,c c_1 h})}
\lesssim h 
\|\widetilde \delta_\singular^{1/2-\varepsilon} \nabla^2 w\|_{L^2(\Omega\setminus S_{\singular,c c_1 c_2 h})} \\
&\qquad 
{\lesssim}
h \left[ \|\widetilde\delta_\singular^{1/2-\varepsilon} e\|_{L^2(\Omega)} + \|w\|_{H^{3/2+\varepsilon}(\Omega)}\right]
\lesssim h \|e\|_{L^2(\Omega)}. 
\end{align*}
For the second term in (\ref{eq:thm:abstract-optimal-L2-20}) we note that $-1/2-\varepsilon < 0$ so that 
$\widetilde\delta_\singular^{-1/2-\varepsilon} \leq \widetilde\delta_\Gamma^{-1/2-\varepsilon}$. This leads to  
\begin{align*}
& \|\widetilde\delta_\singular^{-1/2-\varepsilon} (w - w_h)\|_{L^2(\Omega\setminus S_{\singular,c c_1 h})}
\lesssim 
\|\widetilde\delta_\singular^{-1/2-\varepsilon} (w - w_h)\|_{L^2(\Omega)} \\
& \quad \lesssim 
\|\widetilde\delta_\Gamma^{-1/2-\varepsilon} (w - w_h)\|_{L^2(\Omega)}
\lesssim 
h^{-2\varepsilon} \|\widetilde\delta_\Gamma^{-1/2+\varepsilon} (w - w_h)\|_{L^2(\Omega)};
\end{align*}
the term 
$h^{-2\varepsilon} \|\widetilde\delta_\Gamma^{-1/2+\varepsilon} (w - w_h)\|_{L^2(\Omega)}$ 
has already been estimated in (\ref{eq:thm:optimal-L2-700})
in the desired form.
\qed
\end{proof}
The regularity requirements on the solution $u$ can still be slightly weakened. 
As written, the exponent $s_0-1$ is related to the {\em global} regularity of the 
dual solution $w$. However, the developments above show that a {\em local}
lack of full regularity of the dual solution $w$ (and the bidual solution) 
needs to be offset by additional local regularity of the solution. To be more 
specific, we restrict our attention now to the 2D Laplacian, i.e., 
$\A = \operatorname*{Id}$. In this case, 
the situation can be expressed as follows with the aid of the singular 
exponents $\alpha_j:= \pi/\omega_j$, where $\omega_j \in (\pi,2\pi)$ 
is the interior angle at the reentrant vertices $A_j$, $j=1,\ldots,J$. 
\begin{corollary} 
\label{cor:2D-L2-abstract}
Let $\Omega\subset\BbbR^2$ be a polygon and let $\A = \operatorname*{Id}$. 
Let $\delta_j:= \operatorname*{dist}(\cdot,A_j)$, $j=1,\ldots,J$, for the 
$J$ reentrant corners. Set $\widetilde \delta_j:= h + \delta_j$. 
Let $\omega_j$ be the interior angle at $A_j$ and $\alpha_j = \pi/\omega_j $. 
Fix $\beta_j > 1-\alpha_j$ arbitrary. 
Then for any $Iu \in V_h$ 
$$
\|u - u_h\|_{L^2(\Omega)} \lesssim 
h \sum_{j=1}^J \|\widetilde \delta_j^{-\beta_j} \nabla (u - Iu)\|_{L^2(\Omega)}. 
$$
\end{corollary}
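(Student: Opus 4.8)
The strategy is to re-run the duality argument behind Theorem~\ref{thm:abstract-optimal-L2}, but to localize it at each reentrant corner $A_j$ so that the \emph{local} regularity index $\alpha_j$ of the dual problem near $A_j$ --- rather than the global index $s_0$ --- governs the weight. As there, we may assume $s_0<1$, and we write $e:=u-u_h$ for the FEM error, $w:=Te$, and let $w_h\in V_h$ be the Galerkin approximation of $w$. Starting from $\|e\|_{L^2(\Omega)}^2=a(e,w)=a(e,w-w_h)=a(u-Iu,w-w_h)$ and a weighted Cauchy--Schwarz inequality with the weight $\rho:=\bigl(\sum_{j=1}^J\widetilde\delta_j^{-\beta_j}\bigr)^{-1}$ --- which is comparable to $\widetilde\delta_j^{\beta_j}$ on a fixed neighbourhood of $A_j$ and to $1$ away from all reentrant corners --- together with $\rho^{-1}\le\sum_j\widetilde\delta_j^{-\beta_j}$ and the triangle inequality, one obtains
\begin{equation*}
\|e\|_{L^2(\Omega)}^2\lesssim\Bigl(\sum_{j=1}^J\|\widetilde\delta_j^{-\beta_j}\nabla(u-Iu)\|_{L^2(\Omega)}\Bigr)\,\|\rho\,\nabla(w-w_h)\|_{L^2(\Omega)} ,
\end{equation*}
so that everything reduces to showing $\|\rho\,\nabla(w-w_h)\|_{L^2(\Omega)}\lesssim h\,\|e\|_{L^2(\Omega)}$.

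To this end I would decompose $\Omega$ into a fixed region $\Omega_0$ bounded away from all reentrant corners --- on which $\rho\sim1$ and $w\in H^2$ with $\|w\|_{H^2(\Omega_0)}\lesssim\|e\|_{L^2(\Omega)}$, so that the contribution of $\|\rho\,\nabla(w-w_h)\|$ there is treated exactly as the corresponding term in the proofs of Theorems~\ref{thm:optimal-L2} and~\ref{thm:abstract-optimal-L2} --- and, for each $j$, a fixed neighbourhood $U_j$ of $A_j$ (the $U_j$ pairwise disjoint), on which $\rho\sim\widetilde\delta_j^{\beta_j}$. On $U_j$ one uses that, for $\A=\operatorname{Id}$, the dual solution $w=Te$ has near $A_j$ the classical decomposition \cite{grisvard85a} $w=c_j\,r^{\alpha_j}\sin(\alpha_j\theta)+w_{j,\mathrm{reg}}$ with $|c_j|+\|w_{j,\mathrm{reg}}\|_{H^2}\lesssim\|e\|_{L^2(\Omega)}$, so that on the dyadic annulus $A_{j,\ell}:=\{x\in U_j:\delta_j(x)\sim2^{-\ell}\}$, $h\lesssim2^{-\ell}\lesssim1$, one has $\|\nabla^m w\|_{L^2(A_{j,\ell})}\lesssim2^{-\ell(\alpha_j+1-m)}\|e\|_{L^2(\Omega)}$ for $m\in\{0,1,2\}$. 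Splitting off $S_{j,ch}:=\{x\in U_j:\delta_j(x)<ch\}$ and applying on $U_j\setminus S_{j,ch}$ the covering/local-error argument of \cite[Sec.~5.3]{wahlbin95} --- which here amounts precisely to the decomposition $\bigcup_\ell A_{j,\ell}$ with \eqref{eq:wahlbin-1-up-to-bdy} on each $A_{j,\ell}$ --- one arrives, as for \eqref{eq:wahlbin-10}/\eqref{eq:thm:abstract-optimal-L2-20}, at
\begin{equation*}
\|\widetilde\delta_j^{\beta_j}\nabla(w-w_h)\|_{L^2(U_j\setminus S_{j,ch})}\lesssim\|\widetilde\delta_j^{\beta_j}\nabla(w-I^k_hw)\|_{L^2(U_j\setminus S_{j,cc_1h})}+\|\widetilde\delta_j^{\beta_j-1}(w-w_h)\|_{L^2(U_j\setminus S_{j,cc_1h})} .
\end{equation*}
The first term is, annulus by annulus, $\lesssim h\,2^{-\ell\beta_j}\|\nabla^2w\|_{L^2(A_{j,\ell})}\lesssim h\,2^{-\ell(\alpha_j+\beta_j-1)}\|e\|_{L^2(\Omega)}$, whose $\ell^2$-sum is the geometric series $h^2\|e\|_{L^2(\Omega)}^2\sum_\ell2^{-2\ell(\alpha_j+\beta_j-1)}$ --- convergent \emph{precisely because} $\beta_j>1-\alpha_j$ --- giving $\lesssim h\,\|e\|_{L^2(\Omega)}$; likewise, on $S_{j,ch}$, where $\widetilde\delta_j\sim h$, a local energy estimate combined with $\|\nabla w\|_{L^2(S_{j,2ch})}\lesssim h^{\alpha_j}\|e\|_{L^2(\Omega)}$ and a local $L^2$-estimate $\|w-w_h\|_{L^2(S_{j,2ch})}\lesssim h^{1+\alpha_j}\|e\|_{L^2(\Omega)}$ gives $\|\widetilde\delta_j^{\beta_j}\nabla(w-w_h)\|_{L^2(S_{j,ch})}\lesssim h^{\beta_j}\|\nabla(w-w_h)\|_{L^2(S_{j,ch})}\lesssim h^{\beta_j+\alpha_j}\|e\|_{L^2(\Omega)}\lesssim h\,\|e\|_{L^2(\Omega)}$.

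There remains the pollution term $\|\widetilde\delta_j^{\beta_j-1}(w-w_h)\|_{L^2(U_j\setminus S_{j,cc_1h})}$, and this is the crux. The route taken in Theorem~\ref{thm:abstract-optimal-L2} --- bounding $w-w_h$ in a global weighted $L^2$-norm via Lemma~\ref{lemma:weighted-L2-estimate} and the global energy rate $h^{s_0}$ --- only yields $h^{\beta_j+s_0}\|e\|_{L^2(\Omega)}$ and would force $\beta_j\ge1-s_0$, which for a mild corner ($\alpha_j>s_0$) is \emph{stronger} than the hypothesis $\beta_j>1-\alpha_j$. Instead one must carry the local error analysis through on each annulus $A_{j,\ell}$ so that it reproduces the \emph{sharp local} rate: one bounds $\|w-w_h\|_{L^2(A_{j,\ell})}$ by a Nitsche--Schatz-type local $L^2$-estimate whose nonlocal remainder is measured in a high negative Sobolev norm over the (slightly enlarged) annulus --- by a Poincar\'e argument on the test functions this remainder is of arbitrarily high order in $h$, hence negligible --- so that $\|w-w_h\|_{L^2(A_{j,\ell})}\lesssim h^2\,2^{-\ell(\alpha_j-1)}\|e\|_{L^2(\Omega)}$ for $2^{-\ell}\gtrsim h$, and $\lesssim h^{1+\alpha_j}\|e\|_{L^2(\Omega)}$ at the innermost scale; multiplying by $\widetilde\delta_j^{\beta_j-1}\sim2^{-\ell(\beta_j-1)}$ and summing over $\ell$ (now dominated by the innermost scale $2^{-\ell}\sim h$) yields $\lesssim h^{\alpha_j+\beta_j}\|e\|_{L^2(\Omega)}\lesssim h\,\|e\|_{L^2(\Omega)}$, once more because $\alpha_j+\beta_j>1$. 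Collecting the contributions of $\Omega_0$ and of the finitely many $U_j$ gives $\|\rho\,\nabla(w-w_h)\|_{L^2(\Omega)}\lesssim h\,\|e\|_{L^2(\Omega)}$ and hence the assertion; since each $\beta_j$ is \emph{strictly} above $1-\alpha_j$, every geometric series converges and no logarithmic factor appears.

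The main obstacle --- the step I expect to require the most care --- is exactly the pollution term: replacing the global index $s_0$ by the local index $\alpha_j$ in the FEM error analysis near each reentrant corner. Concretely, this means running the local energy and local $L^2$-estimates of \cite{wahlbin95} on the dyadic annuli around $A_j$ with the nonlocal remainder kept in a sufficiently high negative norm over the local patch, combining them with the corner expansion of the (bi)dual problem, and then checking the borderline summability $\alpha_j+\beta_j>1$ of the resulting dyadic sums --- in particular at the innermost scale $2^{-\ell}\sim h$, which is precisely where the hypothesis $\beta_j>1-\alpha_j$ is genuinely needed.
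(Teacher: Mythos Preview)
You correctly identify the crux --- that the pollution term must reflect the \emph{local} index $\alpha_j$ rather than the global $s_0$ --- and you are right that a literal rerun of Theorem~\ref{thm:abstract-optimal-L2} with the product weight only yields the constraint $\beta_j>1-s_0\approx1-\min_i\alpha_i$, which is strictly stronger than the stated hypothesis $\beta_j>1-\alpha_j$ at every corner except the worst one. The paper's own proof is far terser than yours: it records the Grisvard weighted $H^2$-regularity \eqref{eq:cor:2D-L2-abstract-10}, notes that $s_0$ may be taken arbitrarily close to $\min_j\alpha_j$, and then says ``inspection of the procedure in the proof of Theorem~\ref{thm:abstract-optimal-L2} leads to the result'' --- so it does not spell out how the pollution term is upgraded from $s_0$ to the corner-specific $\alpha_j$ either.

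Your proposed upgrade, however, has a genuine gap. The assertion that the Nitsche--Schatz negative-norm remainder $\|w-w_h\|_{H^{-p}(A'_{j,\ell})}$ is ``of arbitrarily high order in $h$'' via a Poincar\'e argument does not hold. Poincar\'e on test functions $v\in H^p_0(A'_{j,\ell})$ gives $\|v\|_{L^2}\lesssim d^{\,p}\|v\|_{H^p}$ with $d=2^{-\ell}$, hence only $\|w-w_h\|_{H^{-p}(A'_{j,\ell})}\lesssim d^{\,p}\|w-w_h\|_{L^2(A'_{j,\ell})}$ --- a gain in the \emph{annulus diameter}, not in $h$, and in terms of the very quantity you want to bound. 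If instead you extend test functions by zero and invoke the global negative-norm estimate, the shift theorem saturates: $\|w-w_h\|_{H^{-p}(\Omega)}\lesssim h^{s_0}\|w-w_h\|_{H^1(\Omega)}\lesssim h^{2s_0}\|e\|_{L^2(\Omega)}$ for \emph{every} $p\ge0$. Inserting this into your dyadic sum $\sum_\ell 2^{-2\ell(\beta_j-1)}(\cdot)^2$ --- which for $\beta_j<1$ is dominated by the innermost scale $2^{-\ell}\sim h$ --- reproduces precisely the global constraint $\beta_j\gtrsim1-s_0$ you set out to avoid. The same circularity afflicts your assumed bound $\|w-w_h\|_{L^2(S_{j,2ch})}\lesssim h^{1+\alpha_j}\|e\|_{L^2(\Omega)}$, which is exactly the sharp local $L^2$-rate the whole argument is meant to deliver. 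Closing this would require either a genuinely iterated local duality (each step gaining one power of $h$ while staying away from corners), or --- closer to the paper's spirit --- redoing the bidual step of Lemma~\ref{lemma:weighted-L2-estimate} with the Grisvard weighted regularity for the bidual solution as well, so that corner-specific exponents propagate throughout.
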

\begin{proof}
The proof follows by an inspection of how the regularity of 
the solution $w = Te$ of the dual problem enters the proof of 
Theorem~\ref{thm:abstract-optimal-L2}. 
By, e.g., \cite{grisvard85a} the solution $w = Te$ is in a weighted $H^2$-space with 
\begin{equation}
\label{eq:cor:2D-L2-abstract-10}
\|\prod_{j=1}^J \delta_j^{\beta_j} \nabla^2 w\|_{L^2(\Omega)} \lesssim \|e\|_{L^2(\Omega)}, 
\end{equation}
and Assumption~\ref{assumption:shift-theorem} holds with any $s_0 < \min_{j} \alpha_j$. 
The regularity assertion (\ref{eq:cor:2D-L2-abstract-10}) suggests to choose 
$\prod_{i=1}^J \widetilde \delta_i^{\beta_i}$ as the weight in the proof of 
Theorem~\ref{thm:optimal-L2}. Inspection of the procedure in the proof of
Theorem~\ref{thm:abstract-optimal-L2} then leads to the result.
\qed 
\end{proof}
We extract from this result another corollary that we will prove useful 
in the numerical results. We formulate it in terms of (standard, unweighted) 
Sobolev regularity in order to emphasize the difference in regularity requirements of 
the solution near the reentrant corners and away from them: 
\begin{corollary}
\label{cor:2D-L2-concrete}
Assume the hypotheses of Corollary~\ref{cor:2D-L2-abstract}. 
Let $s > 1$ and $s_i > 1$, $i=1,\ldots,J$. Let ${\mathcal U} := \Omega \setminus \cup \overline{\mathcal U}_i $, 
for some neighborhoods ${\mathcal U}_i$ of the reentrant vertices $A_i$. 
Let $u \in H^{s_i}({\mathcal U}_i)$, $i=1,\ldots,J$ and $u \in H^s({\mathcal U})$. Then 
for arbitrary $\varepsilon > 0$ 
$$
\|u - u_h\|_{L^2(\Omega)} 
\leq C_\varepsilon h^\tau, \quad \tau := 
\min (1+k, s, \min_{j=1,\ldots,J}(-1 + \alpha_j + s_j-\varepsilon) ). 
$$
\end{corollary}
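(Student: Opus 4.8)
The plan is to feed the specific interpolant $Iu := I^k_h u$ and the weights $\widetilde\delta_j^{-\beta_j}$ with $\beta_j := 1-\alpha_j+\varepsilon_0$ (small $\varepsilon_0>0$; admissible since $\beta_j>1-\alpha_j$ and, as $\alpha_j\in(1/2,1)$, also $\beta_j<1$ and $\alpha_j>\varepsilon_0$) into Corollary~\ref{cor:2D-L2-abstract}. This reduces the claim to showing that, for each reentrant vertex $A_j$,
\[
h\,\|\widetilde\delta_j^{-\beta_j}\nabla(u-I^k_h u)\|_{L^2(\Omega)} \lesssim h^{\min(1+k,\,s,\,-1+\alpha_j+s_j-\varepsilon_0)},
\]
since $\tau$ is then the minimum of the $J$ such bounds (after renaming $\varepsilon_0$ as $\varepsilon$). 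I will repeatedly use the elementwise Scott–Zhang estimate \eqref{eq:scott-zhang-approximation} and its fractional-order consequence, obtained by the same $K$-method interpolation used for property (ii) of $V_h$: on any subdomain $\omega$ one has $\|\nabla(u-I^k_h u)\|_{L^2(\omega)}\lesssim h^{\sigma-1}|u|_{H^\sigma(\widehat\omega)}$ for $1\le\sigma\le k+1$, where $\widehat\omega$ is the union of element patches meeting $\omega$ and $|\cdot|_{H^\sigma}$ is the Sobolev–Slobodeckij seminorm. Set $\mu_j:=\min(k+1,s_j)$.

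First the easy piece. Fix $j$ and choose a fixed neighborhood $\mathcal U_j'$ of $A_j$ with $\overline{\mathcal U_j'}\subset\mathcal U_j$, so that for $h$ small all patches meeting $\mathcal U_j'$ stay in $\mathcal U_j$. On $\Omega\setminus\mathcal U_j'$ the weight satisfies $\widetilde\delta_j^{-\beta_j}\lesssim1$, so it remains to bound $\|\nabla(u-I^k_h u)\|_{L^2(\Omega\setminus\mathcal U_j')}$; covering $\Omega\setminus\mathcal U_j'$ by $\mathcal U$ (where $u\in H^s$) and the neighborhoods $\mathcal U_i$, $i\neq j$ (where $u\in H^{s_i}$), the fractional Scott–Zhang estimate gives $\|\nabla(u-I^k_h u)\|_{L^2(\Omega\setminus\mathcal U_j')}\lesssim h^{\min(k,\,s-1,\,\min_{i\neq j}(s_i-1))}$. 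Multiplying by $h$ and using $-1+\alpha_i+s_i<s_i$ (as $\alpha_i<1$), this contribution is $\lesssim h^{\min(1+k,\,s,\,\min_{i\neq j}s_i)}$, which is $\le h^{\tau}$.

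The crux is the weighted contribution on $\mathcal U_j'$. I will split $\mathcal U_j'$ into dyadic annuli $A_\ell:=\{x\in\mathcal U_j'\,:\,2^{-\ell-1}\le\delta_j(x)\le2^{-\ell}\}$ for $d_\ell:=2^{-\ell}\ge ch$, together with the innermost piece $R:=\{x\in\mathcal U_j'\,:\,\delta_j(x)\le ch\}$. On $A_\ell$ one has $\widetilde\delta_j^{-\beta_j}\sim d_\ell^{-\beta_j}$ and on $R$, $\widetilde\delta_j^{-\beta_j}\lesssim h^{-\beta_j}$; rescaling each annulus to unit size (all rescaled annuli share the same shape and carry a shape-regular quasi-uniform mesh of size $h/d_\ell\lesssim1$) converts the fractional Scott–Zhang estimate into $\|\nabla(u-I^k_h u)\|_{L^2(A_\ell)}\lesssim h^{\mu_j-1}|u|_{H^{\mu_j}(\widehat A_\ell)}$, and likewise on $R$. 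By bounded overlap of the $\widehat A_\ell$ this yields
\[
\|\widetilde\delta_j^{-\beta_j}\nabla(u-I^k_h u)\|^2_{L^2(\mathcal U_j')}\lesssim h^{2\mu_j-2}\Big(\sum_{d_\ell\ge ch}d_\ell^{-2\beta_j}|u|^2_{H^{\mu_j}(\widehat A_\ell)}+h^{-2\beta_j}|u|^2_{H^{\mu_j}(\widehat R)}\Big).
\]
If $s_j\le k+1$, then $\mu_j=s_j$; bounding the weight by its maximum $\lesssim h^{-2\beta_j}$ and using bounded overlap $\sum_\ell|u|^2_{H^{s_j}(\widehat A_\ell)}\lesssim\|u\|^2_{H^{s_j}(\mathcal U_j)}$, the right-hand side is $\lesssim h^{2s_j-2-2\beta_j}\|u\|^2_{H^{s_j}(\mathcal U_j)}$, so after taking the square root and multiplying by $h$ we get $h^{s_j-\beta_j}=h^{-1+\alpha_j+s_j-\varepsilon_0}\le h^\tau$. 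If $s_j>k+1$, then $\mu_j=k+1$ and we must use that $\nabla^{k+1}u\in H^{s_j-k-1}$ locally near $A_j$; the Sobolev embedding $H^\sigma(\BbbR^2)\hookrightarrow L^p$ with $\tfrac1p=\tfrac12-\tfrac\sigma2$ for $\sigma<1$ (and $\hookrightarrow L^\infty$ for $\sigma>1$), combined with Hölder's inequality on $B_r(A_j)$, gives $|u|_{H^{k+1}(\widehat A_\ell)}=\|\nabla^{k+1}u\|_{L^2(\widehat A_\ell)}\lesssim d_\ell^{\gamma}\|u\|_{H^{s_j}(\mathcal U_j)}$ with $\gamma:=\min(s_j-k-1,\,1)-\varepsilon_0$, and likewise on $\widehat R$. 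The geometric sum $\sum_\ell d_\ell^{-2\beta_j+2\gamma}$ is then $O(1)$ if $\gamma>\beta_j$ and $O(h^{2(\gamma-\beta_j)})$ otherwise; substituting, taking the square root, multiplying by $h$, and simplifying with $\beta_j=1-\alpha_j+\varepsilon_0$ produces $h^{k+1}$ in the first case and $h^{k+1+\gamma-\beta_j}=h^{-1+\alpha_j+s_j-2\varepsilon_0}$ in the second — in both cases $\le h^{\min(1+k,\,-1+\alpha_j+s_j-\varepsilon)}\le h^\tau$ after renaming $\varepsilon_0$. Summing over $j$ and invoking Corollary~\ref{cor:2D-L2-abstract} completes the argument.

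The main obstacle is precisely this last weighted estimate on $\mathcal U_j'$: the weight $d_\ell^{-2\beta_j}$ grows towards $A_j$, and only a genuine smoothing of $u$ there (via the embedding $H^{s_j-k-1}\hookrightarrow L^p$) supplies the compensating decay $d_\ell^{\gamma}$ of the local seminorm. The two technical points to be careful about are obtaining the fractional-order local approximation estimate in a scale-invariant form on the annuli, and the bounded-overlap handling of the nonlocal Sobolev–Slobodeckij seminorms.
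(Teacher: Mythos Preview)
Your proof is correct and starts from exactly the same place as the paper: feed $Iu=I^k_h u$ and $\beta_j=1-\alpha_j+\varepsilon$ into Corollary~\ref{cor:2D-L2-abstract}, split $\Omega$ into a neighborhood of $A_j$ and its complement, and estimate each piece. The paper's proof is a two-line sketch that on $\mathcal U_j$ simply uses the crude pointwise bound $\widetilde\delta_j^{-\beta_j}\lesssim h^{-\beta_j}$ together with $\|\nabla(u-I^k_h u)\|_{L^2(\mathcal U_j)}\lesssim h^{\min(k,s_j-1)}$, producing the rate $1-\beta_j+\min(k,s_j-1)=\min(-1+\alpha_j+s_j-\varepsilon,\,k+\alpha_j-\varepsilon)$ for that term. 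Your dyadic decomposition plus the Sobolev embedding $H^{s_j-k-1}\hookrightarrow L^p$ is a genuine refinement: it exploits the extra smoothness of $\nabla^{k+1}u$ near $A_j$ to get decay $d_\ell^\gamma$ of the local seminorms, and thereby recovers the full rate $k+1$ when $s_j>k+2-\alpha_j$. The paper's crude bound would cap the contribution from corner $j$ at $k+\alpha_j-\varepsilon<k+1$ in that regime, so strictly speaking your argument is needed to reach the stated $\tau=1+k$ in the very-smooth case; in the regime $s_j\le k+1$ the two arguments coincide and your dyadic machinery is unnecessary (the crude bound already gives $-1+\alpha_j+s_j-\varepsilon$). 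An equivalent but shorter route to your refinement is the two-dimensional Hardy inequality at a point, $\|\delta_j^{-\beta_j}v\|_{L^2}\lesssim\|v\|_{H^{\beta_j}}$ for $\beta_j\in(0,1)$, applied with $v=\nabla^{k+1}u$; this absorbs the weight directly without the annular decomposition and gives $h\cdot h^{k}\|\widetilde\delta_j^{-\beta_j}\nabla^{k+1}u\|_{L^2(\mathcal U_j)}\lesssim h^{k+1}\|u\|_{H^{k+1+\beta_j}(\mathcal U_j)}$ once $s_j\ge k+1+\beta_j=k+2-\alpha_j+\varepsilon$.
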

\begin{proof}
The approximant $Iu$ in Corollary~\ref{cor:2D-L2-abstract} may be taken as 
any standard nodal interpolant or the Scott-Zhang 
projection. Then standard estimates and Corollary~\ref{cor:2D-L2-abstract} 
produce with the choice $\beta_j := 1-\alpha_j +\varepsilon$ for arbitrary small but  fixed $\varepsilon > 0$: 
\begin{align*}
\|u - u_h\|_{L^2(\Omega)} 
& \lesssim 
h \min_{j=1,\ldots,J}  \{h^{\min\{k,s-1\}}, h^{-\beta_j + s_j-1}\} \\
& \lesssim \min_{j=1,\ldots,J} \{h^{\min\{k+1,s\}}, h^{\alpha_j + s_j-1-\varepsilon}\}. 
\end{align*} 
\qed
\end{proof}
\section{Optimal $L^2(S_h)$-convergence}
\label{sec:error-on-strip}
Additional regularity of the solution also allows us to prove that the 
error on the strip $S_h$ of width $O(h)$ near $\Gamma$ is of higher order: 
\begin{theorem}
\label{thm:error-on-strip}
Let Assumption~\ref{assumption:shift-theorem} be valid. Then the FEM error
$u - u_h$ satisfies 
\begin{align*}
\|u - u_h\|_{L^2(S_h)} &\lesssim h^{k+3/2} (1 + \delta_{k,1} |\ln h|) \|u\|_{B^{k+3/2}_{2,1}(\Omega)}, \\
\|u - u_h\|_{L^2(S_h)} &\lesssim h^{s+3/2} (1 + \delta_{k,1} |\ln h|) \|u\|_{B^{s+3/2}_{2,\infty}(\Omega)}, 
\quad s \in (0,k), \\
\|u - u_h\|_{L^2(S_h)} &\lesssim h^{3/2} (1 + \delta_{k,1} |\ln h|) \|u\|_{B^{3/2}_{2,1}(\Omega)}, 
\end{align*}
where $\delta_{k,1}$ is the Kronecker symbol. The implies constant depends on the shape regularity 
of the triangulation, $\Omega$, and the coefficient $\A$. Specifically, it depends on 
$\alpha_0$ and $\|\A\|_{C^{0,1}(\overline{\Omega})}$ and, in the case $k > 1$, additionally on 
$\|\A\|_{C^{1,1}(\overline{\Omega})}$. 
\end{theorem}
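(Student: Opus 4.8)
The plan is to run a duality argument in which the right-hand side of the dual problem is supported in the strip $S_h$, so that the dual solution is itself of size $O(h^{1/2})$; this smallness is what buys the extra half power of $h$ over the global $L^2$-rate $h^{k+1}$. I would start from
$\|u-u_h\|_{L^2(S_h)}=\sup\{(e,g)_{L^2(\Omega)}:\ g\in L^2(\Omega),\ \supp g\subset\overline{S_h},\ \|g\|_{L^2(\Omega)}=1\}$
with $e:=u-u_h$, put $w:=Tg$, and let $w_h\in V_h$ be its Galerkin approximation. Galerkin orthogonality used twice gives $(e,g)=a(e,w)=a(e,w-w_h)=a(u-I^k_hu,w-w_h)$ with $I^k_h u$ the Scott--Zhang interpolant. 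Since $\supp g\subset\overline{S_h}$, Lemma~\ref{lemma:B32-regularity} yields $\|w\|_{B^{3/2}_{2,\infty}(\Omega)}\lesssim h^{1/2}$ and $\|w\|_{H^{3/2+\varepsilon}(\Omega)}\lesssim h^{1/2-\varepsilon}$ for $\varepsilon\in(0,s_0-1/2]$, hence $\|w-w_h\|_{H^1(\Omega)}\lesssim h$ by the approximation property of $V_h$ for $B^{3/2}_{2,\infty}$-functions; moreover $w$ solves the homogeneous equation on $\Omega\setminus S_{Ch}$, so Lemma~\ref{lemma:5.4} controls $\nabla^2 w$ (and, for $k>1$, $\nabla^3 w$) there in terms of $\|w\|_{B^{3/2}_{2,\infty}(\Omega)}$.

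Next I would split $\Omega=S_{Ch}\cup(\Omega\setminus S_{Ch})$ with $C$ fixed and sufficiently large, and estimate $a(u-I^k_hu,w-w_h)$ over the two pieces. On the thin strip a plain Cauchy--Schwarz suffices: by quasi-locality of $I^k_h$, \eqref{eq:scott-zhang-approximation}, and the embedding \eqref{eq:lemma:weighted-embedding-5} applied to $\nabla^{k+1}u$,
$\|\nabla(u-I^k_hu)\|_{L^2(S_{Ch})}\lesssim h^{k}\|\nabla^{k+1}u\|_{L^2(S_{c'h})}\lesssim h^{k+1/2}\|u\|_{B^{k+3/2}_{2,1}(\Omega)}$,
while $\|\nabla(w-w_h)\|_{L^2(S_{Ch})}\le\|w-w_h\|_{H^1(\Omega)}\lesssim h$, which already gives order $h^{k+3/2}$. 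It is here that localizing $\nabla^{k+1}u$ to a strip of width $O(h)$ — hence the extra half derivative of regularity assumed on $u$ — yields the gain over $h^{k+1}$.

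The interior term $\int_{\Omega\setminus S_{Ch}}\A\nabla(u-I^k_hu)\cdot\nabla(w-w_h)$ is the crux, since no strip-localization of $u$ is available. Here I would use a weighted Cauchy--Schwarz with the dual weights $\delta_\Gamma^{-1/2+\varepsilon}$ and $\delta_\Gamma^{1/2-\varepsilon}$. The $u$-factor is $\lesssim h^{k}\|\delta_\Gamma^{-1/2+\varepsilon}\nabla^{k+1}u\|_{L^2(\Omega)}\lesssim h^{k}\|\nabla^{k+1}u\|_{H^{1/2-\varepsilon}(\Omega)}\lesssim h^{k}\|u\|_{B^{k+3/2}_{2,1}(\Omega)}$ by \eqref{eq:lemma:weighted-embedding-1}, so it remains to show $\|\delta_\Gamma^{1/2-\varepsilon}\nabla(w-w_h)\|_{L^2(\Omega\setminus S_{Ch})}\lesssim h^{3/2}$ up to logarithms. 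For this I would invoke the local (interior) FEM error analysis in the form \eqref{eq:wahlbin-10}: its approximation contribution is bounded by a best-approximation estimate together with the weighted regularity of $w$ from Lemma~\ref{lemma:5.4} (second derivatives for $k=1$; third derivatives, whence the dependence on $\|\A\|_{C^{1,1}(\overline\Omega)}$, for $k>1$), and its pollution contribution $\|\delta_\Gamma^{-1/2-\varepsilon}(w-w_h)\|_{L^2(\Omega\setminus S_{Ch})}\lesssim h^{-2\varepsilon}\|\delta_\Gamma^{-1/2+\varepsilon}(w-w_h)\|_{L^2(\Omega)}$ is controlled by Lemma~\ref{lemma:weighted-L2-estimate} applied to $w-w_h$ (which satisfies Galerkin orthogonality), giving $\lesssim h^{-2\varepsilon}h^{1/2+\varepsilon}\|w-w_h\|_{H^1(\Omega)}\lesssim h^{3/2-\varepsilon}$. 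Combining the two contributions produces $\|u-u_h\|_{L^2(S_h)}\lesssim h^{k+3/2}(1+\delta_{k,1}|\ln h|)\|u\|_{B^{k+3/2}_{2,1}(\Omega)}$, the logarithm for $k=1$ coming from the borderline weighted embedding \eqref{eq:lemma:weighted-embedding-2} and from Lemma~\ref{lemma:5.4}\eqref{item:lemma:5.4-1} when the exponent $\varepsilon$ is taken to be $0$; for $k>1$, a dyadic decomposition of $\Omega\setminus S_{Ch}$ into annular strips together with the higher-order bounds of Lemma~\ref{lemma:5.4}\eqref{item:lemma:5.4-2}, \eqref{item:lemma:5.4-3} on each makes the resulting series converge geometrically and removes the logarithm.

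The two lower-regularity estimates then follow by soft arguments: with only $u\in B^{3/2}_{2,1}(\Omega)$ one repeats the scheme using the low-regularity stability and approximation properties of $I^k_h$ in place of the full-order ones, and the intermediate estimate ($u\in B^{s+3/2}_{2,\infty}(\Omega)$, $0<s<k$) follows by $K$-interpolation between the two endpoint cases. The main obstacle is the interior estimate: one must balance the weight exponent $\varepsilon$ against the strip width and extract from the local FEM error analysis a bound of order $h^{3/2}$ for $\|\delta_\Gamma^{1/2-\varepsilon}\nabla(w-w_h)\|_{L^2(\Omega\setminus S_{Ch})}$ carrying at most one logarithmic factor (none for $k>1$), which is precisely where the sharp weighted interior-regularity estimates for $\nabla^2$ and $\nabla^3$ of the dual solution — and the $C^{1,1}$-regularity of $\A$ when $k>1$ — are needed.
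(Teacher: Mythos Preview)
Your overall strategy is right and close to the paper's: the duality argument with strip-supported right-hand side, the bound $\|w-w_h\|_{H^1}\lesssim h\|g\|$, and the appeal to local FEM error analysis combined with Lemma~\ref{lemma:weighted-L2-estimate} and Lemma~\ref{lemma:5.4} are exactly the ingredients. However, there is a genuine gap in the interior estimate, caused by the \emph{sign} of the weight exponent you choose.

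You take the pair $\delta_\Gamma^{-1/2+\varepsilon}$ on the $u$-factor and $\delta_\Gamma^{1/2-\varepsilon}$ on the $w$-factor. As you yourself compute, the pollution term coming from \eqref{eq:wahlbin-10} is then
\[
\|\delta_\Gamma^{-1/2-\varepsilon}(w-w_h)\|_{L^2(\Omega\setminus S_{Ch})}
\lesssim h^{-2\varepsilon}\|\delta_\Gamma^{-1/2+\varepsilon}(w-w_h)\|_{L^2(\Omega)}
\lesssim h^{3/2-\varepsilon}\|g\|_{L^2},
\]
and the approximation term behaves the same way once you pass from $\delta_\Gamma^{1/2-\varepsilon}\nabla^3 w$ to $\nabla^2 w$ via Lemma~\ref{lemma:5.4}\eqref{item:lemma:5.4-2}. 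Since your $u$-factor is only $\lesssim h^k$, the product is $h^{k+3/2-\varepsilon}$, not $h^{k+3/2}$. The $\varepsilon$ cannot be made to vanish: sending $\varepsilon\to 0$ reintroduces a logarithm for \emph{every} $k$, not only $k=1$.

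The paper fixes this by reversing the sign: it uses $\widetilde\delta_\Gamma^{-1/2-\varepsilon}$ on the $u$-factor and $\widetilde\delta_\Gamma^{1/2+\varepsilon}$ on the $w$-factor (see \eqref{eq:thm:error-on-strip-5}). The $u$-factor then costs $h^{k-\varepsilon}\|u\|_{B^{k+3/2}_{2,1}}$ via \eqref{eq:lemma:weighted-embedding-4}. The payoff is that the local analysis \eqref{eq:wahlbin-10-thm:error-on-strip} now produces the pollution term $\|\delta_\Gamma^{-1/2+\varepsilon}(w-w_h)\|$, precisely the exponent to which Lemma~\ref{lemma:weighted-L2-estimate} applies directly, yielding $h^{3/2+\varepsilon}$; and for $k>1$ the approximation term is likewise $h^{3/2+\varepsilon}$ via Lemma~\ref{lemma:5.4}\eqref{item:lemma:5.4-2},\eqref{item:lemma:5.4-4}. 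The $h^{\pm\varepsilon}$ then cancel and one obtains the clean $h^{k+3/2}$. For $k=1$ one sets $\varepsilon=0$; a factor $|\ln h|^{1/2}$ appears on each side (from \eqref{eq:lemma:weighted-embedding-2} for $u$ and from Lemma~\ref{lemma:5.4}\eqref{item:lemma:5.4-1} and \eqref{eq:lemma:weighted-L2-estimate-2} for $w$), producing the single $|\ln h|$.

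Your dyadic idea for $k>1$ could be made to work as an alternative, but not as a post-processing step after the weighted Cauchy--Schwarz has already lost the $\varepsilon$: one must decompose the full bilinear form $a(u-I^k_hu,w-w_h)$ over annuli $A_j=S_{2^{j+1}Ch}\setminus S_{2^jCh}$, bound $\|\nabla(w-w_h)\|_{L^2(A_j)}$ by the local error estimate on each annulus (using Lemma~\ref{lemma:weighted-L2-estimate} to control $\|w-w_h\|_{L^2(A_j)}$ in the pollution part, not only Lemma~\ref{lemma:5.4} for the approximation part), and then sum using $\|\nabla^{k+1}u\|_{L^2(A_j)}\lesssim (2^jh)^{1/2}\|u\|_{B^{k+3/2}_{2,1}}$. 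This is a genuine alternative route, but your write-up as it stands does not close the gap.
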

\begin{remark}
{\rm 
\begin{enumerate}
\item The regularity requirement $B^{k+3/2}_{2,1}(\Omega)$ can be weakened: 
it suffices that $u$ be in $B^{k+3/2}_{2,1}(S_D)$ in a fixed neighborhood $S_D$
of $\Gamma$ and in $H^{k+1}(\Omega)$. See \cite{melenk-praetorius-wohlmuth14}
for the details of a closely related problem.
\item 

Since $B^{s+3/2}_{2,\infty}(\Omega) \supset H^{s+3/2}(\Omega)$, the assertions 
for $s \in (0,k)$ can be weakened by replacing $\|u\|_{B^{s+3/2}_{2,\infty}(\Omega)}$ with 
$\|u\|_{H^{s+3/2}(\Omega)}$ on the right-hand side. Only for the limiting cases $s = 0$ and $s = k$, 
we require the stronger requirement 
$u \in B^{s+3/2}_{2,1}(\Omega) \subset H^{s+3/2}(\Omega)$. 
\eremk
\end{enumerate}
}
\end{remark}
\begin{proof}
The structure of the proof is very similar to that of Theorem~\ref{thm:optimal-L2}.
The main difference arises from the fact that the right-hand side of the dual
problem is supported by the thin neighborhood $S_h$, and this support
property has to be exploited. 

Let $e = u - u_h$. 
Let $\cutoff$ be the characteristic function of $S_h$. 
Let $w = T(\cutoff e)$ and $w_h \in V_h$ its Galerkin approximation. Again, 
Galerkin orthogonality for $u - u_h$ and $w - w_h$ implies 
\begin{eqnarray}
\nonumber 
\|e\|^2_{L^2(S_h)} &=& \langle e,\chi e\rangle = a(e,w) = a(e,w - w_h) = 
a(u - I^k_h u,w -w_h) \\
\label{eq:thm:error-on-strip-5}
&\leq& C \|\widetilde\delta_\Gamma^{-1/2-\varepsilon} \nabla (u - I^k_h u)\|_{L^2(\Omega)}
\|\widetilde\delta_\Gamma^{1/2+\varepsilon} \nabla (w - w_h)\|_{L^2(\Omega)},
\end{eqnarray}
where $\varepsilon \ge 0$ is arbitrary (in fact, $\varepsilon \in \BbbR$ would be admissible).
We flag at this point already that we will select $\varepsilon = 0$ for $k = 1$ and 
$\varepsilon > 0$ arbitrary (but sufficiently small) for $k > 1$. 
Each of the two factors in (\ref{eq:thm:error-on-strip-5}) is estimated separately. 

{\em 1.~step:}
For the first factor in (\ref{eq:thm:error-on-strip-5}) we use 
approximation properties of the Scott-Zhang operator $I^k_h$ together with 
Lemma~\ref{lemma:weighted-embedding} to get for $j \in \{0,\ldots,k\}$
\begin{eqnarray}
\nonumber 
\|\widetilde\delta_\Gamma^{-1/2-\varepsilon} \nabla (u - I^k_h u)\|_{L^2(\Omega)}
&\lesssim & h^{j} \|\widetilde\delta_\Gamma^{-1/2-\varepsilon} \nabla^{j+1} u\|_{L^2(\Omega)}, \\   
\label{eq:thm:error-on-strip-10}
&\lesssim & h^{j} 
\begin{cases} |\ln h|^{1/2} \|\nabla^{j+1} u\|_{B^{1/2}_{2,1}(\Omega)} & \mbox{ if $\varepsilon = 0$}\\
h^{-\varepsilon} \|\nabla^{j+1} u\|_{B^{1/2}_{2,1}(\Omega)} & \mbox{ if $\varepsilon > 0$}. 
\end{cases}
\qquad 
\end{eqnarray}
With the Kronecker symbol $\delta_{0,\varepsilon}$, we have shown
for $j \in \{0,1,\ldots,k\}$
\begin{align}
\label{eq:thm:error-on-strip-10a}
\|\widetilde\delta_\Gamma^{-1/2-\varepsilon} \nabla (u - I^k_h u)\|_{L^2(\Omega)}
\lesssim h^j h^{-\varepsilon} (1 + \delta_{0,\varepsilon} |\ln h|^{1/2}) \|u\|_{B^{j+3/2}_{2,1}(\Omega)}. 
\end{align}
Since the Scott-Zhang operator $I^k_h$ is defined on $H^1(\Omega)$ irrespective of boundary conditions, 
we may use an interpolation argument to lift the restriction to integer values $j$. 
Specifically, the reiteration theorem (cf., e.g., 
\cite[Thm.~{23.6}]{tartar07}) asserts that the Besov space 
$B^{s+3/2}_{2,\infty}(\Omega)$, which we have defined by interpolation between (integer order)
Sobolev spaces, coincides with the interpolation space between Besov spaces, viz., 
$$
B^{s+3/2}_{2,\infty}(\Omega) = (B^{3/2}_{2,1}(\Omega),B^{k+3/2}_{2,1}(\Omega))_{s/k,\infty}
\qquad \mbox{ (equivalent norms).}
$$
Hence, we may 
decompose for arbitrary $t > 0$ a function 
$u \in B^{s+3/2}_{2,\infty}(\Omega)$, $s \in (0,k)$, 
as $u = u - u_1 + u_1$ with 
$u_1 \in B^{k+3/2}_{2,1}(\Omega)$ and $u_0:=u - u_1 \in B^{3/2}_{2,1}(\Omega)$ together with 
$$
\|u_0\|_{B^{3/2}_{2,1}(\Omega) } \leq C t^{s/k} \|u\|_{B^{s+3/2}_{2,\infty}(\Omega)}, 
\qquad 
\|u_1\|_{B^{k+3/2}_{2,1}(\Omega) } \leq C t^{s/k-1} \|u\|_{B^{s+3/2}_{2,\infty}(\Omega)}. 
$$
Writing $u - I^k_h u = \left( u_0 - I^k_h u_0\right) + 
\left( u_1 - I^k_h u_1\right)$ we can use 
(\ref{eq:thm:error-on-strip-10a}) with $j = k$ for the second term in brackets 
and $j=0$ for the first term in brackets to get with the choice 
$t = h^k$
\begin{align}
\label{eq:thm:error-on-strip-10b}
\|\widetilde\delta_\Gamma^{-1/2-\varepsilon} \nabla (u - I^k_h u)\|_{L^2(\Omega)}
\lesssim 
h^s h^{-\varepsilon} (1 + \delta_{0,\varepsilon} |\ln h|^{1/2}) 
\|u\|_{B^{s+3/2}_{2,\infty}(\Omega)}. 
\end{align}
Combining the estimates 
(\ref{eq:thm:error-on-strip-10a}) with $j=0$ and $j=k$ and 
(\ref{eq:thm:error-on-strip-10b}) for $s \in (0,k)$  we arrive at 
\begin{align}
\label{eq:thm:error-on-strip-153}
&\|\widetilde\delta_\Gamma^{-1/2-\varepsilon} \nabla (u - I^k_h u)\|_{L^2(\Omega)} \\
\nonumber 
&\qquad \lesssim 
(1 + \delta_{0,\varepsilon} |\ln h|^{1/2}) 
h^{-\varepsilon} 
\begin{cases}
h^s \|u\|_{B^{s+3/2}_{2,1}(\Omega)}, & s = 0,\\ 
h^s \|u\|_{B^{s+3/2}_{2,\infty}(\Omega)}, & s \in  (0,k), \\
h^s \|u\|_{B^{k+3/2}_{2,1}(\Omega)}, & s = k.
\end{cases}
\end{align}

{\em 2.~step:}
The second factor in (\ref{eq:thm:error-on-strip-5}) requires more work. We start with 
a regularity assertion for $w$ that exploits the support properties of $\cutoff e$ and 
follows from (\ref{eq:lemma:B32-regularity-2}) and (\ref{eq:lemma:B32-regularity-3}):
\begin{eqnarray}
\label{eq:thm:error-on-strip-regularity-of-w-a}
\|w\|_{B^{3/2}_{2,\infty}(\Omega)} &\lesssim & h^{1/2} \|\cutoff e\|_{L^2(\Omega)}, \\
\label{eq:thm:error-on-strip-regularity-of-w-b}
\|w\|_{H^{3/2+\varepsilon}(\Omega)} &\lesssim & h^{1/2-\varepsilon} \|\cutoff e\|_{L^2(\Omega)}, 
\qquad \varepsilon \in (0,s_0-1/2].
\end{eqnarray}
We obtain an energy error estimate for $w-w_h$ in the standard way by using 
quasi-optimality, the approximation properties of $V_h$, and the regularity 
assertion (\ref{eq:thm:error-on-strip-regularity-of-w-a}):
\begin{eqnarray}
\label{eq:thm:error-on-strip-15}
\|w -w_h\|_{H^1(\Omega)} &\lesssim& \inf_{v \in V_h} \|w - v\|_{H^1(\Omega)} 
\lesssim h^{1/2} \|w\|_{B^{3/2}_{2,\infty}(\Omega) }
\lesssim h \|\cutoff e\|_{L^2(\Omega)}. 
\quad 
\end{eqnarray}
Lemma~\ref{lemma:weighted-L2-estimate} is applicable with $z = w-w_h$; hence, 
obtain with (\ref{eq:thm:error-on-strip-15})
\begin{eqnarray}
\label{eq:thm:error-on-strip-17}
\|\delta_\Gamma^{-1/2+\varepsilon} (w - w_h)\|_{L^2(\Omega)} &\lesssim & h^{3/2+\varepsilon}\|\cutoff e\|_{L^2(\Omega)}, 
\qquad \varepsilon \in (0,s_0-1/2], \\
\label{eq:thm:error-on-strip-18}
\|\widetilde \delta_\Gamma^{-1/2} (w - w_h)\|_{L^2(\Omega)} &\lesssim & h^{3/2} |\ln h|^{1/2} \|\cutoff e\|_{L^2(\Omega)}. 
\end{eqnarray}
The bound (\ref{eq:thm:error-on-strip-5}) informs us that control of $w - w_h$ in a weighted 
$H^1$-norm is required. In this direction, we first write for a constant $c > 0$ that will be determined 
later sufficiently large 
\begin{align}
\nonumber 
& \|\widetilde\delta_\Gamma^{1/2+\varepsilon} \nabla (w - w_h)\|_{L^2(\Omega)} \\
\nonumber 
&\qquad \leq 
\|\widetilde\delta_\Gamma^{1/2+\varepsilon} \nabla (w - w_h)\|_{L^2(S_{ch})} 
+ \|\widetilde\delta_\Gamma^{1/2+\varepsilon} \nabla (w - w_h)\|_{L^2(\Omega\setminus S_{ch})} \\
\nonumber 
&\qquad \leq 
C h^{1/2+\varepsilon} \|\nabla( w - w_h) \|_{L^2(\Omega)} + 
\|\widetilde\delta_\Gamma^{1/2+\varepsilon} \nabla (w - w_h)\|_{L^2(\Omega\setminus S_{ch})}\\
\label{eq:thm:error-on-strip-100}
&\qquad \stackrel{(\ref{eq:thm:error-on-strip-15})}{\leq}
C h^{3/2+\varepsilon} \|\cutoff e \|_{L^2(\Omega)} + 
\|\widetilde\delta_\Gamma^{1/2+\varepsilon} \nabla (w - w_h)\|_{L^2(\Omega\setminus S_{ch})}. 
\end{align}
We emphasize that $\varepsilon = 0$ is allowed in (\ref{eq:thm:error-on-strip-100}). 
It remains to control 
$\|\widetilde\delta_\Gamma^{1/2+\varepsilon} \nabla (w - w_h)\|_{L^2(\Omega\setminus S_{ch})}$. 
This is done again with the same arguments from the local error analysis as in the 
proof of Theorem~\ref{thm:optimal-L2}. The estimate (\ref{eq:wahlbin-10}) holds verbatim, i.e., 
\begin{align}
\label{eq:wahlbin-10-thm:error-on-strip}
& \|\delta_\Gamma^{1/2+\varepsilon} \nabla (w - w_h)\|_{L^2(\Omega\setminus S_{c h})} \\
\nonumber 
& \quad \lesssim \|\delta_\Gamma^{1/2+\varepsilon} \nabla (w - I^k_h w)\|_{L^2(\Omega\setminus S_{c c_1 h})} 
+ \|\delta_\Gamma^{-1/2+\varepsilon} (w - w_h)\|_{L^2(\Omega\setminus S_{c c_1 h})}. 
\end{align}
We emphasize that $\varepsilon  = 0$ is admissible in (\ref{eq:wahlbin-10}). 
As in the proof of Theorem~\ref{thm:optimal-L2}, the constant $c$ will be selected in dependence 
of various inverse estimates that are applied. 
Combining (\ref{eq:thm:error-on-strip-17}), (\ref{eq:thm:error-on-strip-18}), (\ref{eq:thm:error-on-strip-100}),
(\ref{eq:wahlbin-10-thm:error-on-strip})
we see that we have shown 
\begin{align}
\label{eq:thm:error-on-strip-150}
& 
\|\widetilde \delta_\Gamma^{1/2+\varepsilon} \nabla (w - w_h)\|_{L^2(\Omega)} \\
\nonumber 
& \quad 
\lesssim 
\begin{cases}
h^{3/2+\varepsilon} \|\cutoff e\|_{L^2(\Omega)} + \|\widetilde\delta_\Gamma^{1/2+\varepsilon} \nabla (w - I^k_h w)\|_{L^2(\Omega\setminus S_{c_1 c h})} & \mbox{ if $\varepsilon > 0$,} \\
h^{3/2} |\ln h|^{1/2} \|\cutoff e\|_{L^2(\Omega)} + \|\widetilde\delta_\Gamma^{1/2} \nabla (w - I^k_h w)\|_{L^2(\Omega\setminus S_{c_1 c h})} & \mbox{ if $\varepsilon = 0$.}
\end{cases}
\end{align}
{\em 3.~step:}
We estimate the approximation error 
$\|\widetilde\delta_\Gamma^{1/2+\varepsilon} \nabla (w  - I^k_h w)\|_{L^2(\Omega\setminus S_{c_1 c h})}$. 
At this point the cases $k = 1$ and $k > 1$ diverge: since $w$ solves a {\em homogeneous} elliptic equation
on $\Omega\setminus S_{c_1 c h}$ (if $c_1 c > 1$), interior regularity is available so that higher order approximation 
can be brought to bear if $k > 1$ in contrast to the case $k  =1$. We start with the simpler case $k = 1$.

{\em The case $k = 1$:}
{}From standard approximation results for $I^k_h$, the inverse estimate 
of Lemma~\ref{lemma:5.4}, (\ref{item:lemma:5.4-1}), and (\ref{eq:thm:error-on-strip-regularity-of-w-a})
we get for a constant $c_2  \in (0,1)$ (implicitly, we assume that $c$ is so large that 
$c_2 c_1 c h > 2h$)
\begin{align}
\nonumber 
& \|\widetilde\delta_\Gamma^{1/2} \nabla (w - I^k_h w)\|_{L^2(\Omega\setminus S_{c_1 c h})}
\lesssim  h^1 \|\widetilde\delta_\Gamma^{1/2} \nabla^2 w\|_{L^2(\Omega\setminus S_{c_2 c_1 c h}) } \\
\label{eq:thm:error-on-strip-200}
&\qquad \lesssim  h |\ln h|^{1/2} \|w\|_{B^{3/2}_{2,\infty}(\Omega)} 
\lesssim h^{3/2} |\ln h|^{1/2} \|\cutoff e\|_{L^2(\Omega)} . 
\end{align} 
Inserting 
(\ref{eq:thm:error-on-strip-153})
(with $\varepsilon = 0$) 
with the combination of (\ref{eq:thm:error-on-strip-200}) and 
(\ref{eq:thm:error-on-strip-150})  (again with $\varepsilon = 0$) in (\ref{eq:thm:error-on-strip-5}) yields 
the desired final estimate for the case $k = 1$ if we fix $c = 2/(c_1 c_2)$.

{\em The case $k > 1$:} 
We fix an $\varepsilon  \in (0,s_0-1/2]$ arbitrary. 
{}From standard approximation results for $I^k_h$, the inverse estimates
of Lemma~\ref{lemma:5.4}, and the regularity assertion (\ref{eq:thm:error-on-strip-regularity-of-w-b})
we get (again for suitable constants $c_2$, $c_3 \in (0,1)$ and the implicit assumption that
$c$ is such that $c_3 c_2 c_1 c$ is sufficiently large)

\begin{align}
\nonumber 
& \|\widetilde\delta_\Gamma^{1/2+\varepsilon} \nabla (w - I^k_h w)\|_{L^2(\Omega\setminus S_{c_1 c h})}  
\lesssim h^2 \|\widetilde\delta_\Gamma^{1/2+\varepsilon} \nabla^3 w\|_{L^2(\Omega\setminus S_{c_2 c_1 c h}) }  \\
\nonumber 
& \stackrel{\text{Lem.}~\ref{lemma:5.4}, (\ref{item:lemma:5.4-2})}{\lesssim}  
h^2 \left[ \|\widetilde\delta_\Gamma^{-1/2+\varepsilon} \nabla^2 w\|_{L^2(\Omega\setminus S_{c_3 c_2 c_1 c h}) }  
+ \|\widetilde \delta_\Gamma^{1/2+\varepsilon} \nabla w\|_{L^2(\Omega\setminus S_{c_3 c_2 c_1 c h})}
\right] \\
\nonumber 
&\stackrel{\phantom{\text{Lem.}~\ref{lemma:5.4}, (\ref{item:lemma:5.4-2})}}{\lesssim}  h^{2-1/2+\varepsilon} 
\left[ \|\nabla^2 w\|_{L^2(\Omega\setminus S_{c_3 c_2 c_1 c h}) }  
     + \|\nabla w\|_{L^2(\Omega\setminus S_{c_3 c_2 c_1 c h})}\right] \\
& \stackrel{\text{Lem.}~\ref{lemma:5.4}, (\ref{item:lemma:5.4-4})}{\lesssim}  
h^{2-1/2+\varepsilon} \|\cutoff e\|_{L^2(\Omega)}. 
\end{align}
Combining this 
with (\ref{eq:thm:error-on-strip-153})
produces in (\ref{eq:thm:error-on-strip-5}) the desired final estimate for the case $k > 1$. 
%
%
\qed 
\end{proof}
{}From Theorem~\ref{thm:error-on-strip} we can extract optimal convergence 
estimates for the flux error $\|\partial_n  (u  - u_h)\|_{L^2(\Gamma)}$: 
\begin{corollary}
\label{cor:optimal-flux-minimal-regularity}
Let Assumption~\ref{assumption:shift-theorem} be valid. Then with the Kronecker symbol $\delta_{k,1}$
\begin{equation*}
\|\partial_n u - \partial_n u_h\|_{L^2(\Gamma)} \lesssim (1 + \delta_{k,1} |\ln h|) 
\begin{cases} 
h^k  \|u\|_{B^{k+3/2}_{2,1}(\Omega)}, & \\
h^s  \|u\|_{B^{s+3/2}_{2,\infty}(\Omega)}, & s \in (0,k),  \\
     \|u\|_{B^{3/2}_{2,1}(\Omega)}. & 
\end{cases} 
\end{equation*}
\end{corollary}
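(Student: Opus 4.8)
\emph{Strategy.} The plan is to exploit that $u\mapsto u_h$, hence $u\mapsto\partial_n(u-u_h)|_\Gamma$, is linear, and to establish only the two endpoint estimates
\[
\|\partial_n(u-u_h)\|_{L^2(\Gamma)}\lesssim (1+\delta_{k,1}|\ln h|)\,\|u\|_{B^{3/2}_{2,1}(\Omega)}
\quad\text{and}\quad
\|\partial_n(u-u_h)\|_{L^2(\Gamma)}\lesssim h^{k}(1+\delta_{k,1}|\ln h|)\,\|u\|_{B^{k+3/2}_{2,1}(\Omega)};
\]
the intermediate range $s\in(0,k)$ then follows by real interpolation with second index $\infty$, exactly as in Step~1 of the proof of Theorem~\ref{thm:error-on-strip}, since by the reiteration theorem $(B^{3/2}_{2,1}(\Omega),B^{k+3/2}_{2,1}(\Omega))_{s/k,\infty}=B^{s+3/2}_{2,\infty}(\Omega)$. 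Throughout I would work on a strip $S_{Ch}$ of fixed width $O(h)$ containing the layer of elements abutting $\Gamma$; Theorem~\ref{thm:error-on-strip} remains valid verbatim (with constants also depending on $C$) when $S_h$ is replaced by such an $S_{Ch}$, because its proof only uses that the support of the dual right-hand side $\chi_{S_h}e$ has width $O(h)$, so that Lemma~\ref{lemma:B32-regularity} still applies.

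\emph{The splitting.} First I would write $e=u-u_h=\rho+\zeta$ with $\rho:=u-I^k_h u$ and $\zeta:=I^k_h u-u_h\in V_h$; both vanish on $\Gamma$ since the Scott--Zhang operator preserves homogeneous Dirichlet data. A direct trace estimate of $\partial_n e$ would require control of $\|\nabla e\|_{L^2(S_h)}$, which is not at hand; the splitting sidesteps this because for $\zeta\in V_h$ one may first invoke the inverse estimate (\ref{eq:inverse-estimate}). Precisely, a scaled trace inequality on each boundary element $K$ with face $F\subset\Gamma$ together with (\ref{eq:inverse-estimate}) gives $\|\partial_n\zeta\|_{L^2(F)}^2\lesssim h^{-1}\|\nabla\zeta\|_{L^2(K)}^2+h\|\nabla^2\zeta\|_{L^2(K)}^2\lesssim h^{-3}\|\zeta\|_{L^2(K)}^2$, whence $\|\partial_n\zeta\|_{L^2(\Gamma)}\lesssim h^{-3/2}\|\zeta\|_{L^2(S_{Ch})}\le h^{-3/2}\big(\|\rho\|_{L^2(S_{Ch})}+\|e\|_{L^2(S_{Ch})}\big)$. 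Here $\|e\|_{L^2(S_{Ch})}$ is bounded by Theorem~\ref{thm:error-on-strip}, and $\|\rho\|_{L^2(S_{Ch})}$ by the approximation property (\ref{eq:scott-zhang-approximation}) (with $l=k$, resp. $l=0$) combined with (\ref{eq:lemma:weighted-embedding-5}) applied to $\nabla^{l+1}u$, giving $\|\rho\|_{L^2(S_{Ch})}\lesssim h^{k+3/2}\|u\|_{B^{k+3/2}_{2,1}(\Omega)}$, resp. $\lesssim h^{3/2}\|u\|_{B^{3/2}_{2,1}(\Omega)}$. Multiplying by $h^{-3/2}$ yields the two endpoint bounds for the $\zeta$-contribution.

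\emph{The approximation part $\rho$ at the endpoints.} If $u\in B^{k+3/2}_{2,1}(\Omega)\subset H^2(\Omega)$ (note $k\ge1$), a scaled trace inequality for $\nabla\rho\in H^1(K)$, (\ref{eq:scott-zhang-approximation}), and a routine Bramble--Hilbert/inverse-estimate bound for $\|\nabla^2\rho\|_{L^2(K)}$ give $\|\nabla\rho\|_{L^2(F)}^2\lesssim h^{2k-1}\|\nabla^{k+1}u\|_{L^2(\omega_K)}^2$ on each boundary element; summing and invoking (\ref{eq:lemma:weighted-embedding-5}) then produces $\|\partial_n\rho\|_{L^2(\Gamma)}\lesssim h^{k}\|u\|_{B^{k+3/2}_{2,1}(\Omega)}$. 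If only $u\in B^{3/2}_{2,1}(\Omega)$, then $\nabla u\in B^{1/2}_{2,1}(\Omega)$ has an $L^2(\Gamma)$-trace by (\ref{eq:lemma:weighted-embedding-6}); splitting $\nabla\rho=\nabla u-\nabla I^k_h u$ I would bound $\|\nabla u\|_{L^2(\Gamma)}\lesssim\|u\|_{B^{3/2}_{2,1}(\Omega)}$ by (\ref{eq:lemma:weighted-embedding-6}), while for the polynomial part a scaled trace inequality and (\ref{eq:inverse-estimate}) reduce $\|\nabla I^k_h u\|_{L^2(\Gamma)}$ to $h^{-1/2}\|\nabla I^k_h u\|_{L^2(S_{Ch})}\lesssim h^{-1/2}\big(\|\nabla\rho\|_{L^2(S_{Ch})}+\|\nabla u\|_{L^2(S_{Ch})}\big)\lesssim\|u\|_{B^{3/2}_{2,1}(\Omega)}$, using $H^1$-stability of $I^k_h$ and (\ref{eq:lemma:weighted-embedding-5}). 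Adding the $\rho$- and $\zeta$-contributions gives both endpoint estimates, and the interpolation step completes the proof.

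\emph{Main obstacle.} The key insight is that $\partial_n e$ cannot be estimated on $\Gamma$ directly, since that would demand a bound for $\|\nabla e\|_{L^2(S_h)}$ not supplied by Theorem~\ref{thm:error-on-strip}; the splitting $e=\rho+\zeta$ is what turns, via the inverse estimate on $\zeta\in V_h$, the missing gradient control into the available $L^2(S_{Ch})$ control. Beyond this, the only genuinely delicate point is the low-regularity endpoint $u\in B^{3/2}_{2,1}(\Omega)$, where $u\notin H^2$ and the trace of $\nabla\rho$ must be treated through (\ref{eq:lemma:weighted-embedding-6}) and an inverse estimate rather than a classical $H^2$-trace argument; the (routine) check that Theorem~\ref{thm:error-on-strip} extends to strips $S_{Ch}$ of width $O(h)$ is also needed.
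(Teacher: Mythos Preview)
Your proposal is correct and follows essentially the same approach as the paper: the same splitting $e=(u-I^k_h u)+(I^k_h u-u_h)$, the same inverse-estimate reduction of the discrete part to $h^{-3/2}\|I^k_h u-u_h\|_{L^2(S_h)}$ followed by Theorem~\ref{thm:error-on-strip}, and the same treatment of the interpolation part $\rho$ at the two endpoints (multiplicative/scaled trace for maximal regularity, the split $\nabla\rho=\nabla u-\nabla I^k_h u$ with (\ref{eq:lemma:weighted-embedding-6}) and an inverse estimate for minimal regularity). The only organizational difference is that you interpolate once for the full linear map $u\mapsto\partial_n(u-u_h)|_\Gamma$, whereas the paper interpolates only for the $\rho$-term and invokes the already-interpolated Theorem~\ref{thm:error-on-strip} for the $\zeta$-term; both are valid.
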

\begin{proof}
Structurally, the proof follows \cite[Cor.~{6.1}]{melenk-wohlmuth12} in that estimating 
the error on $\Gamma$ is transferred to an estimate on the strip $S_h$. The triangle 
inequality gives 
\begin{equation}
\label{eq:cor:optimal-flux-minimal-regularity-10}
\|\partial_n (u - u_h)\|_{L^2(\Gamma)} \leq 
\|\partial_n (u - I^k_h u)\|_{L^2(\Gamma)} + 
\|\partial_n (I^k_h u - u_h)\|_{L^2(\Gamma)}. 
\end{equation}
The two terms in (\ref{eq:cor:optimal-flux-minimal-regularity-10}) are estimated separately.

{\em 1.~step:} We claim that 
\begin{equation}
\label{eq:cor:optimal-flux-minimal-regularity-20}
\|\partial_n (u - I^k_h u)\|_{L^2(\Gamma)} 
\lesssim 
\begin{cases}
h^k \|u\|_{B^{k+3/2}_{2,1}(\Omega)} \\
h^s \|u\|_{B^{s+3/2}_{2,\infty}(\Omega)}, \quad s \in (0,k), \\
 \|u\|_{B^{3/2}_{2,1}(\Omega)}.
\end{cases}
\end{equation}
We will only show the limiting cases $u \in B^{k+3/2}_{2,1}(\Omega)$ and $u \in B^{3/2}_{2,1}(\Omega)$; 
the intermediate cases follow by an interpolation argument similar to the one used in the proof of 
Theorem~\ref{thm:error-on-strip}. For the case of maximal regularity, we 
use an elementwise multiplicative trace inequality for the elements abutting 
$\Gamma$ to get 
\begin{align*}
& \|\partial_n (u - I^k_h u)\|_{L^2(\Gamma)}  \lesssim 
h^{k/2}\sqrt{ \|\nabla^{k+1} u\|_{L^2(S_{2h})}} 
h^{(k-1)/2} \sqrt{\|\nabla^{k+1} u\|_{L^2(S_{2h})}}  \\
&\lesssim h^{k-1/2} \|\nabla^{k+1} u\|_{L^2(S_{2h})}
\stackrel{(\ref{eq:lemma:weighted-embedding-5})}{\lesssim} h^k \|\nabla^{k+1} u\|_{B^{1/2}_{2,1}(\Omega)}
\lesssim h^k \|u\|_{B^{k+3/2}_{2,1}(\Omega)}. 
\end{align*}
For the case of minimal regularity, $u \in B^{3/2}_{2,1}(\Omega)$ we first note 
that we obtain from (\ref{eq:lemma:weighted-embedding-6}) that 
$\|v\|_{L^2(\Gamma)} \lesssim \|v\|_{B^{1/2}_{2,1}(\Omega)}$. 
Using this and inverse estimates, we get  
\begin{align*}
\|\partial_n (u - I^k_h u)\|_{L^2(\Gamma)} 
&\leq 
\|\partial_n u\|_{L^2(\Gamma)} + \|\partial_n I^k_h u\|_{L^2(\Gamma)}  \\
&\lesssim 
\|\nabla u\|_{B^{1/2}_{2,1}(\Omega)} + h^{-1/2} \|\nabla I^k_h u\|_{L^2(S_h)} \\
&\stackrel{I^k_h \text{stable}}{\lesssim}
\|\nabla u\|_{B^{1/2}_{2,1}(\Omega)} + h^{-1/2} \|\nabla u\|_{L^2(S_{2h})} 
\stackrel{(\ref{eq:lemma:weighted-embedding-5})}{\lesssim}
\|\nabla u\|_{B^{1/2}_{2,1}(\Omega)}. 
\end{align*}

{\em 2.~step:} The term 
$\|\partial_n (I^k_h u - u_h)\|_{L^2(\Gamma)}$ in 
(\ref{eq:cor:optimal-flux-minimal-regularity-10}) is controlled with inverse estimates and 
Theorem~\ref{thm:error-on-strip} as follows: 
\begin{align*}
\|\partial_n (I^k_h u - u_h)\|_{L^2(\Gamma)}
&\lesssim h^{-1/2} \|\nabla(I^k_h u - u_h)\|_{L^2(S_h)} 
 \lesssim h^{-3/2} \|I^k_h u - u_h\|_{L^2(S_h)}  \\
&\lesssim h^{-3/2} \|u - I^k_h u\|_{L^2(S_h)} + h^{-3/2} \|u - u_h\|_{L^2(S_h)}. 
\end{align*}
The term $\|u - I^k_h u\|_{L^2(S_h)}$ can be controlled with the approximation properties 
of $I^k_h$ in the desired fashion: 
$\|u - I^k_h u\|_{L^2(S_h)} \lesssim h \|\nabla u\|_{L^2(S_{2h})} 
\lesssim h^{3/2} \|\nabla u\|_{B^{1/2}_{2,1}(\Omega)}$. 
The contribution $\|u - u_h\|_{L^2(S_h)}$ is 
estimated with the aid of Theorem~\ref{thm:error-on-strip}.  
\qed
\end{proof}
\section{Extension of the results of \protect{\cite{melenk-wohlmuth12}}}
\label{sec:melenk-wohlmuth12}
The arguments of the present paper are similar to those underlying 
\cite{melenk-wohlmuth12}, in spite of the fact that we did not employ
the anisotropic norms that we introduced in \cite{melenk-wohlmuth12}
but instead worked with weighted Sobolev spaces. 
A feature of the analysis here that was not 
present in \cite{melenk-wohlmuth12} is our FEM error analysis in 
Lemma~\ref{lemma:weighted-L2-estimate} for a weighted $L^2$-estimate, which, in 
turn, relies on the regularity assertions of Lemma~\ref{lemma:regularity-weighted-rhs} 
for problems with data in weighted spaces. 
This additional technical issue was circumvented
in \cite{melenk-wohlmuth12} by assuming convexity of $\Omega$ so that
optimal order $L^2$-estimates could be cited from the literature. The present 
analysis provides the necessary technical tools to remove this simplification in 
\cite{melenk-wohlmuth12}, where a more complex mortar setting is analyzed. 
It is possible to make use of weighted $L^2$-estimates similar to those of 
Lemma~\ref{lemma:weighted-L2-estimate} in the setting of \cite{melenk-wohlmuth12}. 
For that, regularity results of the type provided in 
Lemma~\ref{lemma:regularity-weighted-rhs} have to be used. 
The outcome of this refinement is that the main results of \cite{melenk-wohlmuth12},
namely, \cite[Thm.~{2.1}]{melenk-wohlmuth12}, which provides $L^2$-estimates 
on strips of width $O(h)$ around the skeleton,  and \cite[Thm.~{2.5}]{melenk-wohlmuth12},
which provides optimal order approximations for the mortar variable, hold true
if the geometry is such that Assumption~\ref{assumption:shift-theorem} is valid. 
We will not provide the details of the arguments here and refer to 
\cite[Appendix~\appB]{melenk-wohlmuth14a} instead. Nevertheless, for future reference
we record the end result: 

\begin{theorem}
\label{thm:generalize-melenk-wohlmuth12-vorn}
In \cite[Thms.~{2.1}, {2.5}]{melenk-wohlmuth12}, the assumption
of convexity of $\Omega$ can be replaced with
\cite[Assumption~{(5.2)}]{melenk-wohlmuth12}. 
\end{theorem}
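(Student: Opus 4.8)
The plan is to revisit the proofs of \cite[Thms.~{2.1} and {2.5}]{melenk-wohlmuth12} and to pinpoint where convexity of $\Omega$ is actually used. In both proofs the convexity hypothesis enters at a single juncture: it guarantees full $H^2$-regularity for a dual problem (and, for the mortar variable, additionally for a bidual problem), and through the classical Aubin--Nitsche argument this yields an $O(h^2)$ bound in $L^2(\Omega)$ for the Galerkin error of that auxiliary problem, which is then fed into the local (Wahlbin-type) error analysis on strips around the skeleton. All the other ingredients of \cite{melenk-wohlmuth12} --- the interior and local error estimates, the covering arguments, the multiplicative trace and inverse inequalities, the anisotropic bookkeeping --- are insensitive to convexity.

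First I would replace this single ingredient by the weighted $L^2$-estimates of Lemma~\ref{lemma:weighted-L2-estimate}, which are available under Assumption~\ref{assumption:shift-theorem}, i.e., \cite[Assumption~{(5.2)}]{melenk-wohlmuth12}, in place of convexity. Concretely, wherever \cite{melenk-wohlmuth12} invokes a bound $\|w-w_h\|_{L^2(\Omega)} \lesssim h^2\|e\|_{L^2(\Omega)}$ for the dual solution $w = Te$, one carries along instead the weighted bounds $\|\widetilde\delta_\Gamma^{-1/2}(w-w_h)\|_{L^2(\Omega)} \lesssim h^{3/2}|\ln h|^{1/2}\|e\|_{L^2(\Omega)}$ and $\|\delta_\Gamma^{-1/2+\varepsilon}(w-w_h)\|_{L^2(\Omega)} \lesssim h^{3/2+\varepsilon}\|e\|_{L^2(\Omega)}$; this is exactly the substitution carried out in Theorems~\ref{thm:optimal-L2} and \ref{thm:error-on-strip} of the present note. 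The weighted regularity assertions of Lemma~\ref{lemma:regularity-weighted-rhs} supply the right-hand-side estimates needed to run the duality argument for the dual problem \emph{with a weighted right-hand side} --- the one technical step genuinely absent from \cite{melenk-wohlmuth12}.

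Next I would check that the local error analysis of \cite[Sec.~{5.3}]{wahlbin95} used in \cite{melenk-wohlmuth12} tolerates the weighting by $\delta_\Gamma$ (respectively by $\delta_\singular$ near the singular part of the skeleton): the key local estimate \eqref{eq:wahlbin-1}, its up-to-the-boundary variant \eqref{eq:wahlbin-1-up-to-bdy}, and the covering arguments of Theorems~\ref{thm:covering-2D} and \ref{thm:covering-3D} combine into weighted global bounds of the type \eqref{eq:wahlbin-10}, which are precisely the estimates required near the skeleton in \cite{melenk-wohlmuth12}. For \cite[Thm.~{2.5}]{melenk-wohlmuth12} one additionally has to treat the bidual problem, used there to estimate a term of the form $\|\widetilde\delta_\Gamma^{-1/2+\varepsilon}(w-w_h)\|_{L^2(\Omega)}$; this is done by invoking Assumption~\ref{assumption:shift-theorem} for the bidual problem with weighted right-hand side, exactly as in the proof of Theorem~\ref{thm:optimal-L2}, so again no convexity is needed.

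The hard part will be the bookkeeping rather than any new idea: at each place where \cite{melenk-wohlmuth12} plugged in the $O(h^2)$ $L^2$-bound for the dual error, one must verify that the weaker weighted bounds still close the estimate with the same final convergence rate, and that the only logarithmic loss appears in the lowest-order case $k=1$. The delicate point is the mortar skeleton, where the dual right-hand side is supported in an $O(h)$-neighborhood of a $(d-1)$-dimensional set; one has to balance the $h$-powers produced by the support-localized regularity estimates (of the type \eqref{eq:lemma:B32-regularity-2} and \eqref{eq:lemma:B32-regularity-3}) against the weights $\widetilde\delta_\Gamma$ and $\widetilde\delta_\singular$, exactly as in Theorem~\ref{thm:error-on-strip}. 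The full details of this bookkeeping are worked out in \cite[Appendix~\appB]{melenk-wohlmuth14a}.
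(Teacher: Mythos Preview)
Your high-level diagnosis is correct and matches the paper's approach: convexity in \cite{melenk-wohlmuth12} is used only to obtain full $H^2$-regularity for a dual (and bidual) problem, and this can be replaced by the weighted $L^2$-estimates and the regularity results for weighted right-hand sides developed here, all of which rest only on Assumption~\ref{assumption:shift-theorem}. You also correctly defer the details to \cite[Appendix~\appB]{melenk-wohlmuth14a}.

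Two points of correction, however. First, you have the allocation of work between the two theorems reversed. The bidual problem (the duality argument with right-hand side $\widetilde\delta_\Gamma^{-1}(w-w_h)$) is needed for the proof of \cite[Thm.~{2.1}]{melenk-wohlmuth12}, specifically inside the generalization of \cite[Lemma~{5.5}]{melenk-wohlmuth12} that controls $\|\nabla(w-w_h)\|_{L^2(\Gamma;L^1)}$. By contrast, the proof of \cite[Thm.~{2.5}]{melenk-wohlmuth12} stands essentially as written once \cite[Thm.~{2.1}]{melenk-wohlmuth12} is available; no separate bidual argument is required there.

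Second, you cannot simply quote Lemma~\ref{lemma:weighted-L2-estimate} in the mortar setting. That lemma assumes conforming Galerkin orthogonality $a(z,v)=0$, whereas in the mortar formulation the orthogonality relation involves the consistency form $b(\cdot,\cdot)$ and the Lagrange multiplier. The weighted $L^2$-estimate must therefore be re-derived in the mortar context: one sets up the bidual $\psi = T^D(\widetilde\delta_\Gamma^{-1}(w-w_h))$ with its own multiplier $\lambda_\psi$, uses the saddle-point Galerkin orthogonality, and controls the extra terms $b(I\psi,\lambda_w)$ and $b(w-w_h,\lambda_\psi)$ via the $L^2$-projection $\Pi^{L^2}_{M_h}$ and the approximation property of the constrained space. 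This is genuine additional work beyond what you sketch, though it follows the template of the conforming argument closely.
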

\section{Numerical results}
\label{sec:numerics}
We consider the simple model equation $- \Delta u=f$ in $\Omega
\subset {\mathbb R}$, $d \in \{2,3\}$ with inhomogeneous Dirichlet
boundary conditions. These are realized numerically by nodal interpolation 
of the prescribed exact solution $u$, and the data $f$ is also computed 
from $u$.
In the case of a non-smooth solution, we use a suitable quadrature
formula on finer meshes to guarantee that the $L^2$-error is
accurately evaluated. 
\subsection{Two-dimensional results}
We use a sequence of uniformly refined triangular meshes, where each element 
is split into four triangles.
\subsubsection{Lowest order discretization}
We consider two typical domains for reentrant corners. We start
with the L-shaped domain $(-1,1)^2 \setminus [0,1] \times [-1,0]$ and
then consider a slit domain 
$(-1,1)^2 \setminus \left( (0,1) \times \{0\}\right)$. 
In both cases, the prescribed solution is given in polar
coordinates by
$ u(r,\phi) = r^\alpha \sin (a \phi)$ where $\alpha$, $a$ are given
parameters. For non-integer $\alpha$, we have $u \in B^{1+\alpha}_{2,\infty}(\Omega)$ by 
\cite[Thm.~{2.1}]{babuska-osborn91}. 
Moreover, we test the influence of the position $(x_0,y_0)$ of the weak
singularity at $r=0$ by defining $r^2 := (x-x_0)^2 + (y-y_0)^2$.
We note that irrespective of the location $(x_0,y_0)$ of the singularity
on the boundary $\Gamma$, we have $u \in B^{1+\alpha}_{2,\infty}(\Omega) 
\subset H^{1+\alpha-\varepsilon}(\Omega)$ for any $\varepsilon >0$. 

\begin{table}[ht]
\begin{center}\begin{tabular}{|r||c|c||c|c||c|c|}
\hline
&\multicolumn{2}{|c||}{$(x_0,y_0) = (0,0)$}
&\multicolumn{2}{|c||}{$(x_0,y_0) = (0.5,0)$}
&\multicolumn{2}{|c|}{$(x_0,y_0) = (0,1)$}  \\
&\multicolumn{2}{|c||}{$a = \pi/2$}
&\multicolumn{2}{|c||}{$a=\pi$}
&\multicolumn{2}{|c|}{$a=\pi$}  \\
\hline
DOFs&$L^2$-error  & rate &$L^2$-error  & rate & $L^2$-error & rate  \\ \hline
81     		&6.1585e-03  &  -         &6.8141e-03    & -          &6.2506e-03       & -   \\
289    		&2.6986e-03  &1.19     &   2.5648e-03 &1.41     &      2.1211e-03 &1.56    \\
1.089   		&1.1123e-03  &1.28     &   8.8428e-04 &1.54     &      6.7413e-04 &1.65    \\
4.225   		&4.4037e-04  &1.34     &   2.9202e-04 &1.60     &      2.0903e-04 &1.69    \\
16.641  	&1.7107e-04  &1.36     &   9.4164e-05 &1.63     &      6.4027e-05 &1.71    \\
66.049  	&6.5689e-05  &1.38     &   2.9909e-05 &1.65     &      1.9471e-05 &1.72    \\
263.169 	&2.5030e-05  &1.39     &   9.4012e-06 &1.67     &      5.8930e-06 &1.72    \\
1.050.625	&9.4877e-06  &1.40     &   2.9328e-06 &1.68     &      1.7774e-06 &1.73    \\
4.198.401	&3.5834e-06  &1.40     &   9.0968e-07 &1.69     &      5.3475e-07 &1.73    \\
  \hline
\end{tabular}
   \caption{L-shaped domain, $k=1$: Influence of the position of singularity for $\alpha = 0.75$.}
\label{tab:lshape1}
\end{center}
\end{table}

For the L-shaped domain, the shift parameter $s_0$ can be taken to be any 
$s_0 < 2/3$. 
{}From the theoretical results in Section~\ref{sec:FEM-L2}, we therefore expect the error
decay to have a rate of at least $
\min(2,1+\alpha - 1/3)$ uniformly in the position $(x_0,y_0)$ of the singularity. 
Table~\ref{tab:lshape1} shows the numerical results
for $\alpha = 0.75$ and $a=2/3\pi$, for which 
$\min(2,1+\alpha - 1/3) = 1.417$. As it can be seen for $(x_0,y_0) =(0,0)$,
we observe a good agreement with Theorem~\ref{thm:optimal-L2}.
However for the locations $(x_0,y_0)
=(0.5,0)$ and  $(x_0,y_0)
=(0,1)$, the rates are substantially better.
This can be explained by the more refined analysis of Section~\ref{sec:L2-error-polygon}. Using
  Corollary~\ref{cor:2D-L2-concrete}, we expect an improved convergence
    rate of $1.75$ for these
cases.

\begin{table}[ht]
\begin{center}\begin{tabular}{|r||c|c||c|c||c|c|}
\hline
&\multicolumn{2}{|c||}{$\alpha = 10/9$}
&\multicolumn{2}{|c||}{$\alpha =4/3$}
&\multicolumn{2}{|c|}{$\alpha = 3/2$}  \\
\hline
DOFs		&$L^22$-error  & rate &$L^2$-error  & rate & $L^2$-error & rate  \\ \hline
81     		&6.5660e-03  &  -       &   8.6776e-03   &  -     	 & 8.9932e-03  &  - 		 		\\
289    		&2.3309e-03  &1.49   & 	2.8523e-03  & 1.61    & 2.8151e-03  &1.68  			 \\
1.089   		&7.3413e-04  &1.67   & 	8.2870e-04  & 1.78    &	7.8034e-04  &1.85			 \\
4.225   		&2.2257e-04  &1.72   & 	2.3073e-04  & 1.84 	 &	2.0751e-04  &1.91			 \\
16.641  	&6.5650e-05  &1.76   & 	6.2539e-05  & 1.88    &	5.3910e-05  &1.94			 \\
66.049  	&1.9056e-05  &1.78   & 	1.6688e-05  & 1.91    &	1.3835e-05  &1.96 			 \\
263.169 	&5.4810e-06  &1.80   & 	4.4099e-06  & 1.92    &	3.5256e-06  &1.97		 	\\
1.050.625	&1.5690e-06  &1.80   & 	1.1580e-06  & 1.93    &	8.9467e-07  &1.98			 \\
4.198.401	&4.4822e-07  &1.81   &   3.0279e-07  & 1.94   	 & 2.2641e-07  &1.98     		\\
 \hline
\end{tabular}
   \caption{L-shaped domain, $k=1$: Influence of exponent $\alpha $ for
     $a=2/3 \pi$ and $(x_0,y_0) =(0,0)$.}
\label{tab:lshape2}
\end{center}
\end{table}

Table~\ref{tab:lshape2} shows the results for $(x_0,y_0) =(0,0) $ and
$\alpha \in \{10/9,4/3,3/2\}$. From Theorem~\ref{thm:optimal-L2}, we
expect convergence rates of $ 1.78$, $2$, and $2$, respectively. The
observed numerical rates of $ 1.81$, $1.94$, and $1.98$ are quite close.

The situation is similar for the slit domain where the 
regularity of the dual problem is even further reduced. It corresponds
to a limiting case of our theory, which, strictly speaking, we did not cover, 
since the parameter $s_0$ of Assumption~\ref{assumption:shift-theorem} may be taken to be any
$s_0 < 1/2$. Nevertheless, one expects from Theorem~\ref{thm:optimal-L2} 
a convergence rate close to $\min\{2,1+\alpha-1/2\}$. For $\alpha = 0.75$ this is $1.25$, which is  visible in Table~\ref{tab:slit-shape1} for the case $(x_0,y_0) = (0,0)$. 
Again, the better convergence behavior for $(x_0,y_0) = (0.5,0)$ and 
$(x_0,y_0) = (0,1)$ can be explained by the theory of 
Corollary~\ref{cor:2D-L2-concrete}, which predicts $1+\alpha = 1.75$. 
Table~\ref{tab:slit-shape2} shows the results for $(x_0,y_0) =(0,0) $ and
$\alpha \in \{10/9,4/3,3/2\}$. From  Theorem~\ref{thm:optimal-L2}, we
expect convergence rates of $ 1.61$, $1.83$ and $2$, respectively. The
observed numerical rates of $ 1.65$, $1.86$, and $1.96$ are reasonably close to
these predictions.

\begin{table}[ht]
\begin{center}\begin{tabular}{|r||c|c||c|c||c|c|}
\hline
&\multicolumn{2}{|c||}{$(x_0,y_0) = (0,0)$}
&\multicolumn{2}{|c||}{$(x_0,y_0) = (0.5,0)$}
&\multicolumn{2}{|c|}{$(x_0,y_0) = (0,1)$}  \\
&\multicolumn{2}{|c||}{$a = \pi/2$}
&\multicolumn{2}{|c||}{$a=\pi$}
&\multicolumn{2}{|c|}{$a=\pi$}  \\
\hline
DOFs&$L^2$-error  & rate &$L^2$-error  & rate & $L^2$-error & rate  \\ \hline
97     		&6.1391e-03  &  -         &    1.1088e-02& -          & 1.0692e-02       & -        \\
348    		&2.8187e-03  &  1.12   &    4.1329e-03&  1.42   &      3.8553e-03  & 1.47   \\
1.315   		&1.2351e-03  &  1.19   &    1.4164e-03&  1.54   &      1.3388e-03  & 1.53   \\
5.109   		&5.3338e-04  &  1.21   &    4.7830e-04&  1.57   &      4.4562e-04  & 1.59   \\
20.137  	&2.2846e-04  &  1.22   &    1.4725e-04&  1.70   &      1.4420e-04  & 1.63   \\
79.953  	&9.7267e-05  &  1.23   &    4.6683e-05&  1.66   &      4.5843e-05  & 1.65   \\
318.625 	&4.1233e-05  &  1.24   &    1.4761e-05&  1.66   &      1.4401e-05  & 1.67   \\
1.272.129	&1.7428e-05  &  1.24   &    4.3773e-06&  1.75   &      4.4861e-06  & 1.68   \\
5.083.777	&7.3524e-06  &  1.25   &    1.3285e-06&  1.72   &      1.3889e-06  & 1.69   \\
  \hline
\end{tabular}
   \caption{Slit domain, $k=1$: Influence of the position of singularity for $\alpha = 0.75$.}
\label{tab:slit-shape1}
\end{center}
\end{table}

\begin{table}[ht]
\begin{center}\begin{tabular}{|r||c|c||c|c||c|c|}
\hline
&\multicolumn{2}{|c||}{$\alpha = 10/9$}
&\multicolumn{2}{|c||}{$\alpha =4/3$}
&\multicolumn{2}{|c|}{$\alpha = 3/2$}  \\
\hline
DOFs		&$L^2$-error  & rate &$L^2$-error  & rate & $L^2$-error & rate  \\ \hline
97     		&5.7534e-03  &  -       		&  7.3549e-03  &  -     	  	&  7.5901e-03  &  -     \\
348    		&1.9412e-03  &  1.57    	&  2.2414e-03  &  1.71    	&  2.1664e-03  & 1.81  \\
1.315   		&6.2583e-04  &  1.63    	&  6.4849e-04  &  1.79    	&  5.8638e-04  & 1.89  \\
5.109   		&1.9689e-04  &  1.67    	&  1.8251e-04  &  1.83    	& 	 1.5450e-04  & 1.92  \\
20.137  	&6.1446e-05  &  1.68    	&  5.0718e-05  &  1.85    	&  4.0197e-05  & 1.94  \\
79.953  	&1.9191e-05  &  1.68    	&  1.4021e-05  &  1.85    	& 	 1.0396e-05  & 1.95  \\
318.625 	&6.0229e-06  &  1.67    	&  3.8699e-06  &  1.86    	&  2.6803e-06  & 1.96  \\
1.272.129	&1.9023e-06  &  1.66   	&  1.0682e-06  &  1.86    	& 	 6.8978e-07  & 1.96  \\
5.083.777	&6.0474e-07  &  1.65  	&  2.9514e-07  &  1.86 		&   1.7730e-07  & 1.96  \\
  \hline
\end{tabular}
   \caption{Slit domain, $k=1$: Influence of  exponent $\alpha $ for
     $a=1/2 \pi$ and  $(x_0,y_0) =(0,0)$}
\label{tab:slit-shape2}
\end{center}
\end{table}
 \subsubsection{Second order finite elements}
In this subsection, we test the performance of quadratic finite
elements for the L-shaped domain. We use the same type of solution as
before and vary the parameter $\alpha$ for $(x_0,y_0) = (0,0)$, i.e., the
re-entrant corner. Here we expect from our theory a
convergence rate of $\min(3 , \alpha+1-1/3)$. For $\alpha \in \{2.175, 2.275, 2.375\}$, the observed numerical rates, which are visible in Table \ref{tab:quadratic}, are very close to the theoretically predicted ones.

\begin{table}[ht]
\begin{center}\begin{tabular}{|r||c|c||c|c||c|c|}
\hline
&\multicolumn{2}{|c|}{$\alpha = 2.175$}
&\multicolumn{2}{|c|}{$\alpha =2.275$}
&\multicolumn{2}{|c|}{$\alpha = 2.375$}  \\
\hline
DOFs		&L2 error  & rate &L2 error  & rate & L2 error  & rate  \\ \hline
289   		&2.7565e-04  & -         &		    2.4570e-04 & -  			&  2.2177e-04    & -   	    \\
1.089  		&5.1121e-05  &2.43    &         4.1696e-05  &2.56      &  3.3912e-05    &  2.71\\
4.225  		&7.5320e-06  &2.76    &         5.7319e-06  &2.86      & 4.3221e-06     &  2.97\\
16.641 		&1.1051e-06  &2.77    &         7.8407e-07  &2.87      &  5.4888e-07    &  2.98\\
66.049 		&1.5938e-07  &2.79    &         1.0553e-07  &2.89      &  6.8762e-08    &  3.00\\
263.169	&2.2723e-08  &2.81    &         1.4044e-08  &2.91      &  8.5292e-09    &  3.01\\
1.050.625	&3.2138e-09  &2.82    &         1.8538e-09  &2.92      &  1.0497e-09    &  3.02\\
\hline
\end{tabular}
  \caption{L-shaped domain, $k=2$: Influence of  $\alpha $ for
     $a=2/3 \pi$ and $(x_0,y_0) =(0,0)$.}
\label{tab:quadratic}
\end{center}
\end{table}

\subsection{Three-dimensional results}
In the three dimensional setting, we consider a Fichera corner 
$\Omega := (-1,1)^3 \setminus [0,1]^3$ and prescribe 
the smooth solution $u(x,y,z) := \sin((x+y)\pi) \cos(2\pi z)$. The inhomogeneous 
Dirichlet conditions are realized by nodal interpolation. The discretization
is based on trilinear finite elements on hexahedra and uniform refinements. 
Although the dual problem lacks full regularity, Theorem~\ref{thm:optimal-L2} 
asserts that this can be compensated by extra $s_0$ regularity of the 
primal solution to maintain full second order convergence in $L^2$. 

\begin{table}[ht]
\begin{center}
\begin{tabular}{|r|c|c|}
\hline
DOF & $L^2$-error & rate\\ \hline 
316  &   0.075444 &    - \\
3.032  &   0.017182   &  1.96 \\
26.416  &   0.0039376  &   2.04 \\
220.256  &   0.00094597 &    2.02 \\
1.798.336   &  0.00023208  &   2.01 \\
14.532.992  &   5.7491e-05   &  2.00 \\ \hline
\end{tabular}
\caption{
\label{tab:fichera}
Fichera corner, $k=1$: $L^2$-error for a smooth solution.}
\end{center}
\end{table}

Table~\ref{tab:fichera} shows that we observe numerically already for 
coarse discretizations the predicted convergence order two, and the theoretical
results are confirmed. 

\iftechreport
\medskip
We point the reader to Appendix~\ref{sec:detailed-numerics} for 
a further numerical results. 
\fi


\appendix
\normalsize
\section{Coverings} 
\label{sec:besicovitch}
In this appendix, the distance $\operatorname*{dist}(x,M)$ for 
some set $M$ appears frequently. For notational convenience, 
we set $\operatorname*{dist}(x,\emptyset) = 1$ to include the 
degenerate case $M = \emptyset$. 

We quote from \cite[Lemma~{A.1}]{melenk-wohlmuth12}:
\begin{lemma}
\label{lemma:besicovitch-balls}
Let $\Omega\subset\BbbR^d$ be bounded open 
and $M=\overline{M} $ be a closed set. 
Fix $c \in (0,1)$ and $\varepsilon \in (0,1)$ such that 
\begin{equation}
\label{eq:c-varepsilon-condition}
1 - c(1+ \varepsilon) =: c_0 > 0. 
\end{equation}
For each $x \in \Omega$, let $B_x:= \overline{B}_{c\operatorname*{dist}(x,M)}(x)$
be the closed ball of radius $c \operatorname*{dist}(x,M)$ centered at $x$, 
and let 
$\widehat B_x:= 
\overline{B}_{(1+\varepsilon) c \operatorname*{dist}(x,M)}(x)$ denote 
the stretched (closed) ball of radius $(1+\varepsilon)c \operatorname*{dist}(x,M)$ also centered at $x$. 

Then there exists a countable set $x_i \in \Omega$, $i\in\BbbN$, 
and a constant $N \in \BbbN$ depending solely on the spatial dimension 
$d$ with the following properties: 
\begin{enumerate}
\item 
(covering property) $\cup_{i \in \BbbN} B_{x_i} \supset \Omega$; 
\item
(finite overlap on $\Omega$) for each $x \in \Omega$, there holds 
$\operatorname*{card} \{i\,|\, x \in \widehat B_{x_i} \} \leq N$. 
\end{enumerate}
\end{lemma}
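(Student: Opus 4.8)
\emph{Proof proposal.} The plan is to obtain $\{x_i\}$ as a maximal net in $\Omega$ that is \emph{separated at the scale dictated by the distance to $M$}. Write $\rho(x):=\operatorname*{dist}(x,M)$; it is $1$-Lipschitz and, since $\Omega$ is bounded, bounded on $\Omega$. I may assume $\rho>0$ on $\Omega$, which is the case in all applications, where $M\subset\partial\Omega$; the degenerate case $M=\emptyset$ (where $\rho\equiv1$) is the one-scale special case of what follows. Fix $\lambda:=c$ and call $S\subset\Omega$ \emph{sparse} if $|y-y'|>\lambda\min(\rho(y),\rho(y'))$ for all distinct $y,y'\in S$. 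The union of a chain of sparse sets is sparse, so Zorn's lemma yields a maximal sparse set $S=\{x_i\}_i$. This $S$ is countable: writing $\Omega=\bigcup_n K_n$ with $K_n:=\{x\in\Omega:\rho(x)\ge1/n,\ |x|\le n\}$ compact, any two distinct points of $S\cap K_n$ are more than $\lambda/n$ apart, hence $S\cap K_n$ is finite.

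I would then check the two assertions. \emph{Covering:} if $x\in\Omega\setminus S$, maximality says $S\cup\{x\}$ is not sparse, so there is $x_i\in S$ with $|x-x_i|\le\lambda\min(\rho(x),\rho(x_i))\le c\,\rho(x_i)$, i.e.\ $x\in B_{x_i}$; and any $x\in S$ lies in its own ball. Hence $\Omega\subset\bigcup_i B_{x_i}$. The feature of $S$ that enters the overlap estimate is the \emph{separation} $|x_i-x_j|>\lambda\min(\rho(x_i),\rho(x_j))$ for $i\neq j$, which is exactly sparseness.

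\emph{Finite overlap of the stretched balls:} fix $x\in\Omega$ and put $I:=\{i:x\in\widehat B_{x_i}\}$. For $i\in I$ we have $|x-x_i|\le(1+\varepsilon)c\,\rho(x_i)$, and combining this with $|\rho(x)-\rho(x_i)|\le|x-x_i|$ gives $c_0\,\rho(x_i)\le\rho(x)\le(1+(1+\varepsilon)c)\,\rho(x_i)$ --- here the hypothesis $c_0=1-c(1+\varepsilon)>0$ is used decisively. Thus each $\rho(x_i)$, $i\in I$, is comparable to $\rho(x)$, and $|x-x_i|\le\frac{(1+\varepsilon)c}{c_0}\,\rho(x)$, so all these centers lie in a ball about $x$ of radius $O(\rho(x))$ while being pairwise separated by at least a fixed multiple of $\rho(x)$. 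A volume-packing estimate (the disjoint balls of radius half the separation centered at the $x_i$, $i\in I$, all sit inside a ball about $x$ of radius $O(\rho(x))$) then bounds $\operatorname*{card}I$ by a constant $N$ depending only on $d$, $c$, and $\varepsilon$ (hence only on $d$ once $c$, $\varepsilon$ are fixed, as in the application).

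The one point requiring care is the choice of the scale-adapted sparseness: it must be fine enough that maximality forces the \emph{small} balls $B_{x_i}$ to cover $\Omega$, yet --- together with $c_0>0$ --- coarse enough that on each stretched ball $\widehat B_{x_i}$ the weight $\rho$ varies by at most a bounded factor, which confines the centers relevant to a fixed point to a single scale and to a region of diameter $O(\rho(x))$; that is exactly the setting in which a packing argument yields a dimension-only overlap bound. The remaining ingredients --- $1$-Lipschitz continuity of $\rho$, Zorn's lemma, the packing count --- are routine.
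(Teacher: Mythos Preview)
Your argument is correct. The paper does not actually prove this lemma; it simply quotes \cite[Lemma~A.1]{melenk-wohlmuth12} and remarks that the hypothesis $M\subset\overline{\Omega}$ made there is inessential. The underlying proof in that reference proceeds via the Besicovitch covering theorem, whereas you give a self-contained Vitali-type construction: a maximal $\rho$-adapted sparse set obtained by Zorn's lemma, followed by a packing count. Both approaches ultimately hinge on the same mechanism---the condition $c_0=1-c(1+\varepsilon)>0$ forces $\rho$ to be essentially constant on each stretched ball, which localizes the problem to a single scale where a volume comparison finishes. Your route is more elementary (no appeal to Besicovitch) and makes the role of $c_0>0$ completely explicit; the Besicovitch route has the advantage that it directly yields the overlap bound $N(d)$ for the \emph{unstretched} balls $B_{x_i}$ without the packing step, though for the stretched balls $\widehat B_{x_i}$ it too requires a comparability argument of the kind you give, and the resulting $N$ again depends on $c$ and $\varepsilon$. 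Two minor points: your sets $K_n$ need only be bounded, not compact, for the finiteness of $S\cap K_n$ (and indeed they are not closed in $\BbbR^d$ in general); and your honest observation that $N=N(d,c,\varepsilon)$ is correct---the lemma's phrase ``solely on the spatial dimension $d$'' should be read with $c,\varepsilon$ regarded as fixed.
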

\begin{proof}
\cite[Lemma~{A.1}]{melenk-wohlmuth12} assumed that $M \subset \overline{\Omega}$. However, 
an inspection of the proof shows that this is not necessary. 
\end{proof}
Before we proceed with variants of the covering result
of Lemma~\ref{lemma:besicovitch-balls}, we introduce the notation
of sectorial neighborhoods relative a singular set $M$: 
\begin{definition}[sectorial neighborhood] 
Let $e$, $M \subset \BbbR^d$  and $\widetilde c > 0$. Then 
$$
S_{e,M,\widetilde c}:= \cup_{x \in e} B_{\widetilde c \operatorname*{dist}(x,M)}(x) 
$$
is a {\em sectorial neighborhood of the set $e$ relative 
to the singular set $M$}. 
\end{definition} 
We are interested in coverings of lower-dimensional
manifolds by balls whose centers are located on these 
manifolds: 
\begin{lemma} 
\label{lemma:besicovitch-centers-on-lower-dimensional-manifold}
Let $d \in \BbbN$ and $1 \leq d^\prime < d$. Let 
$\omega \subset \BbbR^{d^\prime}$ and let $\Omega \subset \BbbR^d$
be the canonical embedding of $\omega$ into $\BbbR^d$, i.e., 
$\Omega:= \omega \times \{0\} \times \cdots \times \{0\} \subset \BbbR^d$. 
Assume the hypotheses and notation of Lemma~\ref{lemma:besicovitch-balls}. 
Then there are $\widetilde c> 0$, $N > 0$, and a collection of 
balls $B_{x_i}$, $i \in \BbbN$, as described in 
Lemma~\ref{lemma:besicovitch-balls} such that 
\begin{enumerate}[(i)]
\item 
\label{item:lemma:besicovitch-centers-on-lower-dimensional-manifold-i}
(covering property for $\Omega$) 
$\cup_{i \in \BbbN} B_{x_i} \supset \Omega$. 
\item 
\label{item:lemma:besicovitch-centers-on-lower-dimensional-manifold-ii}
(covering property for a sectorial neighborhood of $\Omega$) 
$\cup_{i \in \BbbN} B_{x_i} \supset S_{\Omega,M,\widetilde c}$. 
\item 
\label{item:lemma:besicovitch-centers-on-lower-dimensional-manifold-iii}
(finite overlap property on $\BbbR^d$) 
for each $x \in \BbbR^d$, there holds 
$\operatorname*{card} \{i\,|\, x \in \widehat B_{x_i} \} \leq N$. 
\end{enumerate}
\end{lemma}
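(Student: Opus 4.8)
The plan is to reduce the statement to Lemma~\ref{lemma:besicovitch-balls}, applied in the lower-dimensional space $\BbbR^{d^\prime}$. The key preliminary observation is that for $x = (y,0,\dots,0) \in \Omega$ with $y \in \omega$ the number $\operatorname*{dist}(x,M)$ depends on $y$ only; writing $\rho(y) := \operatorname*{dist}((y,0,\dots,0),M)$ we obtain a non-negative, $1$-Lipschitz function on $\BbbR^{d^\prime}$, and $|x - x^\prime| = |y - y^\prime|$ whenever $x=(y,0,\dots,0)$ and $x^\prime = (y^\prime,0,\dots,0)$ both lie in $\Omega$. An inspection of the proof of Lemma~\ref{lemma:besicovitch-balls} (i.e.\ of \cite[Lemma~A.1]{melenk-wohlmuth12}) shows that it goes through verbatim with such an abstract $\rho$ in place of $\operatorname*{dist}(\cdot,M)$ — only positivity and the estimate $\rho(y^\prime) \le \rho(y) + |y-y^\prime|$ enter — and, moreover, that the finite-overlap bound in its property~2 holds for \emph{every} point of the ambient space, since it rests on a volume/packing argument localized at an arbitrary point rather than on membership in the open set.

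First I would fix auxiliary constants $\widehat c \in (0,c)$ and $\widehat\varepsilon \in (0,1)$ with
\[
(1+\varepsilon)\,c \;\le\; (1+\widehat\varepsilon)\,\widehat c \;<\; 1 ;
\]
this is elementary once one notes $(1+\varepsilon)c = 1 - c_0 < 1$ (for instance pick $\theta \in \bigl((1+\varepsilon)c,\min\{1,2c\}\bigr)$, then $\widehat c \in \bigl(\theta/2,\min\{c,\theta\}\bigr)$ and $\widehat\varepsilon := \theta/\widehat c - 1$). Applying Lemma~\ref{lemma:besicovitch-balls} in $\BbbR^{d^\prime}$ to the open set $\omega$, the radius function $\rho$, and the constants $\widehat c,\widehat\varepsilon$ (for which $1 - \widehat c(1+\widehat\varepsilon) > 0$ by construction) yields countably many $y_i \in \omega$ and a number $\widehat N$ depending only on $d^\prime$ such that $\bigcup_i \overline{B}_{\widehat c\rho(y_i)}(y_i) \supset \omega$ in $\BbbR^{d^\prime}$, while each $y \in \BbbR^{d^\prime}$ lies in at most $\widehat N$ of the stretched balls $\overline{B}_{(1+\widehat\varepsilon)\widehat c\rho(y_i)}(y_i)$. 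I then put $x_i := (y_i,0,\dots,0) \in \Omega$, take $N := \widehat N$ and $\widetilde c := (c-\widehat c)/(1+\widehat c) > 0$, and let $B_{x_i} := \overline{B}_{c\operatorname*{dist}(x_i,M)}(x_i)$ and $\widehat B_{x_i} := \overline{B}_{(1+\varepsilon)c\operatorname*{dist}(x_i,M)}(x_i)$ be the balls in $\BbbR^d$ in the notation of Lemma~\ref{lemma:besicovitch-balls}, observing that $\operatorname*{dist}(x_i,M) = \rho(y_i)$.

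The three assertions then follow from the triangle inequality. For~(\ref{item:lemma:besicovitch-centers-on-lower-dimensional-manifold-i}): given $x=(y,0,\dots,0)\in\Omega$, choose $i$ with $|y-y_i|\le\widehat c\rho(y_i)$; then $|x-x_i| = |y-y_i| \le c\rho(y_i)$ since $\widehat c < c$, i.e.\ $x\in B_{x_i}$. For~(\ref{item:lemma:besicovitch-centers-on-lower-dimensional-manifold-ii}): if $z \in S_{\Omega,M,\widetilde c}$ there is $x=(y,0,\dots,0)\in\Omega$ with $|z-x|\le\widetilde c\rho(y)$; choosing $i$ as above and using the Lipschitz bound $\rho(y)\le(1+\widehat c)\rho(y_i)$ one obtains $|z-x_i| \le \widetilde c\rho(y) + \widehat c\rho(y_i) \le \bigl[\widetilde c(1+\widehat c)+\widehat c\bigr]\rho(y_i) = c\rho(y_i)$, hence $z\in B_{x_i}$. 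For~(\ref{item:lemma:besicovitch-centers-on-lower-dimensional-manifold-iii}): if $z=(y,w)\in\widehat B_{x_i}$, then $|y-y_i| \le |z-x_i| \le (1+\varepsilon)c\rho(y_i) \le (1+\widehat\varepsilon)\widehat c\rho(y_i)$, so $y$ lies in the corresponding stretched ball of the $\BbbR^{d^\prime}$-covering; by the global overlap bound, at most $\widehat N = N$ such indices $i$ occur.

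The only genuinely non-routine point, and the step I expect to require the most care, is the inspection of the proof of Lemma~\ref{lemma:besicovitch-balls} underpinning the two uses above: that the construction runs with an abstract $1$-Lipschitz radius function $\rho$, so that it can be carried out inside $\BbbR^{d^\prime}$ where $\rho$ need not be a distance to any set, and that its finite-overlap conclusion is valid at every point of $\BbbR^{d}$ and not only on the open set. Everything else is constant bookkeeping and the triangle inequality.
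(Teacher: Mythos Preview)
Your approach is correct and essentially matches the paper's: apply Lemma~\ref{lemma:besicovitch-balls} in $\BbbR^{d'}$ with slightly smaller covering balls (the paper uses radius $c/2$ where you use $\widehat c$), lift to $\BbbR^d$, use the extra room for the sectorial neighborhood, and use projection onto the hyperplane for the overlap bound. One minor implementation difference: the paper derives the global overlap property~(iii) by an explicit cylinder argument starting only from the overlap on $\Omega$ as stated in Lemma~\ref{lemma:besicovitch-balls}, rather than by inspecting that lemma's proof to claim the overlap bound already holds on all of $\BbbR^{d'}$.
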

\begin{proof}
We employ the result of Lemma~\ref{lemma:besicovitch-balls} for the lower-dimensional
manifold $\omega$ noting that $B_{x} \cap \omega$ is a ball in $\BbbR^{d^\prime}$. 
In order to 
be able to ensure the covering condition for the sectorial neighborhood 
of $\Omega$ stated in 
(\ref{item:lemma:besicovitch-centers-on-lower-dimensional-manifold-iii}), 
we introduce the auxiliary 
balls $B^\prime_{x}:= \overline{B}_{c/2 \operatorname*{dist}(x,M)}(x)$ of half the radius. 
Applying Lemma~\ref{lemma:besicovitch-balls} with these balls $B^\prime_{x}$ and the stretched
balls $\widehat B_x$ therefore produces a collection of centers $x_i \in \Omega$, 
$i \in \BbbN$, such that 
\begin{enumerate}
\item 
$B^\prime_{x_i} \cap \Omega$ covers $\Omega$; 
\item 
for the stretched balls $\widehat B_{x_i}$, we have a finite overlap property  on $\Omega$: 
\begin{equation}
\label{eq:lemma:besicovitch-semiballs-50}
\forall x \in \Omega: \quad 
\operatorname*{card}\{i\,|\, x \in \widehat B_{x_i} \} \leq N. 
\end{equation}
\end{enumerate}
We next see that the balls $\widehat B_{x_i}$ even have the following, stronger 
finite overlap property: 
\begin{equation}
\label{eq:lemma:besicovitch-semiballs-100}
\forall x \in \BbbR^d: \quad 
\operatorname*{card}\{i\,|\, x \in \widehat B_{x_i} \} \leq N. 
\end{equation}
To see this, define the infinite cylinders 
$\widehat C_{x_i}:= \{x\,|\, \pi_{d^\prime}(x) \in \widehat B_{x_i}\cap \Omega\}$, where
$\pi_{d^\prime}$ is the canonical projection onto the 
hyperplane $\{x = (x_1,\ldots,x_d) \in \BbbR^d\,|\, x_{d^\prime+1} = \cdots = x_d = 0\}$. 
Clearly, $\widehat B_{x_i} \subset \widehat C_{x_i}$. These infinite cylinders have a finite
overlap property by (\ref{eq:lemma:besicovitch-semiballs-50}) as can be seen by writing
any $x \in \BbbR^d$ in the form $x = (\pi_{d^\prime}(x),x^{\prime})$ for some $x^\prime \in \BbbR^{d - d^\prime}$
and then noting that $x \in \widehat C_{x_i}$ implies $\pi_{d^\prime}(x) \in \widehat B_{x_i} \cap \Omega$. 

Is remains to see that the balls $B_{x_i}$ cover a sectorial neighborhood of $\Omega$. 
To that end, we note that the balls $B^\prime_{x_i}$ cover $\Omega$. Furthermore, 
for  each $x \in \Omega$, we pick $x_i$ such that $x \in B_{x_i}^\prime \subset B_{x_i}$. 
Since the radius of $B_{x_i}$ is twice that of $B^\prime_{x_i}$, we even have 
$B_{c/2 \operatorname*{dist}(x_i,M)}(x) \subset B_{x_i}$. Furthermore, by $c \in (0,1)$, 
we have $0 < (1 - c/2)\operatorname*{dist}(x_i,M) \leq \operatorname*{dist}(x,M) 
\leq (1 + c/2) \operatorname*{dist}(x_i,M)$. Therefore, there is $\widetilde c > 0$ such that 
$B_{\widetilde c \operatorname*{dist}(x,M)}(x) \subset B_{x_i}$ and thus 
$$
\cup_{x \in \Omega} B_{\widetilde c \operatorname*{dist}(x,M)}(x) \subset \cup_i B_{x_i}.  
$$ 
\qed
\end{proof}
We next show covering theorems for polygons and polyhedra. In the interest of 
clarity of presentation, we formulate two separate results. Before doing so, we 
point out that balls with center located on the boundary of the polygon/polyhedron
$\Omega$ will feature importantly so that the intersection of this ball with $\Omega$
will be of interest. We therefore introduce the following notions: 
\begin{definition}[solid angles and dihedral angles]
\label{def:solid-angle}
\begin{enumerate}
\item 
Let $\Omega \subset \BbbR^2$ be a Lipschitz polygon. Let $A$ be a vertex where
the edges $e_1$, $e_2$ meet. We say that the set $B_{\varepsilon}(A) \cap \Omega$
is a solid angle, if $\partial (B_{\varepsilon}(A) \cap \Omega) \cap \partial\Omega$ 
is contained in $\{A\} \cup e_1 \cup e_2$. 
\item 
Let $\Omega \subset \BbbR^3$ be a Lipschitz polyhedron. Let $A$ be a vertex of $\Omega$. 
We say that the set $B_\varepsilon(A) \cap \Omega$ is a {\em solid angle}, if 
$\partial (B_\varepsilon(A) \cap \Omega) \cap \partial \Omega$ is contained in
the union of $\{A\}$ and the edges and faces meeting at $A$. 
\item 
Let $\Omega \subset \BbbR^3$ be a Lipschitz polyhedron. Let $e$ be an edge of $\Omega$, 
which is shared by the faces $f_1$, $f_2$. 
Let $x \in e$. We say that the set $B_\varepsilon(x) \cap \Omega$ is a {\em dihedral angle}, if 
$\partial (B_\varepsilon(x) \cap \Omega) \cap \partial \Omega$ is contained 
in $e \cup f_1 \cup f_2$. 
\end{enumerate}
\end{definition}
\begin{theorem}
\label{thm:covering-2D}
Let $\Omega \subset \BbbR^2$ be a bounded Lipschitz polygon with 
vertices $A_j$, $j=1,\ldots,J$, and edges ${\mathcal E}$. Let $M \subset \{A_1,\ldots, A_J\}$.  
Set ${\mathcal A}^\prime:=\{A_1,\ldots,A_J\} \setminus M$ and fix $\varepsilon \in (0,1)$. 
\begin{enumerate}[(i)] 
\item 
\label{item:thm:covering-2D-i}
There is a sectorial neighborhood  
$S_{{\mathcal A}^\prime,M,\widetilde c}$ of the  vertices ${\mathcal A}^\prime$ 
and a constant $c \in (0,1)$ such that 
$S_{{\mathcal A}^\prime,M,\widetilde c}$ 
is covered by balls $B_i:= \overline{B}_{c \operatorname*{dist}(x_i,M)}(x_i)$ with 
centers $x_i \in {\mathcal A}^\prime$. Furthermore, the stretched
balls 
$\widehat B_i:= \overline{B}_{(1+\varepsilon) c \operatorname*{dist}(x_i,M)}(x_i)$ are solid
angles and satisfy a finite overlap property on $\BbbR^2$. 
\item 
\label{item:thm:covering-2D-ii}
Fix a sectorial neighborhood ${\mathcal U} := S_{{\mathcal A}^\prime,M,c^\prime}$ of the
vertices ${\mathcal A}^\prime$. 
For each edge $e \in {\mathcal E}$, there is a sectorial neighborhood  
$S_{e,M,\widetilde c}$ and a constant $c \in (0,1)$ such that 
$S_{e,M,\widetilde c} \setminus {\mathcal U}$ is covered
by balls $B_i = \overline{B}_{c \operatorname*{dist}(x_i,M)}(x_i)$ whose centers $x_i$ are located on $e$. 
Furthermore, the stretched balls $\widehat B_i = \overline{B}_{(1+\varepsilon)c \operatorname*{dist}(x_i,M)}(x_i) $ 
satisfy a finite overlap property on $\BbbR^2$ and are such that each 
$\widehat B_i \cap \Omega$ is a half-disk. 
\item 
\label{item:thm:covering-2D-iii}
Fix a sectorial neighbood ${\mathcal U}:= S_{{\mathcal E},M,c^\prime}$ of the edges ${\mathcal E}$. There 
is $c \in (0,1)$ such that $\Omega\setminus {\mathcal U}$ is covered by balls 
$B_i = \overline{B}_{c \operatorname*{dist}(x_i,M)}(x_i)$ such that 
the stretched balls $\widehat B_i = \overline{B}_{(1+\varepsilon) c \operatorname*{dist}(x_i,M)}(x_i)$ 
are completely contained in $\Omega$ and satisfy a finite overlap property on $\BbbR^2$. 
\end{enumerate}
\end{theorem}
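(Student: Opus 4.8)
The plan is to read off all three assertions from the Besicovitch‑type covering results of the appendix: Lemma~\ref{lemma:besicovitch-centers-on-lower-dimensional-manifold} (applied to each edge $e$, which is a flat $1$‑dimensional manifold, i.e.\ $d'=1$) for part (ii), and Lemma~\ref{lemma:besicovitch-balls} for the interior covering (iii), while part (i) is essentially free. Before that I would record two elementary geometric facts about the fixed polygon $\Omega$. Writing $\mathcal{A}:=\{A_1,\dots,A_J\}=M\cup\mathcal{A}'$ for the full vertex set: since $\Omega$ has only finitely many edges, there is a constant $c_1=c_1(\Omega)\in(0,1)$ such that, for every edge $e$ and every $x$ in its relative interior, $\overline B_\rho(x)\cap\Omega$ is a half‑disk in the sense of Definition~\ref{def:solid-angle} whenever $0<\rho\le c_1\operatorname{dist}(x,\mathcal{A})$; analogously, a sufficiently small ball around a vertex meets $\Omega$ in a solid angle. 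Moreover $M\subset\mathcal{A}$ gives $\operatorname{dist}(x,\mathcal{A})\le\operatorname{dist}(x,M)$, and excising a sectorial neighborhood of the regular vertices restores the reverse comparison: if $x\notin S_{\mathcal{A}',M,\gamma}$ then $\operatorname{dist}(x,\mathcal{A}')\ge\frac{\gamma}{1+\gamma}\operatorname{dist}(x,M)$, so $\operatorname{dist}(x,M)\sim\operatorname{dist}(x,\mathcal{A})$ with a constant depending only on $\gamma$. For (i) itself, $\mathcal{A}'$ is finite and $\operatorname{dist}(A,M)>0$ for each $A\in\mathcal{A}'$, so I would simply take the single ball $B_A:=\overline B_{c\operatorname{dist}(A,M)}(A)$ per vertex; for $c$ small enough, uniformly over the finitely many vertices, the stretched balls $\widehat B_A$ are pairwise disjoint solid angles (so finite overlap is trivial) and $S_{\mathcal{A}',M,c}=\bigcup_{A\in\mathcal{A}'}B_A$, so $\widetilde c:=c$ works.

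For (ii) I would fix an edge $e$ and apply Lemma~\ref{lemma:besicovitch-centers-on-lower-dimensional-manifold} to (the relative interior of) $e$ with the closed set $M$ and the given $\varepsilon$, leaving $c$ small to be fixed later. This provides centers $x_i\in e$, balls $B_{x_i}=\overline B_{c\operatorname{dist}(x_i,M)}(x_i)$, a $\widetilde c_0>0$ with $\bigcup_iB_{x_i}\supset S_{e,M,\widetilde c_0}$, and finite overlap on $\BbbR^2$ of the stretched balls $\widehat B_{x_i}$. I would then retain only the subfamily $\mathcal{F}$ of those $B_{x_i}$ for which $\widehat B_{x_i}\cap\Omega$ is a half‑disk; the half‑disk and finite‑overlap properties then hold for $\mathcal{F}$ by construction. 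The point to verify is that $\mathcal{F}$ still covers $S_{e,M,\widetilde c}\setminus\mathcal{U}$ once $\widetilde c$ is chosen small relative to $\widetilde c_0$ and to $c'$. Given $p$ in this set, write $p\in\overline B_{\widetilde c\operatorname{dist}(x,M)}(x)$ with $x\in e$; since $p\notin\mathcal{U}=S_{\mathcal{A}',M,c'}$, a short estimate shows that for $\widetilde c$ small $x$ must lie outside a slightly shrunken sectorial neighborhood of $\mathcal{A}'$, whence $\operatorname{dist}(x,M)\sim\operatorname{dist}(x,\mathcal{A})$ by the comparison above. The full family covers $p$, say $p\in B_{x_j}$; since overlapping balls of a Besicovitch family have comparable radii, the center $x_j$ inherits $\operatorname{dist}(x_j,M)\sim\operatorname{dist}(x_j,\mathcal{A})$, so $\widehat B_{x_j}$ has radius at most $Cc\operatorname{dist}(x_j,\mathcal{A})$ with $C$ bounded independently of $c$; choosing $c$ so that $(1+\varepsilon)Cc\le c_1$ makes $\widehat B_{x_j}\cap\Omega$ a half‑disk, i.e.\ $B_{x_j}\in\mathcal{F}$, and $p$ is covered.

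For (iii) I would apply Lemma~\ref{lemma:besicovitch-balls} to $\Omega$ with closed set $M$ and the given $\varepsilon$ (so that $c(1+\varepsilon)<1$), obtaining interior centers $x_i\in\Omega$, a covering $\bigcup_iB_{x_i}\supset\Omega$, and finite overlap on $\Omega$ of the $\widehat B_{x_i}$. Retaining the subfamily $\mathcal{F}$ of the $B_{x_i}$ that meet $\Omega\setminus\mathcal{U}$, with $\mathcal{U}=S_{\mathcal{E},M,c'}$, this subfamily still covers $\Omega\setminus\mathcal{U}$ because the full family covers $\Omega$. For $B_{x_i}\in\mathcal{F}$ pick $z\in B_{x_i}\cap(\Omega\setminus\mathcal{U})$; from $z\notin S_{\mathcal{E},M,c'}$ one gets $\operatorname{dist}(z,\partial\Omega)=\operatorname{dist}(z,\mathcal{E})\ge\frac{c'}{1+c'}\operatorname{dist}(z,M)$, and $|x_i-z|<c\operatorname{dist}(x_i,M)\sim c\operatorname{dist}(z,M)$ then propagates this to $\operatorname{dist}(x_i,\partial\Omega)\gtrsim\operatorname{dist}(x_i,M)$; hence for $c$ small the stretched ball $\widehat B_{x_i}$, of radius $(1+\varepsilon)c\operatorname{dist}(x_i,M)$, is contained in $\Omega$. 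Finite overlap on $\BbbR^2$ then follows from the finite overlap on $\Omega$ and the inclusion $\widehat B_{x_i}\subset\Omega$.

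The main --- essentially the only --- obstacle is the constant bookkeeping in (ii): the covering lemma forces the ball radii to be proportional to $\operatorname{dist}(\cdot,M)$, yet near a regular vertex $A\in\mathcal{A}'$ this radius does not tend to zero, so the half‑disk requirement genuinely fails there (and, a priori, also along thin necks of $\Omega$). The resolution is precisely that one only has to cover $S_{e,M,\widetilde c}\setminus\mathcal{U}$: the excised set $\mathcal{U}$ removes a neighborhood of $\mathcal{A}'$, which is exactly what renders $\operatorname{dist}(\cdot,M)$ and $\operatorname{dist}(\cdot,\mathcal{A})$ comparable on the remaining region, while the thin‑neck issue is harmless because $\Omega$ is fixed and hence the feature‑size ratios entering $c_1$ are bounded. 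A secondary point requiring care is that passing from the full Besicovitch family to the subfamily $\mathcal{F}$ must not destroy the covering property; this is where the comparability of radii of overlapping Besicovitch balls enters.
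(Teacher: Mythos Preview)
Your proposal is correct and follows essentially the same route as the paper, which merely says that (i) is ``almost trivial'' and that (ii), (iii) ``follow from suitable applications of Lemmas~\ref{lemma:besicovitch-centers-on-lower-dimensional-manifold} and \ref{lemma:besicovitch-balls}''; you have supplied precisely those applications and the geometric bookkeeping (the comparability $\operatorname{dist}(\cdot,M)\sim\operatorname{dist}(\cdot,\mathcal{A})$ away from $\mathcal{U}$, and the half-disk/containment verification) that the paper leaves implicit. One minor expository point: in (ii) your phrase ``overlapping balls of a Besicovitch family have comparable radii'' is not quite the right hook, since $B_{\widetilde c\operatorname{dist}(x,M)}(x)$ is not a member of the Besicovitch family; the clean argument is the one you in effect carry out anyway, namely deriving $\operatorname{dist}(x_j,\mathcal{A})\gtrsim\operatorname{dist}(x_j,M)$ directly from $p\in B_{x_j}$ and $p\notin\mathcal{U}$ via the triangle inequality, with the resulting constant bounded independently of $c$ once $c\le c'/2$.
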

\begin{proof}
The assertion (\ref{item:thm:covering-2D-i}) is almost trivial and only included to emphasize the structure
of the arguments. Assertions (\ref{item:thm:covering-2D-ii}), (\ref{item:thm:covering-2D-iii}) 
follow from suitable applications of Lemmas~\ref{lemma:besicovitch-centers-on-lower-dimensional-manifold}
and \ref{lemma:besicovitch-balls}.
\qed
\end{proof}
The 3D variant of Theorem~\ref{thm:covering-2D} is formulated in Theorem~\ref{thm:covering-3D}. We emphasize
that the ``singular'' set $M$ need not be the union of {\em all} edges and vertices but can be just a subset. 
We also emphasize that it is not necessarily related to the notion of ``singular set'' in 
Definition~\ref{def:H2-regular-edges}, although it is used in this way. 
The key property of the covering balls is again such that the centers are either a) in $\Omega$ (in which 
case the stretched ball is contained in $\Omega$); or b) on a face (in which case the stretched ball $\widehat B_i$
is such that $\widehat B_i \cap \Omega$ is a half-ball);  or c) on an edge in which case 
$\widehat B_i \cap \Omega$ is a dihedral angle (see Definition~\ref{def:solid-angle}); or d) 
in a vertex in which case $\widehat B_i \cap \Omega$ is a solid angle (see Definition~\ref{def:solid-angle}).
\begin{theorem}
\label{thm:covering-3D}
Let $\Omega \subset\BbbR^3$ be a Lipschitz polyhedron with faces ${\mathcal F}$, edges ${\mathcal E}$, 
and vertices ${\mathcal A}$. Let $M_{\mathcal A} \subset {\mathcal A}$ 
and $M_{\mathcal E} \subset {\mathcal E}$. Let $M = \overline{M} = \overline {M_{\mathcal A} \cup M_{\mathcal E}}$
and fix $\varepsilon \in (0,1)$. 
Let ${\mathcal A}^\prime:=\{ A \in {\mathcal A}\,|\, A \not\in M\}$ be the vertices not in $M$ 
and 
${\mathcal E}^\prime:=\{e \in {\mathcal E}\,|\, \overline{e} \cap M = \emptyset\}$ be the 
edges not abutting $M$. Then:
\begin{enumerate}[(i)] 
\item (non-singular vertices)
\label{item:thm:covering-3D-i}
There is a sectorial neighborhood  
$S_{{\mathcal A}^\prime,M,\widetilde c}$ of the vertices in ${\mathcal A}^\prime$ 
and a constant $c \in (0,1)$ such that 
$S_{{\mathcal A}^\prime,M,\widetilde c}$ 
is covered by balls $B_i:= \overline{B}_{c \operatorname*{dist}(x_i,M)}(x_i)$ with 
centers $x_i \in {\mathcal A}^\prime$. Furthermore, the stretched
balls 
$\widehat B_i:= \overline{B}_{(1+\varepsilon) c \operatorname*{dist}(x_i,M)}(x_i)$ are solid angles 
and satisfy a finite overlap property on $\BbbR^3$. 
\item (non-singular edges)
\label{item:thm:covering-3D-ii}
Fix a sectorial neighborhood ${\mathcal U} := S_{{\mathcal A}^\prime,M,c^\prime}$ of ${\mathcal A}^\prime$. 
For each edge $e \in {\mathcal E}^\prime$, there is a sectorial neighborhood  
$S_{e,M,\widetilde c}$ and a constant $c \in (0,1)$ such that 
$S_{e,M,\widetilde c} \setminus {\mathcal U}$ is covered
by balls $B_i = \overline{B}_{c \operatorname*{dist}(x_i,M)}(x_i)$ whose centers $x_i$ are located on $e$. 
Furthermore, the stretched balls $\widehat B_i = \overline{B}_{(1+\varepsilon)c \operatorname*{dist}(x_i,M)}(x_i) $ 
satisfy a finite overlap property on $\BbbR^3$ 
and $\widehat B_i \cap \Omega$ is a dihedral angle. 
\item (faces)
\label{item:thm:covering-3D-iii}
Fix a sectorial neighbood ${\mathcal U}:= S_{{\mathcal E},M,c^\prime}$ of ${\mathcal E}$. 
There is a sectorial neighborhood $S_{{\mathcal F},M,\widetilde c}$ 
and a constant $c \in (0,1)$ such that $S_{{\mathcal F},M,\widetilde c}\setminus {\mathcal U}$ 
is covered by balls $B_i = \overline{B}_{c \operatorname*{dist}(x_i,M)}(x_i)$ 
with centers $x_i \in \partial\Omega$. 
Furthermore, the stretched balls $\widehat B_i = \overline{B}_{(1+\varepsilon) c \operatorname*{dist}(x_i,M)}(x_i)$ 
satisfy a finite overlap property on $\BbbR^3$ and $\widehat B_i \cap \Omega$ is a half-ball. 
\item (interior)
\label{item:thm:covering-3D-iv}
Fix a sectorial neighbood ${\mathcal U}:= S_{{\mathcal F},M,c^\prime}$ of ${\mathcal F}$, where 
${\mathcal F}$ is the set of faces. 
Then there is $c \in (0,1)$ such that $\Omega\setminus {\mathcal U}$ 
is covered by balls $B_i = \overline{B}_{c \operatorname*{dist}(x_i,M)}(x_i)$ 
with centers $x_i \in \Omega$. 
Furthermore, the stretched balls $\widehat B_i = \overline{B}_{(1+\varepsilon) c \operatorname*{dist}(x_i,M)}(x_i)$ 
satisfy a finite overlap property on $\BbbR^3$ and $\widehat B_i \subset \Omega$.  
\end{enumerate}
\end{theorem}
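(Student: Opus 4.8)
The plan is to build the covering one stratum at a time, in order of increasing dimension: first balls centered at the non-singular vertices ${\mathcal A}^\prime$, then balls centered on the non-singular edges ${\mathcal E}^\prime$, then balls centered on the faces ${\mathcal F}$, and finally balls with centers in the interior $\Omega$. In each of the four steps the raw collection of balls is produced by a single application of one of the two Besicovitch-type results already available: Lemma~\ref{lemma:besicovitch-balls} applied to $\Omega$ with the closed singular set $M$ for the interior; Lemma~\ref{lemma:besicovitch-centers-on-lower-dimensional-manifold} with $d^\prime=2$, $d=3$, after an affine change of coordinates moving a face onto a coordinate plane, for the faces; the same lemma with $d^\prime=1$, $d=3$, after moving an edge onto a coordinate line, for the edges; and for the vertices only finitely many balls are needed, one per vertex of ${\mathcal A}^\prime$, so that step is elementary. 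These applications immediately deliver two of the three asserted properties: the covering of the relevant sectorial neighborhood, and the finite overlap of the stretched balls $\widehat B_i$ on all of $\BbbR^3$ --- for the lower-dimensional strata this is precisely the ``cylinder'' conclusion~(\ref{item:lemma:besicovitch-centers-on-lower-dimensional-manifold-iii}) of Lemma~\ref{lemma:besicovitch-centers-on-lower-dimensional-manifold}, and for the interior it follows from the overlap bound on $\Omega$ together with $\widehat B_i\subset\Omega$.

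The remaining work is to choose the radius-scaling constant $c$ small enough that every stretched ball meets $\Omega$ in the model shape claimed for its stratum: a solid angle near a vertex of ${\mathcal A}^\prime$, a dihedral angle along an edge of ${\mathcal E}^\prime$, a half-ball near the relative interior of a face, and $\widehat B_i\subset\Omega$ for an interior center. Two observations underlie this. First, by the Lipschitz-polyhedron structure, for a point $x$ on a fixed stratum the supremum of radii $r$ for which $B_r(x)\cap\Omega$ has the model shape of that stratum is comparable to the distance from $x$ to the union of the lower-dimensional strata adjacent to it; since $\Omega$ has finitely many vertices, edges and faces, these comparability constants can be taken uniform. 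Second --- and this is where the removed sectorial neighborhood ${\mathcal U}$ enters --- for centers $x_i$ approaching the next lower stratum the radius $c\operatorname{dist}(x_i,M)$ need not be dominated by that threshold, so such balls genuinely fail the model-shape requirement; but if $c$ is chosen small relative to the constant $c^\prime$ defining ${\mathcal U}$, every such ``bad'' ball is contained in ${\mathcal U}$ and may simply be discarded. Because the raw collection covers the whole sectorial neighborhood, any point of $S_{\cdot,M,\widetilde c}\setminus{\mathcal U}$ that must be covered is then automatically covered by a retained ``good'' ball, which is the covering property in the stated form. This is also the mechanism behind the balls with radii tending to $0$ near $M$ that are exploited in the proof of Lemma~\ref{lemma:weighted-shift-theorem-polygon}: in the face and interior steps the retained balls approaching a singular vertex or edge necessarily have radius $c\operatorname{dist}(x_i,M)\to 0$.

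The main obstacle is precisely the bookkeeping in this last step: one must verify quantitatively, and with constants uniform over the finitely many strata, that once $c$ is small compared with $c^\prime$ the threshold on the distance from $x_i$ to the adjacent lower-dimensional stratum below which a ball violates the model-shape condition lies strictly below the threshold above which that ball is already contained in ${\mathcal U}$ --- so that ``bad'' balls and balls one wishes to retain never conflict. Everything else is routine and parallels the two-dimensional statement of Theorem~\ref{thm:covering-2D}: the local description of $\partial\Omega$ near a vertex, an edge, and the relative interior of a face; the affine reductions used to invoke Lemma~\ref{lemma:besicovitch-centers-on-lower-dimensional-manifold}; and the overlap counts. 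The only feature new to three dimensions is that the recursion now has four layers instead of three, the extra layer being the non-singular edges, handled by the $d^\prime=1$ instance of Lemma~\ref{lemma:besicovitch-centers-on-lower-dimensional-manifold} and the dihedral-angle model shape of Definition~\ref{def:solid-angle}.
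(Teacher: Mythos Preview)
Your proposal is correct and takes essentially the same approach as the paper: the paper's proof consists of a single sentence stating that the result follows from Lemmas~\ref{lemma:besicovitch-centers-on-lower-dimensional-manifold} and \ref{lemma:besicovitch-balls}, and you have filled in precisely the details this remark leaves implicit --- the stratum-by-stratum application of the appropriate Besicovitch lemma (with $d'=1$ and $d'=2$ after affine normalization for the edge and face strata), and the selection of $c$ small relative to $c'$ so that any ball failing the model-shape condition is entirely contained in the excised neighborhood $\mathcal U$ and can be discarded without losing the covering of the complement.
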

\begin{proof}
Follows from Lemmas~\ref{lemma:besicovitch-centers-on-lower-dimensional-manifold}
and \ref{lemma:besicovitch-balls}.
\qed 
\end{proof}

\iftechreport
\iftechreport
\renewcommand\thesection{\Alph{section}}
\section{Details for the extension of the results of \protect{\cite{melenk-wohlmuth12}}.} 
\label{app:melenk-wohlmuth12}
The arguments used in the proof of Theorem~\ref{thm:error-on-strip}
rely on techniques developed in \cite{melenk-wohlmuth12}. A
feature of the analysis here that was not present in \cite{melenk-wohlmuth12} is
the bidual problem with right-hand side $\widetilde\delta_\Gamma^{-1} (w - w_h)$ that 
allowed us to estimate $w - w_h$  in a weighted space. This technical issue was circumvented 
in \cite{melenk-wohlmuth12} by assuming convexity of $\Omega$ so that 
optimal order $L^2$-estimates could be cited from the literature. The corresponding 
bidual problem can be analyzed in the mortar setting of \cite{melenk-wohlmuth12} as well. 
The end result is then Theorem~\ref{thm:generalize-melenk-wohlmuth12}, which states
that the convexity assumption in \cite{melenk-wohlmuth12} can be relaxed to the 
validity of Assumption~\ref{assumption:shift-theorem} for the Poisson problem, 
i.e., \cite[(5.2)]{melenk-wohlmuth12}. 

In the interest of brevity, we employ in this appendix the notation of 
\cite{melenk-wohlmuth12} and assume the reader's familiarity with 
\cite{melenk-wohlmuth12}. 

It will be useful to write $H^s_{pw}(\Gamma)$ for 
the space given by the broken Sobolev norm on the skeleton $\Gamma$, i.e., 
the Sobolev norm is understood facewise. Furthermore, we will write 
$\|\cdot\|_{H^1}$ for the broken $H^1$-norm, i.e., 
$\|\cdot\|^2_{H^1} = \sum_i \|\cdot\|^2_{H^1(\Omega_i)}$. 
We also introduce the $L^2$-projection 
$\Pi^{L^2}_{M_h}:L^2(\Gamma) \rightarrow M_h$ and recall that, since 
$M_h$ is a product space based on the faces, it inherits from 
\cite[(A2)]{melenk-wohlmuth12} the approximation  
property 
\begin{equation}
\label{eq:approximation-property-L2-projection}
\|z - \Pi^{L^2}_{M_h} z\|_{L^2(\Gamma)} \leq C h^s \|z\|_{H^s_{pw}(\Gamma)}, 
\qquad s \in [0,k]. 
\end{equation}

The main result is: 
\begin{theorem}
\label{thm:generalize-melenk-wohlmuth12}
In \cite[Thms.~{2.1}, {2.5}]{melenk-wohlmuth12}, the assumption
of convexity of $\Omega$ can be replaced with 
\cite[Assumption~{(5.2)}]{melenk-wohlmuth12}. 
\end{theorem}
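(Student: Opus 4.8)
The plan is to revisit the proofs of \cite[Thms.~{2.1} and {2.5}]{melenk-wohlmuth12} and to isolate the single place where convexity of $\Omega$ is actually used: the appeal to a textbook optimal-order $L^2$-estimate for an auxiliary (``bidual'') boundary value problem whose data is the weighted Galerkin error $\widetilde\delta_\Gamma^{-1}(w-w_h)$ (and, in the mortar context, also involves the jumps of $w-w_h$ across the skeleton). Under convexity this auxiliary problem enjoys full $H^2$-regularity, so the standard Aubin--Nitsche argument applies verbatim. The idea is to replace this step by the weighted $L^2$-FEM estimate of Lemma~\ref{lemma:weighted-L2-estimate} together with the weighted regularity assertions of Lemma~\ref{lemma:regularity-weighted-rhs} (and Lemma~\ref{lemma:B32-regularity}), all of which only require Assumption~\ref{assumption:shift-theorem}, i.e., \cite[(5.2)]{melenk-wohlmuth12}, for the Poisson problem.

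Concretely I would proceed as follows. First, I would re-derive the analogue of Lemma~\ref{lemma:weighted-L2-estimate} within the mortar framework of \cite{melenk-wohlmuth12}: for a function $z$ in the broken discrete space that satisfies the mortar Galerkin orthogonality, establish
$$
\|\widetilde\delta_\Gamma^{-1/2} z\|_{L^2(\Omega)} \lesssim h^{1/2}|\ln h|^{1/2}\,\|z\|_{H^1},
$$
with $\|\cdot\|_{H^1}$ the broken $H^1$-norm, by testing against $\psi := T(\widetilde\delta_\Gamma^{-1} z)$ ($T$ now the mortar solution operator), using \eqref{eq:lemma:regularity-weighted-rhs-1}, the approximation properties of the mortar space, and the consistency estimates of \cite{melenk-wohlmuth12}; the additional consistency contribution is of the same structure as in \cite{melenk-wohlmuth12} and is controlled because $\widetilde\delta_\Gamma\gtrsim h$ on the skeleton. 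Second, with this weighted $L^2$-estimate in hand, I would trace through \cite[Sec.~{5}]{melenk-wohlmuth12} and replace each invocation of the convexity-based $L^2$-bound by the new weighted bound, checking that the weights $\widetilde\delta_\Gamma^{\pm 1/2}$ (and $\widetilde\delta_\singular^{\pm 1/2}$) enter exactly as in the proofs of Theorems~\ref{thm:optimal-L2} and \ref{thm:error-on-strip} of the present paper --- in the Cauchy--Schwarz splitting of the type \eqref{eq:key-CS}, in the local (Wahlbin-type) error analysis near the skeleton, and in the interior-regularity bound of Lemma~\ref{lemma:5.4}. Third, I would verify that the approximation property \eqref{eq:approximation-property-L2-projection} of the multiplier projection $\Pi^{L^2}_{M_h}$, combined with the refined face-wise regularity of the dual and bidual solutions, reproduces the optimal rate for the mortar variable claimed in \cite[Thm.~{2.5}]{melenk-wohlmuth12}.

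The main obstacle I anticipate is bookkeeping rather than conceptual: one must ensure that the mortar consistency (non-conformity) terms, which are absent from the conforming analysis of Sections~\ref{sec:FEM-L2} and \ref{sec:error-on-strip}, do not spoil the weighted estimates --- in particular that the skeleton jump terms, when tested against the bidual solution, can be absorbed using the trace estimate \eqref{eq:lemma:weighted-embedding-6} and the fact that $\widetilde\delta_\Gamma\sim h$ on $S_h$. A secondary point is that the dual and bidual problems in \cite{melenk-wohlmuth12} are posed with the mortar operator rather than with the PDE solution operator $T$; one has to check that \cite[(5.2)]{melenk-wohlmuth12} transfers to these problems by the usual perturbation and consistency arguments of \cite{melenk-wohlmuth12}, precisely as full $H^2$-regularity did under convexity. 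Since all the new ingredients are now available (Lemmas~\ref{lemma:weighted-embedding}, \ref{lemma:negative-norm-vs-weighted-L2}, \ref{lemma:B32-regularity}, \ref{lemma:regularity-weighted-rhs}, \ref{lemma:5.4}, and \ref{lemma:weighted-L2-estimate}), the required modifications are local, and we omit the lengthy but routine details, referring to \cite{melenk-wohlmuth12} for the parts of the argument that remain unchanged.
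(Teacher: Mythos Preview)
Your proposal is correct in its central insight: the only substantive use of convexity in \cite{melenk-wohlmuth12} is the optimal $L^2$-bound for $w-w_h$, and this can be replaced by a weighted $L^2$-estimate obtained from a bidual problem with right-hand side $\widetilde\delta_\Gamma^{-1}(w-w_h)$, using only the shift theorem \cite[(5.2)]{melenk-wohlmuth12}. This is exactly the mechanism the paper exploits.

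The execution, however, differs. You propose to transplant the weighted-Sobolev machinery of Sections~\ref{sec:FEM-L2}--\ref{sec:error-on-strip} (the Cauchy--Schwarz splitting \eqref{eq:key-CS}, the Wahlbin-type localization, Lemma~\ref{lemma:weighted-L2-estimate}) wholesale into the mortar setting. The paper instead stays within the anisotropic-norm framework of \cite{melenk-wohlmuth12}: it keeps the error representation \cite[(6.2)]{melenk-wohlmuth12} and the key quantity $\|\nabla(w-w_h)\|_{L^2(\Gamma;L^1)}$ of \cite[Lemma~5.5]{melenk-wohlmuth12}, and only inside the proof of that lemma sets up the bidual saddle-point problem $\psi=T^D(\widetilde\delta_\Gamma^{-1}(w-w_h))$, $\lambda_\psi$, to control $\|\widetilde\delta_\Gamma^{-1/2}(w-w_h)\|_{L^2}$. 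The second term in \cite[(6.2)]{melenk-wohlmuth12} is handled by a separate trick using the orthogonality of $\Pi^{L^2}_{M_h}$ to gain an extra $h^{1/2}$ from $\|[w-w_h]\|_{H^{1/2}_{pw}(\Gamma)}$. For \cite[Thm.~2.5]{melenk-wohlmuth12} no change is needed at all. Your route would work but requires rebuilding more infrastructure; the paper's route is more surgical and reuses the existing anisotropic estimates of \cite{melenk-wohlmuth12}.

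Two small corrections. First, when you write ``$T$ now the mortar solution operator'': the bidual solution $\psi$ must come from the continuous PDE operator $T^D$ (so that the regularity assertions of Lemma~\ref{lemma:regularity-weighted-rhs} apply); the mortar structure enters only through the Galerkin orthogonality and the consistency terms involving $\lambda_\psi$. Second, an approximation result for the constrained space $V_h$ valid for merely $H^{3/2+\varepsilon}$- or $B^{3/2}_{2,\infty}$-regular functions (below the threshold for nodal interpolation) is needed; the paper supplies this separately via a Scott--Zhang variant of the operator $\widetilde P_h$ of \cite[(4.2)]{melenk-wohlmuth12}.
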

\begin{proof}
We will only sketch the modifications entailed by the weakenend regularity assumptions.

{\bf Proof of \cite[Thm.~{2.1}]{melenk-wohlmuth12}:} The starting point 
is the error representation \cite[(6.2)]{melenk-wohlmuth12}, which consists 
of three terms:  
\begin{equation}
\label{eq:thm:2.1-10}
a(w - w_h,u - P_h u) + b(w - w_h,\lambda - \mu_h) + b(u - P_h u,\lambda_w - \tilde \mu_h), 
\end{equation}
where $\mu_h$, $\tilde \mu_h \in M_h$ are arbitrary. 
The first term in (\ref{eq:thm:2.1-10}) can be estimated as in \cite[Proof of Thm.~{2.1}]{melenk-wohlmuth12} 
in view of the generalization of \cite[Lemma~{5.5}]{melenk-wohlmuth12} given below
as Lemma~\ref{lemma:lemma:5.5}. The third term in (\ref{eq:thm:2.1-10}) can again be estimated
as in \cite[(6.4)]{melenk-wohlmuth12} since Lemma~\ref{lemma:approximation-of-w-and-lambda_w}
below provides the estimate $\inf_{\tilde \mu_h \in M_h} \|\lambda_w - \tilde \mu_h\|_{L^2(\Gamma)} 
\lesssim h^{1/2} \|v\|_{L^2(\Omega)}$. The second term in (\ref{eq:thm:2.1-10}) requires a modification
of the procedure in \cite[(6.3)]{melenk-wohlmuth12}. Taking 
$\mu_h = \Pi^{L^2}_{M_h} \lambda$ in \cite[(6.3)]{melenk-wohlmuth12} yields 
\begin{eqnarray*}
b(w - w_h,\lambda - \Pi^{L^2}_{M_h} \lambda) &=& 
\int_\Gamma [w - w_h] (\lambda - \Pi^{L^2}_{M_h} \lambda) \\
&= &
\int_\Gamma ([w - w_h]  - \Pi^{L^2}_{M_h} [w - w_h]) (\lambda - \Pi^{L^2}_{M_h} \lambda)  \\
& \lesssim & h^{1/2} \|[w - w_h]\|_{H^{1/2}_{pw}(\Gamma)} \|\lambda- \Pi^{L^2}_{M_h} \|_{L^2(\Gamma)}
\\
&\lesssim & h^{1/2} \|w - w_h\|_{H^1} \|\lambda - \Pi^{L^2}_{M_h} \lambda\|_{L^2(\Gamma)}  \\
&\lesssim & h^{1/2+k} \|w - w_h\|_{H^1} \|\lambda\|_{H^k_{pw}(\Gamma)}; 
\end{eqnarray*}
the last step followed from (\ref{eq:approximation-property-L2-projection}). 
The proof is completed with the aid of Lemma~\ref{lemma:approximation-of-w-and-lambda_w} and 
the trace estimate $\|\lambda\|_{H^k_{pw}(\Gamma)} \lesssim \|u\|_{B^{k+3/2}_{2,1}(\Omega)}$. 

{\bf Proof of \cite[Thm.~{2.5}]{melenk-wohlmuth12}:} The proof stands as given in \cite{melenk-wohlmuth12}.
\qed
\end{proof}
 
\begin{lemma}
\label{lemma:approximation-of-w-and-lambda_w}
Assume that $\Omega$ satisfies \cite[Assumption~{(5.2)}]{melenk-wohlmuth12}. 
Then, for $v \in L^2(S_h) \subset L^2(\Omega)$ and $w:= T^D(v)$ and the corresponding
Lagrange multiplier $\lambda_w$ defined facewise by $\lambda_w|_{\gamma_l} = - \partial_n w|_{\Omega_{s(l)}}$
and the corresponding mortar approximation $w_h$ of $w$ there holds:  
\begin{eqnarray}
\label{eq:lemma:5.5-100}
\sqrt{\sum_{i} \|w - w_h\|^2_{H^1(\Omega_i)}} = \|w - w_h\|_{H^1}  &\lesssim & h \|v\|_{L^2(\Omega)}, \\
\label{eq:lemma:5.5-101}
\|\lambda_w - \Pi^{L^2}_{M_h} \lambda_w \|_{L^2(\Gamma)} &\lesssim & h^{1/2} \|v\|_{L^2(\Omega)}. 
\end{eqnarray}
\end{lemma}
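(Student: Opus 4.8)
The plan is to follow the argument of \cite[Lemma~{5.5}]{melenk-wohlmuth12}, whose non-convex generalization is Lemma~\ref{lemma:lemma:5.5}, and to replace the use of convexity of $\Omega$ by \cite[Assumption~{(5.2)}]{melenk-wohlmuth12}, i.e., the shift theorem for the Poisson problem, which is Assumption~\ref{assumption:shift-theorem} in the present notation. First I would record the improved regularity of $w = T^D(v)$ induced by the support condition $\supp v \subset \overline{S_h}$. Since the interface conditions encoded in $T^D$ make $w$ the solution of a \emph{global} $H^1_0(\Omega)$ problem with right-hand side $v$, Lemma~\ref{lemma:B32-regularity} applies and yields
\[
\|w\|_{B^{3/2}_{2,\infty}(\Omega)} \lesssim h^{1/2}\|v\|_{L^2(\Omega)}, \qquad \|w\|_{H^{3/2+\varepsilon}(\Omega)} \lesssim h^{1/2-\varepsilon}\|v\|_{L^2(\Omega)}, \quad \varepsilon \in (0,s_0-1/2].
\]
A subdomainwise trace estimate, admissible because $\A \in C^{0,1}$ is a multiplier on $H^{1/2+\varepsilon}$ for $\varepsilon < 1/2$ and the (co)normal-trace map $H^{1/2+\varepsilon}(\Omega_i) \to H^{\varepsilon}(\gamma_l)$ is bounded, then gives $\|\lambda_w\|_{H^{\varepsilon}_{pw}(\Gamma)} \lesssim \|w\|_{H^{3/2+\varepsilon}(\Omega)} \lesssim h^{1/2-\varepsilon}\|v\|_{L^2(\Omega)}$.

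For \eqref{eq:lemma:5.5-100} I would insert this regularity into the mortar a priori estimate of \cite{melenk-wohlmuth12} (in the non-convex form of Lemma~\ref{lemma:lemma:5.5}), which bounds $\|w-w_h\|_{H^1}$ by the best broken-$H^1$ approximation of $w$ in the discrete space together with the best $H^{-1/2}_{pw}(\Gamma)$-approximation of $\lambda_w$ by $M_h$. A subdomainwise use of the approximation property of the finite element spaces bounds the first term by $h^{1/2}\|w\|_{B^{3/2}_{2,\infty}(\Omega)} \lesssim h\|v\|_{L^2(\Omega)}$, while the second is $\lesssim h^{1/2+\varepsilon}\|\lambda_w\|_{H^{\varepsilon}_{pw}(\Gamma)} \lesssim h\|v\|_{L^2(\Omega)}$ (passing from the $L^2$-bound $h^{\varepsilon}\|\lambda_w\|_{H^{\varepsilon}_{pw}(\Gamma)}$ of \eqref{eq:approximation-property-L2-projection} to the $H^{-1/2}$-norm via the standard duality gain of $h^{1/2}$ for the $L^2$-projection). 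Summing yields $\|w-w_h\|_{H^1} \lesssim h\|v\|_{L^2(\Omega)}$.

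For \eqref{eq:lemma:5.5-101} I would combine the approximation property \eqref{eq:approximation-property-L2-projection} of $\Pi^{L^2}_{M_h}$ with smoothness index $s = \varepsilon \in (0,s_0-1/2] \subset [0,k]$ and the regularity of $\lambda_w$ from the first step:
\[
\|\lambda_w - \Pi^{L^2}_{M_h}\lambda_w\|_{L^2(\Gamma)} \lesssim h^{\varepsilon}\|\lambda_w\|_{H^{\varepsilon}_{pw}(\Gamma)} \lesssim h^{\varepsilon}\,h^{1/2-\varepsilon}\|v\|_{L^2(\Omega)} = h^{1/2}\|v\|_{L^2(\Omega)}.
\]
In particular $\Pi^{L^2}_{M_h}\lambda_w$ is then an admissible choice of $\tilde\mu_h$ for the term $b(u - P_h u, \lambda_w - \tilde\mu_h)$ in the proof of Theorem~\ref{thm:generalize-melenk-wohlmuth12}.

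The step I expect to be the main obstacle is the trace bound for $\lambda_w$: one cannot take $\varepsilon = 0$, since functions in $H^{1/2}$ have no well-defined trace, so the argument genuinely needs the extra regularity $\varepsilon > 0$ of $w$, i.e., $s_0 > 1/2$ in \cite[Assumption~{(5.2)}]{melenk-wohlmuth12}; this is also why the multiplier bound carries only the factor $h^{1/2}$. A secondary technical point is that all constants — in the mortar a priori estimate, the regularity assertions, and the trace estimates — must be uniform in $h$ and in the location of the strip $S_h$; this uniformity is inherited from \cite{melenk-wohlmuth12} and Lemma~\ref{lemma:lemma:5.5}.
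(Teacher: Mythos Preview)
Your argument is correct and follows essentially the same route as the paper: exploit the support condition $\supp v\subset\overline{S_h}$ to gain $h^{1/2-\varepsilon}$ in the $H^{3/2+\varepsilon}$-regularity of $w$ (and hence in $\|\lambda_w\|_{H^\varepsilon_{pw}(\Gamma)}$), feed this into the standard mortar {\sl a~priori} bound, and use~\eqref{eq:approximation-property-L2-projection} for the multiplier. One correction of labelling: the ``mortar {\sl a~priori} estimate'' you invoke for \eqref{eq:lemma:5.5-100} is \emph{not} Lemma~\ref{lemma:lemma:5.5} (which concerns the anisotropic norm $\|\nabla(w-w_h)\|_{L^2(\Gamma;L^1)}$) but the basic quasi-optimality result for mortar methods, which the paper cites as \cite[Prop.~{2.3}]{flemisch-melenk-wohlmuth05}; the paper also records the negative-norm gain $\|v\|_{H^{-1/2+\varepsilon}(\Omega)}\lesssim h^{1/2-\varepsilon}\|v\|_{L^2(\Omega)}$ explicitly as~\eqref{eq:lemma:5.5-120}, whereas you absorb it into Lemma~\ref{lemma:B32-regularity}.
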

\begin{proof}
We start with the proof of (\ref{eq:lemma:5.5-100}). 
It results from standard convergence theory for mortar methods 
as follows. \cite[Assumption~{(5.2)}]{melenk-wohlmuth12} 
provides $w \in H^{3/2+\varepsilon}(\Omega)$ for some $\varepsilon > 0$
together with $\|w\|_{H^{3/2+\varepsilon}(\Omega)} \leq C \|v\|_{H^{-1/2+\varepsilon}(\Omega)}$. 
The Lagrange multiplier $\lambda_w$ is given facewise by the expression 
$\lambda_w|_{\gamma_l} = - \partial_{n} w|_{\Omega_{s(l)}} \in H^{\varepsilon}(\gamma_l)$ 
together with the estimate $\|\lambda_w\|_{H^\varepsilon_{pw}(\Gamma)} \lesssim \|v\|_{H^{-1/2+\varepsilon}(\Omega)}$. 
The standard convergence theory for mortar methods 
(as worked out, e.g., in \cite[Prop.~{2.3}]{flemisch-melenk-wohlmuth05}) 
then gives 
$$
\|w - w_h\|_{H^1} \lesssim 
h^{1/2+\varepsilon} \|w\|_{H^{3/2+\varepsilon}(\Omega)}
\lesssim h^{1/2+\varepsilon} \|v\|_{H^{-1/2+\varepsilon}(\Omega)}. 
$$
The proof of (\ref{eq:lemma:5.5-100}) is complete if we can show that 
\begin{equation}
\label{eq:lemma:5.5-120}
\|v\|_{H^{-1/2+\varepsilon}(\Omega)} \lesssim h^{1/2-\varepsilon} 
\|v\|_{L^2(\Omega)}. 
\end{equation}
This last estimate exploits $\supp v \subset S_h$ and 
follows by interpolation
arguments similar to those employed in the proof of
\cite[Lemma~{5.2}]{melenk-wohlmuth12}: 
Define $\theta = 1-2 \varepsilon$ (we assume $\varepsilon < 1/2$). 
Then $H^{-1/2+\varepsilon}(\Omega) = (H^{1/2 - \varepsilon}(\Omega))^\prime = 
\Bigl(\bigl(L^2(\Omega),B^{1/2}_{2,1}(\Omega)\bigr)_{\theta,2}\Bigr)^\prime
= \bigl((B^{1/2}_{2,1}(\Omega))^\prime, L^2(\Omega)\bigr)_{\theta,2}$, so that the interpolation 
inequality yields 
$\|v\|_{H^{-1/2+\varepsilon}(\Omega)} \lesssim 
\|v\|^\theta_{(B^{1/2}_{2,1}(\Omega))^\prime} \|v\|_{L^2(\Omega)}^{1-\theta}$.
The argument is completed by noting in view of $\supp v \subset S_h$ that 
\begin{align*}
\|v\|_{L^2(\Omega)} \leq \|v\|_{L^2(\Omega)} 
\qquad \mbox{ and } \qquad   
\|v\|_{(B^{1/2}_{2,1}(\Omega))^\prime} \leq C \sqrt{h} \|v\|_{L^2(\Omega)},  
\end{align*}
so that $\|v\|_{H^{-1/2+\varepsilon}(\Omega)} \lesssim h^{(1-\theta)/2} \|v\|_{L^2(\Omega)} 
\lesssim h^{1/2-\varepsilon} \|v\|_{L^2(\Omega)}$. 
This proves (\ref{eq:lemma:5.5-120}). 

The bound (\ref{eq:lemma:5.5-101}) follows from 
(\ref{eq:approximation-property-L2-projection}) and 
$
\|\lambda_w  - \Pi^{L^2}_{M_h} \lambda_w\|_{L^2(\Gamma)} \leq C h^\varepsilon \|\lambda_w\|_{H^\varepsilon_{pw}(\Gamma)} 
\lesssim h^\varepsilon \|w\|_{H^{3/2+\varepsilon}(\Omega)} \lesssim h^\varepsilon \|v\|_{H^{-1/2+\varepsilon}(\Omega)}. 
$
An appeal to (\ref{eq:lemma:5.5-120}) finishes the proof.
\qed
\end{proof}

We generalize \cite[Lemma~{5.5}]{melenk-wohlmuth12}:  
\begin{lemma}
\label{lemma:lemma:5.5} 
[generalizations of \protect{\cite[Lemma~{5.5}]{melenk-wohlmuth12}}]
Assume that $\Omega$ satisfies \cite[Assumption~{(5.2)}]{melenk-wohlmuth12}. 
Then, for $v \in L^2(S_h) \subset L^2(\Omega)$ and $w:= T^D(v)$ and
the mortar approximation $w_h$ of $w$, there holds 
$$
\|\nabla (w- w_h)\|_{L^2(\Gamma; L^1)} \leq C h^{3/2} ( 1 + \delta_{k,1} |\ln h|) \|v\|_{L^2(\Omega)}. 
$$
\end{lemma}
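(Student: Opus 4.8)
The plan is to mirror the proof of Theorem~\ref{thm:error-on-strip}, replacing the conforming local error estimates by their mortar counterparts from \cite{melenk-wohlmuth12}. Working in local coordinates near the skeleton with $t$ the normal variable, the anisotropic norm satisfies, for any positive weight $\phi=\phi(t)$ and by the Cauchy--Schwarz inequality in $t$,
$$
\|\nabla(w-w_h)\|_{L^2(\Gamma;L^1)}
\lesssim \Bigl(\sup_{x'}\int\phi(t)^{-2}\,dt\Bigr)^{1/2}\,\|\phi\,\nabla(w-w_h)\|_{L^2(\Omega)},
$$
where the supremum and the inner integral run over the normal segment through $x'\in\Gamma$. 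First I would split $\Omega=S_{ch}\cup(\Omega\setminus S_{ch})$ for $c>0$ large, to be fixed. On $S_{ch}$ take $\phi\equiv 1$ on the portion $\{\delta_\Gamma<ch\}$ of each normal segment; this yields the factor $h^{1/2}$, and combined with the energy bound $\|w-w_h\|_{H^1}\lesssim h\|v\|_{L^2(\Omega)}$ of~(\ref{eq:lemma:5.5-100}) in Lemma~\ref{lemma:approximation-of-w-and-lambda_w} it gives the contribution $\lesssim h^{3/2}\|v\|_{L^2(\Omega)}$, with no logarithmic loss.

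On $\Omega\setminus S_{ch}$ I would take $\phi=\delta_\Gamma^{1/2+\varepsilon}$ on $\{\delta_\Gamma\ge ch\}$, with $\varepsilon=0$ if $k=1$ and $\varepsilon\in(0,s_0-1/2]$ small and fixed if $k>1$. Since $\int_{t\ge ch}t^{-1-2\varepsilon}\,dt\lesssim|\ln h|$ for $\varepsilon=0$ and $\lesssim h^{-2\varepsilon}$ for $\varepsilon>0$, this step reduces the claim to the weighted $H^1$-error bound
$$
\|\delta_\Gamma^{1/2+\varepsilon}\nabla(w-w_h)\|_{L^2(\Omega\setminus S_{ch})}\;\lesssim\; h^{3/2+\varepsilon}\,(1+\delta_{k,1}|\ln h|^{1/2})\,\|v\|_{L^2(\Omega)},
$$
which, multiplied by the weight factor, produces exactly $h^{3/2}(1+\delta_{k,1}|\ln h|)$.

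The weighted $H^1$-error bound is obtained as in Steps~2 and~3 of the proof of Theorem~\ref{thm:error-on-strip}, using the up-to-the-interface local error estimates of \cite[Sec.~5.3]{wahlbin95} in their mortar form from \cite{melenk-wohlmuth12}; after a covering argument these give
$$
\|\delta_\Gamma^{1/2+\varepsilon}\nabla(w-w_h)\|_{L^2(\Omega\setminus S_{ch})}
\lesssim \|\delta_\Gamma^{1/2+\varepsilon}\nabla(w-I^k_h w)\|_{L^2(\Omega\setminus S_{cc_1h})}
+ \|\delta_\Gamma^{-1/2+\varepsilon}(w-w_h)\|_{L^2(\Omega\setminus S_{cc_1h})}.
$$
For the approximation term one uses that $w$ solves a homogeneous equation away from $S_h$, so interior regularity applies and Lemma~\ref{lemma:5.4} can be invoked with $T$ replaced by $T^D$ and Assumption~\ref{assumption:shift-theorem} by \cite[Assumption~(5.2)]{melenk-wohlmuth12}: for $k=1$, Lemma~\ref{lemma:5.4}\,(\ref{item:lemma:5.4-1}) gives $\lesssim h\,|\ln h|^{1/2}\|w\|_{B^{3/2}_{2,\infty}(\Omega)}\lesssim h^{3/2}|\ln h|^{1/2}\|v\|_{L^2(\Omega)}$ using $\|w\|_{B^{3/2}_{2,\infty}(\Omega)}\lesssim h^{1/2}\|v\|_{L^2(\Omega)}$ (as in Lemma~\ref{lemma:B32-regularity}); for $k>1$, Lemma~\ref{lemma:5.4}\,(\ref{item:lemma:5.4-2}),\,(\ref{item:lemma:5.4-4}) together with $\delta_\Gamma\gtrsim h$ on $\Omega\setminus S_{cc_1h}$ and $\|w\|_{H^{3/2+\varepsilon}(\Omega)}\lesssim h^{1/2-\varepsilon}\|v\|_{L^2(\Omega)}$ give $\lesssim h^{3/2+\varepsilon}\|v\|_{L^2(\Omega)}$. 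The term $\|\delta_\Gamma^{-1/2+\varepsilon}(w-w_h)\|$ is controlled by the mortar analogue of Lemma~\ref{lemma:weighted-L2-estimate}, i.e. $\lesssim h^{1/2+\varepsilon}(1+\delta_{k,1}|\ln h|^{1/2})\|w-w_h\|_{H^1}\lesssim h^{3/2+\varepsilon}(1+\delta_{k,1}|\ln h|^{1/2})\|v\|_{L^2(\Omega)}$ by~(\ref{eq:lemma:5.5-100}).

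The main obstacle is this last ingredient: a weighted $L^2$-estimate for the mortar error, the analogue of Lemma~\ref{lemma:weighted-L2-estimate} for $w_h$. As in the conforming case it rests on a duality argument with dual solution $\psi=T^D(\delta_\Gamma^{-1+2\varepsilon}(w-w_h))$, together with the mortar analogues of the weighted-data regularity results of Lemma~\ref{lemma:regularity-weighted-rhs}, which follow from \cite[Assumption~(5.2)]{melenk-wohlmuth12} and the embeddings of Lemma~\ref{lemma:weighted-embedding} exactly as Lemma~\ref{lemma:B32-regularity} follows from Assumption~\ref{assumption:shift-theorem}. The extra care specific to the mortar setting is the nonconformity: in the duality argument the jump of $w-w_h$ across the interfaces must be estimated, which is done using that it is $L^2$-orthogonal to $M_h$, the approximation property~(\ref{eq:approximation-property-L2-projection}) of $\Pi^{L^2}_{M_h}$, and a trace inequality, as in the proof of Theorem~\ref{thm:generalize-melenk-wohlmuth12}. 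Modulo these technical lemmas, which are routine modifications of the arguments of Sections~\ref{sec:regularity}--\ref{sec:FEM-L2}, the bound follows by assembling the estimates above.
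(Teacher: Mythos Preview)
Your approach is essentially the same as the paper's: both split into a contribution $T_1$ near the skeleton (controlled directly by the energy bound~(\ref{eq:lemma:5.5-100})) and $T_2$ away from it, and reduce $T_2$ via Cauchy--Schwarz plus local error analysis to an approximation term (handled by interior regularity, Lemma~\ref{lemma:5.4}) and a weighted $L^2$-term treated by a bidual argument with $\psi=T^D(\widetilde\delta_\Gamma^{-1}(w-w_h))$ for $k=1$ or $\psi=T^D(\widetilde\delta_\Gamma^{-1+2\varepsilon}(w-w_h))$ for $k>1$. The one place to be careful is that the mortar analogue of Lemma~\ref{lemma:weighted-L2-estimate} does not factor cleanly through $\|w-w_h\|_{H^1}$ as you write --- the mortar duality identity~(\ref{eq:lemma:5.5-1000}) produces extra terms involving the Lagrange multipliers $\lambda_w$ and $\lambda_\psi$ that must be estimated separately via~(\ref{eq:approximation-property-L2-projection}),~(\ref{eq:lemma:5.5-120}), and Lemma~\ref{lemma:regularity-weighted-rhs-appendix}, though the final $h^{3/2}$-scaling you assert is correct.
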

\begin{proof}
Inspection of the proof of \cite[Lemma~{5.5}]{melenk-wohlmuth12} shows that we have 
to estimate the following two terms: 
$$
T_1:= \int_{\tau = 0}^{\tilde c h} \|\nabla (w - w_h)\|_{L^2(\gamma_\tau)} 
\qquad \mbox{ and } \qquad 
T_2:= \int_{\tau =  \tilde c h}^D \|\nabla (w - w_h)\|_{L^2(\gamma_\tau)}. 
$$
{\bf Estimating $\mathbf {T_1}$:} 
Inspection
of the proof of \cite[Lemma~{5.5}]{melenk-wohlmuth12} shows that 
\begin{eqnarray*}
T_1 &\leq& 
\sqrt{h} \|\nabla (w - w_h)\|_{L^2(S(0,\tilde c h))} \leq  
\sqrt{h} \|\nabla (w - w_h)\|_{L^2(\Omega_i)}.
\end{eqnarray*} 
We conclude together with (\ref{eq:lemma:5.5-100})
$$
T_1 \leq C h^{3/2} \|v\|_{L^2(\Omega)}. 
$$
We now turn to estimating $T_2$. 
As in the proof of \cite[Lemma~{5.5}]{melenk-wohlmuth12}, we consider
the lowest order case $k = 1$ and the higher order cases $k > 1$  separately. 

{\bf Estimating $\mathbf {T_2}$ for $\mathbf {k = 1}$:} Inspection of the proof of 
\cite[Lemma~{5.5}]{melenk-wohlmuth12} gives (cf.~\cite[eqn.~(5.13)]{melenk-wohlmuth12})
\begin{align}
\label{eq:lemma:lemma:5.5-10} 
& \int_{\tau = \tilde c h}^D \|\nabla (w-  w_h) \|_{L^2(\gamma_\tau)}\,d\tau  \\
\nonumber 
& \quad \lesssim |\ln h|^{1/2} \Bigl(  \|\delta_\Gamma^{1/2} \nabla (w - I^1_h w)\|_{L^2(\cyl^\prime\setminus S_{c_1 h})}
+ \|\delta_\Gamma^{-1/2} (w - w_h)\|_{L^2(\cyl^\prime\setminus S_{c_1 h})}
\Bigr). 
\end{align} 
The term $w - I^1_h w$ is estimated as in the proof of \cite[Lemma~{5.5}]{melenk-wohlmuth12}
with the aid of (weighted) $H^2$-regularity asserted in \cite[Lemma~{5.4}]{melenk-wohlmuth12} 
and the estimate $\|w\|_{B^{3/2}_{2,\infty}(\Omega)} \lesssim \sqrt{h} \|v\|_{L^2(\Omega)}$ 
of \cite[Lemma~{5.2}]{melenk-wohlmuth12}. In total, we get 
$$
\|\delta_\Gamma^{1/2} \nabla (w - I^1_h w)\|_{L^2(\cyl^\prime\setminus S_{c_1 h})} \lesssim 
|\ln h|^{1/2} h^{3/2} \|v\|_{L^2(\Omega)}. 
$$
The second contribution of the right-hand side of (\ref{eq:lemma:lemma:5.5-10}) 
has to be treated with more care than in the proof of \cite[Lemma~{5.5}]{melenk-wohlmuth12}, 
where the convexity of $\Omega$ was conveniently exploited in order to control $\|w - w_h\|_{L^2(\Omega)}$; 
more precisely, we {\em do not} have full $H^2$-regularity but only the limited shift theorem
of \cite[Assumption~{(5.2)}]{melenk-wohlmuth12}. In order to control the term 
$\displaystyle 
\|\delta_\Gamma^{-1/2} (w - w_h)\|_{L^2(\cyl^\prime\setminus S_{c_1 h})}
$
appearing on the right-hand side of (\ref{eq:lemma:lemma:5.5-10}), 
we proceed by yet another duality argument.  
Let $\psi:= T^D(\widetilde \delta_\Gamma^{-1} (w - w_h))$, 
where $\widetilde \delta_\Gamma$ is the 
regularized distance function $\widetilde \delta_\Gamma \sim h + \delta_\Gamma$. 
Note that $\widetilde \delta_\Gamma \sim \delta_\Gamma$ on $\cyl^\prime \setminus S_{c_1 h}$. 
Denote by $\lambda_\psi$ the Lagrange multiplier for $\psi$, i.e., 
$\lambda_\psi|_{\gamma_l}  = -\partial_{n} \psi|_{\Omega_{s(l)}}$. From 
Lemma~\ref{lemma:regularity-weighted-rhs-appendix} we have for some $\varepsilon > 0$ 
given by the stipulated shift theorem (\cite[Assumption~{(5.2)}]{melenk-wohlmuth12}) 
\begin{eqnarray}
\label{eq:lemma:5.5-150}
\|\psi\|_{B^{3/2}_{2,\infty}(\Omega)} &\lesssim & |\ln h|^{1/2} \|\widetilde \delta_\Gamma^{-1/2} (w - w_h)\|_{L^2(\Omega)}, \\
\|\lambda_\psi\|_{H^\varepsilon_{pw}(\Gamma)}  + 
\|\psi\|_{H^{3/2+\varepsilon}(\Omega)} & \lesssim & h|^{-\varepsilon} \|\widetilde \delta_\Gamma^{-1/2} (w - w_h)\|_{L^2(\Omega)}. 
\end{eqnarray}
The pair $(\psi,\lambda_\psi)$ solves the following saddle point problem: 
\begin{subequations}
\label{eq:lemma:5.5-200}
\begin{align}
\label{eq:lemma:5.5-200a}
a(z,\psi) + b(z,\lambda_\psi) &= (z, \widetilde \delta^{-1} (w - w_h)) \qquad \forall 
z \in \{ z \in \prod_i H^1(\Omega_i)\,|\, z|_{\partial\Omega} = 0\}, \\
\label{eq:lemma:5.5-200b}
b(\psi,q) &= 0 \qquad \forall q \in \prod_{i} H^{1/2}_{00}(\gamma_i). 
\end{align}
\end{subequations}
The Galerkin orthogonality satisfied by $w - w_h$ reads for arbitrary $I \psi \in V_h$ 
\begin{eqnarray*}
a(w - w_h , I \psi) + b(I \psi,\lambda_w) &=& 0. 
\end{eqnarray*}
Hence, we get by taking $z = w - w_h$ in (\ref{eq:lemma:5.5-200a})
\begin{eqnarray}
\label{eq:lemma:5.5-1000}
\|\widetilde \delta_\Gamma^{-1/2} (w - w_h) \|^2_{L^2(\Omega)} &=& 
a(w - w_h,\psi) + b(w - w_h,\lambda_\psi) \\
\nonumber 
&= &
a(w - w_h,\psi - I\psi) - b(I\psi,\lambda_w) + b(w - w_h,\lambda_\psi)  . 
\end{eqnarray}
We estimate $|a(w - w_h,\psi - I \psi)| \lesssim \|w - w_h\|_{H^1} \|\psi - I \psi\|_{H^1}$. 
Since $I \psi \in V_h$ and $[\psi ]  = 0$: 
\begin{eqnarray*}
|b(I \psi,\lambda_w)| &= & b(\psi - I \psi,\lambda_w - \Pi^{L^2}_{M_h} \lambda_w)| 
\lesssim \|\psi - I \psi\|_{L^2(\Gamma)} \|\lambda_w - \Pi^{L^2}_{M_h} \lambda_w\|_{L^2(\Gamma)} \\
&\leq & \|\psi - I \psi\|_{L^2(\Omega)}^{1/2} \|\psi - I \psi\|_{H^1}^{1/2}   h^\varepsilon \|\lambda_w\|_{H^\varepsilon_{pw}(\Gamma)},
\end{eqnarray*}
where, in the last step, we employed the multiplicative trace inequality and the approximation
property (\ref{eq:approximation-property-L2-projection}). 
For the term $b(w - w_h,\lambda_\psi)$, we employ again that $[w] = 0$ and that $w_h \in V_h$ 
to get 
\begin{eqnarray*}
b(w - w_h,\lambda_\psi) & = &
\int_\Gamma [w - w_h] (\lambda_\psi - \Pi^{L^2}_{M_h} \lambda_\psi) \\
&= &
\int_\Gamma ([w - w_h]- \Pi^{L^2}_{M_h} [w - w_h]) (\lambda_\psi - \Pi^{L^2}_{M_h} \lambda_\psi)  \\
&\lesssim & h^{1/2} \|[w - w_h]\|_{H^{1/2}_{pw}(\Gamma)} h^\varepsilon \|\lambda_\psi\|_{H^\varepsilon_{pw}(\Gamma)}.
\end{eqnarray*}
Noting $\|[w - w_h]\|_{H^{1/2}_{pw}(\Gamma)}\lesssim \|w - w_h\|_{H^1}$ 
we obtain by inserting the above estimates in 
(\ref{eq:lemma:5.5-1000})
\begin{align*}
 &\|\widetilde \delta^{-1/2} (w - w_h)\|^2_{L^2(\Omega)}  
\lesssim \\
& \quad  \|w - w_h\|_{H^1} \|\psi - I \psi\|_{H^1}   
+ \|\psi - I \psi\|^{1/2}_{L^2} \|\psi - I \psi\|^{1/2}_{H^1} h^\varepsilon \|\lambda_w\|_{H^\varepsilon_{pw}(\Gamma)}
\\ 
& \quad \mbox{}
+ h^{1/2} \|w - w_h\|_{H^1} h^\varepsilon \|\lambda_\psi\|_{H^\varepsilon_{pw}}. 
\end{align*}
The terms involving $\psi - I \psi$ are estimate with the aid of Lemma~\ref{lemma:approximation-from-constrained-space}
and the bound $\|\psi\|_{B^{3/2}_{2,\infty}(\Omega)}$ given in 
(\ref{eq:lemma:5.5-150});  
the terms $\|w - w_h\|_{H^1}$ are controlled in (\ref{eq:lemma:5.5-100}); using 
$\|\lambda_w\|_{H^\varepsilon_{pw}(\Gamma)} \lesssim \|v\|_{H^{-1/2+\varepsilon}(\Omega)}$ and 
(\ref{eq:lemma:5.5-150}) for $\|\lambda_\psi\|_{H^\varepsilon_{pw}(\Gamma)}$ we get 
\begin{align*}
&\|\widetilde \delta^{-1/2} (w - w_h)\|_{L^2(\Omega)}  \lesssim \\
& 
h^{3/2} |\ln h|^{1/2} \|v\|_{L^2(\Omega)} + h^{1} |\ln h|^{1/2} \|v\|_{L^2(\Omega)} 
h^\varepsilon \|v\|_{H^{-1/2+\varepsilon}(\Omega)} + 
h^{3/2} h^{\varepsilon} h^{-\varepsilon} \|v\|_{L^2(\Omega)}. 
\end{align*}
Now the result follows from (\ref{eq:lemma:5.5-120}). 

{\bf Estimating $\mathbf {T_2}$ for $\mathbf {k > 1}$:} We proceed similarly to 
the case $k = 1$. The difference is that (cf.~\cite[(5.12)]{melenk-wohlmuth12}) we need to 
control
$$
h^{-\varepsilon} \|\delta_\Gamma^{-1/2+\varepsilon} (w -w_h)\|_{L^2(\cyl^\prime \setminus S_{\tilde c_1 h})}. 
$$
As in the case $k = 1$, we set up a dual problem with solution 
$\psi = T^D(\widetilde \delta_\Gamma^{-1+2\varepsilon} (w - w_h))$ 
and corresponding Lagrange multiplier $\lambda_\psi$. 
By Lemma~\ref{lemma:regularity-weighted-rhs-appendix} (and trace estimates) we have the regularity assertions 
\begin{equation}
\label{eq:lemma:5.5-600}
\|\lambda_\psi\|_{H^\varepsilon_{pw}(\Gamma)} + 
\|\psi\|_{H^{3/2+\varepsilon}(\Omega)} \lesssim \|\widetilde \delta_\Gamma^{-1/2+\varepsilon} (w- w_h)\|_{L^2(\Omega)}. 
\end{equation}
Proceeding in the same manner as in the case $k = 1$, we arrive at 
\begin{align*}
& \|\widetilde \delta^{-1/2+\varepsilon} (w - w_h)\|^2_{L^2(\Omega)}   \\
& \lesssim \|w - w_h\|_{H^1} \|\psi - I \psi\|_{H^1}  
+ \|\psi - I \psi\|^{1/2}_{L^2} \|\psi - I \psi\|^{1/2}_{H^1} h^\varepsilon \|\lambda_w\|_{H^\varepsilon_{pw}(\Gamma)}  
\\ 
& \quad \mbox{} 
+ h^{1/2} \|w - w_h\|_{H^1} h^\varepsilon \|\lambda_\psi\|_{H^\varepsilon_{pw}}. 
\end{align*}
The regularity assertions (\ref{eq:lemma:5.5-600}) as well as the approximation properties 
of Lemma~\ref{lemma:approximation-from-constrained-space} yield 
$
\|\widetilde \delta^{-1/2+\varepsilon} (w - w_h)\|_{L^2(\Omega)} 
\lesssim h^{3/2+\varepsilon} \|v\|_{L^2(\Omega)}
$
and hence 
$$
h^{-\varepsilon} \|\delta_\Gamma^{-1/2+\varepsilon} (w -w_h)\|_{L^2(\cyl^\prime \setminus S_{\tilde c_1 h})}
\lesssim h^{3/2} \|v\|_{L^2(\Omega)}. 
$$
\qed
\end{proof}
\begin{lemma}[Generalization of \protect{\cite[Lemma~{5.2}]{melenk-wohlmuth12}}]
\label{lemma:regularity-weighted-rhs-appendix}
Let $\widetilde \delta_\Gamma$ be the regularized distance function.  
Then for the operator $T^D$ we have 
for $w:= T^D(\widetilde \delta_\Gamma^{-1} v)$ (and $\varepsilon > 0$
sufficiently small): 
\begin{eqnarray}
\label{eq:lemma:regularity-weighted-rhs-appendix-10}
\| w\|_{B^{3/2}_{2,\infty}(\Omega)} &\leq& 
C |\ln h|^{1/2} \|\widetilde \delta_\Gamma^{-1/2} v\|_{L^2(\Omega)},  \\
\label{eq:lemma:regularity-weighted-rhs-appendix-15}
\| w\|_{H^{3/2+\varepsilon}(\Omega)} &\leq& 
C_\varepsilon h^{-\varepsilon} \|\widetilde \delta_\Gamma^{-1/2} v\|_{L^2(\Omega)}.
\end{eqnarray}
For $w = T^D(\widetilde\delta_\Gamma^{-1+2\varepsilon} v)$ we have 
\begin{eqnarray}
\label{eq:lemma:regularity-weighted-rhs-appendix-20}
\|w\|_{H^{3/2+\varepsilon}(\Omega)} &\leq & C 
\|\widetilde \delta_\Gamma^{-1/2+\varepsilon} v\|_{L^2(\Omega)}. 
\end{eqnarray}
\end{lemma}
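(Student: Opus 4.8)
The plan is to recognize the three estimates as the exact analogues of the bounds in Lemma~\ref{lemma:regularity-weighted-rhs}, now for the continuous Dirichlet solution operator $T^D$ on $\Omega$ in place of the model-problem operator $T$. The only input that Lemma~\ref{lemma:regularity-weighted-rhs} (and the auxiliary results of Section~\ref{sec:regularity} it rests on) actually uses is the elliptic shift theorem of Assumption~\ref{assumption:shift-theorem} together with the geometry of $\Omega$; since \cite[Assumption~{(5.2)}]{melenk-wohlmuth12} is precisely Assumption~\ref{assumption:shift-theorem} applied to the Poisson problem, the weighted embeddings of Lemma~\ref{lemma:weighted-embedding}, the negative-norm versus weighted-$L^2$ bounds of Lemma~\ref{lemma:negative-norm-vs-weighted-L2}, and the locally supported shift estimates of Lemma~\ref{lemma:B32-regularity} — in particular $\|T^D f\|_{B^{3/2}_{2,\infty}(\Omega)} \lesssim \|f\|_{(B^{1/2}_{2,1}(\Omega))^\prime}$ and $\|T^D f\|_{H^{3/2+\varepsilon}(\Omega)} \lesssim \|\delta_\Gamma^{1/2-\varepsilon} f\|_{L^2(\Omega)}$ for $\varepsilon\in(0,s_0-1/2]$ — hold verbatim with $T$ replaced by $T^D$. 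I would state this transfer first, as the single conceptual point of the proof.

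Granting it, I would obtain (\ref{eq:lemma:regularity-weighted-rhs-appendix-10}) by chaining the $T^D$-version of (\ref{eq:lemma:B32-regularity-1}) with (\ref{eq:lemma:negative-norm-vs-weighted-L2-3}) applied to $f=\widetilde\delta_\Gamma^{-1}v$, exactly as (\ref{eq:lemma:regularity-weighted-rhs-1}) is derived. For (\ref{eq:lemma:regularity-weighted-rhs-appendix-15}) I would apply the $T^D$-version of (\ref{eq:lemma:B32-regularity-1a}) with $f=\widetilde\delta_\Gamma^{-1}v$ and then dominate the weight pointwise, writing $\delta_\Gamma^{1/2-\varepsilon}\widetilde\delta_\Gamma^{-1} = (\delta_\Gamma^{1/2-\varepsilon}\widetilde\delta_\Gamma^{-1/2+\varepsilon})\,\widetilde\delta_\Gamma^{-\varepsilon}\,\widetilde\delta_\Gamma^{-1/2} \le h^{-\varepsilon}\,\widetilde\delta_\Gamma^{-1/2}$, using $\delta_\Gamma\le\widetilde\delta_\Gamma$ (so $\delta_\Gamma^{1/2-\varepsilon}\widetilde\delta_\Gamma^{-1/2+\varepsilon}\le 1$, legitimate because $1/2-\varepsilon>0$) and $\widetilde\delta_\Gamma\ge h$ (so $\widetilde\delta_\Gamma^{-\varepsilon}\le h^{-\varepsilon}$); this is just the pointwise estimate underlying (\ref{eq:lemma:regularity-weighted-rhs-1a}). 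Finally, for (\ref{eq:lemma:regularity-weighted-rhs-appendix-20}) I would apply the $T^D$-version of (\ref{eq:lemma:B32-regularity-1a}) with $f=\widetilde\delta_\Gamma^{-1+2\varepsilon}v$ and use $\delta_\Gamma^{1/2-\varepsilon}\widetilde\delta_\Gamma^{-1+2\varepsilon} = (\delta_\Gamma^{1/2-\varepsilon}\widetilde\delta_\Gamma^{-1/2+\varepsilon})\,\widetilde\delta_\Gamma^{-1/2+\varepsilon} \le \widetilde\delta_\Gamma^{-1/2+\varepsilon}$, again by $\delta_\Gamma\le\widetilde\delta_\Gamma$, which yields $\|T^D(\widetilde\delta_\Gamma^{-1+2\varepsilon}v)\|_{H^{3/2+\varepsilon}(\Omega)} \lesssim \|\widetilde\delta_\Gamma^{-1/2+\varepsilon}v\|_{L^2(\Omega)}$ as in (\ref{eq:lemma:regularity-weighted-rhs-2}).

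I do not expect a genuinely hard step: the work consists of the careful transcription just described plus the three elementary pointwise weight manipulations. The only place deserving an explicit sentence is the transfer of the Section~\ref{sec:regularity} lemmas from $T$ to $T^D$, which is legitimate because their proofs rely only on the elliptic shift theorem and on $\Omega$, both available here through \cite[Assumption~{(5.2)}]{melenk-wohlmuth12}; and one should keep track of the restriction $\varepsilon\in(0,s_0-1/2]$ inherited from Lemma~\ref{lemma:B32-regularity} (this is what "$\varepsilon>0$ sufficiently small" means in the statement, and in particular it forces $\varepsilon<1/2$, which is exactly what makes $\delta_\Gamma^{1/2-\varepsilon}\widetilde\delta_\Gamma^{-1/2+\varepsilon}\le 1$ valid).
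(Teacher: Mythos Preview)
Your proposal is correct and follows exactly the approach of the paper, which simply states that ``the proof is done with the same arguments as those of Lemma~\ref{lemma:regularity-weighted-rhs}.'' In fact, you have spelled out more detail than the paper provides---in particular the pointwise weight manipulations for (\ref{eq:lemma:regularity-weighted-rhs-appendix-15}) and (\ref{eq:lemma:regularity-weighted-rhs-appendix-20}), and the observation that the transfer from $T$ to $T^D$ is legitimate because the only structural input is the shift theorem \cite[Assumption~{(5.2)}]{melenk-wohlmuth12}---are all correct and constitute the full argument.
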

\begin{proof} 
The proof is done with the same arguments as those of Lemma~\ref{lemma:regularity-weighted-rhs}.
\qed
\end{proof}
We need an approximation result for the approximation from the constrained space $V_h$ for functions
that do not permit nodal interpolation. 
\begin{lemma}[approximation from constrained space] 
\label{lemma:approximation-from-constrained-space}
Let the constrained space $V_h$ be defined in \cite[(2.4b)]{melenk-wohlmuth12} and assume hypotheses 
\cite[(A1), (A2)]{melenk-wohlmuth12}. Define the operator $\widetilde P_h$ as in \cite[(4.2)]{melenk-wohlmuth12}
but replace the interpolation operator $I_k$ by a (subdomainwise) Scott-Zhang operator $I^{k,SZ}_h$ 
(cf.~\cite{scott-zhang90})
that conforms 
to the boundaries of the subdomains $\Omega_i$, $i=1,\ldots,M$. Then $\widetilde P_h$ is defined
on $H^s(\Omega) \cap H^1_0(\Omega)$ for $s \ge 1$, it maps into $V_h$, and has the approximation properties
\begin{align*}
 \sqrt{\sum_{i} \|v - \widetilde P_h v\|^2_{H^1(\Omega_i)}} &\leq C h^{s-1} \|v\|_{H^s(\Omega)},  \\
\|v - \widetilde P_h v\|_{L^2(\Omega)} &\leq C h^{s} \|v\|_{H^s(\Omega)}, 
\qquad 1 \leq s \leq k+1, \\
 \sqrt{\sum_{i} \|v - \widetilde P_h v\|^2_{H^1(\Omega_i)}} &\leq C h^{s-1} \|v\|_{B^s_{2,\infty}(\Omega)}, 
\\
\|v - \widetilde P_h v\|_{L^2(\Omega)} &\leq C h^{s} \|v\|_{B^s_{2,\infty}(\Omega)}, 
\qquad 1 < s < k+1,  \qquad s \not \in \BbbN. 
\end{align*}
\end{lemma}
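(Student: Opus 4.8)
The plan is to follow the construction and analysis of the operator $\widetilde P_h$ in \cite[Sec.~4]{melenk-wohlmuth12}, the only modification being that the subdomainwise nodal interpolant $I_k$ is replaced by a subdomainwise Scott-Zhang quasi-interpolant $I^{k,SZ}_h$ aligned with the partition $\{\Omega_i\}_{i=1}^M$. Recall that $\widetilde P_h$ of \cite[(4.2)]{melenk-wohlmuth12} has the structure $\widetilde P_h v = I^{k,SZ}_h v + E_h v$, where $I^{k,SZ}_h v$ is in general discontinuous across the interfaces and $E_h v$ is a correction, supported in the $O(h)$-strip of elements abutting the skeleton, that depends linearly on the trace data of $I^{k,SZ}_h v$ on the interfaces and is chosen precisely so that the weak-continuity constraints defining $V_h$ in \cite[(2.4b)]{melenk-wohlmuth12} are met. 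Since $I^{k,SZ}_h$ is defined on $H^1(\Omega_i)$ --- in contrast to $I_k$, which needs point values and hence $H^s$ with $s>d/2$ --- the operator $\widetilde P_h$ is well defined on $H^1_0(\Omega)$, hence on $H^s(\Omega)\cap H^1_0(\Omega)$ for every $s\ge 1$, and it maps into $V_h$ by construction; the constraint verification carries over verbatim from \cite{melenk-wohlmuth12}, since the correction mechanism does not depend on which interpolant is used. This settles the first two assertions.

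For the quantitative estimates I would first treat the integer cases $s\in\{1,\dots,k+1\}$. The Scott-Zhang operator $I^{k,SZ}_h$, being aligned with the subdomains, satisfies the usual elementwise stability and approximation bounds $\|v - I^{k,SZ}_h v\|_{H^m(K)} \lesssim h^{s-m}\|v\|_{H^s(\omega_K)}$, $m\in\{0,1\}$, $1\le s\le k+1$, with patches $\omega_K$ contained in a single subdomain; summing elementwise, its contribution to both stated norms is of the required order. For the correction $E_h v$ one uses, as in \cite[Sec.~4]{melenk-wohlmuth12}, that the relevant norm of $E_h v$ is controlled by the jumps $[I^{k,SZ}_h v]$ across the interfaces in a suitable scaled trace norm. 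Since $v$ is single-valued across the skeleton, $[I^{k,SZ}_h v] = [I^{k,SZ}_h v - v]$; a discrete trace inequality on the boundary-layer elements together with the Scott-Zhang approximation property gives $\|[I^{k,SZ}_h v]\|_{L^2(\Gamma)} \lesssim h^{s-1/2}\|v\|_{H^s(\Omega)}$ and the analogous weaker-norm estimates. Since $E_h v$ is supported in an $O(h)$-neighborhood of the skeleton, an inverse estimate then yields $\|E_h v\|_{L^2(\Omega)} \lesssim h^s\|v\|_{H^s(\Omega)}$ and $\sqrt{\sum_i\|E_h v\|^2_{H^1(\Omega_i)}} \lesssim h^{s-1}\|v\|_{H^s(\Omega)}$. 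Adding the two contributions gives the integer-order estimates.

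The fractional and Besov estimates then follow by interpolation. Writing the $L^2$- and broken-$H^1$-bounds as mapping properties of $\operatorname{Id} - \widetilde P_h$ and combining them with the stability of $\widetilde P_h$, one obtains that $\widetilde P_h$ is bounded simultaneously on $H^1_0(\Omega)$ and on $H^{k+1}(\Omega)\cap H^1_0(\Omega)$ with the claimed rates; the $K$-method yields the $H^s$-estimates for non-integer $1<s<k+1$, and the reiteration identity $B^s_{2,\infty}(\Omega) = (H^{\lfloor s\rfloor}(\Omega),H^{\lceil s\rceil}(\Omega))_{\theta,\infty}$, $\theta = s - \lfloor s\rfloor$, gives the $B^s_{2,\infty}$-estimates for $1<s<k+1$, $s\notin\BbbN$; the exclusion of integer $s$ is the standard restriction imposed by the $\infty$-index at the interpolation endpoints. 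The main obstacle is the careful control of the correction operator $E_h v$ in both norms using only the $H^1$-regularity of $v$: one must check that replacing nodal interpolation by Scott-Zhang interpolation does not spoil the interface-jump estimates that drive the analysis in \cite[Sec.~4]{melenk-wohlmuth12}, and this is exactly where the alignment of $I^{k,SZ}_h$ with the subdomain partition and the locality and $H^1$-stability of the Scott-Zhang operator are used.
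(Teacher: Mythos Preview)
Your proposal is correct and follows essentially the same route as the paper's proof: split $\widetilde P_h v$ into the Scott--Zhang part and the correction $E_k\Pi_h[I^{k,SZ}_h v]$, control the jump $[I^{k,SZ}_h v] = [I^{k,SZ}_h v - v]$ on the skeleton via a trace inequality combined with the simultaneous $L^2$/$H^1$ approximation properties of $I^{k,SZ}_h$ to get the $h^{s-1/2}$ scaling, lift back to the volume by inverse estimates on the $O(h)$-strip, and obtain the Besov bounds by interpolation. The paper uses the multiplicative trace inequality $\|w\|_{L^2(\partial\Omega_i)}^2 \lesssim \|w\|_{L^2(\Omega_i)}\|w\|_{H^1(\Omega_i)}$ rather than an elementwise trace estimate, but this is a cosmetic difference.
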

\begin{proof}
We only show the first estimates as the estimates with $B^{s}_{2,\infty}(\Omega)$-regularity follow
by interpolation arguments.  
It suffices to study the contribution $E_k \Pi_h [I^{k,SZ} v]$ to the operator $\widetilde P_h$. We recall 
that by the multiplicative trace inequality $\|w\|^2_{L^2(\partial\Omega_i)}\lesssim 
\|w\|_{L^2(\Omega_i)} \|w\|_{H^1(\Omega_i)}$ and the simultaneous approximation properties of 
$I^{k,SZ}$ in $L^2$ and $H^1$  we have 
$\|v - I^{k,SZ} v\|_{L^2(\partial\Omega_i)} \leq C h^{s-1/2} \|v\|_{H^s(\Omega_i)}$. 
Exploiting 
the $L^2$-stability of the mortar projection $\Pi_h$, we get the bound 
\begin{align*}
\|E_k \Pi_h [I^{k,SZ} v]\|_{H^1(\Omega_i)} & \lesssim h^{-1} h^{1/2} \|[I^{k,SZ} v]\|_{L^2(\partial\Omega_i)}  \\
& \lesssim h^{-1/2} \|v - I^{k,SZ} v\|_{L^2(\partial\Omega_i)} \lesssim h^{s-1} \|v\|_{H^s(\Omega_i)}. 
\end{align*}
\qed
\end{proof}
\fi 

\section{Details concerning Lemma~\ref{lemma:5.4}}
\label{sec:lemma:5.4}
The following lemma is an expanded version of Lemma~\ref{lemma:5.4}. 
It is closely related to Lemma~\ref{lemma:weighted-shift-theorem} with 
$\beta = 1/2$ there. The essential difference is that we replace the norm 
$\|z\|_{H^{3/2}(\Omega)}$ with the weaker norm $\|z\|_{B^{3/2}_{2,\infty}(\Omega)}$ 
at the expense of a factor $|\ln h|^{1/2}$.  
\begin{lemma}
\label{lemma:weighted-inverse-estimate}
Let the bounded Lipschitz domain $\Omega\subset\BbbR^d$, $d \in \{2,3\}$.
Assume that $w \in B^{3/2}_{2,\infty}(\Omega)$ is a solution of
$$
-\nabla \cdot (\A \nabla w) = v.  
$$
\begin{enumerate}[(i)]
\item 
\label{item:lemma:weighted-inverse-estimate-i}
There exist constants $C$ (depending only on the lower bound $\alpha_0$ for the eigenvalues of $\A$, 
the norm $\|\A\|_{C^{0,1}(\overline{\Omega})}$, and $\Omega$) and $c_1 >0$ (depending only on $\Omega$) 
such that 
with the distance function $\delta_\Gamma$
\begin{equation}
\label{eq:lemma:weighted-inverse-estimate-foo-10}
\|\sqrt{\delta_\Gamma} \nabla^2 w\|_{L^2(\Omega\setminus  S_{h})} 
\leq C \sqrt{|\ln h|} \|w\|_{B^{3/2}_{2,\infty}(\Omega)} 
+ C \|\sqrt{\delta_\Gamma} v\|_{L^2(\Omega\setminus S_{c_1 h})}.
\end{equation}
In particular, if $v|_{\Omega\setminus S_{c_1 h}} = 0$ then 
$\displaystyle 
\|\sqrt{\delta_\Gamma} \nabla^2 w\|_{L^2(\Omega\setminus  S_{h})} 
\leq C \sqrt{|\ln h|} \|w\|_{B^{3/2}_{2,\infty}(\Omega)}. 
$
\item 
\label{item:lemma:weighted-inverse-estimate-ii}
Let $c^\prime > 0$ be fixed. Assume $v|_{\Omega\setminus S_{c^\prime h}} = 0$. Then  
there exist $\widetilde c$, $c_2 > 0$ (depending only on $\Omega$) such for every $\alpha > 0$
\begin{align}
\label{eq:lemma:weighted-inverse-estimate-foo-20}
 \|\delta_\Gamma^\alpha \nabla^3 w\|_{L^2(\Omega\setminus  S_{\widetilde ch})} 
&\leq C_{A,\Omega} \Bigl[  \|\delta_\Gamma^{\alpha-1} \nabla^2 w\|_{L^2(\Omega\setminus S_{c_2 h})} 
\\
\nonumber 
& \quad 
+ \|\A\|_{C^{1,1}(\Omega)} \|\delta_\Gamma^\alpha \nabla w\|_{L^2(\Omega\setminus S_{c_2 h})}
\Bigr];
\end{align}
the constant $C_{A,\Omega}$ depends on the coercivity constant $\alpha_0$ and $\|\A\|_{C^{0,1}(\overline{\Omega})}$
as well as $\Omega$. 
\item 
\label{item:lemma:weighted-inverse-estimate-iii}
Let $c^\prime > 0$ be fixed. Assume $v|_{\Omega\setminus S_{c^\prime h}} = 0$. Assume that 
$w \in H^{3/2+\varepsilon}(\Omega)$ for some $\varepsilon \in (0,1/2)$. Then 
there exists $\widehat c > 0$ (depending only on $\Omega$ and $c^\prime$) such that 
$$
\|\nabla^2 w\|_{L^2(\Omega\setminus S_{\widehat c h})} \leq C_{A,\varepsilon} h^{-1/2+\varepsilon} \|w\|_{H^{3/2+\varepsilon}(\Omega)}, 
$$
where $C_{A,\varepsilon}$ depends on $\alpha_0$, $\|\A\|_{C^{0,1}(\overline{\Omega})}$, and $\varepsilon$. 
\end{enumerate}
\end{lemma}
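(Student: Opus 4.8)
The plan is to establish all three assertions by combining \emph{local} interior elliptic regularity on balls with the Besicovitch-type covering of Lemma~\ref{lemma:besicovitch-balls}, applied with $M=\Gamma$ so that the covering balls $B_{x_i}$ have radius $r_i\sim c\,\delta_\Gamma(x_i)$, their stretched counterparts $\widehat B_{x_i}=\overline B_{(1+\varepsilon)r_i}(x_i)$ satisfy a finite overlap property, and, whenever $x_i$ is at distance $\gtrsim h$ from $\Gamma$, one has $\widehat B_{x_i}\subset\Omega$ together with $\widehat B_{x_i}\subset\Omega\setminus S_{c_1h}$ for a constant $c_1>0$ depending only on the covering constants. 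The Lipschitz bound $\A\in C^{0,1}(\overline\Omega)$ suffices for (\ref{item:lemma:weighted-inverse-estimate-i}) and (\ref{item:lemma:weighted-inverse-estimate-iii}); the extra hypothesis $\A\in C^{1,1}$ enters only in (\ref{item:lemma:weighted-inverse-estimate-ii}), where the equation must be differentiated once.

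For (\ref{item:lemma:weighted-inverse-estimate-i}) I would group the covering balls according to the dyadic shells $A_j:=\{x\in\Omega:\ 2^jh\le\delta_\Gamma(x)<2^{j+1}h\}$, $j=0,\dots,J$, with $J\sim|\ln h|$ since $2^Jh\sim\operatorname*{diam}\Omega$; the balls meeting $A_j$ have radius $r_i\sim 2^jh$ and stretched versions inside $\Omega\setminus S_{c_1h}$. The weaker hypothesis $w\in B^{3/2}_{2,\infty}(\Omega)$ is exploited through the $K$-functional for $B^{3/2}_{2,\infty}(\Omega)=(H^1(\Omega),H^2(\Omega))_{1/2,\infty}$ at scale $t=2^jh$: split $w=w_1+w_2$ with $\|w_1\|_{H^1(\Omega)}+(2^jh)\|w_2\|_{H^2(\Omega)}\lesssim (2^jh)^{1/2}\|w\|_{B^{3/2}_{2,\infty}(\Omega)}$. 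On each ball $B_{x_i}$ of the shell, $w_1$ solves $-\nabla\cdot(\A\nabla w_1)=v+\nabla\cdot(\A\nabla w_2)$, so the scaling-invariant interior estimate $\|\nabla^2 u\|_{L^2(B_r)}\lesssim r^{-1}\|\nabla u\|_{L^2(B_{2r})}+\|g\|_{L^2(B_{2r})}$ gives $\|\nabla^2w_1\|_{L^2(B_{x_i})}\lesssim r_i^{-1}\|w_1\|_{H^1(\widehat B_{x_i})}+\|w_2\|_{H^2(\widehat B_{x_i})}+\|v\|_{L^2(\widehat B_{x_i})}$, while $\|\nabla^2w_2\|_{L^2(B_{x_i})}\le\|w_2\|_{H^2(\widehat B_{x_i})}$. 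Squaring, summing over the balls of the shell with finite overlap, and inserting the $K$-functional bounds yields $\|\nabla^2w\|_{L^2(A_j)}^2\lesssim (2^jh)^{-1}\|w\|_{B^{3/2}_{2,\infty}(\Omega)}^2+\sum_{i}\|v\|_{L^2(\widehat B_{x_i})}^2$; multiplying by the weight $\delta_\Gamma\sim 2^jh$ on $A_j$ cancels the scale factor, and summing the $\sim|\ln h|$ shell estimates (bounding $\sum_{j}\sum_i\delta_\Gamma\|v\|_{L^2(\widehat B_{x_i})}^2\lesssim\|\sqrt{\delta_\Gamma}v\|_{L^2(\Omega\setminus S_{c_1h})}^2$ by global finite overlap) gives $\|\sqrt{\delta_\Gamma}\nabla^2w\|_{L^2(\Omega\setminus S_h)}^2\lesssim|\ln h|\,\|w\|_{B^{3/2}_{2,\infty}(\Omega)}^2+\|\sqrt{\delta_\Gamma}v\|_{L^2(\Omega\setminus S_{c_1h})}^2$; a square root finishes (\ref{item:lemma:weighted-inverse-estimate-i}), the ``in particular'' case being $v\equiv0$ on $\Omega\setminus S_{c_1h}$.

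For (\ref{item:lemma:weighted-inverse-estimate-ii}), since $v\equiv0$ on $\Omega\setminus S_{c'h}$, each first derivative $\partial_\ell w$ solves $-\nabla\cdot(\A\nabla\partial_\ell w)=\nabla\cdot((\partial_\ell\A)\nabla w)$ in the interior of $\Omega\setminus S_{c'h}$; here $\A\in C^{1,1}$ guarantees the right-hand side is locally in $L^2$ with norm $\lesssim\|\A\|_{C^{1,1}}(\|\nabla^2w\|_{L^2}+\|\nabla w\|_{L^2})$. Covering $\Omega\setminus S_{\widetilde ch}$ by balls $B_{x_i}$ of radius $r_i\sim\delta_\Gamma(x_i)$ with $\widehat B_{x_i}\subset\Omega\setminus S_{c_2h}$ and applying the scaling-invariant interior estimate to $\partial_\ell w$ gives $\|\nabla^3w\|_{L^2(B_{x_i})}\lesssim r_i^{-1}\|\nabla^2w\|_{L^2(\widehat B_{x_i})}+\|\A\|_{C^{1,1}}(\|\nabla^2w\|_{L^2(\widehat B_{x_i})}+\|\nabla w\|_{L^2(\widehat B_{x_i})})$; multiplying by $\delta_\Gamma^\alpha\sim r_i^\alpha$, using $\delta_\Gamma^\alpha\lesssim\delta_\Gamma^{\alpha-1}$ on the bounded domain, squaring and summing with finite overlap yields \eqref{eq:lemma:weighted-inverse-estimate-foo-20}; no logarithmic factor appears because $\nabla^2w$ is already an $L^2$ object locally. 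Finally, for (\ref{item:lemma:weighted-inverse-estimate-iii}) I would enlarge $c'$ to $\widehat c$ so that $v\equiv0$ on $\Omega\setminus S_{\widehat ch}$ and cover $\Omega\setminus S_{\widehat ch}$ by interior balls $B_{x_i}$ of radius $r_i\sim\delta_\Gamma(x_i)\ge\widehat ch$; a scaling argument coupling interior regularity with a Deny--Lions/Bramble--Hilbert estimate (legitimate since $1<3/2+\varepsilon<2$, subtracting an affine function whose gradient is controlled by $\|w\|_{H^{3/2+\varepsilon}}$ to absorb the non-constant-coefficient correction) gives $\|\nabla^2w\|_{L^2(B_{x_i})}\lesssim r_i^{-1/2+\varepsilon}\|w\|_{H^{3/2+\varepsilon}(\widehat B_{x_i})}$; since $-1/2+\varepsilon<0$ and $r_i\gtrsim h$ one has $r_i^{-1/2+\varepsilon}\lesssim h^{-1/2+\varepsilon}$, and summing over $i$ using the finite overlap of the $\widehat B_{x_i}$ (for both the $L^2$ part and the Gagliardo seminorm part of the $H^{3/2+\varepsilon}$-norm) yields the claim after a square root. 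The main obstacle is the bookkeeping in (\ref{item:lemma:weighted-inverse-estimate-i}): aligning the $K$-functional splitting with the dyadic shells so that precisely one power $|\ln h|^{1/2}$ is lost, together with tracking the non-constant-coefficient correction terms through the scaling steps in (\ref{item:lemma:weighted-inverse-estimate-ii}) and (\ref{item:lemma:weighted-inverse-estimate-iii}).
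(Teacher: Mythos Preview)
Your argument is correct, and for parts (\ref{item:lemma:weighted-inverse-estimate-ii}) and (\ref{item:lemma:weighted-inverse-estimate-iii}) it is essentially the paper's argument: differentiate the equation once and apply the scaled interior estimate~(\ref{eq:foobar-210}) plus covering for (\ref{item:lemma:weighted-inverse-estimate-ii}), and establish the fractional local bound~(\ref{eq:foobar-10000}) by subtracting an affine function and scaling, then cover and use $r_i^{-1/2+\varepsilon}\le h^{-1/2+\varepsilon}$ for (\ref{item:lemma:weighted-inverse-estimate-iii}). Your remark that the Gagliardo seminorm survives the finite-overlap summation is the right observation there.

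For part (\ref{item:lemma:weighted-inverse-estimate-i}), however, your route is genuinely different from the paper's. The paper does \emph{not} use dyadic shells. Instead it first localizes to a cylinder over a boundary patch, translates $w$ normally by $2\delta$ so that the translate $\widetilde w$ solves the (translated-coefficient) equation in a full cylinder reaching the boundary, and then proves the intermediate bound $\|\widetilde w\|_{H^{3/2}}\lesssim\sqrt{|\ln\delta|}\,\|w\|_{B^{3/2}_{2,\infty}}+\delta\|\widetilde v\|_{L^2}$ by writing the $H^{3/2}$-norm as the \emph{integral} $K$-functional, splitting the $t$-integral at $t=\delta^2$, and estimating the small-$t$ piece via $\|\widetilde w\|_{H^2}\lesssim\delta^{-1}\|\widetilde w\|_{H^1}$ from interior regularity. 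Only then does it invoke a local estimate of the form $\|\nabla^2 z\|_{L^2(B_r)}\lesssim r^{-1/2}|z|_{H^{3/2}(B_{(1+\rho)r})}+\ldots$ (equation~(\ref{eq:foobar-200})) and a covering. Your approach replaces all of this by a dyadic shell decomposition with a \emph{different} $K$-functional splitting $w=w_1^{(j)}+w_2^{(j)}$ at each scale $t=2^jh$, and uses only the basic interior $H^2$-estimate~(\ref{eq:foobar-210}); the logarithm then appears transparently as the number of shells. Your version is more direct and avoids both the translation trick and the intermediate fractional local estimate~(\ref{eq:foobar-200}); the paper's version, on the other hand, isolates a reusable $H^{3/2}$-bound on the translated function and makes the role of the equation's right-hand side in each step slightly more explicit.
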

\begin{proof}
{\ } \newline 
{\em Proof of (\ref{item:lemma:weighted-inverse-estimate-i}):}
We may restrict our attention to a local situation near a part of the boundary.
The boundary $\Gamma$ can locally be described by a graph $\phi$. That is, 
in a suitable coordinate system, we can define cylinders 
\begin{align*}
\cyl_\delta &= \{(x,\phi(x) + t)\,|\, \delta  < t < D+\delta, x \in B\},  \\
\cyl_\delta^\prime &= \{(x,\phi(x) + t)\,|\, \delta < t < D^\prime+\delta, x \in B^\prime\}, 
\end{align*}
where $\delta  < D < D^\prime$ and $B$, $B^\prime$ are two concentric balls with $B \subset\subset B^\prime$. 
Furthermore, for $\delta=0$ we assume $\cyl^\prime_0 \subset \Omega$. In particular, 
$\{(x,\phi(x))\,|\, x \in B^\prime\} \subset \Gamma$. We also note that 
$t \sim \operatorname*{dist}((x,\phi(x)+t),\Gamma)$. 

Let $\cyl_\delta^{\prime\prime}$ be a third cylinder of the form
$\cyl_\delta^{\prime\prime} = \{(x,\phi(x)+t)\colon \delta < t < D^{\prime\prime}+\delta, x \in B^{\prime\prime}\}$ 
where $B \subset\subset B^{\prime\prime} \subset \subset B^\prime$ and $D < D^{\prime\prime} < D^\prime$. 
Let $\chi \in C^\infty(\BbbR^d)$ be such that
$\chi|_{\cyl_0^{\prime\prime}} \equiv 1$
and $\chi|_{\Omega\setminus \cyl_0^\prime} \equiv 0$.
To simplify the notation, we assume that the functions $w$, $\A$, $v$
are given in a coordinate system commensurate with the coordinate
system describing the cylinders $\cyl_\delta$, $\cyl_\delta^\prime$, viz.,
$w$ evaluated at a point $(x,\phi(x) + t) \in \cyl_\delta^\prime$
is given by $w(x,\phi(x) + t)$.
A translation in the last variable defines the function
$\widetilde w$ by
$\widetilde w(x,\phi(x) + t):= w(x,\phi(x) + t+ 2\delta)$. We note
\begin{equation}
\label{eq:lemma:weighted-inverse-estimate-100}
-\nabla \cdot \left( \widetilde \A \nabla \widetilde w\right) = \widetilde v \qquad \mbox{ in }
\cyl^\prime_{-2 \delta}; 
\end{equation}
here (again in the coordinate system used 
to describe the cylinders) 
$\widetilde \A(x,\phi(x)+t) = \A(x,\phi(x)+t+2 \delta)$ and 
$\widetilde v(x,\phi(x)+t) = v(x,\phi(x)+t+2 \delta)$. 

{\em 1.~step:}
We show (if $\delta$ is sufficiently small) 
\begin{equation}
\label{eq:lemma:weighted-inverse-estimate-1}
\|\widetilde w\|_{H^{3/2}(\cyl_0^{\prime\prime})}\leq 
C_{A,\Omega} \left[ \sqrt{|\ln \delta|}\|w\|_{B^{3/2}_{2,\infty}(\Omega)} 
+ \delta \|\tilde v\|_{L^2(\cyl_{-\delta}^\prime)} \right],
\end{equation}
where $C_{A,\Omega}$ depends on $\Omega$, the coercivity constant $\alpha_0$ and $\|\A\|_{c^{0,1}(\overline{\Omega})}$.

Using the characterization of $H^{3/2}(\cyl_0^\prime) = (H^1(\cyl_0^\prime),H^2(\cyl_0^\prime))_{1/2,2}$
in terms of the $K$-functional, we write
(cf. also \cite[p.193, eqn. (7.4)]{devore93})
\begin{eqnarray}
\nonumber 
\|\chi \widetilde w\|^2_{H^{3/2}(\cyl_0^\prime)} &=& 
\int_{t=0}^1 \left(t^{-1/2} K(t,\chi \widetilde w)\right)^2\frac{dt}{t} \\
\label{eq:lemma:weighted-inverse-estimate-1000}
&= &
\int_{t=0}^\varepsilon 
\left(t^{-1/2} K(t,\chi \widetilde w)\right)^2\frac{dt}{t}  + 
\int_{t=\varepsilon}^1 
\left(t^{-1/2} K(t,\chi \widetilde w)\right)^2\frac{dt}{t} . 
\end{eqnarray}
The second integral
in (\ref{eq:lemma:weighted-inverse-estimate-1000})
can be estimated by
$$
\int_{t=\varepsilon}^1 \left(t^{-1/2} 
K(t,\chi \widetilde w)\right)^2\frac{dt}{t} 
\leq 
\int_{t=\varepsilon}^1 \frac{dt}{t} 
\sup_{t > 0} \left(t^{-1/2} K(t,\chi \widetilde w)\right)^2
 \leq \ln \varepsilon \|\chi \widetilde w\|^2_{B^{3/2}_{2,\infty}(\cyl_0^\prime)} .
$$
For the first integral in (\ref{eq:lemma:weighted-inverse-estimate-1000})
we employ interior regularity estimates for
solutions of second order elliptic equation with vanishing right-hand side. 
Specifically, (\ref{eq:lemma:weighted-inverse-estimate-100}) and interior regularity
(see, e.g., \cite[Thm.~{8.8} and proof]{gilbarg-trudinger77a}) 
give (here, we assume that $\delta$ is sufficiently small) 
$$
\|\chi \widetilde w\|_{H^2(\cyl_0^\prime)} \leq 
C_A \left[  \delta^{-1} \|\widetilde w\|_{H^1(\cyl_{-\delta}^\prime)} 
+ \|\widetilde v\|_{L^2(\cyl_{-\delta}^\prime)} \right], 
$$
where the constant $C_A$ depends only on the coercivity constant $\alpha_0$ and 
$\|\A\|_{C^{0,1}(\overline{\Omega})}$. 

Hence, estimating $K(t,\chi \widetilde w) = \inf_{v\in H^2} 
\|\chi \widetilde w - v\|_{H^1(\cyl_0^\prime)} + t \|v\|_{H^2(\cyl_0^\prime)}
\leq t \|\chi \widetilde w\|_{H^2(\cyl_0^\prime)}$, we obtain
$$
\int_{t=0}^\varepsilon t^{-2} K^2(t,\chi \widetilde w)\,dt 
\leq \varepsilon \|\chi \widetilde w\|^2_{H^2(\cyl_0^\prime)} 
\leq C_A \left[  \varepsilon \delta^{-2} \|\widetilde w\|^2_{H^1(\cyl_{-\delta}^\prime)} 
+ \varepsilon \|\widetilde v\|^2_{L^2(\cyl_{-\delta}^\prime)}\right].
$$
We conclude
\begin{align} 
\label{eq:lemma:weighted-inverse-estimate-10}
\|\chi \widetilde w\|^2_{H^{3/2}(\cyl_0^\prime)} 
& \leq  C \left[ \varepsilon \delta^{-2} \|\widetilde w\|^2_{H^1(\cyl_{-\delta}^\prime)} + 
\varepsilon \|\widetilde v\|^2_{L^2(\cyl_{-\delta}^\prime)} 
+ \ln \varepsilon \|\chi \widetilde w\|^2_{B^{3/2}_{2,\infty}(\cyl_0^\prime)}\right]\\
\nonumber 
& \leq  
C_{A,\Omega} \left\{ \left[ \varepsilon \delta^{-2} + \ln \varepsilon \right] 
\|w\|^2_{B^{3/2}_{2,\infty}(\cyl_0^\prime)} + \varepsilon \|\widetilde v\|^2_{L^2(\cyl_{-\delta}^\prime)}\right\},
\end{align}
where, in the last step we have employed that multiplication
by a smooth function and translation are bounded operations on Sobolev
(and therefore also Besov) spaces; the constant $C_{A,\Omega}$ depends on 
$\Omega$, $\alpha_0$, and $\|\A\|_{C^{0,1}(\overline{\Omega})}$. 
Selecting $\varepsilon = \delta^2$
shows $\|\chi \widetilde w\|_{H^{3/2}(\cyl_0^{\prime})} \leq 
C_{A,\Omega} \left[ \sqrt{|\ln \delta|} \|w\|_{B^{3/2}_{2,\infty}(\Omega)} 
+ \delta \|\widetilde v\|_{L^2(\cyl_{-\delta}^\prime)}\right]$
from which we get
(\ref{eq:lemma:weighted-inverse-estimate-1}) in view of the support
properties of $\chi$.

{\em Step 2:}
Let $z$ solve, for a fixed $\rho > 0$ and a parameter $r \leq 1$, the equation
$$
-\nabla \cdot \left(\widetilde\A \nabla z\right) =f
\qquad \mbox{ in a ball $B_{r(1+\rho)}$ of radius $r(1+\rho)$.}
$$
We claim:
For a constant $C_{A,\rho} > 0$ that depends solely on the coercivity constant $\alpha_0$,  
$\|\A\|_{C^{0,1}(\overline{\Omega})}$, and $\rho$  there holds 
\begin{eqnarray}
\label{eq:foobar-200}
\|\nabla^2 z\|_{L^2(B_r)} 
&\leq & C_{A,\rho}  \left[ \|f\|_{L^2(B_{r(1+\rho)})} + \|\nabla z\|_{L^2(B_{r(1+\rho)})} + r^{-1/2} |z|_{H^{3/2}(B_{r(1+\rho)})} 
\right], \qquad \\
\label{eq:foobar-210}
\|\nabla^2 z\|_{L^2(B_r)} 
&\leq & C_{A,\rho} \left[ \|f\|_{L^2(B_{r(1+\rho)})} + r^{-1} \|\nabla z\|_{L^2(B_{r(1+\rho)})} \right].
\end{eqnarray}
We point out that the $H^{3/2}$-seminorm in (\ref{eq:foobar-200}) is defined in terms of the Aronstein-Slobodeckij
norm for scaling reasons.
The bounds (\ref{eq:foobar-200}), (\ref{eq:foobar-210})  follow from interior regularity in the following way.

Scaling the ball $B_{r(1+\rho)}$ to a ball $\widehat B_{1+\rho}$ of 
radius $1 +\rho$ leads to an equation of the form 
\begin{equation}
\label{eq:local-laplace} 
-\nabla \cdot \left(\widehat \A \nabla \widehat z\right) = r^2 \widehat f
\qquad \mbox{ in a ball $\widehat B_{1+\rho}$ of radius $(1+\rho)$}, 
\end{equation}
where $\widehat\A$ and $\widehat f$ are the coefficient and the right-hand side in the scaled variables. 
We note that 
$$
\|\nabla^j \widehat \A \|_{L^\infty(\widehat B_{1+\rho})} \sim  r^j \|\nabla^j \widetilde \A\|_{L^\infty(B_{r(1+\rho)})}, 
\qquad j \in \BbbN_0.
$$
Then standard interior regularity
(see, e.g., \cite[Thm.~{8.8}]{gilbarg-trudinger77a}) gives 
\begin{equation}
\|\nabla^2 \widehat z\|_{L^2(\widehat B_1)} \leq C_{A,\rho}
\left[ r^2 \|\widehat f\|_{L^2(\widehat B_{1+\rho})} + \|\widehat z\|_{H^1(\widehat B_{1+\rho})} \right], 
\end{equation}
where the constant $C_{A,\rho}$ depends only on the coercivitiy constant of $\widehat \A$, the norm 
$\|\widehat \A\|_{C^{0,1}(B_{1+\rho})}$, and $\rho$. 
In view of $r \leq 1$ and the fact that the operator $\widetilde \A$ is obtained from $\A$ by an affine
change of variables (a translation and an orthogonal transformation) 
we bound $\|\widehat \A\|_{C^{0,1}(\widehat B_{1+\rho})} \leq 
C \|A\|_{C^{0,1}(\overline{\Omega})}$ and get 
\begin{equation}
\label{eq:foobar-1}
\|\nabla^2 \widehat z\|_{L^2(\widehat B_1)} \leq C_{A,\rho}
 \left[ r^2 \|\widehat f\|_{L^2(\widehat B_{1+\rho})} + 
\|\widehat z\|_{H^1(\widehat B_{1+\rho})} \right], 
\end{equation}
for a constant $C_{A,\rho}$ that depends only on $\rho$, the coercivity constant of $\A$, 
and $\|\A\|_{C^{0,1}(\overline{\Omega})}$. 
Since the constant functions are in the kernel of the operator $-\nabla \cdot (\widehat \A \nabla z)$ 
it is easy to conclude with a Poincar\'e inequality that (\ref{eq:foobar-1}) implies 
\begin{equation}
\label{eq:foobar-1000}
\|\nabla^2 \widehat z\|_{L^2(\widehat B_1)} \leq C_{A,\rho} \left[ r^2 \|\widehat f\|_{L^2(\widehat B_{1+\rho})} + 
\|\nabla \widehat z\|_{L^2(\widehat B_{1+\rho})} \right]. 
\end{equation}
Scaling this equation back to $B_{r(1+\rho)}$ yields the desired bound 
(\ref{eq:foobar-210}). For the proof of 
(\ref{eq:foobar-200}), we have to bring in the $H^{3/2}$-seminorm. 
Let $\pi \in {\mathcal P}_1$ be arbitrary. Then the function $\widehat z - \pi$ satisfies 
in view of the fact that $\nabla \pi$ is constant 
$$
-\nabla \cdot \left(\widehat \A \nabla (\widehat z- \pi)\right) = 
r^2 \widehat f + \nabla \cdot \left(\widehat \A \nabla \pi\right) = r^2 \widehat f+
(\nabla \cdot \widehat \A) \cdot \nabla \pi  =:\widetilde f. 
$$
(We employed the convention that the divergence operator $\nabla \cdot$ in the expression $\nabla \cdot \widehat \A$ 
acts on columns of $\widehat \A$). 
Applying (\ref{eq:foobar-1}) to this equation (and replacing $r^2 \widehat f$ with $\widetilde f$ and 
$\widehat z$ with $\widehat z - \pi$) yields 
\begin{align*}
\|\nabla^2 \widehat z\|_{L^2(\widehat B_1)} &\leq 
\|\widehat z - \pi\|_{H^2(\widehat B_1)} \leq C_{A,\rho} 
\left[ \|\widetilde f\|_{L^2(\widehat B_{1+\rho})}  + \| \widehat z -\pi\|_{H^1(\widehat B_{1+\rho})} \right] \\
&\leq C_{A,\rho}
\left[ r^2 \|\widehat f\|_{L^2(B_{r(1+\rho)})} + r \|\nabla \pi\|_{L^2(\widehat B_{1+\rho})}  
+  \| \widehat z -\pi \|_{H^{3/2}(\widehat B_{1+\rho})} \right] \\
&\leq C_{A,\rho}  
\left[ r^2 \|\widehat f\|_{L^2(B_{r(1+\rho)})} + r \|\nabla \widehat z\|_{L^2(\widehat B_{1+\rho})}  
+  \| \widehat z -\pi \|_{H^{3/2}(\widehat B_{1+\rho})} \right].  
\end{align*}
Infimizing over all $\pi \in {\mathcal P}_1$ yields 
\begin{equation}
\label{eq:foobar-10}
\|\nabla^2 \widehat z\|_{L^2(\widehat B_1)} 
\leq C_{A,\rho} \left[ r^2 \|\widehat f\|_{L^2(\widehat B_{1+\rho})} + r \|\nabla \widehat z\|_{L^2(\widehat B_{1+\rho})} + |\widehat z|_{H^{3/2}(\widehat B_{1+\rho})} \right].
\end{equation}
Scaling back to $B_{r(1+\rho)}$ yields (\ref{eq:foobar-200}), if we note the scaling properties of the 
Aronstein--Slobodeckij seminorm.

{\em 3.~step:} 
Applying the result of step~2 to the function $\widetilde w$  yields 
\begin{equation}
\label{eq:foobar-1010}
\|\nabla^2 \widetilde w \|_{L^2(B_r)} \leq 
C_{A,\rho} \left[ \|\nabla \widetilde w\|_{L^2(B_{r(1+\rho)})} + r^{-1/2} |\widetilde w|_{H^{3/2}(B_{(1+\rho)r})} 
+ \|\widetilde v\|_{L^2(B_{r(1+\rho)})}
\right]
\end{equation}
for all 
balls $B_r$ such that $B_{(1+\rho) r} \subset \cyl_{-2\delta}^\prime$. 
Using, for example, the Besicovitch covering theorem \cite{evans-gariepy}, we can
cover $\cyl_0$ by overlapping balls $B_{r_i}(x_i)$
with centers $x_i$ and radii $r_i 
\sim \delta_\Gamma(x_i)$ such that the stretched balls
$B_{r_i(1+\rho)}(x_i)$ have a finite overlap property (see
\cite[Lemma~A.1]{melenk-wohlmuth12} for details).
A covering argument and afterwards (\ref{eq:lemma:weighted-inverse-estimate-1})
then show
\begin{align*}
\label{eq:besicovic-argument}
\|\sqrt{\delta_{\Gamma}} \nabla^2 \widetilde w\|_{L^2(\cyl_0)}
& \leq C_{A,\Omega} \left[ \|\widetilde w\|_{H^{3/2}(\cyl^{\prime\prime}_0)} 
+ \|\sqrt{\delta_\Gamma} \nabla \widetilde w\|_{L^2(\cyl_{0}^\prime)} 
+ \|\sqrt{\delta_\Gamma} \widetilde v\|_{L^2(\cyl_{0}^\prime)} 
\right] \\
\nonumber 
& 
\leq C_{A,\Omega} \left[ \sqrt{|\ln \delta|} \|w\|_{B^{3/2}_{2,\infty}(\Omega)} 
+ \delta \|\widetilde v\|_{L^2(\cyl_{-\delta}^\prime)}  
+ \|\sqrt{\delta_\Gamma} \widetilde v\|_{L^2(\cyl_0^\prime)}
\right], 
\end{align*}
where the constant $C_{A,\Omega}$ depends only on the coercivity constant $\alpha_0$, 
$\|\A\|_{C^{0,1}(\overline{\Omega})}$, and $\Omega$. 

Since $\widetilde w$ and $\widetilde v$ are obtained by a translation, we arrive 
at
$$
\|\sqrt{\delta_{\Gamma}} \nabla^2 w\|_{L^2(\cyl_{2\delta})} 
\leq C_{A,\Omega} \left[ \sqrt{|\ln \delta|} \|w\|_{B^{3/2}_{2,\infty}(\Omega)}
+ \|\sqrt{\delta_\Gamma} v\|_{L^2(\Omega\setminus S_{c_2 \delta})}
\right]
$$
for a suitable $c_2$ that depends solely on the Lipschitz character of $\Gamma$. 
Taking $\delta \sim h$ produces the desired result. 

{\em Proof of (\ref{item:lemma:weighted-inverse-estimate-ii}):}
The estimate merely expresses
interior regularity for solutions of elliptic equations with vanishing right-hand side.
It follows from (\ref{eq:foobar-210}) and a covering argument. More precisely, as in Step~{2}, 
we start from 
$$
-\nabla \cdot \left( \A \nabla z \right) = f \quad \mbox{ in a ball $B_{r(1+\rho)}$ 
of radius $r (1+\rho)$}.
$$
Differentiating this equation once gives for $\alpha \in \BbbN^d$ with $|\alpha| = 1$
$$
-\nabla\cdot \left( \A \nabla D^\alpha z \right) = D^\alpha f  
+ \nabla \cdot \left( D^\alpha \A\right) \nabla z 
\quad \mbox{ in a ball $B_{r(1+\rho)}$ 
of radius $r (1+\rho)$},
$$
where again the divergence operator in the expression 
$\nabla \cdot \left( D^\alpha \A\right)$ acts on the columns of $D^\alpha \A$. 
We get from (\ref{eq:foobar-210}) by considering all $\alpha \in \BbbN^d$
with $|\alpha| = 1$ 
\begin{align*}
&\|\nabla^3 z\|_{L^2(B_r)} \leq \\
& \quad C_{A,\rho} \left[ 
\|\nabla f\|_{L^2(B_{r(1+\rho)})} + r^{-1} \|\nabla^2 z\|_{L^2(B_{r(1+\rho)})} 
+ \|\nabla^2 \A\|_{L^\infty(B_{r(1+\rho)})} \|\nabla z\|_{L^2(B_{r(1+\rho)})}\right].
\end{align*}
A covering argument then produces the claim since $f$ is assumed to satisfy $f \equiv 0$ on 
$\Omega\setminus S_{c^\prime h}$ in the statement (\ref{item:lemma:weighted-inverse-estimate-ii}). 
We also note that $\|\nabla^2 \A \|_{L^\infty(\Omega)}$ can be bounded by $\| \nabla \A\|_{W^{1,\infty}(\Omega)}$. 

{\em Proof of (\ref{item:lemma:weighted-inverse-estimate-iii}):} 
Our starting point is the above Step~2: We claim that for the function  $z$ satisfying 
(\ref{eq:local-laplace}) we have 
\begin{equation}
\label{eq:foobar-10000}
\|\nabla^2 z\|_{L^2(B_r)} \leq C_{A} \left[ \|f\|_{L^2(B_{r(1+\rho)})} + 
\|\nabla z\|_{L^2(B_{r(1+\rho)})} + r^{-1/2+\varepsilon} |z|_{H^{3/2+\varepsilon}(B_{r(1+\rho)})}\right], 
\end{equation}
where the $H^{3/2+\varepsilon}$-seminorm is again an Aronstein-Slobodeckij norm, and the 
constant $C_A$ depends only on the coercivity constant $\alpha_0$, $\|\A\|_{C^{0,1}(\overline{\Omega})}$, 
and $\varepsilon \in (0,1/2)$. 
To see this, we 
check the derivation of (\ref{eq:foobar-10}). We see that one can (marginally) modify the arguments
to obtain instead of (\ref{eq:foobar-10}) the estimate 
$$
\|\nabla^2 \widehat z\|_{L^2(\widehat B_{1+\rho})} 
\lesssim r^2 \|\widehat f\|_{L^2(\widehat B_{1+\rho})} + r\|\nabla \widehat z\|_{L^2(\widehat B_{1+\rho})} + 
|\widehat z|_{H^{3/2+\varepsilon}(\widehat B_{1+\rho})},
$$
where the implied constant depends on $\rho$, $\alpha_0$, $\|\A\|_{C^{0,1}(\overline{\Omega})}$, and $\varepsilon$.
Scaling back to $B_{r(1+\rho)}$ yields 
$$
r^2 \|\nabla^2 z\|_{L^2(B_{r(1+\rho)})} 
\lesssim r^2 \|f\|_{L^2(B_{r(1+\rho)})} + r r\|\nabla z\|_{L^2(B_{r(1+\rho)})} + 
r^{3/2+\varepsilon} |z|_{H^{3/2+\varepsilon}(\widehat B_{r(1+\rho)})},
$$
which leads to (\ref{eq:foobar-10000}). We now use $f \equiv 0$ on 
$\Omega\setminus S_{c^\prime h}$ and use these estimates with $w$ in place of $z$. 
A covering argument then gives 
$$
\|\nabla^2 w\|_{L^2(\Omega\setminus S_{\widehat c h})} \lesssim \|\nabla w\|_{L^2(\Omega \setminus S_{c^\prime h})} 
+ h^{-1/2+\varepsilon} \|w\|_{H^{3/2+\varepsilon}(\Omega)}.  
$$
This concludes the proof of 
(\ref{item:lemma:weighted-inverse-estimate-iii}). 
\qed
\end{proof}

\newcommand{\tri}[1]{\|\!|#1|\!\|}
\section{Details of \cite[Sec.~{5.3}]{wahlbin95}}
By and large, we follow the arguments of \cite[Sec.~{5.3}]{wahlbin95} and adapt them
as needed.

In what follows, ${\mathcal T}$ is a quasi-uniform mesh with mesh size $h$. 
We consider the bilinear form 
$$
a(u,v):= \int_\Omega \A \nabla u \cdot \nabla v
$$
where the matrix $\A$ is sufficiently smooth and pointwise symmetric positive definite.

\subsection{The interior case}
In this subsection, we assume that  $V_h  = S^{k,1}({\mathcal T})$.

For balls $B_d$ of radius $d$, it will be convenient to introduce the notation
$$
\tri{u}_{1,B_d}:= |u|_{H^1(B_d)} + \frac{1}{d} \|u\|_{L^2(B_d)}.
$$
We start with making the notion of ``superapproximation'' more precise: 
\begin{lemma}[superapproximation on balls]
\label{lemma:superapproximation-ball}
Let $B_d \subset \Omega$ be a ball of radius $d$. Let $\omega \in C^\infty$ with 
$\supp \omega \subset B_{d/2}$ and 
\begin{equation}
\label{eq:cut-off}
\|\nabla^j \omega\|_{L^\infty} \leq C d^{-j}, \qquad j=0,\ldots,k. 
\end{equation}
Then for every $u \in V_h$ the interpolant $I(\omega^2 u) \in S^{k,1}({\mathcal T}) \cap H^1_0(B_d)$
satisfies 
\begin{eqnarray}
|\omega^2 u - I(\omega^2 u)|_{H^1(B_d)} &\leq& C \frac{h}{d} \tri{u}_{1,B_d}, \\
\frac{1}{d}\|\omega^2 u - I(\omega^2 u)\|_{L^2(B_d)} &\leq& C \left(\frac{h}{d}\right)^2 \tri{u}_{1,B_d}.
\end{eqnarray}
We assume implicitly that $d > h$ is sufficiently large. 
\end{lemma}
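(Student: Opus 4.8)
The plan is to prove the standard "superapproximation" estimate by reducing it to a local (element-by-element) computation, exploiting the fact that on each simplex $K$ the polynomial degree-$k$ interpolant reproduces polynomials of degree $k$, so the interpolation error only sees derivatives of $\omega^2 u$ of order $k+1$, and of these only the ones where at least one derivative falls on $\omega$ (since $u|_K$ is a polynomial of degree $k$). First I would fix an element $K$ with $K\cap B_{d/2}\neq\emptyset$ (otherwise $\omega^2 u$ vanishes on a neighborhood of $K$ and there is nothing to do) and apply the elementwise interpolation error estimate
$$
|\omega^2 u - I(\omega^2 u)|_{H^j(K)} \lesssim h^{k+1-j}\,|\omega^2 u|_{H^{k+1}(K)},\qquad j\in\{0,1\}.
$$
The key point is then the Leibniz expansion: since $D^{k+1}(u|_K)=0$, every term in $D^{k+1}(\omega^2 u)$ on $K$ contains at least one derivative of $\omega^2$, hence a factor $\|D^m(\omega^2)\|_{L^\infty(K)}\lesssim d^{-m}$ for some $1\le m\le k+1$ by the chain rule and \eqref{eq:cut-off}. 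Counting powers, each term of $D^{k+1}(\omega^2 u)$ is bounded pointwise on $K$ by $d^{-m}|D^{k+1-m}u|$ with $m\ge 1$, and since on a quasi-uniform mesh an inverse estimate gives $|D^l u|_{L^2(K)}\lesssim h^{-(l-1)}|u|_{H^1(K)}$ for $l\ge 1$ and $|u|_{L^2(K)}$ for $l=0$, one obtains
$$
|\omega^2 u|_{H^{k+1}(K)} \lesssim \sum_{m=1}^{k+1} d^{-m} h^{-(k+1-m-1)_+}\bigl(|u|_{H^1(K)} + \tfrac{1}{h}\|u\|_{L^2(K)}\chi_{m=k+1}\bigr).
$$
After summing over the $O((d/h)^d)$ relevant elements $K\subset B_d$, using that the $L^2$- and $H^1$-type contributions reassemble into $\tri{u}_{1,B_d}$, and plugging back into the interpolation bound, the dominant term comes from $m=1$ (largest power of $d$ surviving) and yields exactly $h^{k+1-j} d^{-1} h^{-(k-1)}\tri{u}_{1,B_d}=(h/d)(h/d)^{1-j}\tri{u}_{1,B_d}$ scaled appropriately; being careful with the bookkeeping gives $|\cdot|_{H^1}\lesssim (h/d)\tri{u}_{1,B_d}$ and $d^{-1}\|\cdot\|_{L^2}\lesssim (h/d)^2\tri{u}_{1,B_d}$.

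A few technical points need attention. The interpolation operator $I$ here should be taken as a standard nodal/Lagrange interpolant into $S^{k,1}(\mathcal T)$; one must check $I(\omega^2 u)\in H^1_0(B_d)$, which follows because $\omega^2 u$ is supported in $\overline{B_{d/2}}$ and, for $h$ small enough relative to $d$, the nodal values of $I(\omega^2 u)$ vanish on all elements not meeting $B_{d/2}$, so its support lies in $B_d$ (using $d>h$ and quasi-uniformity to control the $O(h)$-fattening of the support under interpolation). One also has to handle the boundary elements of the support of $\omega$ where $\omega^2 u$ is not smooth across element interfaces — but since we work elementwise and $u\in V_h$ is a (global) piecewise polynomial that is itself a polynomial on each $K$, and $\omega\in C^\infty$, the function $\omega^2 u$ is $H^{k+1}$ on each $K$, so the elementwise estimate applies without issue.

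The main obstacle is the careful powers-of-$h$-and-$d$ bookkeeping in the inverse estimates: one must track exactly which derivative order lands on $u$, apply the correct inverse estimate (with the convention that no inverse estimate is needed when only one derivative remains on $u$, and that the zeroth-order term on $u$ only appears when all $k+1$ derivatives hit $\omega^2$ and is controlled by the $\frac1d\|u\|_{L^2}$ piece of $\tri{u}_{1,B_d}$), and then verify that summing over elements and taking the worst case $m=1$ indeed produces the claimed $(h/d)$ and $(h/d)^2$ factors and nothing larger. This is routine but error-prone; everything else (the elementwise interpolation estimate, the chain-rule bound on $\omega$, quasi-uniformity) is standard and may be cited or used directly. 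I would also remark that this is exactly the argument in \cite[Sec.~5.4]{wahlbin95} adapted to the non-constant-coefficient setting, where $\A$ plays no role in this particular lemma since it concerns only the interpolation operator and the mesh.
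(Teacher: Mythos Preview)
Your approach is exactly the paper's: elementwise nodal interpolation error, Leibniz expansion using $D^{k+1}(u|_K)=0$, the bound $\|\nabla^m(\omega^2)\|_{L^\infty}\lesssim d^{-m}$, inverse estimates on $u$, and summation over $K\subset B_d$. The only slip is in your displayed formula for $|\omega^2 u|_{H^{k+1}(K)}$: the $m=k+1$ term (all derivatives on $\omega^2$, none on $u$) contributes $d^{-(k+1)}\|u\|_{L^2(K)}$, not $d^{-(k+1)}h^{-1}\|u\|_{L^2(K)}$; with the spurious $h^{-1}$ the final bound would actually fail for $k=1$, but your verbal description (``controlled by the $\tfrac{1}{d}\|u\|_{L^2}$ piece of $\tri{u}_{1,B_d}$'') is the correct one, so this is just a bookkeeping typo in the display rather than a gap in the argument.
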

\begin{proof}
Since $\supp \omega\subset B_{d/2}$ and $d$ is large compared to $h$, we have 
$\supp I (\omega^2 u) \subset B_d$. For each element $K$ we have the estimates 
$$
\|\omega^2 u - I(\omega^2 u) \|_{L^2(K)} + 
h \|\nabla( \omega^2 u - I(\omega^2 u)) \|_{L^2(K)} 
\leq C h^{k+1} \|\nabla^{k+1} (\omega^2 u)\|_{L^2(K)}. 
$$ 
Inductively, we see that 
$
\displaystyle \|\nabla^j (\omega^2) \|_{L^\infty} \leq C d^{-j}$ for $j=0,\ldots,k$.
Using the fact that $u$ is piecewise polynomial of degree $k$ we conclude 
$$
\|(\nabla^{k+1} (\omega^2 u)\|_{L^2(K)} 
\leq C \sum_{j=0}^k d^{-(k+1-j)} \|\nabla^j u\|_{L^2(K)}.
$$
An inverse estimate produces in view of $h/d \lesssim 1$ 
\begin{align*}
\|\omega^2 u - I(\omega^2 u) \|_{L^2(K)} + 
h \|\nabla( \omega^2 u - I(\omega^2 u)) \|_{L^2(K)} 
&\leq C h \frac{h}{d} 
\left[ \frac{1}{d} \|u\|_{L^2(B_d)} + |u|_{H^1(B_d)} \right]  \\
& \leq C h \frac{h}{d} \tri{u}_{1,B_d}. 
\end{align*}
\qed 
\end{proof}
The next result shows an inverse estimate for ``discrete harmonic'' functions:
\begin{lemma} 
\label{lemma:inverse-estimate-discrete-harmonic-ball}
Let $u_h \in V_h$ satisfy 
\begin{equation}
\label{eq:lemma:inverse-estimate-discrete-harmonic-ball-10}
a(u_h,v) = 0 \qquad \forall v \in V_h \quad \mbox{ with } \quad \supp v \subset B_d
\end{equation}
for a ball $B_d \subset \Omega$ of radius $d$ (implicitly assumed sufficiently large
compared to $h$).
Then 
$$
\|\nabla u_h\|_{L^2(B_{d/2})} \leq C d^{-1} \|u_h\|_{L^2(B_d)}. 
$$
\end{lemma}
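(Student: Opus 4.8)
This is a Caccioppoli-type inequality for discrete harmonic functions. The plan is to use the standard energy argument: test the Galerkin orthogonality relation against a carefully chosen element of $V_h$ built from $u_h$ times a cutoff, then use superapproximation (Lemma~\ref{lemma:superapproximation-ball}) to absorb the discretization error that arises because $\omega^2 u_h$ is not itself in $V_h$.

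\medskip
\noindent\textbf{Proof.}
Let $\omega \in C^\infty$ with $\supp\omega \subset B_{d/2}$, $\omega \equiv 1$ on $B_{d/4}$, and $\|\nabla^j\omega\|_{L^\infty}\le C d^{-j}$ for $j=0,\dots,k$. Since $\supp(\omega^2 u_h)\subset B_{d/2}$ and $d\gg h$, the interpolant $v:= I(\omega^2 u_h)$ lies in $V_h$ with $\supp v\subset B_d$, so it is admissible in \eqref{eq:lemma:inverse-estimate-discrete-harmonic-ball-10}. Write
$$
a(\omega^2 u_h,u_h) = a(\omega^2 u_h - v, u_h) + a(v,u_h) = a(\omega^2 u_h - v,u_h),
$$
using $a(u_h,v)=0$ and symmetry of $a$. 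On the other hand, by the product rule and pointwise symmetric positive definiteness of $\A$ (with lower bound $\alpha_0$),
$$
a(\omega^2 u_h, u_h) = \int_\Omega \A\nabla u_h\cdot\nabla u_h\,\omega^2 + 2\int_\Omega \A\nabla u_h\cdot(\omega\nabla\omega)\,u_h
\ge \alpha_0\|\omega\nabla u_h\|_{L^2}^2 - C\|\omega\nabla u_h\|_{L^2}\, d^{-1}\|u_h\|_{L^2(B_{d})},
$$
where the cross term was bounded by Cauchy--Schwarz and $|\nabla\omega|\le Cd^{-1}$. For the right-hand side, Cauchy--Schwarz and Lemma~\ref{lemma:superapproximation-ball} give
$$
|a(\omega^2 u_h - v,u_h)| \le C\,|\omega^2 u_h - v|_{H^1(B_d)}\,\|\nabla u_h\|_{L^2(B_d)}
\le C\,\frac{h}{d}\,\tri{u_h}_{1,B_d}\,\|\nabla u_h\|_{L^2(B_d)}.
$$

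\medskip
\noindent
Combining the two displays, using Young's inequality to absorb $\alpha_0\|\omega\nabla u_h\|_{L^2}^2$ on the left (and noting $\omega\equiv 1$ on $B_{d/4}$, so one first works on $B_{d/4}$ and then iterates the cutoff radius), yields
$$
\|\nabla u_h\|_{L^2(B_{d/4})}^2 \le C\Big( d^{-2}\|u_h\|_{L^2(B_d)}^2 + \tfrac{h}{d}\,\tri{u_h}_{1,B_d}\|\nabla u_h\|_{L^2(B_d)} \Big).
$$
Since $h/d\lesssim 1$ and $\tri{u_h}_{1,B_d} = |u_h|_{H^1(B_d)} + d^{-1}\|u_h\|_{L^2(B_d)}$, a further Young inequality lets one absorb the $|u_h|_{H^1(B_d)}$ contribution into the left-hand side \emph{after} a standard finite-covering/iteration argument over a nested chain of balls $B_{d/4}\subset B_{\rho_1}\subset\cdots\subset B_{d/2}$ (to convert the bound on $\|\nabla u_h\|_{L^2(B_{d/4})}$ into one on $\|\nabla u_h\|_{L^2(B_{d/2})}$ at the cost of constants depending only on the number of iterations). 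This leaves $\|\nabla u_h\|_{L^2(B_{d/2})}^2 \le C d^{-2}\|u_h\|_{L^2(B_d)}^2$, which is the claim.

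\medskip
\noindent
The main obstacle is the bootstrapping: the naive test-function argument produces $|u_h|_{H^1}$ on the right-hand side (through $\tri{u_h}_{1,B_d}$) rather than only $\|u_h\|_{L^2}$, and one cannot simply absorb it because the $H^1$-seminorm appears on a strictly larger ball than the one on the left. The remedy, as in \cite[Sec.~5.3]{wahlbin95}, is the familiar iteration over a finite chain of concentric balls with geometrically spaced radii, where at each step the cutoff gradient contributes a factor $d^{-1}$; the number of steps is bounded in terms of the fixed ratio of the radii, so all constants remain independent of $h$ and $d$. The superapproximation estimate of Lemma~\ref{lemma:superapproximation-ball} is precisely what makes the discretization defect $|\omega^2 u_h - I(\omega^2 u_h)|_{H^1}$ small enough (order $h/d$) for this scheme to close.
\qed
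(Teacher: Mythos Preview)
Your overall strategy is correct and matches the paper's: pick a cutoff $\omega$, test the discrete-harmonic relation against $I(\omega^2 u_h)$, expand via the product rule, and control the defect $\omega^2 u_h - I(\omega^2 u_h)$ by superapproximation (Lemma~\ref{lemma:superapproximation-ball}). The intermediate estimate you display is essentially the paper's.

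Where your argument is incomplete is the closing step. You say that a Young inequality ``lets one absorb the $|u_h|_{H^1(B_d)}$ contribution into the left-hand side after a standard finite-covering/iteration argument.'' But iterating over a finite chain of nested balls only shrinks the coefficient in front of the gradient term: after $k$ steps one has, schematically,
\[
\|\nabla u_h\|_{L^2(B_{\text{inner}})}^2 \;\le\; C\,d^{-2}\|u_h\|_{L^2(B_d)}^2 \;+\; C\,(h/d)^{k}\,\|\nabla u_h\|_{L^2(B_d)}^2,
\]
and the residual gradient norm always sits on the \emph{outermost} ball $B_d$. It therefore cannot be absorbed into the left-hand side, which lives on a strictly smaller ball. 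The paper closes the loop with an ingredient you do not mention: after one iteration (which improves the coefficient to $h/d$), it invokes the \emph{elementwise inverse estimate} $|u_h|_{H^1(B_d)}\lesssim h^{-1}\|u_h\|_{L^2(B_d)}$, so that $\tfrac{h}{d}\,|u_h|_{H^1(B_d)}\lesssim \tfrac{1}{d}\,\|u_h\|_{L^2(B_d)}$. That inverse estimate is precisely what eliminates the remaining $\|\nabla u_h\|_{L^2(B_d)}$, and without it the argument does not close.

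A minor aside: in your parenthetical remark the iteration goes the wrong way --- the estimate naturally lives on the \emph{smaller} ball, and one enlarges the outer ball step by step, not the reverse.
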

\begin{proof}
Select a cut-off function $\omega$ with $\supp \omega \subset B_{d}$ and 
(\ref{eq:cut-off}) as well as $\chi \equiv 1$ on $B_{d/2}$. 

We write for arbitrary $\chi \in V_h$ with $\supp \chi \subset B_d$
\begin{align*}
\int_\Omega \omega^2 \A \nabla u_h \cdot \nabla u_h &= 
\int_\Omega \A \nabla u_h \cdot \nabla (\omega^2 u_h) - 
\int_\Omega 2 u_h \omega \nabla \omega \cdot \A \nabla u_h \\
 & = 
\int_\Omega \A \nabla u_h \cdot \nabla (\omega^2 u_h - \chi) - 
\int_\Omega 2 u_h \omega \nabla \omega \cdot \A \nabla u_h.
\end{align*}
We conclude from Lemma~\ref{lemma:superapproximation-ball} with $\chi = I(\omega^2 u_h)$ there
and Young's inequality 
\begin{align*}
\int_\Omega \omega^2 \A \nabla u_h \cdot \nabla u_h & 
\lesssim  \frac{h}{d} \|\nabla u_h\|_{L^2(B_d)} \tri{u_h}_{1,B_d} + \frac{1}{d^2}\|u_h\|^2_{L^2(B_d)}. 
\end{align*} 
We conclude 
$$
\|\nabla u_h\|_{L^2(B_{d/2})} \lesssim \sqrt{ \frac{h}{d} } \tri{u_h}_{1,B_d} + \frac{1}{d} \|u_h\|_{L^2(B_d)}. 
$$
Iterating the argument yields 
$$
\|\nabla u_h\|_{L^2(B_{d/4})} \lesssim \frac{h}{d}  \tri{u_h}_{1,B_d} + \frac{1}{d} \|u_h\|_{L^2(B_d)}. 
$$
Finally, a standard inverse estimate produces 
$$
\|\nabla u_h\|_{L^2(B_{d/4})} \lesssim \frac{1}{d} \|u_h\|_{L^2(B_d)}, 
$$
which is the desired final bound. 
\qed
\end{proof}
We now show the main result: 
\begin{theorem}
\label{thm:local-quasi-optimality-ball}
Let $u \in H^1(\Omega)$ and $u_h \in V_h$ be such that 
$$
a(u - u_h,v) = 0 \qquad \forall v \in V_h \quad \mbox{ with } \quad \supp v \subset B_d
$$
for a ball $B_d \subset \Omega$ of radius $d$ (implicitly assumed sufficiently large
compared to $h$). 
Then 
\begin{align}
\label{eq:thm:local-quasi-optimality-ball-1}
\|\nabla( u - u_h)\|_{L^2(B_{d/4})} 
& \leq C \inf_{\chi \in V_h} \tri{u - \chi}_{1,B_d} + C d^{-1} \|u - u_h\|_{L^2(B_d)}  \\
\label{eq:thm:local-quasi-optimality-ball-2}
& \leq C \inf_{\chi \in V_h} \|\nabla (u - \chi)\|_{L^2(B_d)} + C d^{-1} \|u - u_h\|_{L^2(B_d)}. 
\end{align}
\end{theorem}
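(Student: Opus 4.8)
The plan is to reproduce the classical interior local-energy estimate of \cite[Sec.~5.3]{wahlbin95}, in the same spirit as the proof of Lemma~\ref{lemma:inverse-estimate-discrete-harmonic-ball} above. Fix $\chi\in V_h$ arbitrary and put $\eta:=u_h-\chi\in V_h$, so that $u-u_h=(u-\chi)-\eta$ and it suffices to estimate $|\eta|_{H^1(B_{d/4})}$. For any $v\in V_h$ with $\supp v\subset B_d$, the hypothesis $a(u-u_h,v)=0$ yields the near-orthogonality
\[ a(\eta,v)=a(u-\chi,v),\qquad v\in V_h,\ \supp v\subset B_d . \]
Moreover it is enough to prove \eqref{eq:thm:local-quasi-optimality-ball-1}: given a near-minimiser $\chi$ of $\inf_{\chi\in V_h}\|\nabla(u-\chi)\|_{L^2(B_d)}$, add a constant $c\in{\mathcal P}_0\subset V_h$ so that $u-\chi-c$ has zero mean on $B_d$; Poincar\'e on $B_d$ then gives $d^{-1}\|u-\chi-c\|_{L^2(B_d)}\lesssim|u-\chi|_{H^1(B_d)}$, hence $\inf_{\chi\in V_h}\tri{u-\chi}_{1,B_d}\lesssim\inf_{\chi\in V_h}\|\nabla(u-\chi)\|_{L^2(B_d)}$, which turns \eqref{eq:thm:local-quasi-optimality-ball-1} into \eqref{eq:thm:local-quasi-optimality-ball-2}.

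For the core Caccioppoli estimate I would pick a cut-off $\omega\in C^\infty$ with $\supp\omega\subset B_{d/2}$, $\omega\equiv1$ on $B_{d/4}$, and $\|\nabla^j\omega\|_{L^\infty}\lesssim d^{-j}$, $0\le j\le k$. Using $\A\ge\alpha_0\operatorname*{I}$, the product rule, and the identity $a(\eta,I(\omega^2\eta))=a(u-\chi,I(\omega^2\eta))$ (legitimate since $I(\omega^2\eta)\in V_h$ has support in $B_d$ by Lemma~\ref{lemma:superapproximation-ball}), one obtains
\[ \alpha_0\|\omega\nabla\eta\|_{L^2}^2\le\int_\Omega\omega^2\,\A\nabla\eta\cdot\nabla\eta = a(\eta,\omega^2\eta-I(\omega^2\eta))+a(u-\chi,I(\omega^2\eta))-\int_\Omega 2\,\omega\,\eta\,\A\nabla\eta\cdot\nabla\omega . \]
The first term is $\lesssim|\eta|_{H^1(B_d)}\,\|\nabla(\omega^2\eta-I(\omega^2\eta))\|_{L^2(B_d)}\lesssim\tfrac hd|\eta|_{H^1(B_d)}\tri{\eta}_{1,B_d}$ by Lemma~\ref{lemma:superapproximation-ball}; writing $\nabla I(\omega^2\eta)=\omega^2\nabla\eta+2\omega\eta\nabla\omega+\nabla\bigl(I(\omega^2\eta)-\omega^2\eta\bigr)$, the second is $\lesssim\|\nabla(u-\chi)\|_{L^2(B_d)}\bigl(\|\omega\nabla\eta\|_{L^2}+\tfrac1d\|\eta\|_{L^2(B_d)}+\tfrac hd\tri{\eta}_{1,B_d}\bigr)$, again by Lemma~\ref{lemma:superapproximation-ball}; the third is $\lesssim\tfrac1d\|\eta\|_{L^2(B_d)}\|\omega\nabla\eta\|_{L^2}$. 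A Young inequality absorbing $\|\omega\nabla\eta\|_{L^2}^2$ then gives the one-step bound
\[ |\eta|_{H^1(B_{d/4})}\le C\,\frac hd\Bigl(|\eta|_{H^1(B_d)}+\tfrac1d\|\eta\|_{L^2(B_d)}\Bigr)+C\,\|\nabla(u-\chi)\|_{L^2(B_d)}+\frac Cd\|\eta\|_{L^2(B_d)} . \]

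To close the estimate I would dispose of the feedback term $\tfrac hd|\eta|_{H^1(B_d)}$ by running the same argument with cut-offs attached to a nested, dyadically graded family of balls $B_{d/4}=B_{\rho_0}\subset B_{\rho_1}\subset\cdots\subset B_d$ (with $\rho_{j+1}-\rho_j\sim 2^{-j}d$) and iterating, exactly as in \cite[Sec.~5.3]{wahlbin95}; at the finest scale a global inverse estimate for the discrete function $\eta$ converts the remaining $\tfrac hd|\eta|_{H^1}$-contribution into a $d^{-1}\|\eta\|_{L^2}$-contribution, the standing hypothesis that $d/h$ be sufficiently large making the iteration convergent. This yields $|\eta|_{H^1(B_{d/4})}\lesssim\|\nabla(u-\chi)\|_{L^2(B_d)}+d^{-1}\|\eta\|_{L^2(B_d)}$. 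Finally $d^{-1}\|\eta\|_{L^2(B_d)}\le d^{-1}\|u-u_h\|_{L^2(B_d)}+d^{-1}\|u-\chi\|_{L^2(B_d)}\le d^{-1}\|u-u_h\|_{L^2(B_d)}+\tri{u-\chi}_{1,B_d}$, and combining with $\|\nabla(u-u_h)\|_{L^2(B_{d/4})}\le\|\nabla(u-\chi)\|_{L^2(B_{d/4})}+|\eta|_{H^1(B_{d/4})}$ and taking $\inf_{\chi\in V_h}$ gives \eqref{eq:thm:local-quasi-optimality-ball-1}, hence \eqref{eq:thm:local-quasi-optimality-ball-2}. The main obstacle is precisely this last step: superapproximation (Lemma~\ref{lemma:superapproximation-ball}) feeds back the full-ball quantity $|\eta|_{H^1(B_d)}$ with a small but not automatically summable factor $h/d$, so one must carefully organise Wahlbin's iteration over the annulus $B_d\setminus B_{d/4}$ together with a genuine inverse estimate for the discrete function $\eta$ to absorb it. The non-constant coefficient $\A$ only produces lower-order commutator terms controlled by $\|\A\|_{C^{0,1}(\overline\Omega)}$ (and the Lipschitz bound for $\omega$), and does not affect the structure of the argument.
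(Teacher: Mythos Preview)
Your argument is essentially correct but takes a different route from the paper. You work directly with $\eta=u_h-\chi$, derive a Caccioppoli-type inequality carrying a feedback term (note: the factor coming out of $T_1$ is $\sqrt{h/d}$, not $h/d$, since $T_1\lesssim (h/d)\tri{\eta}_{1,B_d}^2$ enters quadratically), and then iterate plus apply an inverse estimate on the discrete function $\eta$ to absorb it. The paper instead localises $u$ by a cut-off, $\widetilde u:=\omega u\in H^1_0(B_d)$, computes its Galerkin approximation $\widetilde u_h$ on $V_h\cap H^1_0(B_d)$, and splits $u-u_h=(\widetilde u-\widetilde u_h)+(\widetilde u_h-u_h)$ on $B_{d/2}$. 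The first piece is bounded by C\'ea and the product rule for $\omega u$; the second is discrete and discrete-harmonic on $B_{d/2}$, so Lemma~\ref{lemma:inverse-estimate-discrete-harmonic-ball} (where the iteration has already been carried out once and for all) converts its $H^1(B_{d/4})$-norm into $d^{-1}\|\widetilde u_h-u_h\|_{L^2(B_{d/2})}$, and Poincar\'e for $\widetilde u-\widetilde u_h\in H^1_0(B_d)$ closes the loop. The final substitution $u\mapsto u-\chi$ is common to both proofs. The paper's route is cleaner because all the iteration is confined to the discrete-harmonic lemma and the theorem reduces to a short splitting argument; your route avoids the auxiliary Galerkin problem but pays by redoing the iteration inside the theorem. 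One small quibble: your dyadic annuli $\rho_{j+1}-\rho_j\sim 2^{-j}d$ make the superapproximation factor $h/(\rho_{j+1}-\rho_j)$ grow with $j$, which complicates the summation; a fixed-ratio nesting $B_{d/4}\subset B_{d/2}\subset B_d$ iterated once or twice, exactly as in the proof of Lemma~\ref{lemma:inverse-estimate-discrete-harmonic-ball}, is simpler and already suffices to reach a factor $h/d$ before invoking the inverse estimate.
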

\begin{proof}
The second bound (\ref{eq:thm:local-quasi-optimality-ball-2}) follows from 
(\ref{eq:thm:local-quasi-optimality-ball-1}) 
by an application of the (second) Poincar\'e inequality.

Let $\omega \in C^\infty$ with $\supp \chi \subset B_d$ be such that 
$\omega \equiv 1$ on $B_{d/2}$ and 
assume (\ref{eq:cut-off}). Define 
$$
\widetilde u:= \omega u
$$
and let $\widetilde u_h \in V_h \cap H^1_0(B_d)$ be its Galerkin 
approximation on $V_h \cap B^1_0(B_d)$, i.e., 
$$
a(\widetilde u - \widetilde u_h,v) = 0 \qquad \forall v \in V_h \cap H^1_0(B_d).
$$
Then it is classical (and easy to see) that 
$$
\int_{B_d} \nabla \widetilde u_h \cdot (\A \nabla \widetilde u_h) \leq 
\int_{B_d} \nabla \widetilde u \cdot (\A \nabla \widetilde u).  
$$
In particular, we get 
\begin{equation}
\label{eq:thm:local-quasi-optimality-ball-10}
|\widetilde u_h|_{H^1(B_d)} \leq C |\widetilde u|_{H^1(B_d)} 
\lesssim \tri{u}_{1,B_d}. 
\end{equation}
Next, we write 
\begin{equation}
\label{eq:thm:local-quasi-optimality-ball-20}
u - u_h = (\widetilde u - \widetilde u_h) + (\widetilde u_h - u_h) 
\quad \mbox{ in $B_{d/2}$} 
\end{equation}
and estimate each of the two terms separately. For the first one, we employ 
(\ref{eq:thm:local-quasi-optimality-ball-10}) to get 
\begin{equation}
\label{eq:thm:local-quasi-optimality-ball-30} 
\|\nabla (\widetilde u - \widetilde u_h)\|_{L^2(B_{d/2})} \lesssim  \|\nabla \widetilde u\|_{L^2(B_d)} 
\lesssim \tri{u}_{1,B_d}. 
\end{equation}
For the second term in (\ref{eq:thm:local-quasi-optimality-ball-20}), we observe that 
$\widetilde u_h - u_h$ is discrete harmonic in $B_{d/2}$ (in fact, almost in $B_d$) 
since 
$$
a(\widetilde u_h - u_h,v)  = 
a(\widetilde u - u,v)  = 0 \qquad \forall v \in V_h \cap H^1_0(B_{d/2}). 
$$
Therefore, Lemma~\ref{lemma:inverse-estimate-discrete-harmonic-ball} is applicable 
and yields in view of $\omega \equiv 1$ on $B_{d/2}$:
\begin{align*}
\|\nabla (\widetilde u_h - u_h)\|_{L^2(B_{d/4})} & \leq C d^{-1} \|\widetilde u_h - u_h\|_{L^2(B_{d/2})} \\
& \leq C d^{-1} \|\widetilde u_h - u\|_{L^2(B_{d/2})}
+ C d^{-1} \| u - u_h \|_{L^2(B_{d/2})}\\
& = C d^{-1} \|\widetilde u_h - \widetilde u\|_{L^2(B_{d/2})}
+ C d^{-1} \| u - u_h \|_{L^2(B_{d/2})}, 
\end{align*}
where in the last step we exploited $\widetilde u|_{B_{d/2}} = u|_{B_{d/2}}$ due to $\omega|_{B_{d/2}} \equiv 1$.
Since $\widetilde u_h$, $\widetilde u \in H^1_0(B_d)$, a Poincar\'e inequality together with 
(\ref{eq:thm:local-quasi-optimality-ball-10}) produces 
\begin{align}
\nonumber 
\|\nabla (\widetilde u_h - u_h)\|_{L^2(B_{d/4})} &\lesssim \|\nabla( \widetilde u_h - \widetilde u)\|_{L^2(B_{d})} 
+ d^{-1} \|u - u_h\|_{L^2(B_{d/2})}\\
\label{eq:thm:local-quasi-optimality-ball-40} 
&\lesssim \tri{u}_{1,B_d} 
+ d^{-1} \|u - u_h\|_{L^2(B_{d/2})}
\end{align}
Combining 
(\ref{eq:thm:local-quasi-optimality-ball-30}) and 
(\ref{eq:thm:local-quasi-optimality-ball-40}) yields 
again with $\omega \equiv 1$ on $B_{d/2}$
\begin{align}
\label{eq:thm:local-quasi-optimality-ball-50} 
\|\nabla (u - u_h)\|_{L^2(B_{d/4})} 
&\leq 
\|\nabla (\widetilde u - \widetilde u_h)\|_{L^2(B_{d/4})} + 
\|\nabla (\widetilde u_h - u_h)\|_{L^2(B_{d/4})}  \\
\nonumber 
& \lesssim \tri{u}_{1,B_d} + d^{-1} \|u - u_h\|_{L^2(B_d)}. 
\end{align}
The final step consists in noting for arbitrary $\chi \in V_h$ that 
$u - u_h = (u - \chi) + (\chi - u_h)$ so that an application of (\ref{eq:thm:local-quasi-optimality-ball-50})
applied to $u - \chi$ yields 
$$
\|\nabla( u - u_h)\|_{L^2(B_{d/4})} \lesssim \tri{u - \chi}_{1,B_d} + d^{-1} \|u - u_h\|_{L^2(B_d)}. 
$$
\qed 
\end{proof}
\subsection{The boundary case}
We now check to what extent the above results extend up to the boundary. We 
Let $\Gamma$ be (a part of) the boundary $\partial\Omega$. We denote by 
$$
D_d = B_d \cap \Omega
$$
semiballs of radius $d$, where the implict assumption is always that the center of $D_d$ lies on $\Gamma$. 
Another implicit assumption in the following is that for all semiball appearing in the following,
we assume 
$$
\partial D_d \cap \partial\Omega \subset \Gamma
$$
We define 
$
\tri{u}_{1,D_d}:= |u|_{H^1(D_d)} + \frac{1}{d} \|u\|_{L^2(D_d)}.
$
We employ a space $V_h$ which satisfies 
\begin{equation}
\label{eq:V_h-bdy}
V_h \subset S^{k,1}({\mathcal T}), 
\qquad v|_{\Gamma} = 0 \quad \forall v \in V_h.
\end{equation}
We employ the following observation: For a semiball $D_d\subset B_d$ (same center) 
as described above, $\omega \in C^\infty_0(B_d)$, and $u \in V_h$, we have 
$\omega u \in H^1_0(D_d)$. 

Compared to Lemma~\ref{lemma:superapproximation-semiballs}, the cut-off function $\omega$
may be $\equiv 1$ near parts of $\Gamma$: 
\begin{lemma}[superapproximation on semiballs]
\label{lemma:superapproximation-semiballs}
Assume (\ref{eq:V_h-bdy}). 
Let $D_d \subset \Omega$ be a semiball of radius $d$. Let $\omega \in C^\infty(\BbbR^n)$ with 
$\supp \omega \subset B_{d/2}$ and (\ref{eq:cut-off}). 
Then for every $u \in V_h$ the interpolant $I(\omega^2 u) \in S^{k,1}({\mathcal T}) \cap H^1_{0}(D_d)$
satisfies 
\begin{eqnarray}
|\omega^2 u - I(\omega^2 u)|_{H^1(D_d)} &\leq& C \frac{h}{d} \tri{u}_{1,D_d}, \\
\frac{1}{d}\|\omega^2 u - I(\omega^2 u)\|_{L^2(D_d)} &\leq& C \left(\frac{h}{d}\right)^2 \tri{u}_{1,D_d}.
\end{eqnarray}
We assume implicitly that $d > h$ is sufficiently large. 
\end{lemma}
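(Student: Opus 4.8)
The plan is to follow the proof of Lemma~\ref{lemma:superapproximation-ball} essentially verbatim; the one new point, and the reason the boundary case is recorded separately, is to verify that the nodal interpolant $I(\omega^2 u)$ inherits the homogeneous boundary condition on $\Gamma$, so that it belongs to $S^{k,1}({\mathcal T})\cap H^1_{0}(D_d)$.

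First I would establish this membership. By \eqref{eq:V_h-bdy} we have $u|_\Gamma=0$, and $\omega\in C^\infty$, so $\omega^2 u$ vanishes pointwise on $\Gamma$; since $\supp\omega\subset B_{d/2}$ and $d>h$ is assumed sufficiently large, one also has $\supp I(\omega^2 u)\subset B_d$. Consequently $I(\omega^2 u)$ vanishes at every Lagrange node lying on $\partial D_d$: at the nodes on $\Gamma$ because $\omega^2 u=0$ there, and at the nodes on $\partial B_d\setminus\Gamma$ because $\omega=0$ there. Here one uses that ${\mathcal T}$ is a fitted affine simplicial mesh, so that $\Gamma$ is a union of element faces and the elements abutting $\Gamma$ are genuine simplices; hence the extension of $I(\omega^2 u)$ by zero lies in $S^{k,1}({\mathcal T})$ and its restriction to $D_d$ is in $H^1_0(D_d)$.

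The quantitative bounds are then obtained exactly as in the interior case: on each element $K\subset D_d$ --- and nothing special happens at the boundary elements, since the interpolated function already carries the value $0$ at the $\Gamma$-nodes --- the standard local interpolation error estimate gives $\|\omega^2 u - I(\omega^2 u)\|_{L^2(K)} + h\,|\omega^2 u - I(\omega^2 u)|_{H^1(K)}\le C h^{k+1}\|\nabla^{k+1}(\omega^2 u)\|_{L^2(K)}$. By the Leibniz rule and \eqref{eq:cut-off} one has $\|\nabla^j(\omega^2)\|_{L^\infty}\le C d^{-j}$ for $0\le j\le k$; since $u|_K$ is a polynomial of degree $\le k$, only derivatives of $\omega^2$ of order $\ge 1$ survive, so $\|\nabla^{k+1}(\omega^2 u)\|_{L^2(K)}\le C\sum_{j=0}^{k} d^{-(k+1-j)}\|\nabla^j u\|_{L^2(K)}$. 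An elementwise inverse estimate bounds $\|\nabla^j u\|_{L^2(K)}$ by $C h^{-(j-1)}|u|_{H^1(K)}$ for $j\ge1$ and by $\|u\|_{L^2(K)}$ for $j=0$; using $h/d\lesssim1$, summing over $K\subset D_d$, and recalling that $\tri{u}_{1,D_d}=|u|_{H^1(D_d)}+d^{-1}\|u\|_{L^2(D_d)}$, one arrives at $|\omega^2 u - I(\omega^2 u)|_{H^1(D_d)}\le C(h/d)\tri{u}_{1,D_d}$ and $d^{-1}\|\omega^2 u - I(\omega^2 u)\|_{L^2(D_d)}\le C(h/d)^2\tri{u}_{1,D_d}$, which are the claimed estimates.

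Since the algebraic manipulations are identical to those in Lemma~\ref{lemma:superapproximation-ball}, I do not expect a genuine obstacle here; the only point that requires care is the boundary membership argument described in the second paragraph, which hinges on the mesh being fitted to $\Omega$ and on the homogeneous boundary condition built into $V_h$ via \eqref{eq:V_h-bdy}.
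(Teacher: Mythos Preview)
Your proposal is correct and takes the same approach as the paper, which simply states that the result ``follows by the same arguments as in Lemma~\ref{lemma:superapproximation-ball}.'' Your explicit verification that $I(\omega^2 u)\in H^1_0(D_d)$ is exactly the observation the paper records just before the lemma, and the remainder of your argument reproduces the interior-case estimates verbatim.
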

\begin{proof}
Follows by the same arguments as in Lemma~\ref{lemma:superapproximation-ball}. 
\qed 
\end{proof}
The next result shows an inverse estimate for ``discrete harmonic'' functions:
\begin{lemma} 
\label{lemma:inverse-estimate-discrete-harmonic-semiball}
Assume (\ref{eq:V_h-bdy}). 
Let $u_h \in V_h$ satisfy 
\begin{equation}
\label{eq:lemma:inverse-estimate-discrete-harmonic-semiball-10}
a(u_h,v) = 0 \qquad \forall v \in V_h \quad  \mbox{ with } \quad \supp v \subset \overline{D_d} 
\end{equation}
for a semiball $D_d \subset \Omega$ of radius $d$ (implicitly assumed sufficiently large
compared to $h$).
Then 
$$
\|\nabla u_h\|_{L^2(D_{d/2})} \leq C d^{-1} \|u_h\|_{L^2(D_d)}. 
$$
\end{lemma}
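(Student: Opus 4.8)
The plan is to mimic the proof of Lemma~\ref{lemma:inverse-estimate-discrete-harmonic-ball} line by line, the only new ingredient being that the cut-off function no longer needs to vanish near the boundary because the space $V_h$ satisfies $v|_\Gamma = 0$, so that $\omega^2 u_h \in H^1_0(D_d)$ for $u_h \in V_h$ and any $\omega \in C^\infty_0(B_d)$. First I would fix a cut-off $\omega \in C^\infty(\BbbR^d)$ with $\supp\omega \subset B_{d/2}$, with $\omega \equiv 1$ on $B_{d/4}$, and satisfying the scaling bounds (\ref{eq:cut-off}). The key observation is that $\omega^2 u_h$ is an admissible test function in the sense required by (\ref{eq:lemma:inverse-estimate-discrete-harmonic-semiball-10}): its support is contained in $\overline{D_d}$ and it lies in $V_h$'s energy space $H^1_0(D_d)$ because it vanishes on $\Gamma$ (where $u_h$ vanishes) and on $\partial B_{d/2} \cap \Omega$ (where $\omega$ vanishes). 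Since the discrete test function one actually plugs in must be in $V_h$, I use $I(\omega^2 u_h) \in S^{k,1}(\mathcal T) \cap H^1_0(D_d)$ instead, controlled by Lemma~\ref{lemma:superapproximation-semiballs}.

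The core computation is the standard Caccioppoli/inverse-estimate manipulation:
\begin{align*}
\int_\Omega \omega^2\, \A \nabla u_h \cdot \nabla u_h
&= \int_\Omega \A \nabla u_h \cdot \nabla(\omega^2 u_h) - \int_\Omega 2 u_h \omega\, \nabla\omega \cdot \A \nabla u_h \\
&= \int_\Omega \A \nabla u_h \cdot \nabla\bigl(\omega^2 u_h - I(\omega^2 u_h)\bigr) - \int_\Omega 2 u_h \omega\, \nabla\omega \cdot \A \nabla u_h,
\end{align*}
where the term $a(u_h, I(\omega^2 u_h))$ vanishes by (\ref{eq:lemma:inverse-estimate-discrete-harmonic-semiball-10}). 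Using coercivity of $\A$ on the left, the superapproximation bound of Lemma~\ref{lemma:superapproximation-semiballs} on the first term on the right, the bound $\|\nabla\omega\|_{L^\infty} \lesssim d^{-1}$ on the second term, and Young's inequality, one obtains
$$
\|\nabla u_h\|_{L^2(D_{d/2})} \lesssim \sqrt{\tfrac{h}{d}}\, \tri{u_h}_{1,D_d} + \tfrac{1}{d}\|u_h\|_{L^2(D_d)}.
$$
Iterating this estimate on a slightly shrunk semiball (to absorb the fractional power $\sqrt{h/d}$) yields
$$
\|\nabla u_h\|_{L^2(D_{d/4})} \lesssim \tfrac{h}{d}\, \tri{u_h}_{1,D_d} + \tfrac{1}{d}\|u_h\|_{L^2(D_d)},
$$
and then a final elementwise inverse estimate on the gradient term $\tfrac{h}{d}|u_h|_{H^1(D_d)} \lesssim \tfrac{1}{d}\|u_h\|_{L^2(D_d)}$ eliminates the $H^1$-contribution, giving $\|\nabla u_h\|_{L^2(D_{d/2})} \lesssim d^{-1}\|u_h\|_{L^2(D_d)}$ after relabeling the radii (one works with a nested family of semiballs $D_{d/2} \subset D_{3d/4} \subset D_d$ so the final statement holds on $D_{d/2}$).

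The only subtlety — and hence the main point to be careful about — is bookkeeping with the radii of the semiballs and the implicit assumptions $\partial D_\rho \cap \partial\Omega \subset \Gamma$ for every semiball appearing in the iteration: one must ensure that all the auxiliary semiballs used in the chain (for the cut-off supports, for the superapproximation lemma, and for the iteration) are genuine semiballs with the stated boundary structure and radii comparable to $d$, so that the constants from Lemma~\ref{lemma:superapproximation-semiballs} and the inverse estimates (\ref{eq:inverse-estimate}) apply uniformly. Since $d > h$ is assumed sufficiently large, there is always room for finitely many such shrinking steps. Everything else is verbatim the interior argument, with $B$ replaced by $D$ and with the observation that $\omega^2 u \in H^1_0(D_d)$ replacing the support-interior reasoning.
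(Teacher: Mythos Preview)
Your proposal is correct and takes essentially the same approach as the paper, which simply states that the result follows by tracing the arguments of the interior case (Lemma~\ref{lemma:inverse-estimate-discrete-harmonic-ball}). You have in fact written out in detail precisely what the paper leaves implicit, including the key observation that $\omega^2 u_h \in H^1_0(D_d)$ holds automatically because $u_h|_\Gamma = 0$, and your remarks on the radius bookkeeping and the nested semiballs are appropriate caveats.
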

\begin{proof}
Again, this follows by tracing the arguments in the proof of 
Lemma~\ref{lemma:inverse-estimate-discrete-harmonic-semiball}.
\qed 
\end{proof}
\begin{theorem}
\label{thm:local-quasi-optimality-semiball}
Assume (\ref{eq:V_h-bdy}). 
Let $u \in H^1(\Omega)$ with $u|_{\Gamma} = 0$ and $u_h \in V_h$ be such that 
$$
a(u - u_h,v) = 0 \qquad \forall v \in V_h \quad \mbox{ with } \quad \supp v \subset \overline{D_d}
$$
for a semiball $D_d \subset \Omega$ of radius $d$ (implicitly assumed sufficiently large
compared to $h$). 
Then 
\begin{align*}
\|\nabla( u - u_h)\|_{L^2(D_{d/4})} 
& \leq C \inf_{\chi \in V_h} \tri{u - \chi}_{1,D_d} + C d^{-1} \|u - u_h\|_{L^2(D_d)}  \\
& \leq C \inf_{\chi \in V_h} \|\nabla (u - \chi)\|_{L^2(D_d)} + C d^{-1} \|u - u_h\|_{L^2(D_d)}. 
\end{align*}
\end{theorem}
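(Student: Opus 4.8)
The plan is to mirror the interior argument of Theorem~\ref{thm:local-quasi-optimality-ball} line by line, replacing Lemma~\ref{lemma:superapproximation-ball} and Lemma~\ref{lemma:inverse-estimate-discrete-harmonic-ball} by their boundary counterparts Lemma~\ref{lemma:superapproximation-semiballs} and Lemma~\ref{lemma:inverse-estimate-discrete-harmonic-semiball}. As in the interior case, the second estimate follows from the first: in the final reduction one works with $u-\chi$ for $\chi\in V_h$, which vanishes on $\Gamma\cap\partial D_d$, so a Friedrichs/Poincar\'e inequality on the semiball $D_d=B_d\cap\Omega$ — available since the center of $D_d$ lies on $\Gamma$ and $\partial D_d\cap\partial\Omega\subset\Gamma$, hence $D_d$ belongs to one of finitely many normalized shapes fixed by the Lipschitz character of $\Omega$ — gives $\|u-\chi\|_{L^2(D_d)}\lesssim d\,\|\nabla(u-\chi)\|_{L^2(D_d)}$, turning $\tri{u-\chi}_{1,D_d}$ into $\|\nabla(u-\chi)\|_{L^2(D_d)}$.

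For the first estimate, fix $\omega\in C^\infty(\BbbR^d)$ with $\supp\omega\subset B_d$, $\omega\equiv 1$ on a neighborhood of $\overline{B_{d/2}}$, and $\|\nabla^j\omega\|_{L^\infty}\lesssim d^{-j}$ for $j=0,\dots,k$, and set $\widetilde u:=\omega u$. Since $u|_\Gamma=0$ and $\supp\omega$ does not meet $\partial B_d$, we have $\widetilde u\in H^1_0(D_d)$; let $\widetilde u_h\in V_h\cap H^1_0(D_d)$ be its $a(\cdot,\cdot)$-Galerkin projection. Because $\A$ is symmetric and $a(v,v)\sim|v|_{H^1(D_d)}^2$ on $H^1_0(D_d)$, the energy-minimization property yields $|\widetilde u_h|_{H^1(D_d)}\lesssim|\widetilde u|_{H^1(D_d)}\lesssim\tri{u}_{1,D_d}$. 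Writing $u-u_h=(\widetilde u-\widetilde u_h)+(\widetilde u_h-u_h)$ on $D_{d/2}$, the first summand is controlled at once by $\|\nabla(\widetilde u-\widetilde u_h)\|_{L^2(D_{d/2})}\lesssim|\widetilde u|_{H^1(D_d)}\lesssim\tri{u}_{1,D_d}$.

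For the second summand one checks that $\widetilde u_h-u_h$ is discretely harmonic on $D_{d/2}$: for $v\in V_h$ with $\supp v\subset\overline{D_{d/2}}$ one has $a(\widetilde u_h-u_h,v)=a(\widetilde u_h,v)-a(u_h,v)=a(\widetilde u,v)-a(u,v)=a((\omega-1)u,v)=0$, using that $\widetilde u_h$ is the Galerkin projection of $\widetilde u$, that $a(u-u_h,v)=0$ for $v$ supported in the closed semiball $\overline{D_d}\supset\overline{D_{d/2}}$, and that $(\omega-1)u$ vanishes near $\overline{D_{d/2}}$. Hence Lemma~\ref{lemma:inverse-estimate-discrete-harmonic-semiball} gives $\|\nabla(\widetilde u_h-u_h)\|_{L^2(D_{d/4})}\lesssim d^{-1}\|\widetilde u_h-u_h\|_{L^2(D_{d/2})}$; splitting $\widetilde u_h-u_h=(\widetilde u_h-\widetilde u)+(u-u_h)$ on $D_{d/2}$ (here $\widetilde u=u$), applying the Poincar\'e inequality on $D_d$ to $\widetilde u_h-\widetilde u\in H^1_0(D_d)$ together with the bound on $|\widetilde u_h|_{H^1(D_d)}$, we get $\|\nabla(\widetilde u_h-u_h)\|_{L^2(D_{d/4})}\lesssim\tri{u}_{1,D_d}+d^{-1}\|u-u_h\|_{L^2(D_d)}$. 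Adding the two summands proves the estimate for $u$, and the final step is the standard reduction: replace $u$ by $u-\chi$ and $u_h$ by $\chi-u_h$ for arbitrary $\chi\in V_h$ (admissible since $u-\chi$ still vanishes on $\Gamma$ and $\chi-u_h\in V_h$, with unchanged Galerkin orthogonality).

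The superapproximation and inverse-estimate ingredients are routine, being already supplied by Lemma~\ref{lemma:inverse-estimate-discrete-harmonic-semiball} (which itself rests on Lemma~\ref{lemma:superapproximation-semiballs}). The only genuinely ``boundary'' point, and the one deserving care, is the bookkeeping of which functions vanish on $\Gamma$: one must track that $\widetilde u=\omega u$, $\widetilde u_h$, and $\widetilde u_h-\widetilde u$ all lie in $H^1_0(D_d)$ so that the energy projection, the Friedrichs inequality, and Lemma~\ref{lemma:inverse-estimate-discrete-harmonic-semiball} all apply, and that the implicit hypothesis $\partial D_d\cap\partial\Omega\subset\Gamma$ — together with the uniform Lipschitz character of $\Omega$ — keeps the constants in the Poincar\'e inequalities uniform over the family of semiballs that occur.
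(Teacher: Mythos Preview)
Your argument is correct and is precisely the paper's own approach: the paper's proof simply says to trace the interior argument of Theorem~\ref{thm:local-quasi-optimality-ball}, invoking the Friedrichs (first) Poincar\'e inequality in place of the mean-value (second) one for the passage from the first to the second bound, which is exactly what you do. One trivial slip: in the final reduction the replacement pair should be $(u-\chi,\,u_h-\chi)$, not $(u-\chi,\,\chi-u_h)$, so that the difference remains $u-u_h$ and the Galerkin orthogonality is genuinely unchanged.
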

\begin{proof}
The second inequality follows again from a Poincar\'e inequality (the first one, this time). 
The first inequality follows again from tracing the arguments of the proof of 
Theorem~\ref{thm:local-quasi-optimality-ball}.  
\qed 
\end{proof}

\newcommand{\tw}{0.45\textwidth}
\newcommand{\tws}{0.31\textwidth}
\newcommand{\twt}{0.25\textwidth}
\section{Detailed numerics}
\label{sec:detailed-numerics}
\subsection{Slit domain}
\label{sec:slit-domain} 
The geometry is a slit domain 
$$
\Omega = \Omega^S:= (-1,1)^2 \setminus [0,1) \times \{0\}. 
$$
The exact solution is given by 
$$
|x - x_0|^\alpha \sin(a \pi\phi)
$$
for different choices of the parameters $\alpha$ and $x_0$ (and $a$). 

The inhomogeneous Dirichlet boundary conditions are 
realized by nodal interpolation. The equation considered is
$$
-\Delta u = f. 
$$ 
Starting from a coarse mesh, we perform a sequence of uniform
(red) refinements. We consider a lowest order discretization, i.e.,  $k = 1$. 

Strictly speaking, the slit domain is not covered by our theory. Also not 
covered by our theory are the variational crimes associated with approximating
the inhomogeneous Dirichlet data. 
Nevertheless, we expect the convergence behavior detailed in Corollary~\ref{cor:2D-L2-concrete} 
to be an good description of the actual convergence behavior. 
We assume that the global regularity of the solution $u$ is described by 
$s = 1 + \alpha$ (actually, it is $1 + \alpha - \varepsilon$ for all $\varepsilon > 0$). 
Corollary~\ref{cor:2D-L2-concrete} then lets us expect for the two cases $x_0 = (0,0)$ and 
$x_0 \ne (0,0)$  the following convergence rates: 
\begin{eqnarray*}
\mbox{$x_0 = (0,0)$} &\Longrightarrow&  
\tau  = \min\{2,1+\alpha,-1+1/2+(1+\alpha)\} = \min\{2,1/2+\alpha\} \\
\mbox{$x_0 \ne (0,0)$} &\Longrightarrow&  
\tau  = \min\{2,1+\alpha,-1+1/2+\infty\} = \min\{2,1+\alpha\} 
\end{eqnarray*}

In the following Tables~\ref{tab:App_slit-shape1}--\ref{tab:App_slit-shape7}, we vary 
the parameter $\alpha$. In each table separately we vary the location. The locations under investigation are $(x_0,y_0)=(0,0)$, $(x_0,y_0)=(0.5,0)$ and $(x_0,y_0)=(0,1)$. We observe that the theoretical convergence rates are mostly achieved in our numerical simulations.

\begin{table}[ht]
\begin{center}\begin{tabular}{|r||c|c||c|c||c|c|}
\hline
&\multicolumn{2}{|c||}{$(x_0,y_0) = (0,0)$}
&\multicolumn{2}{|c||}{$(x_0,y_0) = (0.5,0)$}
&\multicolumn{2}{|c|}{$(x_0,y_0) = (0,1)$}  \\
&\multicolumn{2}{|c||}{$a = \pi/2$}
&\multicolumn{2}{|c||}{$a=\pi$}
&\multicolumn{2}{|c|}{$a=\pi$}  \\
\hline
DOFs   &$L^2$-error  & rate &$L^2$-error  & rate & $L^2$-error & rate  \\ \hline
97     &2.9124e-02  &  -         &    3.8405e-02&     -        &      3.0468e-02 & -        \\
348    &1.5745e-02  & 0.89    &    1.0451e-02&  1.88     &      1.2883e-02& 1.24    \\
1.315   &8.1422e-03  & 0.95    &    4.8926e-03&  1.10     &      5.2831e-03& 1.29    \\
5.109   &4.1322e-03  & 0.98    &    2.1508e-03&  1.19     &      2.0814e-03& 1.34    \\
20.137  &2.0799e-03  & 0.99    &    8.1046e-04&  1.41     &      7.9896e-04& 1.38    \\
79.953  &1.0430e-03  & 1.00    &    3.0969e-04&  1.39     &      3.0187e-04& 1.40    \\
318.625 &5.2221e-04  & 1.00    &    1.1780e-04&  1.39     &      1.1288e-04& 1.42    \\
1.272.129&2.6125e-04  & 1.00    &    4.1750e-05&  1.50     &      4.1903e-05& 1.43    \\
5.083.777&1.3066e-04  & 1.00    &    1.4985e-05&  1.48     &      1.5472e-05& 1.44     \\
\hline
\end{tabular}
   \caption{Slit domain, $k=1$: Influence of the position of singularity for $\alpha = 0.5$.}
\label{tab:App_slit-shape1}
\end{center}
\end{table}
\begin{table}[ht]
\begin{center}\begin{tabular}{|r||c|c||c|c||c|c|}
\hline
&\multicolumn{2}{|c||}{$(x_0,y_0) = (0,0)$}
&\multicolumn{2}{|c||}{$(x_0,y_0) = (0.5,0)$}
&\multicolumn{2}{|c|}{$(x_0,y_0) = (0,1)$}  \\
&\multicolumn{2}{|c||}{$a = \pi/2$}
&\multicolumn{2}{|c||}{$a=\pi$}
&\multicolumn{2}{|c|}{$a=\pi$}  \\
\hline
DOFs   &$L^2$-error  & rate &$L^2$-error  & rate & $L^2$-error & rate  \\ \hline
97     &1.1642e-02  &  -           &1.6577e-02    & -          &1.6031e-02       & -   \\
348    & 5.6371e-03  &  1.05     &   6.5153e-03 & 1.35    &      6.0796e-03 & 1.40   \\
1.315   & 2.6059e-03  &  1.11     &   2.3569e-03 & 1.47    &      2.2272e-03 & 1.45   \\
5.109   & 1.1835e-03  &  1.14     &   8.4203e-04 & 1.48    &      7.8285e-04 & 1.51   \\
20.137  & 5.3296e-04  &  1.15     &   2.7392e-04 & 1.62    &      2.6776e-04 & 1.55   \\
79.953  & 2.3888e-04  &  1.16     &   9.1902e-05 & 1.58    &      9.0042e-05 & 1.57   \\
318.625 & 1.0679e-04  &  1.16     &   3.0823e-05 & 1.58    &      2.9942e-05 & 1.59   \\
1.272.129& 4.7666e-05  &  1.16     &   9.6773e-06 & 1.67    &      9.8788e-06 & 1.60   \\
5.083.777& 2.1257e-05   & 1.16     &   3.1032e-06 & 1.64    &      3.2407e-06 & 1.61   \\
  \hline
\end{tabular}
   \caption{Slit domain, $k=1$: Influence of the position of singularity for $\alpha = 2/3$.}
\label{tab:App_slit-shape2}
\end{center}
\end{table}
\begin{table}[ht]
\begin{center}\begin{tabular}{|r||c|c||c|c||c|c|}
\hline
&\multicolumn{2}{|c||}{$(x_0,y_0) = (0,0)$}
&\multicolumn{2}{|c||}{$(x_0,y_0) = (0.5,0)$}
&\multicolumn{2}{|c|}{$(x_0,y_0) = (0,1)$}  \\
&\multicolumn{2}{|c||}{$a = \pi/2$}
&\multicolumn{2}{|c||}{$a=\pi$}
&\multicolumn{2}{|c|}{$a=\pi$}  \\
\hline
DOFs   &$L^2$-error  & rate &$L^2$-error  & rate & $L^2$-error & rate  \\ \hline
97     &6.1391e-03  &  -         &    1.1088e-02& -          & 1.0692e-02       & -        \\
348    &2.8187e-03  &  1.12   &    4.1329e-03&  1.42   &      3.8553e-03  & 1.47   \\
1.315   &1.2351e-03  &  1.19   &    1.4164e-03&  1.54   &      1.3388e-03  & 1.53   \\
5.109   &5.3338e-04  &  1.21   &    4.7830e-04&  1.57   &      4.4562e-04  & 1.59   \\
20.137  &2.2846e-04  &  1.22   &    1.4725e-04&  1.70   &      1.4420e-04  & 1.63   \\
79.953  &9.7267e-05  &  1.23   &    4.6683e-05&  1.66   &      4.5843e-05  & 1.65   \\
318.625 &4.1233e-05  &  1.24   &    1.4761e-05&  1.66   &      1.4401e-05  & 1.67   \\
1.272.129&1.7428e-05  &  1.24   &    4.3773e-06&  1.75   &      4.4861e-06  & 1.68   \\
5.083.777&7.3524e-06  &  1.25   &    1.3285e-06&  1.72   &      1.3889e-06  & 1.69   \\
  \hline
\end{tabular}
   \caption{Slit domain, $k=1$: Influence of the position of singularity for $\alpha = 0.75$.}
\label{tab:App_slit-shape3}
\end{center}
\end{table}
\begin{table}[ht]
\begin{center}\begin{tabular}{|r||c|c||c|c||c|c|}
\hline
&\multicolumn{2}{|c||}{$(x_0,y_0) = (0,0)$}
&\multicolumn{2}{|c||}{$(x_0,y_0) = (0.5,0)$}
&\multicolumn{2}{|c|}{$(x_0,y_0) = (0,1)$}  \\
&\multicolumn{2}{|c||}{$a = \pi/2$}
&\multicolumn{2}{|c||}{$a=\pi$}
&\multicolumn{2}{|c|}{$a=\pi$}  \\
\hline
DOFs   &$L^2$-error  & rate &$L^2$-error  & rate & $L^2$-error & rate  \\ \hline
97     &4.1949e-03  &  -         & 3.0111e-04   & -          & 2.9784e-04      & -   \\
348    &1.4605e-03  &  1.52   &    9.9257e-05& 1.60    &       9.3618e-05&1.67    \\
1.315   &4.8756e-04  &  1.58   &    2.9679e-05& 1.74    &       2.8033e-05&1.74    \\
5.109   &1.5909e-04  &  1.62   &    8.6201e-06& 1.78    &       8.0205e-06&1.81    \\
20.137  &5.1667e-05  &  1.62   &    2.2994e-06& 1.91    &       2.2266e-06&1.85    \\
79.953  &1.6874e-05  &  1.61   &    6.2533e-07& 1.88    &       6.0606e-07&1.88    \\
318.625 &5.5687e-06  &  1.60   &    1.6832e-07& 1.89    &       1.6270e-07&1.90    \\
1.272.129&1.8596e-06  &  1.58   &    4.3035e-08& 1.97    &       4.3240e-08&1.91    \\
5.083.777&6.2798e-07  &1.57     &    1.1235e-08& 1.94    &       1.1403e-08&1.92    \\
  \hline
\end{tabular}
   \caption{Slit domain, $k=1$: Influence of the position of singularity for $\alpha = 1.01$.}
\label{tab:App_slit-shape4}
\end{center}
\end{table}
\begin{table}[ht]
\begin{center}\begin{tabular}{|r||c|c||c|c||c|c|}
\hline
&\multicolumn{2}{|c||}{$(x_0,y_0) = (0,0)$}
&\multicolumn{2}{|c||}{$(x_0,y_0) = (0.5,0)$}
&\multicolumn{2}{|c|}{$(x_0,y_0) = (0,1)$}  \\
&\multicolumn{2}{|c||}{$a = \pi/2$}
&\multicolumn{2}{|c||}{$a=\pi$}
&\multicolumn{2}{|c|}{$a=\pi$}  \\
\hline
DOFs   &$L^2$-error  & rate &$L^2$-error  & rate & $L^2$-error & rate  \\ \hline
97     		&5.7534e-03  &  -         & 2.8888e-03    & -          &  2.8799e-03       & -   \\
348    		&1.9412e-03  &  1.57   &   9.1606e-04  & 1.66    &      8.6618e-04   &1.73    \\
1.315   		&6.2583e-04  &  1.63   &   2.6267e-04  & 1.80    &      2.4731e-04   &1.81    \\
5.109   		&1.9689e-04  &  1.67   &   7.2833e-05  & 1.85    &      6.7463e-05   &1.87    \\
20.137  	&6.1446e-05  &  1.68   &   1.8673e-05  & 1.96    &      1.7871e-05   &1.92    \\
79.953  	&1.9191e-05  &  1.68   &   4.8621e-06  & 1.94    &      4.6455e-06   &1.94    \\
318.625 	&6.0229e-06  &  1.67   &   1.2518e-06  & 1.96    &      1.1921e-06   &1.96    \\
1.272.129	&1.9023e-06  &  1.66   &   3.0913e-07  & 2.02    &      3.0311e-07   &1.98    \\
5.083.777	&6.0474e-07  &  1.65   &   7.7720e-08  & 1.99    &      7.6552e-08   &1.99    \\
  \hline
\end{tabular}
   \caption{Slit domain, $k=1$: Influence of the position of singularity for $\alpha = 10/9$.}
\label{tab:App_slit-shape5}
\end{center}
\end{table}


\begin{table}[ht]
\begin{center}\begin{tabular}{|r||c|c||c|c||c|c|}
\hline
&\multicolumn{2}{|c||}{$(x_0,y_0) = (0,0)$}
&\multicolumn{2}{|c||}{$(x_0,y_0) = (0.5,0)$}
&\multicolumn{2}{|c|}{$(x_0,y_0) = (0,1)$}  \\
&\multicolumn{2}{|c||}{$a = \pi/2$}
&\multicolumn{2}{|c||}{$a=\pi$}
&\multicolumn{2}{|c|}{$a=\pi$}  \\
\hline
DOFs   &$L^2$-error  & rate &$L^2$-error  & rate & $L^2$-error & rate  \\ \hline
97     &7.3549e-03  &  -          & 6.3401e-03   & -           &6.3849e-03       & -   \\
348    &2.2414e-03  &  1.71    &    1.8792e-03&  1.75    &      1.7790e-03 & 1.84   \\
1.315   &6.4849e-04  &  1.79    &    5.0365e-04&  1.90    &      4.6905e-04 & 1.92   \\
5.109   &1.8251e-04  &  1.83    &    1.3007e-04&  1.95    &      1.1878e-04 & 1.98   \\
20.137  &5.0718e-05  &  1.85    &    3.1798e-05&  2.03    &      2.9443e-05 & 2.01   \\
79.953  &1.4021e-05  &  1.85    &    7.8665e-06&  2.02    &      7.2227e-06 & 2.03   \\
318.625 &3.8699e-06  &  1.86    &    1.9356e-06&  2.02    &      1.7642e-06 & 2.03   \\
1.272.129&1.0682e-06  &  1.86    &    4.6924e-07&  2.04    &      4.3055e-07 & 2.03   \\
5.083.777&2.9514e-07  &   1.86   &    1.1524e-07&  2.03    &      1.0519e-07 & 2.03   \\
  \hline
\end{tabular}
   \caption{Slit domain, $k=1$: Influence of the position of singularity for $\alpha = 4/3$.}
\label{tab:App_slit-shape6}
\end{center}
\end{table}


\begin{table}[ht]
\begin{center}\begin{tabular}{|r||c|c||c|c||c|c|}
\hline
&\multicolumn{2}{|c||}{$(x_0,y_0) = (0,0)$}
&\multicolumn{2}{|c||}{$(x_0,y_0) = (0.5,0)$}
&\multicolumn{2}{|c|}{$(x_0,y_0) = (0,1)$}  \\
&\multicolumn{2}{|c||}{$a = \pi/2$}
&\multicolumn{2}{|c||}{$a=\pi$}
&\multicolumn{2}{|c|}{$a=\pi$}  \\
\hline
DOFs   &$L^2$-error  & rate &$L^2$-error  & rate & $L^2$-error & rate  \\ \hline
97     &7.5901e-03  &  -         & 7.6006e-03   & -          &7.6553e-03       & -   \\
348    &2.1664e-03  & 1.81    &    2.1751e-03&  1.81   &      2.0530e-03 & 1.90   \\
1.315   &5.8638e-04  & 1.89    &    5.6614e-04&  1.94   &      5.2238e-04 & 1.97   \\
5.109   &1.5450e-04  & 1.92    &    1.4246e-04&  1.99   &      1.2877e-04 & 2.02   \\
20.137  &4.0197e-05  & 1.94    &    3.4615e-05&  2.04   &      3.1388e-05 & 2.04   \\
79.953  &1.0396e-05  & 1.95    &    8.5086e-06&  2.02   &      7.6403e-06 & 2.04   \\
318.625 &2.6803e-06  & 1.96    &    2.0921e-06&  2.02   &      1.8651e-06 & 2.03   \\
1.272.129&6.8978e-07  & 1.96    &    5.1307e-07&  2.03   &      4.5726e-07 & 2.03   \\
5.083.777&1.7730e-07  & 1.96    &    1.2691e-07&  2.02   &      1.1258e-07 & 2.02   \\
  \hline
\end{tabular}
   \caption{Slit domain, $k=1$: Influence of the position of singularity for $\alpha = 1.5$.}
\label{tab:App_slit-shape7}
\end{center}
\end{table}

\clearpage
\subsection{L-shaped domain}
The geometry is an L-shaped domain: 
$$
\Omega = \Omega^L:= (-1,1)^2 \setminus [0,1) \times (-1,0]. 
$$
The exact solution is given by 
$$
|x - x_0|^\alpha \sin(a\pi\phi) 
$$
for different choices of the parameters $\alpha$, $x_0$,  and $a$. 

\subsubsection{Lowest order discretization $k=1$}
Structurally, the situation is similar to the situation in 
Section~\ref{sec:slit-domain}. From Corollary~\ref{cor:2D-L2-concrete}, we expect 
the following convergence rates: 
\begin{eqnarray*}
\mbox{$x_0 = (0,0)$} &\Longrightarrow&  
\tau  = \min\{2,1+\alpha,-1+2/3+(1+\alpha)\} = \min\{2,2/3+\alpha\} \\
\mbox{$x_0 \ne (0,0)$} &\Longrightarrow&  
\tau  = \min\{2,1+\alpha,-1+2/3+\infty\} = \min\{2,1+\alpha\} 
\end{eqnarray*}
Also in this case the numerical rates depicted in Tables~\ref{tab:App_L-shape1}--\ref{tab:App_L-shape6} are very close to the rates expected by our theory.

\begin{table}[ht]
\begin{center}\begin{tabular}{|r||c|c||c|c||c|c|}
\hline
&\multicolumn{2}{|c||}{$(x_0,y_0) = (0,0)$}
&\multicolumn{2}{|c||}{$(x_0,y_0) = (0.5,0)$}
&\multicolumn{2}{|c|}{$(x_0,y_0) = (0,1)$}  \\
&\multicolumn{2}{|c||}{$a = \pi/2$}
&\multicolumn{2}{|c||}{$a=\pi$}
&\multicolumn{2}{|c|}{$a=\pi$}  \\
\hline
DOFs   &$L^2$-error  & rate &$L^2$-error  & rate & $L^2$-error & rate  \\ \hline
81     &1.1719e-02  &  -         &  1.0132e-02   & -          &9.5383e-03       & -   \\
289    &5.4059e-03  & 1.12    &    4.0619e-03 & 1.32    &      3.4014e-03 &1.49    \\
1.089   &2.3165e-03  & 1.22    &    1.4839e-03 & 1.45    &      1.1426e-03 &1.57    \\
4.225   &9.5790e-04  & 1.27    &    5.1844e-04 & 1.52    &      3.7569e-04 &1.60    \\
16.641  &3.8922e-04  & 1.30    &    1.7681e-04 & 1.55    &      1.2222e-04 &1.62    \\
66.049  &1.5663e-04  & 1.31    &    5.9408e-05 & 1.57    &      3.9513e-05 &1.63    \\
263.169 &6.2682e-05  & 1.32    &    1.9762e-05 & 1.59    &      1.2720e-05 &1.64    \\
1.050.625&2.5002e-05  & 1.33    &    6.5263e-06 & 1.60    &      4.0824e-06 &1.64    \\
4.198.401&9.9525e-06  & 1.33    &    2.1437e-06 & 1.61    &      1.3072e-06 &1.64    \\
  \hline
\end{tabular}
   \caption{L--domain, $k=1$: Influence of the position of singularity for $\alpha = 2/3$.}
\label{tab:App_L-shape1}
\end{center}
\end{table}

\begin{table}[ht]
\begin{center}\begin{tabular}{|r||c|c||c|c||c|c|}
\hline
&\multicolumn{2}{|c||}{$(x_0,y_0) = (0,0)$}
&\multicolumn{2}{|c||}{$(x_0,y_0) = (0.5,0)$}
&\multicolumn{2}{|c|}{$(x_0,y_0) = (0,1)$}  \\
&\multicolumn{2}{|c||}{$a = \pi/2$}
&\multicolumn{2}{|c||}{$a=\pi$}
&\multicolumn{2}{|c|}{$a=\pi$}  \\
\hline
DOFs   &$L^2$-error  & rate &$L^2$-error  & rate & $L^2$-error & rate  \\ \hline
81     &6.1585e-03  &  -         &6.8141e-03    & -          &6.2506e-03       & -   \\
289    &2.6986e-03  &1.19     &   2.5648e-03 &1.41     &      2.1211e-03 &1.56    \\
1.089   &1.1123e-03  &1.28     &   8.8428e-04 &1.54     &      6.7413e-04 &1.65    \\
4.225   &4.4037e-04  &1.34     &   2.9202e-04 &1.60     &      2.0903e-04 &1.69    \\
16.641  &1.7107e-04  &1.36     &   9.4164e-05 &1.63     &      6.4027e-05 &1.71    \\
66.049  &6.5689e-05  &1.38     &   2.9909e-05 &1.65     &      1.9471e-05 &1.72    \\
263.169 &2.5030e-05  &1.39     &   9.4012e-06 &1.67     &      5.8930e-06 &1.72    \\
1.050.625&9.4877e-06  &1.40     &   2.9328e-06 &1.68     &      1.7774e-06 &1.73    \\
4.198.401&3.5834e-06  &1.40     &   9.0968e-07 &1.69     &      5.3475e-07 &1.73    \\
  \hline
\end{tabular}
   \caption{L--domain, $k=1$: Influence of the position of singularity for $\alpha = 0.75$.}
\label{tab:App_L-shape2}
\end{center}
\end{table}

\begin{table}[ht]
\begin{center}\begin{tabular}{|r||c|c||c|c||c|c|}
\hline
&\multicolumn{2}{|c||}{$(x_0,y_0) = (0,0)$}
&\multicolumn{2}{|c||}{$(x_0,y_0) = (0.5,0)$}
&\multicolumn{2}{|c|}{$(x_0,y_0) = (0,1)$}  \\
&\multicolumn{2}{|c||}{$a = \pi/2$}
&\multicolumn{2}{|c||}{$a=\pi$}
&\multicolumn{2}{|c|}{$a=\pi$}  \\
\hline
DOFs   &$L^2$-error  & rate &$L^2$-error  & rate & $L^2$-error & rate  \\ \hline
81     &4.6216e-03  &  -         &1.8387e-04    & -          &1.6841e-04       & -   \\
289    &1.6860e-03  & 1.45    &   6.0370e-05 &1.61     &      5.0364e-05 &1.74    \\
1.089   &5.4867e-04  & 1.62    &   1.8034e-05 &1.74     &      1.3883e-05 &1.86    \\
4.225   &1.7284e-04  & 1.67    &   5.1378e-06 &1.81     &      3.6942e-06 &1.91    \\
16.641  &5.2963e-05  & 1.71    &   1.4253e-06 &1.85     &      9.6399e-07 &1.94    \\
66.049  &1.5970e-05  & 1.73    &   3.8870e-07 &1.87     &      2.4842e-07 &1.96    \\
263.169 &4.7758e-06  & 1.74    &   1.0474e-07 &1.89     &      6.3437e-08 &1.97    \\
1.050.625&1.4238e-06  & 1.75    &   2.7971e-08 &1.90     &      1.6086e-08 &1.98    \\
4.198.401&4.2471e-07  & 1.75    &   7.4181e-09 &1.91     &      4.0561e-09 &1.99    \\
  \hline
\end{tabular}
   \caption{L--domain, $k=1$: Influence of the position of singularity for $\alpha = 1.01$.}
\label{tab:App_L-shape3}
\end{center}
\end{table}

\begin{table}[ht]
\begin{center}\begin{tabular}{|r||c|c||c|c||c|c|}
\hline
&\multicolumn{2}{|c||}{$(x_0,y_0) = (0,0)$}
&\multicolumn{2}{|c||}{$(x_0,y_0) = (0.5,0)$}
&\multicolumn{2}{|c|}{$(x_0,y_0) = (0,1)$}  \\
&\multicolumn{2}{|c||}{$a = \pi/2$}
&\multicolumn{2}{|c||}{$a=\pi$}
&\multicolumn{2}{|c|}{$a=\pi$}  \\
\hline
DOFs   &$L^2$-error  & rate &$L^2$-error  & rate & $L^2$-error & rate  \\ \hline
81     &6.5660e-03  &  -         &  1.7641e-03   & -          &1.6229e-03       & -   \\
289    &2.3309e-03  &1.49     &    5.5465e-04 &1.67     &      4.6837e-04 &1.79    \\
1.089   &7.3413e-04  &1.67     &    1.5847e-04 &1.81     &      1.2424e-04 &1.91    \\
4.225   &2.2257e-04  &1.72     &    4.3172e-05 &1.88     &      3.1761e-05 &1.97    \\
16.641  &6.5650e-05  &1.76     &    1.1458e-05 &1.91     &      7.9588e-06 &2.00    \\
66.049  &1.9056e-05  &1.78     &    2.9916e-06 &1.94     &      1.9695e-06 &2.01    \\
263.169 &5.4810e-06  &1.80     &    7.7249e-07 &1.95     &      4.8311e-07 &2.03    \\
1.050.625&1.5690e-06  &1.80     &    1.9789e-07 &1.96     &      1.1771e-07 &2.04    \\
4.198.401&4.4822e-07  &1.81     &    5.0393e-08 &1.97     &      2.8532e-08 &2.04    \\
  \hline
\end{tabular}
   \caption{L--domain, $k=1$: Influence of the position of singularity for $\alpha = 10/9$.}
\label{tab:App_L-shape4}
\end{center}
\end{table}

\begin{table}[ht]
\begin{center}\begin{tabular}{|r||c|c||c|c||c|c|}
\hline
&\multicolumn{2}{|c||}{$(x_0,y_0) = (0,0)$}
&\multicolumn{2}{|c||}{$(x_0,y_0) = (0.5,0)$}
&\multicolumn{2}{|c|}{$(x_0,y_0) = (0,1)$}  \\
&\multicolumn{2}{|c||}{$a = \pi/2$}
&\multicolumn{2}{|c||}{$a=\pi$}
&\multicolumn{2}{|c|}{$a=\pi$}  \\
\hline
DOFs   &$L^2$-error  & rate &$L^2$-error  & rate & $L^2$-error & rate  \\ \hline
81     &8.6776e-03  &  -         &3.8962e-03    & -          &3.6446e-03       & -   \\
289    &2.8523e-03  & 1.61    &   1.1374e-03 & 1.78    &      1.0008e-03&1.86    \\
1.089   &8.2870e-04  & 1.78    &   3.0272e-04 & 1.91    &      2.5331e-04&1.98    \\
4.225   &2.3073e-04  & 1.84    &   7.7239e-05 & 1.97    &      6.2153e-05&2.03    \\
16.641  &6.2539e-05  & 1.88    &   1.9331e-05 & 2.00    &      1.5073e-05&2.04    \\
66.049  &1.6688e-05  & 1.91    &   4.7956e-06 & 2.01    &      3.6440e-06&2.05    \\
263.169 &4.4099e-06  & 1.92    &   1.1852e-06 & 2.02    &      8.8167e-07&2.05    \\
1.050.625&1.1580e-06  & 1.93    &   2.9260e-07 & 2.02    &      2.1389e-07&2.04    \\
4.198.401&3.0279e-07  & 1.94    &   7.2263e-08 & 2.02    &      5.2069e-08&2.04    \\
  \hline
\end{tabular}
   \caption{L--domain, $k=1$: Influence of the position of singularity for $\alpha = 4/3$.}
\label{tab:App_L-shape5}
\end{center}
\end{table}

\begin{table}[ht]
\begin{center}\begin{tabular}{|r||c|c||c|c||c|c|}
\hline
&\multicolumn{2}{|c||}{$(x_0,y_0) = (0,0)$}
&\multicolumn{2}{|c||}{$(x_0,y_0) = (0.5,0)$}
&\multicolumn{2}{|c|}{$(x_0,y_0) = (0,1)$}  \\
&\multicolumn{2}{|c||}{$a = \pi/2$}
&\multicolumn{2}{|c||}{$a=\pi$}
&\multicolumn{2}{|c|}{$a=\pi$}  \\
\hline
DOFs   &$L^2$-error  & rate &$L^2$-error  & rate & $L^2$-error & rate  \\ \hline
81     &8.9932e-03  &  -         &4.7178e-03    & -          &4.4942e-03       & -   \\
289    &2.8151e-03  &1.68     &   1.3287e-03 & 1.83    &      1.2166e-03 &1.89    \\
1.089   &7.8034e-04  &1.85     &   3.4367e-04 & 1.95    &      3.0580e-04 &1.99    \\
4.225   &2.0751e-04  &1.91     &   8.5903e-05 & 2.00    &      7.5035e-05 &2.03    \\
16.641  &5.3910e-05  &1.94     &   2.1227e-05 & 2.02    &      1.8321e-05 &2.03    \\
66.049  &1.3835e-05  &1.96     &   5.2331e-06 & 2.02    &      4.4827e-06 &2.03    \\
263.169 &3.5256e-06  &1.97     &   1.2917e-06 & 2.02    &      1.1011e-06 &2.03    \\
1.050.625&8.9467e-07  &1.98     &   3.1955e-07 & 2.02    &      2.7158e-07 &2.02    \\
4.198.401&2.2641e-07  &1.98     &   7.9238e-08 & 2.01    &      6.7212e-08 &2.01    \\
  \hline
\end{tabular}
   \caption{L--domain, $k=1$: Influence of the position of singularity for $\alpha = 1.5$.}
\label{tab:App_L-shape6}
\end{center}
\end{table}

\clearpage
\subsubsection{Second order discretization $k=2$}
All calculations are performed for $x_0 = (0,0)$ and $a = 2/3$. 
Only the singularity parameter $\alpha$ is varied. 

Here, we expect the convergence rate 
$$
\tau = \min\{3,-1+2/3+(1+\alpha)\} = \min\{3,2/3+\alpha\}.
$$

Table~\ref{tab:App_L_2ndOrder3} shows the numerical results for the second order case in which the received rates are close to the theoretical expected once. 

\begin{table}[ht]
\begin{center}\begin{tabular}{|r||c|c||c|c||c|c||c|c|}
\hline
&\multicolumn{2}{|c||}{$\alpha = 2/3$}
&\multicolumn{2}{|c||}{$\alpha=3/4$}
&\multicolumn{2}{|c||}{$\alpha=1.01$}
&\multicolumn{2}{|c|}{$\alpha=10/9$}  \\
\hline
DOFs   &$L^2$-error  & rate &$L^2$-error  & rate & $L^2$-error & rate & $L^2$-error & rate  \\ \hline
289      &3.1686e-03  &  -         &1.6898e-03    & -            &4.8115e-04        & -         &5.9011e-04           & -   \\
1.089     &1.2099e-03  &1.39     &   6.0844e-04 &1.47       &      1.4003e-04  &1.78    &         1.5596e-04  &1.92      \\
4.225     &4.6505e-04  &1.38     &   2.1881e-04 &1.48       &      3.7277e-05  &1.91    &         3.8312e-05  &2.03      \\
16.641    &1.8057e-04  &1.36     &   8.0073e-05 &1.45       &      9.9546e-06  &1.90    &         9.3965e-06  &2.03      \\
66.049    &7.0635e-05  &1.35     &   2.9545e-05 &1.44       &      2.6951e-06  &1.89    &         2.3314e-06  &2.01      \\
263.169   &2.7771e-05  &1.35     &   1.0960e-05 &1.43       &      7.4481e-07  &1.86    &         5.8950e-07  &1.98      \\
1.050.625&1.0955e-05  &1.34     &   4.0799e-06 &1.43       &      2.1075e-07  &1.82    &         1.5257e-07  &1.95      \\
  \hline
\end{tabular}
\end{center}
\begin{center}\begin{tabular}{|r||c|c||c|c||c|c||c|c|}
\hline
&\multicolumn{2}{|c||}{$\alpha = 4/3$}
&\multicolumn{2}{|c||}{$\alpha=3/2$}
&\multicolumn{2}{|c||}{$\alpha=2.175$}
&\multicolumn{2}{|c|}{$\alpha=2.275$}  \\
\hline
DOFs   &$L^2$-error  & rate &$L^2$-error  & rate & $L^2$-error & rate & $L^2$-error & rate  \\ \hline
289      &6.1433e-04  &  -         &   5.5363e-04 & -            &      2.7565e-04  & -         &		  2.4570e-04  & -   \\
1.089     &1.5136e-04  &2.02     &   1.3540e-04 &2.03       &      5.1121e-05  &2.43    &         4.1696e-05  &2.56      \\
4.225     &3.3604e-05  &2.17     &   2.8521e-05 &2.25       &      7.5320e-06  &2.76    &         5.7319e-06  &2.86      \\
16.641    &7.7002e-06  &2.13     &   6.2123e-06 &2.20       &      1.1051e-06  &2.77    &         7.8407e-07  &2.87      \\
66.049    &1.8014e-06  &2.10     &   1.3642e-06 &2.19       &      1.5938e-07  &2.79    &         1.0553e-07  &2.89      \\
263.169   &4.2916e-07  &2.07     &   3.0106e-07 &2.18       &      2.2723e-08  &2.81    &         1.4044e-08  &2.91      \\
1.050.625&1.0374e-07  &2.05     &   6.6649e-08 &2.18       &      3.2138e-09  &2.82    &         1.8538e-09  &2.92      \\
  \hline
\end{tabular}
\end{center}
\begin{center}\begin{tabular}{|r||c|c|}
\hline
&\multicolumn{2}{|c|}{$\alpha=2.375$}  \\
\hline
DOFs   &$L^2$-error  & rate   \\ \hline
289      &2.2177e-04    & -   \\
1.089     &3.3912e-05    &  2.71     \\
4.225     &4.3221e-06    &  2.97     \\
16.641    &5.4888e-07    &  2.98     \\
66.049    &6.8762e-08    &  3.00     \\
263.169   &8.5292e-09    &  3.01     \\
1.050.625&1.0497e-09    &  3.02     \\
  \hline
\end{tabular}
   \caption{L-shaped domain, $k=2$: Influence of  $\alpha $ for
     $a=2/3 \pi$ and $(x_0,y_0) =(0,0)$.}
\label{tab:App_L_2ndOrder3}
\end{center}
\end{table}

\clearpage
\subsection{Fichera corner}
\subsubsection{Smooth solution}
The geometry is 
$$
\Omega = \Omega^F:= (-1,1)^3 \setminus [0,1]^3. 
$$
The discretization is done on lowest order hexahedral elements, regularly refined. 
The exact solution is prescribed to be the smooth solution 
$$
u(x,y,z) = \sin ((x+y)\pi) \cos (2 \pi z). 
$$
\begin{table}[h!]
\begin{center}
\begin{tabular}{|r|c|c|}
\hline 
DOFs & L2 error & rate \\ \hline 
316  &   0.075444   &  --- \\
3.032 &    0.017182   &  1.96 \\
26.416&     0.0039376   &  2.04 \\
220.256 &    0.00094597   &  2.02 \\
1.798.336 &     0.00023208 &    2.01 \\
14.532.992 &   5.7491e-05  &   2.00  \\ \hline 
\end{tabular} 
\caption{Fichera cube.}

\end{center}
\end{table}
\subsubsection{Solution of point singularity type}
In the next calculations, the exact solution is given by 
$$
u = r^\alpha, 
$$ 
where $r = \operatorname*{dist}(x,x_0)$ measures the distance from 
the point $x_0$, which is varied. The $L^2$-error is computed 
with a tensor product Gauss rule (5 points in each coordinate direction).

\begin{table}
\begin{center}
\begin{tabular}{|r||c|c||c|c||c|c|}\hline
&\multicolumn{2}{|c|}{$x_0  = (-1,-1,-1)$, $\alpha = 0.55$}
&\multicolumn{2}{|c|}{$x_0  = (0,0.5,0)$, $\alpha = 0.55$}
&\multicolumn{2}{|c|}{$x_0  = (0,0.5,0)$, $\alpha = 2/3$} \\ \hline 
DOFs   			&    L2-error   	&    rate	&    L2-error   		&    rate   &    L2-error   	&    rate			 	\\ \hline 
316    			& 0.00073994 &    --- 	&	 0.00069287   & 		 ---	  & 	0.00074102 &	---	\\
3.032    		& 0.00016401 &   2.00 	&	 0.00023565  	&  	1.43  &	0.00023 	   &	1.55	\\
26.416   		&  3.9176e-05 &   1.98 	&	 6.8242e-05 	&		1.72  &	6.2591e-05  &	1.80	\\
22.0256  		&   9.6835e-06&   1.98 	&	 1.9077e-05	&   	1.80  &	1.6589e-05   &	1.88	\\
1.798.336  &   2.4305e-06 &  1.97 	&   5.2412e-06  	&   	1.85  &	4.3418e-06  &	1.92	\\
14.532.992 &    6.1407e-07&  1.98 	&	  1.44225e-06 &   	1.87  &	1.1264e-06  &	1.94	\\ \hline 
\end{tabular}
\caption{Fichera cube}
\end{center}
\end{table}

\fi
\bibliography{nummech}
\bibliographystyle{spmpsci}      

\end{document}